\def\red{\color{red}} 
\def\black{\color{black}}
\newtheorem{theorem}{Theorem}[section]
\newtheorem{remark}[theorem]{Remark}
\DeclareMathOperator{\Id}{Id}
\DeclareMathOperator{\diag}{diag}
\newcommand{\Ricci}{\operatorname{Ric}}
\def\span{\operatorname{span}}
\def\ve{\varepsilon}
\def\ip{\langle \cdot,\cdot \rangle}
\DeclareMathOperator{\ad}{ad}
\newcommand{\mfP}{\mathfrak{P}}
\newcommand{\vsep}{0.05in}
\newcommand{\sdR}{\mathbb{R}^3\rtimes\mathbb{R}}
\newcommand{\sdH}{\mathcal{H}^3\rtimes\mathbb{R}}
\newcommand{\algsdR}{\mathbb{R}^3\rtimes \mathbb{R}}
\newcommand{\algsdH}{\mathfrak{h}_3\rtimes\mathbb{R}}
\newcommand{\cARS}{\bm{\mu}}
\newcommand{\HsignRL}{\delta} 
\newcommand{\alg}{\mathfrak{k}}
\begin{document}
\title[Lorentzian algebraic Ricci solitons]{Four-dimensional Lorentzian algebraic Ricci solitons}
\author{E. Garc\'ia-R\'io, R. Rodríguez-Gigirey, R. V\'azquez-Lorenzo}
\address{EGR: Department of Mathematics, CITMAga, University of Santiago de Compostela, 15782 Santiago de Compostela, Spain}
\email{eduardo.garcia.rio@usc.es}
\address{RRG: IES Bergidum Flavium, 24540 Cacabelos, Spain}
\email{rosalia.rodvil@educa.jcyl.es}
\address{RVL: IES de Ribadeo Dionisio Gamallo, 27700 Ribadeo,  Spain}
\email{ravazlor@edu.xunta.gal}
\thanks{Supported by projects  PID2022-138988NB-I00 (AEI/FEDER, Spain) and ED431C 2023/31  (Xunta de Galicia, Spain).}
\subjclass[2020]{53C50, 53C25, 53E20}
\keywords{}
	
\begin{abstract}
We describe  four-dimensional Lorentzian algebraic Ricci solitons.
In sharp contrast with the Riemannian situation, any connected and simply connected four-dimensional Lie group admits a left-invariant Lorentz metric which is a Ricci soliton.
\end{abstract}
\maketitle

\section{Introduction}

The search for optimal metrics on a given manifold led to different approaches, some of them based on the consideration of different functionals and their critical metrics. More recently, geometric evolution equations have been used to flow a given metric with the purpose of approaching a more well-behaved one.
The Ricci flow $\partial_t g_t=-2\rho(g_t)$ is among the most well-known and widely studied flows. Although the initial metric is expected to flow to a new metric behaving more nicely with respect to the Ricci tensor, this is not always the case. Einstein manifolds, having the best possible Ricci tensor, $\rho=\lambda g$, only evolve by homotheties $g_t=(1-2\lambda t)g$. More generally, self-similar solutions of the flow evolve by homotheties and diffeomorphisms, i.e., $g_t=\sigma(t)\Psi_t^*g$, where $\{\Psi_t\}$ is a one-parameter group of diffeomorphisms and $\sigma(t)$ is a positive real valued function. 
A \emph{Ricci soliton} is a triple  $(M,g,X)$, where $(M,g)$ is a pseudo-Riemannian manifold and $X$ is a vector field on $M$ such that
$$
\mathcal{L}_Xg+\rho=\cARS g
$$ 
for some constant $\cARS$. The soliton is said to be expanding, steady or shrinking if $\cARS<0$, $\cARS=0$, or $\cARS>0$, respectively. Moreover if the vector field is a gradient, then the Ricci soliton equation becomes $\operatorname{Hes}(f)+\rho=\cARS g$ for some potential function $f\in\mathcal{C}^\infty(M)$.
Any self-similar solution of the Ricci flow gives rise to a Ricci soliton and conversely, since the Ricci tensor is homothetically homogeneous, any Ricci soliton corresponds to a self-similar solution of the Ricci flow.
Since Einstein metrics are trivially Ricci solitons, we focus on the \emph{non-trivial} (i.e., non-Einstein) case. We refer to \cite{CaoTran, Wears} and references therein for more information on Ricci solitons (see also  \cite{CaZa,J} for Einstein metrics on four-dimensional Lie groups).

Ricci solitons on Lie groups are closely related to algebraic Ricci solitons introduced in \cite{Lauret}. A left-invariant metric on a Lie group, $(G,\ip)$, is an \emph{algebraic Ricci soliton} if $\mathfrak{D}=\Ricci-\cARS\Id$ is a derivation of the Lie algebra $\mathfrak{g}=\operatorname{Lie}(G)$, where $\operatorname{Ric}$ denotes the Ricci operator. Thus the existence of algebraic Ricci solitons is a property involving both the Lie group structure and the pseudo-Riemannian structure. Any algebraic Ricci soliton gives rise to a Ricci soliton $\mathcal{L}_X\ip+\rho=\cARS \ip$ and thus  to   a self-similar solution of the Ricci flow evolving by automorphisms of the group (see \cite{Lauret2, Wears} for some more general algebraic solitons). 

The three- and four-dimensional cases are particularly interesting. We note that in the Riemannian category homogeneous steady Ricci solitons are flat and, moreover, any non-Einstein four-dimensional homogeneous Ricci soliton is either a gradient one (and hence a product $N^{4-k}(c)\times \mathbb{R}^{k}$, where $N(c)$ is a two- or three-dimensional real space form \cite{PW}), or  homothetic to an expanding algebraic Ricci soliton in the simply connected case \cite{AL, Lauret2} (see also \cite{CaoTran} and references therein).
While three- and four-dimensional Riemannian homogeneous gradient Ricci solitons are locally symmetric, there exist non-symmetric homogeneous Ricci solitons in dimensions three and four. The list of the corresponding algebraic Ricci solitons is relatively small (see \cite{Lauret, Lauret2}).
In sharp contrast with the Riemannian situation, there are plenty of non-symmetric homogeneous Lorentzian Ricci solitons and the purpose of this work is to describe all three- and four-dimensional Lorentzian algebraic Ricci solitons. Results in  \cite{Israel, Maria} coupled with those in this paper show that:
\begin{quote}
\emph{Any connected and simply connected four-dimensional Lie group admits Lorentzian left-invariant Ricci soliton metrics}.
\end{quote}

We refer to \cite{Conti-Rossi, Wears2, Y-D} for previous work on Lorentzian algebraic Ricci solitons with the focus on the nilpotent case, and remark that the main contribution of the present work focuses on the more general solvable situation.
It is worth to emphasize that homogeneous steady Ricci solitons are not necessarily flat in the Lorentzian case, as it occurs for example in the plane wave situation. Moreover, some of these solitons are invariant by cocompact subgroups, thus passing to compact quotients and providing examples of compact steady Lorentzian Ricci solitons which are not Einstein. 

We emphasize that 
	Ricci solitons are unique up to homothetic vector fields. Indeed, two Ricci soliton vector fields $X_1$ and $X_2$ on a given pseudo-Riemannian manifold $(M,g)$ differ on a homothetic vector field $\xi=X_1-X_2$, in which case $(M,g)$ admits different Ricci soliton structures. If a pseudo-Riemannian Lie group admits a homothetic vector field, then the Ricci operator either vanishes or it is nilpotent unless the homothetic vector field is Killing. 
A straightforward calculation shows that all the Ricci solitons constructed in this paper are unique (i.e., they differ from another Ricci soliton in a Killing vector field) unless the underlying Lorentzian structure is a plane wave. 

The paper is structured as follows. 
Plane waves, playing a distinguished role in the analysis, are reviewed in Section~\ref{sse:plane-wave-homog}. We show that although they are Ricci solitons (in the connected and simply connected case), there are Lie groups with a plane wave left-invariant metric which are not algebraic Ricci solitons (cf. Section~\ref{re:producto} and Remark~\ref{re:plane-wave no ARS}).
It is shown in Section~\ref{se:extensions} that three-dimensional algebraic Ricci solitons extend to four-dimensional products which are algebraic Ricci solitons. Therefore we focus on algebraic Ricci solitons which are neither plane waves nor direct products. 
The description of left-invariant Lorentz metrics on three- and four-dimensional Lie groups is revised in Section~\ref{se:3D} and Section~\ref{se:Lorentz Lie groups}, due to some inaccuracies in \cite{CC, CP} where some metrics are missed.
This resulted in the omission of certain three-dimensional algebraic Ricci solitons in \cite{Batat-Onda}, and following \cite{Sandro}, we show in Section~\ref{se:3D} that any connected and simply connected three-dimensional solvable Lie group admits a left-invariant Lorentz metric resulting in an algebraic Ricci soliton.
The description of four-dimensional Lorentz left-invariant metrics will be subsequently used to prove the main results in this paper (Theorem~\ref{th:main R3} and Theorem~\ref{th:main H3}) which describe all non-symmetric, irreducible and non-Einstein algebraic Ricci solitons which are not plane waves. It turns out that they are semi-direct extensions of the Abelian Lie group or the Heisenberg group.
 The proof of Theorem~\ref{th:main R3} and Theorem~\ref{th:main H3} follows from the analysis in Sections~\ref{se:R3}--\ref{se:SLSU}. The purpose of the last section (final remarks) is twofold. First of all to show that any connected and simply connected four-dimensional Lie group admits left-invariant Lorentzian metrics resulting in a Ricci soliton, which is in sharp contrast with the Riemannian situation, and secondly to point out the existence of compact steady Ricci solitons on nilmanifolds and solvmanifolds.

Finally, we mention that the description of algebraic Ricci solitons amounts to solve some systems of polynomial equations on the structure constants of the Lorentzian Lie algebra. In most cases we solve these systems after a straightforward manipulation, but in some cases one can reduce the problem by using Gröbner bases (we refer to \cite{Cox} for more information about the subject).

\section{Summary of results}
\subsection{Four-dimensional homogeneous plane waves}\label{sse:plane-wave-homog}

Let $(M^4,g,\mathcal{U})$ be a four-dimen\-sio\-nal Brink\-mann wave, i.e., a Lorentzian manifold admitting a parallel degenerate line field $\mathcal{U}$. $(M,g,\mathcal{U})$ is said to be a \emph{pp-wave} if the parallel line field is locally generated by a parallel null vector field $U$  and $(M,g)$ is transversally flat, i.e., $R(X,Y)=0$ for all $X,Y\in\mathcal{U}^\perp$. In such  a case there exist local coordinates $(x^+,x^-,x^1,x^2)$ so that 
$$
g=2 dx^+\circ dx^-+ H(x^+,x^1,x^2) dx^+\circ dx^+ +dx^1\circ dx^1+dx^2\circ dx^2\,,
$$
where the degenerate parallel line field is generated by $U=\partial_{x^-}$.
Leistner showed in \cite{Leistner} that a Brinkmann wave $(M,g,\mathcal{U})$ is a $pp$-wave if and only if it is transversally flat and Ricci isotropic, i.e., $g(\operatorname{Ric}X,\operatorname{Ric}X)=0$ for any vector field $X$ on $M$. Furthermore, if  the covariant derivative of the curvature tensor satisfies $\nabla_XR~=~0$ for all $X\in\mathcal{U}^\perp$, then the local coordinates above can be specialized so that $H(x^+,x^1,x^2)=a_{ij}(x^+) x^ix^j$, and we refer to $(M,g,\mathcal{U})$ as a \emph{plane wave}.

The existence of Ricci solitons on plane waves was investigated in \cite{Sandra} where it is shown that any connected and simply connected plane wave is a steady gradient Ricci soliton. Moreover, due to the existence of homothetic vector fields, one also has the existence of expanding and shrinking Ricci solitons on plane waves.

Homogeneous plane waves in dimension four are described in terms of a $2\times 2$ skew-symmetric matrix $F$ and a $2\times 2$ symmetric matrix $A_0$ so that the defining function $H(x^+,x^1,x^2)$ takes the form
$H=\vec{x}^T\, A(x^+) \,\vec{x}$, where $\vec{x}=(x^1,x^2)$ and the matrix $A(x^+)$ is given by (see \cite{BO03})
$$ 
(i)\,\,A(x^+)=e^{x^+\,F}A_0 e^{-x^+\, F},
\quad\text{or}\quad
(ii)\,\,A(x^+)=\frac{1}{(x^+)^2}e^{\log(x^+)F}A_0 e^{-\log(x^+) F}\,.
$$
Moreover, the plane wave metric is Ricci-flat if and only if $A_0$ is trace-free. A straightforward calculation shows that four-dimensional homogeneous plane waves in class (i) have parallel Ricci tensor, while metrics in class (ii) have parallel Ricci tensor if and only if they are Ricci-flat. Furthermore, non-flat locally symmetric plane waves correspond to metrics in class (i) determined by a matrix $A(x^+)$ with constant coefficients.

Four-dimensional plane wave Lie groups reduce to the following families in the non-Einstein case, which correspond to cases (i) and (ii) discussed above depending on whether the Ricci tensor is parallel or not (see \cite{cita_ondas_planas}):
\begin{itemize}
	\item[(a)] A left-invariant metric on $\mathcal{H}^3\rtimes\mathbb{R}$, whose restriction to $\mathcal{H}^3$ is degenerate, given by  
	$[u_1,u_2]=  u_3$,
	$[u_1,u_4]= \kappa_1 u_1 - \kappa_2 u_2 +\kappa_3 u_3$,
	$[u_3,u_4]=(\kappa_1+\kappa_4) u_3$ and
	$[u_2,u_4]=  \kappa_2 u_1 + \kappa_4 u_2+\kappa_5 u_3$,
	where $\{u_i\}$ is a pseudo-orthonormal basis with $\langle u_1,u_1\rangle$ $=$ $\langle u_2,u_2\rangle$ $=$ $\langle u_3,u_4\rangle$ $=1$ and  $4\kappa_1\kappa_4+1\neq 0$. Moreover, the Ricci tensor is parallel if and only if $\kappa_1+\kappa_4=0$, in which case it is of type~(i).
	\smallskip
	\item[(b)] A left-invariant metric on $\mathbb{R}^3\rtimes\mathbb{R}$, whose restriction to $\mathbb{R}^3$ is Lorentzian, determined by 		
	$[u_2,u_4]=(1-\kappa)u_3$ and
	$[u_3,u_4]=(\kappa+1)u_1$, 
	where $\{u_i\}$ is a pseudo-orthonormal basis with  $\langle u_1,u_2\rangle\!=\!\langle u_3,u_3\rangle\!=\!\langle u_4,u_4\rangle\!=\!1$ and $\kappa\neq0$. Moreover, the Ricci tensor is parallel, thus corresponding to a plane wave of type~(i).
	
	\smallskip
	\item[(c)] A left-invariant metric on $\mathbb{R}^3\rtimes\mathbb{R}$, whose restriction to $\mathbb{R}^3$ is degenerate, determined by  	
	$[u_1,u_4]\!=\!\kappa_1 u_1-\kappa_2 u_2 + \kappa_3 u_3$, 
	$[u_2,u_4]\!=\!\kappa_2 u_1+\kappa_4 u_2 + \kappa_5 u_3$ and
	$[u_3,u_4]\!=\!\kappa_6 u_3$, 
	where $\{u_i\}$ denotes a pseudo-orthonormal   basis   with $\langle u_1,u_1\rangle=\langle u_2,u_2\rangle=\langle u_3,u_4\rangle=1$ and  $\kappa_1^2+\kappa_4^2-(\kappa_1+\kappa_4)\kappa_6\neq 0$. Moreover, the Ricci tensor is parallel (and hence of type~(i)) if and only if $\kappa_6=0$.
	
	\smallskip
	\item[(d)] A left-invariant metric on $\widetilde{E}(2)\rtimes\mathbb{R}$, whose restriction to $\widetilde{E}(2)$ is degenerate, determined by
	$[u_1,u_3]= u_2$, 
	$[u_2,u_3]=- u_1$,
	$[u_1,u_4]=\kappa_1 u_1 + \kappa_2 u_2$  and
	$[u_2,u_4]=-\kappa_2 u_1+\kappa_1 u_2$, where $\{ u_i\}$ is a pseudo-orthonormal basis with 
	$\langle u_1,u_1\rangle=\langle u_2,u_2\rangle=\langle u_3,u_4\rangle=1$ and $\kappa_1\neq 0$.
	Moreover, the curvature tensor is parallel, thus corresponding to a plane wave of type~(i).
\end{itemize}

It follows from the proofs of Theorem~\ref{H3-th: g_D0}, Theorem~\ref{R3-th: g_L.III} and Theorem~\ref{R3-th: g_D} that all the plane wave Lie groups in cases (a), (b) and (c) are steady algebraic Ricci solitons. In contrast those in case~(d) are not algebraic  Ricci solitons (Remark~\ref{re:plane-wave no ARS}).
We therefore exclude plane wave Lie groups from the description of Lorentzian algebraic Ricci solitons. 

Furthermore, left-invariant metrics above corresponding to plane waves of type~(i) have parallel Ricci tensor, and thus it follows from \cite{AYan} that they correspond to double extensions of the two-dimensional Abelian Lie algebra.

\subsection{Direct extensions of algebraic Ricci solitons}\label{se:extensions}
Let $(G,\ip_G)$ be an algebraic Ricci soliton and consider the product Lie group $G\times\mathbb{R}^k$ equipped with the left-invariant product metric $\ip=\ip_G\oplus\ip_{\mathbb{R}^k}$. Since $\mathfrak{D}_G=\operatorname{Ric}_G-\cARS\operatorname{Id}_\mathfrak{g}$ is a derivation of the Lie algebra $\mathfrak{g}=\operatorname{Lie}(G)$ and the Ricci operator of $G\times\mathbb{R}^k$ is block-diagonal
$\operatorname{Ric}=\operatorname{Ric}_G\oplus\, 0$, one has that 
$$
\mathfrak{D}=\operatorname{Ric}-\cARS\operatorname{Id}_{\mathfrak{g}\times\mathbb{R}^k}=\left(\operatorname{Ric}_G-\cARS\operatorname{Id}_\mathfrak{g}\right)\oplus (-\cARS\operatorname{Id}_{\mathbb{R}^k})
$$
is a derivation of $\mathfrak{g}\times\mathbb{R}^k=\operatorname{Lie}(G\times\mathbb{R}^k)$ since $\mathbb{R}^k$ is Abelian. As a consequence, the direct extension of any of the non-Einstein three-dimensional algebraic Ricci solitons (see Section~\ref{se:3D}) is a non-Einstein four-dimensional algebraic Ricci soliton. Therefore we exclude those direct products from the subsequent analysis.

\begin{remark}\rm\label{se:51}
Indecomposable Lorentzian symmetric spaces are of constant curvature in the irreducible case, and thus Einstein, or Cahen-Wallach symmetric spaces, which are a special class of plane waves \cite{Cahen}. In the four-dimensional reducible case they are products $N(c)\times\mathbb{R} $, $\Sigma(c)\times\mathbb{R}^2$ or $\Sigma_1(c_1)\times\Sigma_2(c_2)$, where $N(c)$ is a three-dimensional space of constant sectional curvature and $\Sigma(\cdot)$ denotes a surface of constant Gauss curvature. In addition to Einstein products $\Sigma_1(\kappa)\times\Sigma_2(\kappa)$, the products $N(c)\times\mathbb{R} $ and $\Sigma(c)\times\mathbb{R}^2$ are gradient Ricci solitons.

Since two Ricci solitons differ in a homothetic vector field and the products $N(c)\times \mathbb{R}$ and $\Sigma(c)\times\mathbb{R}^2$ do not support any non-Killing homothetic vector field, Ricci solitons are unique (up to Killing vector fields) in these cases. Therefore, we exclude symmetric spaces from the subsequent analysis.
\end{remark}
 
\subsubsection*{Notation}
	Motivated by the results above, as well as those in Section~\ref{sse:plane-wave-homog}, we say that an algebraic Ricci soliton is \emph{strict} if it is non-symmetric, irreducible and neither Einstein nor a plane wave.

\subsection{The non-solvable cases $SU(2)\times\mathbb{R}$ and $\widetilde{SL}(2,\mathbb{R})\times\mathbb{R}$}
We show in Section~\ref{se:SLSU} that the product Lie groups  $SU(2)\times\mathbb{R}$ and $\widetilde{SL}(2,\mathbb{R})\times\mathbb{R}$ are non-Einstein algebraic Ricci solitons only when the left-invariant Lorentz metric gives rise to one of the locally symmetric and locally conformally flat products $\mathbb{S}^3\times\mathbb{R}$ or $\mathbb{S}^3_1\times\mathbb{R}$ (cf. Theorem~\ref{Nonsolvable-ARS-thm}), thus corresponding to the situation in Remark~\ref{se:51}.

\subsection{Extensions of the Euclidean and Poincaré Lie groups}
We show in Section~\ref{se:EE} that any semi-direct extension $G\rtimes\mathbb{R}$ of the Euclidean or Poincaré Lie groups which is a non-Einstein algebraic Ricci soliton is necessarily unimodular and isomorphic to a product $E(1,1)\times\mathbb{R}$ or $\widetilde{E}(2)\times\mathbb{R}$. Hence it is isomorphic to an almost Abelian Lie group corresponding to the discussion in Section~\ref{se:R3} (see \cite{ABDO}).

\subsection{Left-invariant Lorentz metrics on the product $\mathcal{H}^3\times\mathbb{R}$}
\label{re:producto}
\rm
Rahmani showed in \cite{Rahmani} that there exist three non-homothetic classes of left-invariant Lorentzian metrics in the Heisenberg group $\mathcal{H}^3$, which correspond to the situations when the structure operator has Lorentzian, Riemannian or degenerate kernel.   
Kondo and Tamaru have recently shown in \cite{KT} that there exist exactly six non-homothetic classes of left-invariant Lorentzian metrics on $\mathcal{H}^3\times\mathbb{R}$ up to automorphisms, which are described by 	the Lie algebra structures
$$
[e_1,e_2]=-(\alpha e_1-e_4),\quad
[e_2,e_3]=\beta (\alpha e_1-e_4),\quad
[e_2,e_4]=\alpha (\alpha e_1-e_4),
$$ 
where $\{ e_1,e_2,e_3, e_4\}$ is an orthonormal basis of $\mathfrak{h}_3\times\mathbb{R}$ with $e_4$ timelike, and the parameters $(\alpha,\beta)\in\{ (0,0), (1,0), (1,1), (2,0), (2,\sqrt{3}), (2,2)\}$.
A direct calculation shows that they are algebraic Ricci solitons in all cases, corresponding to the soliton constant $\cARS=\frac{3}{2}(\alpha^2-1)(\alpha^2-\beta^2-1)$, as shown in \cite{KT}.

The special case when $(\alpha,\beta)=(1,0)$ determines a flat metric which corresponds to the product metric on $\mathcal{H}^3\times\mathbb{R}$ where the structure operator of the Lorentzian metric on $\mathcal{H}^3$ has degenerate kernel. 	
The cases   $(\alpha,\beta)=(0,0)$ and $(\alpha,\beta)=(2,0)$ reduce to the product metric on $\mathcal{H}^3\times\mathbb{R}$, where the left-invariant metric on $\mathcal{H}^3$ is Lorentzian  and its associated structure operator $L$ has Riemannian and Lorentzian kernel, respectively.
Moreover, the case $(\alpha,\beta)=(2,2)$ corresponds to the product metric on $\mathcal{H}^3\times\mathbb{R}$, where the left-invariant metric in $\mathcal{H}^3$ is Riemannian. Section~\ref{se:extensions} shows that all these cases correspond to four-dimensional algebraic Ricci solitons obtained from those in Section~\ref{se:3D}.

The left-invariant metric in case $(\alpha,\beta)=(1,1)$ is locally symmetric and locally conformally flat. Moreover it is locally isometric to a conformally flat Cahen-Wallach space, where $U=e_1-e_4$ determines a left-invariant parallel null vector field. 	
The Ricci tensor is parallel in the case $(\alpha,\beta)=(2,\sqrt{3})$, although the metric is not locally symmetric. It corresponds to a plane wave associated to the parallel null vector field $U=e_1+\sqrt{3}\,e_3-2e_4$.
In contrast with the previously considered situations, the left-invariant metrics on $\mathcal{H}^3\times\mathbb{R}$ in these last two cases are not the product one.
 
We emphasize that the existence of algebraic Ricci solitons depend both on the pseudo-Riemannian and the Lie group structures. This is clearly observed when comparing the plane wave Lie groups in Section~\ref{sse:plane-wave-homog} corresponding to cases (d) and (b) with $\kappa=1$.
The left-invariant metrics corresponding to both situations are isometric (they are locally conformally flat Cahen-Wallach symmetric spaces). Moreover, while metrics (b) are steady algebraic Ricci solitons, those in class (d) are not.
Indeed, although the locally conformally flat Cahen-Wallach symmetric spaces are Ricci solitons, they admit Lie group presentations of the form $\mathcal{H}^3\times\mathbb{R}$, as in (b), which are algebraic Ricci solitons (corresponding to the metrics determined by $(\alpha,\beta)=(1,1)$), and Lie group presentations on $\operatorname{Aff}(\mathbb{C})$, as in case~(d) of Section~\ref{sse:plane-wave-homog}, which are not algebraic Ricci solitons.

\subsection{Almost Abelian algebraic Ricci solitons}
Connected and simply connected semi-direct extensions of the Abelian Lie group $\mathbb{R}^3$  are isomorphic to one of the Lie groups determined by the following Lie algebras (see \cite{ABDO}):
\begin{enumerate}
	\item The product Lie algebras
	$\mathbb{R}^4$, $\mathfrak{h}_3\times\mathbb{R}$, $\mathfrak{r}_3\times\mathbb{R}$,
	$\mathfrak{r}_{3,\lambda}\times\mathbb{R}$  and
	$\mathfrak{r}'_{3,\lambda}\times\mathbb{R}$.
	\item The irreducible Lie algebras
	$\mathfrak{r}_4$,   $\mathfrak{r}_{4,\lambda}$, $\mathfrak{n}_4$,
	$\mathfrak{r}_{4,\mu,\lambda}$ and
	 $\mathfrak{r}'_{4,\mu,\lambda}$.
\end{enumerate}

In addition to the algebraic Ricci solitons on $\mathfrak{h}_3\times\mathbb{R}$ discussed above and the product ones on $\mathfrak{r}_3\times\mathbb{R}$,
$\mathfrak{r}_{3,\lambda}\times\mathbb{R}$,  and
$\mathfrak{r}'_{3,\lambda}\times\mathbb{R}$ obtained from the three-dimensional algebraic Ricci solitons in Section~\ref{se:3D}, one has the following description.

\begin{theorem}\label{th:main R3}
Let $(\sdR,\ip)$ be a semi-direct extension of the Abelian Lie group equipped with a left-invariant Lorentzian metric. Then it is a strict algebraic Ricci soliton if and only if it is isomorphically homothetic to one of the following:	
	\medskip
	
	\noindent
	{\rm (R)} The restriction of the metric to $\mathbb{R}^3$ is Riemannian and  
	\begin{itemize}
		\item[(i)]  
		$[ e_1, e_4] =  e_2$,
		$[ e_2, e_4] =  e_3$.
		
		\smallskip
		
		\item[(ii)]
		$[e_1,e_4] = e_1 -\gamma_1 e_2$,
		$[e_2,e_4] = \gamma_1 e_1 + e_2$,
		$[e_3,e_4] = \eta_3 e_3$,
		$\eta_3\notin\{0,1\}$, $\gamma_1\geq0$.
		
		\smallskip
		
		\item[(iii)]
		$[e_1,e_4] = e_1$,
		$[e_2,e_4] = \eta_2 e_2$,
		$[e_3,e_4] = \eta_3 e_3$,
		$\eta_2,\eta_3\notin\{0,1\}$, $\eta_2\neq\eta_3$.

	\end{itemize}
	
	\medskip
	
	Here $\{e_i\}$ is an orthonormal basis of the Lie algebra with $e_4$ timelike.

	\medskip
	
	\noindent
	{\rm (L)} The restriction of the metric to $\mathbb{R}^3$ is Lorentzian and  
	\begin{itemize}
		
		\item[(Ia.i)]  
		$[e_1,e_4] = e_1 -\gamma_1 e_2$,
		$[e_2,e_4] = \gamma_1 e_1 + e_2$,
		$[e_3,e_4] = \eta_3 e_3$,
		$\eta_3\notin\{0,1\}$, $\gamma_1\geq 0$.
		
		\smallskip
		
		\item[(Ia.ii)]  
		$[e_1,e_4] = e_1 +\gamma_2 e_3$,
		$[e_2,e_4] = \eta_2 e_2$,
		$[e_3,e_4] = \gamma_2 e_1+e_3$,
		$\eta_2\notin\{0,1\}$, $\gamma_2\geq 0$.
		
		\smallskip
		
		\item[(Ia.iii)]
		$[e_1,e_4] = e_1$,
		$[e_2,e_4] = \eta_2 e_2$,
		$[e_3,e_4] = \eta_3 e_3$,
		$\eta_2,\eta_3\notin\{0,1\}$, $\eta_2\neq \eta_3$.

		\smallskip
		
		\item[(Ib.i)]  
		$[e_1,e_4] = \eta e_1$,
		$[e_2,e_4] = \delta e_2-e_3$,
		$[e_3,e_4] = e_2+\delta e_3$,
		with $\eta\neq 0$ and  $(\eta,\delta)\notin\{
		(\frac{2}{\sqrt{3}},-\frac{1}{\sqrt{3}}),
		(-\frac{2}{\sqrt{3}},\frac{1}{\sqrt{3}})\}$.

		\smallskip
		
		\item[(Ib.ii)]  
		$[e_1,e_4] =  e_3$,
		$[e_2,e_4] = -e_3$,
		$[e_3,e_4] =  e_1+e_2$.

		\smallskip

		\item[(II.i)]  
		$[u_1,u_4] = \eta_1 u_1+ u_2$,
		$[u_2,u_4] = \eta_1 u_2$,
		$[u_3,u_4] = \eta_2 u_3$, $\eta_2\neq 0$.

		\smallskip
		
		\item[(II.ii)]  
		$[u_1,u_4] = -\tfrac{\eta_2}{2}u_1+  u_2$,
		$[u_2,u_4] = \tfrac{4\eta_1+\eta_2}{2} u_2$,
		$[u_3,u_4] = \eta_2 u_3$,
		
		\noindent
		with $\eta_2(\eta_2-\eta_1)(\eta_2+2\eta_1)\neq 0$.
		
		
		\smallskip
		
		\item[(II.iii)]
		$[u_1,u_4] = \eta_1 u_1+  u_2-\gamma_3 u_3$,
		$[u_2,u_4] = \eta_1 u_2$,
		$[u_3,u_4] = \gamma_3 u_2 +\eta_1 u_3$, 
		$\eta_1\gamma_3\neq 0$.

		\smallskip

		\item[(III)]
		$[u_1,u_4]=\eta u_1$,
		$[u_2,u_4]= \eta u_2+u_3$,
		$[u_3,u_4]=u_1+\eta u_3$,
		$ \eta\neq 0$.

	\end{itemize}

	\medskip
	
	Here $\{e_i\}$ is an orthonormal basis of the Lie algebra with $e_3$ timelike, while $\{u_i\}$ is a pseudo-orthonormal basis with $\langle u_1,u_2\rangle=\langle u_3,u_3\rangle=\langle u_4,u_4\rangle=1$. 
\end{theorem}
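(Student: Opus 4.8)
The plan is to realize each such Lie algebra as an almost Abelian extension $\mathfrak{g}=\mathbb{R}^{3}\rtimes_{A}\mathbb{R}$, where $\mathbb{R}^{3}$ is the Abelian ideal and the bracket is encoded by the single operator $A=\operatorname{ad}(e_{4})|_{\mathbb{R}^{3}}$ via $[e_{4},v]=Av$, $[v,w]=0$ for $v,w\in\mathbb{R}^{3}$. After normalizing the metric so that the transverse vector $e_{4}$ is unit and orthogonal to $\mathbb{R}^{3}$, the only remaining freedom is to conjugate $A$ by an isometry of the restricted inner product together with an overall rescaling. I would first separate the two cases according to whether this restriction is Riemannian or Lorentzian, and in each case list the canonical forms of $A$ under the action of $O(3)\times\mathbb{R}^{*}$, respectively $O(2,1)\times\mathbb{R}^{*}$. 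These normal forms are exactly the source of the case split in the statement: the definite case produces the real nilpotent, complex, and real semisimple types appearing as (i)--(iii), while the Lorentzian case produces the richer list of semisimple and non-semisimple forms --- including the null $2\times 2$ and $3\times 3$ Jordan blocks --- labelled I, II and III.

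Next I would convert the algebraic Ricci soliton condition into linear algebra on $A$. Writing a candidate endomorphism in block form $\mathfrak{D}=\begin{pmatrix}B & b\\ c^{T} & d\end{pmatrix}$ relative to $\mathbb{R}^{3}\oplus\mathbb{R}e_{4}$, a direct expansion of $\mathfrak{D}[X,Y]=[\mathfrak{D}X,Y]+[X,\mathfrak{D}Y]$ shows that $\mathfrak{D}$ is a derivation if and only if $A^{T}c=0$, $(c^{T}w)Av=(c^{T}v)Aw$ for all $v,w\in\mathbb{R}^{3}$, and $[B,A]=dA$. I would then compute the Ricci operator $\operatorname{Ric}$ for each normal form from the standard formula for left-invariant metrics and impose $\mathfrak{D}=\operatorname{Ric}-\cARS\operatorname{Id}$; self-adjointness of $\operatorname{Ric}$ identifies the blocks $B,b,c,d$ with those of $\operatorname{Ric}$ shifted by $\cARS$. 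When the ideal is definite the cross terms of $\operatorname{Ric}$ vanish, so $c=0$ and the whole system collapses to the single commutator equation $[\operatorname{Ric}|_{\mathbb{R}^{3}},A]=(\operatorname{Ric}_{44}-\cARS)\,A$; when the ideal is Lorentzian the cross terms may survive, and all three conditions are genuinely needed.

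With this reduction in hand, each canonical form of $A$ yields a polynomial system in the entries of $A$ and the constant $\cARS$. I would solve these systems one by one --- by elimination where the structure is simple, and by Gröbner-basis computations for the more degenerate Lorentzian types --- and then apply the normalizing freedom once more to reduce each solution family to a single representative, recovering the explicit bracket relations in the statement. For the converse direction, I would verify directly that each listed family is indeed an algebraic Ricci soliton by exhibiting $\mathfrak{D}=\operatorname{Ric}-\cARS\operatorname{Id}$ as a derivation with the stated $\cARS$, and that it is strict, i.e. non-Einstein, irreducible, not a plane wave, and non-symmetric; the non-strict solutions --- Einstein metrics, the plane waves of Section~\ref{sse:plane-wave-homog}, the reducible products coming from Section~\ref{se:extensions}, and the locally symmetric spaces of Remark~\ref{se:51} --- are then discarded.

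The main obstacle I expect is the Lorentzian branch. Under $O(2,1)$ the operator $A$ admits many inequivalent normal forms, the Ricci operator is in general not block diagonal, and the non-semisimple types (the null $2\times 2$ and $3\times 3$ Jordan blocks, together with the borderline cases where eigenvalues collide) produce large polynomial systems whose resolution demands careful elimination and bookkeeping. Deciding when two solutions coincide after a further isometry and rescaling, and cleanly separating the genuinely strict solitons from their plane-wave and product degenerations, is where the bulk of the effort will concentrate.
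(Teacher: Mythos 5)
Your overall strategy coincides with the paper's: reduce to the adjoint operator $A=\operatorname{ad}_{e_4}|_{\mathbb{R}^3}$, put it in normal form relative to the induced inner product on the Abelian ideal (the paper does this by splitting $A$ into self-adjoint and anti-self-adjoint parts and normalizing only the self-adjoint part into its Jordan types Ia, Ib, II, III --- which is the practical way to parametrize the $O(2,1)$-orbits you allude to), write the derivation condition for $\mathfrak{D}=\operatorname{Ric}-\cARS\operatorname{Id}$ as a polynomial system, and solve case by case, with Gröbner bases in the harder subcases. Your block computation of the derivation condition ($A^{T}c=0$, $(c^{T}v)Aw=(c^{T}w)Av$, $[B,A]=dA$) is correct.

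However, there is a genuine gap: you split into only two cases, according to whether the restriction of $\ip$ to $\mathbb{R}^3$ is Riemannian or Lorentzian. For a Lorentzian inner product on a four-dimensional space, the restriction to a three-dimensional subspace can also be \emph{degenerate} (signature $(+\,+\,0)$), and in that case your normalization breaks down: $(\mathbb{R}^3)^{\perp}$ is then contained in $\mathbb{R}^3$, so there is no unit vector $e_4$ orthogonal to the ideal, and one must instead work in a pseudo-orthonormal basis with $\langle u_3,u_4\rangle=1$ and $u_3\in\mathbb{R}^3$ null. This third family is not empty and is not subsumed by the other two; the paper devotes Section~\ref{R3-se:degenerate} (Theorem~\ref{R3-th: g_D}) to it and shows that every algebraic Ricci soliton arising there is either a plane wave (case $\gamma_6=\gamma_7=0$) or splits as a Lorentzian product, hence is never \emph{strict}. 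Since the theorem is an equivalence, the "only if" direction is incomplete without this analysis: as written, your argument cannot rule out a strict soliton hiding in the degenerate branch. You need to add this third case and carry out the corresponding elimination (or cite the plane-wave/product dichotomy it produces) before the classification is established.
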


\begin{remark}
	\rm
	Lorentzian metrics on $\mathbb{R}^3\rtimes\mathbb{R}$ which restrict to degenerate metrics on $\mathbb{R}^3$ and which are algebraic Ricci solitons correspond to plane waves, as shown in the proof of Theorem~\ref{R3-th: g_D}.
\end{remark}

\begin{remark}\rm
	It follows from Remark~\ref{re:riem}, Remark~\ref{re:lor-Ia}, Remark~\ref{re:Ib}, Remark~\ref{re:II}  and Remark~\ref{re:III} that any non-Abelian semi-direct extension 	$\mathfrak{r}_4$,   $\mathfrak{r}_{4,\lambda}$, $\mathfrak{n}_4$,
	$\mathfrak{r}_{4,\mu,\lambda}$ and $\mathfrak{r}'_{4,\mu,\lambda}$ of the Abelian Lie group $\mathbb{R}^3$ admits a left-invariant Lorentzian metric resulting in a non-Einstein algebraic Ricci soliton, with the exception of $\mathfrak{r}_{4,1,1}$ where any left-invariant Riemannian or Lorentzian metric is of constant sectional curvature and hence Einstein (see \cite{Milnor, Nomizu}).  
	The existence of non-Einstein algebraic Ricci solitons on the product Lie algebras $\mathfrak{h}_3\times\mathbb{R}$, $\mathfrak{r}_3\times\mathbb{R}$,
	$\mathfrak{r}_{3,\lambda}\times\mathbb{R}$ and
	$\mathfrak{r}'_{3,\lambda}\times\mathbb{R}$ follows from the analysis in Section~\ref{se:3D} and Section~\ref{se:extensions}.
	In addition to the product metrics, we emphasize that the product Lie groups corresponding to $\mathfrak{r}_{3,\lambda}\times\mathbb{R}$ with $\lambda\neq 0,\frac{1}{2}$ (Remark~\ref{re:lor-Ia}--(ii)), and $\mathfrak{r}_{3,-2}\times\mathbb{R}$ (Remark~\ref{re:II}--(ii)) admit non-product metrics resulting on algebraic Ricci solitons.
\end{remark}

\subsection{Algebraic Ricci solitons on semi-direct extensions of the Heisenberg group}
Connected and simply connected semi-direct extensions of the Heisenberg Lie group $\mathcal{H}^3$  are isomorphic to one of the Lie groups determined by the following Lie algebras:
\begin{enumerate}
	\item The product Lie algebra $\mathfrak{h}_3\times\mathbb{R}$.
	\item The irreducible Lie algebras $\mathfrak{d}_4$, $\mathfrak{d}_{4,\lambda}$, $\mathfrak{d}'_{4,\lambda}$, $\mathfrak{n}_4$  and $\mathfrak{h}_4$.
\end{enumerate}

Since the Lie algebras $\mathfrak{n}_4$ and $\mathfrak{h}_3\times\mathbb{R}$ are already covered by the analysis of the almost Abelian case and Section~\ref{re:producto}, in  Theorem~\ref{th:main H3}  we focus on semi-direct extensions of the Heisenberg group which are not almost Abelian.

\begin{theorem}\label{th:main H3}
Let $(\sdH,\ip)$ be a semi-direct extension of the Heisenberg Lie group equipped with a left-invariant Lorentzian metric. Then it is a strict algebraic Ricci soliton not covered by Theorem~\ref{th:main R3}
if and only if  
the  restriction of the metric to $\mathfrak{h}_3$ is Lorentzian
and $(\sdH,\ip)$ is isomorphically homothetic to one of the following:
	\medskip
	\begin{itemize}	
		\item[(Ia-)]  
		$[e_1,e_3] =-e_2$,
		$[e_1,e_4] = \gamma_1 e_1 + \gamma_3 e_3$,
		$[e_2,e_4] = \gamma_4 e_2$,
		
		\smallskip
		
		\noindent
		$[e_3,e_4] = -\gamma_3 e_1-(\gamma_1-\gamma_4)e_3$,
		
		\smallskip
		
		\noindent
		where $\gamma_3$ is the only positive solution of 
		$4\gamma_3^2 = 4 (\gamma_1^2  + \gamma_4^2 - \gamma_1 \gamma_4) + 3$.
		
		\medskip
		
		\item[(II)] 
		$[u_1,u_3] = - u_2$,
		$[u_1,u_4] = \gamma_1 u_1 + \gamma_2 u_2 + 3 \gamma_6 u_3$,
		$[u_2,u_4] = -\gamma_1 u_2$,
		
		\smallskip
		
		\noindent
		$[u_3,u_4] = \gamma_6 u_2 - 2\gamma_1 u_3$, 
		$\gamma_1\neq 0$.
	\end{itemize}	
	
	\medskip
	
	\noindent
	Here $\{ e_i\}$ is an orthonormal basis of the Lie algebra  with $e_3$ timelike, while 	  $\{u_i\}$ is a pseudo-orthonormal basis with $\langle u_1,u_2\rangle=\langle u_3,u_3\rangle=\langle u_4,u_4\rangle=1$.
\end{theorem}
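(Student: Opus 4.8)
The plan is to impose the algebraic soliton condition directly on the Lie bracket of $\algsdH$ and to solve the resulting polynomial system. First I would fix $\mathfrak{h}_3=\operatorname{span}\{e_1,e_2,e_3\}$ with its one-dimensional derived algebra (equal to the center) and realize the extension through the derivation $D=\ad_{e_4}|_{\mathfrak{h}_3}$. Every derivation of $\mathfrak{h}_3$ preserves the center and acts there by the scalar equal to its trace on a complementary plane, so $D$ is encoded by a $2\times2$ block together with two vertical entries; this is exactly the structure visible in the bracket relations of cases (Ia-) and (II). Since the theorem assumes the restriction of $\ip$ to $\mathfrak{h}_3$ is Lorentzian, I would split according to the causal character of the center dictated by Rahmani's classification: a spacelike center gives the orthonormal presentation with $e_3$ timelike (the $\{e_i\}$ frame of case (Ia-)), while a null center gives the null-adapted presentation $\langle u_1,u_2\rangle=\langle u_3,u_3\rangle=\langle u_4,u_4\rangle=1$ (the $\{u_i\}$ frame of case (II)); the remaining Lorentzian-center subcase turns out to be almost Abelian or a plane wave and is therefore excluded.

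Next I would exploit the residual freedom. The isometries of the fixed Lorentzian inner product on $\mathfrak{h}_3$ act on $D$ by conjugation, allowing me to reduce its $2\times2$ block to a real canonical form (diagonal, complex-rotation, or nilpotent Jordan type), and a global homothety lets me normalize one nonzero eigenvalue. This splitting into block types is what separates the genuinely non-almost-Abelian families recorded in the statement from those already appearing in Theorem~\ref{th:main R3}.

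With a normalized bracket in hand I would compute the Levi-Civita connection from the Koszul formula and then the Ricci operator $\Ricci$ as explicit polynomials in the structure constants $\gamma_i$. Imposing that $\mathfrak{D}=\Ricci-\cARS\Id$ be a derivation, i.e. $\mathfrak{D}[X,Y]=[\mathfrak{D}X,Y]+[X,\mathfrak{D}Y]$ on every pair of basis vectors, yields a system of polynomial equations in the $\gamma_i$ and the soliton constant $\cARS$. Solving this system is the crux of the argument and the main obstacle. In the diagonalizable branches one can read $\cARS$ off a single diagonal entry of $\mathfrak{D}$ and finish by elementary elimination, but in the branches where off-diagonal brackets couple the equations I expect to need a Gröbner basis computation to extract the constraints — for instance the defining relation $4\gamma_3^2=4(\gamma_1^2+\gamma_4^2-\gamma_1\gamma_4)+3$ in case (Ia-), and the coefficient alignments (the factor forcing $3\gamma_6$ against $\gamma_6$) in case (II).

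Finally I would prune the solution set and normalize the survivors. I would discard every solution that is almost Abelian (already covered by Theorem~\ref{th:main R3}), Einstein, a direct product, locally symmetric, or a plane wave, invoking the curvature characterizations recalled in Section~\ref{sse:plane-wave-homog} and Remark~\ref{se:51}; this plane-wave exclusion is the delicate part, since several critical parameter values collapse onto plane waves that must be recognized and removed rather than listed. The remaining families are then matched, up to isomorphic homothety, with the two presentations in the statement, checking along the way that the stated constraints (such as $\gamma_1\neq0$ in case (II)) are precisely those separating the strict solitons from the excluded degenerate metrics.
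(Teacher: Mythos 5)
Your treatment of the Lorentzian-restriction case tracks the paper's route closely: the split by the causal character of the center of $\mathfrak{h}_3$ (equivalently, by the Jordan type of Rahmani's structure operator and the signature of its kernel) is exactly the paper's trichotomy, the timelike-center branch does reduce to the almost Abelian situation of Theorem~\ref{th:main R3}, the spacelike-center branch yields case~(Ia-) via a Gr\"obner-basis elimination that forces $\cARS=\tfrac{3}{2}$, $\gamma_2=\gamma_6=0$, $\gamma_5=-\gamma_3$ and then the quadric $4\gamma_3^2=4(\gamma_1^2+\gamma_4^2-\gamma_1\gamma_4)+3$, and the null-center (nilpotent structure operator) branch yields case~(II). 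Your pruning criteria (almost Abelian, Einstein, symmetric, plane wave) are also the ones the paper uses, implemented through explicit recognition lemmas for when the underlying algebra degenerates to $\mathfrak{h}_3\times\mathbb{R}$ or $\mathfrak{n}_4$.

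However, there is a genuine gap at the very first step: you write ``Since the theorem assumes the restriction of $\ip$ to $\mathfrak{h}_3$ is Lorentzian,'' but the theorem does not assume this --- the Lorentzian signature of the restriction is part of the \emph{conclusion} of the ``only if'' direction. To prove the equivalence you must also analyze the semi-direct extensions for which the induced metric on $\mathfrak{h}_3$ is Riemannian or degenerate and show that none of them produces a strict algebraic Ricci soliton outside Theorem~\ref{th:main R3}. The paper devotes two full sections to this: in the Riemannian case the derivation condition forces $\gamma_4=-\gamma_1$ and $\gamma_1^2=\gamma_2^2$, which makes the Lie algebra almost Abelian; in the degenerate case with null derived algebra \emph{every} left-invariant metric is a steady algebraic Ricci soliton, but all of them are plane waves (these are precisely the family~(a) of Section~\ref{sse:plane-wave-homog}) and hence not strict, while the degenerate case with spacelike derived algebra again collapses to the almost Abelian situation. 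None of these three computations is a formality --- the null-derived-algebra case in particular is the source of a large family of solitons that must be recognized as plane waves rather than silently discarded --- so your proposal as written proves only one implication of the theorem.
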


\begin{remark}\rm
	The Lie algebra underlying left-invariant metrics in case~(Ia-) is  $\mathfrak{d}'_{4,\lambda}$ with $|\lambda|<\frac{1}{\sqrt{3}}$ (cf. Remark~\ref{re:Heisenberg-(i)}),
	while for metrics in case~(II) of Theorem~\ref{th:main H3} the underlying Lie algebra is $\mathfrak{d}_{4,2}$ (cf. Remark~\ref{re:Heisenberg-(ii)}).
\end{remark}

%

\subsection{Critical metrics for quadratic curvature functionals}
	\label{grafico 1}\rm
	Quadratic curvature functionals are homothetically invariant in dimension four. Moreover, any 
	quadratic curvature functional in dimensions three and four is equivalent to 
	$$
	\mathcal{S}:g\mapsto\mathcal{S}(g)=\int_M\tau^2\operatorname{dvol}_g,\quad\text{or}\quad
	\mathcal{F}[t]:g\mapsto\mathcal{F}[t](g)=\int_M \{ \|\rho\|^2+t\tau^2\}\operatorname{dvol}_g,
	$$
	where $t\in\mathbb{R}$ (see \cite{CM}).
Considering variations with constant volume in dimension three, or just  arbitrary variations in dimension four, the corresponding critical metrics are determined in the constant scalar curvature case by 
$$
\tau(\rho-\tfrac{1}{n}\tau g)=0,\quad\text{or}\quad
-\Delta\rho+\tfrac{2}{n}(\|\rho\|^2+t\tau^2)g-2R[\rho]-2t\tau\rho=0 ,
$$
respectively,
where $R[\rho]_{ij}=R_{ikj\ell}\rho^{k\ell}$.
Einstein metrics are critical for all quadratic curvature functionals in dimensions three and four. Moreover, non-Einstein metrics with constant scalar curvature are $\mathcal{S}$-critical if and only if $\tau=0$. Furthermore, a metric with constant scalar curvature is critical for two-distinct quadratic curvature functionals if and only if it is critical for all quadratic curvature functionals. Note that Lorentzian products $\Sigma_1(c)\times\Sigma_2(-c)$ of two surfaces with opposite constant Gauss curvature are critical for all quadratic curvature functionals, as well as plane wave metrics, and observe that the energy of the functionals $\mathcal{F}[t]$, given by $\mathcal{E}_t=\|\rho\|^2+t\tau^2$, vanishes identically for plane waves, but it is always non-zero in the products $\Sigma_1(c)\times\Sigma_2(-c)$.

A special feature of connected and simply connected three- and four-dimensional homogeneous Riemannian Ricci solitons $(M,g,X)$ is that they are critical for some quadratic curvature functional with zero energy (i.e., $\|\rho\|^2+t\tau^2=0$) \cite{Sandro2}. If $\cARS\in\mathbb{R}$ denotes the soliton constant ($\mathcal{L}_Xg+\rho=\cARS g$), then $\cARS\tau=\|\rho\|^2$, and the quadratic curvature functional $\mathcal{F}[t]$ with zero energy is determined by $t= -\cARS\tau^{-1}$, provided that $\tau\neq 0$ (see \cite{CaoTran, catino}).	

		A case by case analysis shows that four-dimensional Lorentzian algebraic Ricci soliton metrics are $\mathcal{S}$-critical or $\mathcal{F}[t]$-critical with zero energy. Since quadratic curvature functionals are homothetically invariant in dimension four, we represent in the following diagram the possible values of $t$ for which the left-invariant metrics in Theorem~\ref{th:main R3} and Theorem~\ref{th:main H3} are $\mathcal{F}[t]$-critical with zero energy. We omit the cases of algebraic Ricci solitons with vanishing scalar curvature which are $\mathcal{S}$-critical, which correspond to metrics (L.Ib.i) in Theorem~\ref{th:main R3} with $\eta^2+3\delta^2+2\eta\delta-1=0$, and metrics (L.Ia-) in Theorem~\ref{th:main H3} for $1-2\gamma_4^2=0$. Moreover, in both situations one has that $\|\rho\|=\tau=0$.
		
		We focus on the irreducible case, since the reducible situation follows from the corresponding three-dimensional one in Figure~\ref{fi: figura 2}. This is due to the fact that any three-dimensional homogeneous $\mathcal{F}[t]$-critical manifold with zero energy $(N,g_N)$ gives rise to a four-dimensional $\mathcal{F}[t]$-critical product metric on $M=N\times\mathbb{R}$ with zero energy (see \cite{Sandro2}).
		
		Each row in the figure below indicates the range of $t=-\cARS\tau^{-1}$ for the corresponding homothetic class of irreducible non-Einstein algebraic Ricci solitons with non-zero scalar curvature which are
		not locally symmetric. The arrow on the left (resp. on the right) indicates that the interval extends to $-\infty$ (resp. to $+\infty$). An empty dot means that the point is not included in the interval, whereas a filled dot indicates that the point belongs to the range of $t$. Furthermore algebraic Ricci solitons corresponding to $\mathcal{F}[t]$-critical metrics colored in red are shrinking, those in green color are steady and those in blue are expanding.
	
\begin{center} 
	\def\ColorExp{blue}
	\def\ColorShr{red}
	\def\ColorSte{teal}
	
	\def\RTres{\mathbb{R}^3}
	\def\HTres{\mathcal{H}^3}
	
	\def\Xmenos{-5.6}
	\def\Xmenost{\Xmenos+.5+1.2+.8}
	\def\Xmenosinf{\Xmenos+.7+.8+.3}
	\def\Xmas{5.6}
	\def\Xmast{\Xmas-1.2}
	\def\Xmasinf{\Xmas-.3}
	\def\Y{0}
	\def\YLineaPunteada{6.45}
	
	\def\YSepDosMetricas{0.1}
	
	\def\XCorreccion{.08}
	\def\XCero{\Xmast-.2} 
	\def\XUnCuarto{\XCero-.94+2*\XCorreccion}
	\def\XUnTercio{\XUnCuarto-0.31-2*\XCorreccion}
	\def\XUnMedio{\XUnTercio-0.63-\XCorreccion}
	\def\XTresCuartos{\XUnMedio-.94-\XCorreccion}
	\def\XUno{\XTresCuartos-.94-\XCorreccion}
	\def\XTresMedios{\XUno-1.88-\XCorreccion}

	
	\medskip

	\begin{tikzpicture}
	\draw[thick]   (\Xmenos-1,\Y) -- (\Xmenos+.35+.8+.3,\Y); 
	\draw (\Xmenos-1.1,0.2) node[anchor=west] {\footnotesize  $\cdots\!\rtimes\!\mathbb{R}$ };

	\draw[thick,-latex]   (\Xmenos+.48+.8+.3,\Y) -- (\Xmas,\Y) node[right] {$t$}; 
	
	\foreach \i/\n in 
	{\XTresMedios/$-\frac{3}{2}$, 
		\XUno/$-1$, 
		\XTresCuartos/$\frac{-3}{4}$, 
		\XUnMedio/$\frac{-1}{2}$,
		\XUnTercio/$\frac{-1}{3}$,  
		\XUnCuarto/$\frac{-1}{4}$,
		\XCero/$0$ 
	}{\draw (\i,\Y-.2)--(\i,\Y+.2);}

	\foreach \i/\n in 
	{\XUno/{\footnotesize $-1$}, 
		\XCero/{\footnotesize $0$}
	}{\draw  (\i,\Y+.2) node[above] {\footnotesize \n};}

	\foreach \i/\n in 
	{\XTresMedios/{\footnotesize $\frac{-3}{2}$}, 
		\XTresCuartos/{\footnotesize $\frac{-3}{4}$}, 
		\XUnMedio/{\footnotesize $\frac{-1}{2}$},  
		\XUnTercio/{\footnotesize$\frac{-1}{3}$}, 
		\XUnCuarto/{\footnotesize $\frac{-1}{4}$} 
	}{\draw  (\i,\Y+.12) node[above] {\footnotesize \n};}

	\foreach \i/\n in 
	{\XTresMedios/$\frac{-3}{2}$, 
		\XUno/$-1$, 
		\XTresCuartos/$\frac{-3}{4}$, 
		\XUnMedio/$\frac{-1}{2}$, 
		\XUnTercio/$\frac{-1}{3}$, 
		\XUnCuarto/$\frac{-1}{4}$,
		\XCero/$0$ 
	}{
		\draw[dashed] (\i,\Y-.25)--(\i,\Y-\YLineaPunteada);
	}
	
	\def\YPrimeraFamilia{0.3}
	\def\YSepEntreFamilias{0.42}
	\def\CorreccionYTexto{-.02}
	\def\XGrupo{\Xmenos-1.1}

	\def\YGrupo{\Y-\YPrimeraFamilia}
	\draw (\XGrupo,\YGrupo+\CorreccionYTexto) node[anchor=west] {\footnotesize $\color{black} \mathbb{R}^3\!\rtimes\!\mathbb{R}$ \,\,(Thm\,\ref{th:main R3})};
	
	\def\YGrupo{\Y-\YPrimeraFamilia-\YSepEntreFamilias}
	\draw (\XGrupo,\YGrupo+\CorreccionYTexto) node[anchor=west] {\footnotesize 
		\color{black} \,\,\,\,(R.i)};
	\filldraw [color=\ColorShr,fill=\ColorShr] (\XTresMedios,\YGrupo) circle (2pt); 
	
	\def\YGrupo{\Y-\YPrimeraFamilia-2*\YSepEntreFamilias}
	\draw (\XGrupo,\YGrupo+\CorreccionYTexto) node[anchor=west] {\footnotesize 
		\color{black} \,\,\,\,(R.ii)}; 
	\draw[color=\ColorShr] (\XUno,\YGrupo)--(\XUnCuarto,\YGrupo);	
	\filldraw [color=\ColorShr,fill=\ColorShr] (\XUno,\YGrupo) circle (2pt);
	\filldraw [color=black,fill=white] (\XUnCuarto,\YGrupo) circle (2pt);
	
	\def\YGrupo{\Y-\YPrimeraFamilia-3*\YSepEntreFamilias}
	\draw (\XGrupo,\YGrupo+\CorreccionYTexto) node[anchor=west] {\footnotesize 
		\color{black} \,\,\,\,(R.iii)}; 
	\draw[color=\ColorShr] (\XUno,\YGrupo)--(\XUnCuarto,\YGrupo);	
	\filldraw [color=\ColorShr,fill=\ColorShr] (\XUno,\YGrupo) circle (2pt);
	\filldraw [color=black,fill=white] (\XUnCuarto,\YGrupo) circle (2pt);
	
	\def\YGrupo{\Y-\YPrimeraFamilia-4*\YSepEntreFamilias}
	\draw (\XGrupo,\YGrupo+\CorreccionYTexto) node[anchor=west] {\footnotesize 
		\color{black} \,\,\,\,(L.Ia.i)}; 
	\draw[color=\ColorExp] (\XUno,\YGrupo)--(\XUnCuarto,\YGrupo);	
	\filldraw [color=\ColorExp,fill=\ColorExp] (\XUno,\YGrupo) circle (2pt);
	\filldraw [color=black,fill=white] (\XUnCuarto,\YGrupo) circle (2pt);
	
	\def\YGrupo{\Y-\YPrimeraFamilia-5*\YSepEntreFamilias}
	\draw (\XGrupo,\YGrupo+\CorreccionYTexto) node[anchor=west] {\footnotesize 
		\color{black} \,\,\,\,(L.Ia.ii)}; 
	\draw[color=\ColorExp] (\XUno,\YGrupo)--(\XUnCuarto,\YGrupo);	
	\filldraw [color=\ColorExp,fill=\ColorExp] (\XUno,\YGrupo) circle (2pt);
	\filldraw [color=black,fill=white] (\XUnCuarto,\YGrupo) circle (2pt);
	
	\def\YGrupo{\Y-\YPrimeraFamilia-6*\YSepEntreFamilias}
	\draw (\XGrupo,\YGrupo+\CorreccionYTexto) node[anchor=west] {\footnotesize 
		\color{black} \,\,\,\,(L.Ia.iii)}; 
	\draw[color=\ColorExp] (\XUno,\YGrupo)--(\XUnCuarto,\YGrupo);	
	\filldraw [color=\ColorExp,fill=\ColorExp] (\XUno,\YGrupo) circle (2pt);
	\filldraw [color=black,fill=white] (\XUnCuarto,\YGrupo) circle (2pt);
	
	\def\YGrupo{\Y-\YPrimeraFamilia-7*\YSepEntreFamilias}
	\draw (\XGrupo,\YGrupo+\CorreccionYTexto) node[anchor=west] {\footnotesize 
		\color{black} \,\,\,\,(L.Ib.i)}; 	
	\draw[color=\ColorShr,latex-](\Xmenosinf,\YGrupo-\YSepDosMetricas)--(\XUno,\YGrupo-\YSepDosMetricas); 
	\draw[color=\ColorExp,-](\XUno,\YGrupo+\YSepDosMetricas)--(\XCero,\YGrupo+\YSepDosMetricas);
	\draw[color=\ColorShr,-latex](\XCero,\YGrupo-\YSepDosMetricas)--(\Xmasinf,\YGrupo-\YSepDosMetricas); 
	
	\filldraw [color=\ColorShr,fill=\ColorShr] (\XUno,\YGrupo-\YSepDosMetricas )circle (2pt); 
	\filldraw [color=\ColorSte,fill=\ColorSte] (\XCero,\YGrupo-\YSepDosMetricas)circle (2pt); 
	\filldraw [color=\ColorExp,fill=\ColorExp] (\XUno,\YGrupo+\YSepDosMetricas )circle (2pt); 
	\filldraw [color=black,fill=white] (\XCero,\YGrupo+\YSepDosMetricas )circle (2pt); 	
	
	\def\YGrupo{\Y-\YPrimeraFamilia-8*\YSepEntreFamilias}
	\draw (\XGrupo,\YGrupo+\CorreccionYTexto) node[anchor=west] {\footnotesize 
		\color{black} \,\,\,\,(L.Ib.ii)};
	\filldraw [color=\ColorShr,fill=\ColorShr] (\XTresMedios,\YGrupo) circle (2pt); 
	
	\def\YGrupo{\Y-\YPrimeraFamilia-9*\YSepEntreFamilias}
	\draw (\XGrupo,\YGrupo+\CorreccionYTexto) node[anchor=west] {\footnotesize 
		\color{black} \,\,\,\,(L.II.i)}; 
	\draw[color=\ColorExp] (\XUno,\YGrupo)--(\XUnCuarto,\YGrupo);	
	\filldraw [color=\ColorExp,fill=\ColorExp] (\XUno,\YGrupo) circle (2pt);
	\filldraw [color=\ColorExp,fill=\ColorExp] (\XUnCuarto,\YGrupo) circle (2pt);
	
	\def\YGrupo{\Y-\YPrimeraFamilia-10*\YSepEntreFamilias}
	\draw (\XGrupo,\YGrupo+\CorreccionYTexto) node[anchor=west] {\footnotesize 
		\color{black} \,\,\,\,(L.II.ii)}; 
	\draw[color=\ColorExp] (\XUno,\YGrupo)--(\XUnCuarto,\YGrupo);	
	\filldraw [color=black,fill=white] (\XUno,\YGrupo) circle (2pt);
	\filldraw [color=black,fill=white] (\XUnCuarto,\YGrupo) circle (2pt);
	
	\def\YGrupo{\Y-\YPrimeraFamilia-11*\YSepEntreFamilias}
	\draw (\XGrupo,\YGrupo+\CorreccionYTexto) node[anchor=west] {\footnotesize 
		\color{black} \,\,\,\,(L.III)};
	\filldraw [color=\ColorExp,fill=\ColorExp] (\XUnCuarto,\YGrupo) circle (2pt);

	\def\YGrupo{\Y-\YPrimeraFamilia-12*\YSepEntreFamilias}
	\draw (\XGrupo,\YGrupo+\CorreccionYTexto) node[anchor=west] {\footnotesize $\HTres\!\rtimes\!\mathbb{R}$
		\,\,(Thm\,\ref{th:main H3})};
	
	\def\YGrupo{\Y-\YPrimeraFamilia-13*\YSepEntreFamilias}
	\draw (\XGrupo,\YGrupo+\CorreccionYTexto) node[anchor=west] {\footnotesize 
		\color{black} \,\,\,\,(L.Ia-)}; 	
	\draw[color=\ColorShr,latex-](\Xmenosinf,\YGrupo)--(\XTresCuartos,\YGrupo);
	\filldraw [color=\ColorShr,fill=\ColorShr] (\XTresCuartos,\YGrupo) circle (2pt);
	\draw[color=\ColorShr,-latex](\XCero,\YGrupo)--(\Xmasinf,\YGrupo);
	\filldraw [color=black,fill=white] (\XCero,\YGrupo) circle (2pt);
	
	\def\YGrupo{\Y-\YPrimeraFamilia-14*\YSepEntreFamilias}
	\draw (\XGrupo,\YGrupo+\CorreccionYTexto) node[anchor=west] {\footnotesize 
		\color{black} \,\,\,\,(L.II)};
	\filldraw [color=\ColorExp,fill=\ColorExp] (\XUnMedio,\YGrupo) circle (2pt); 	
	
	\end{tikzpicture}

	
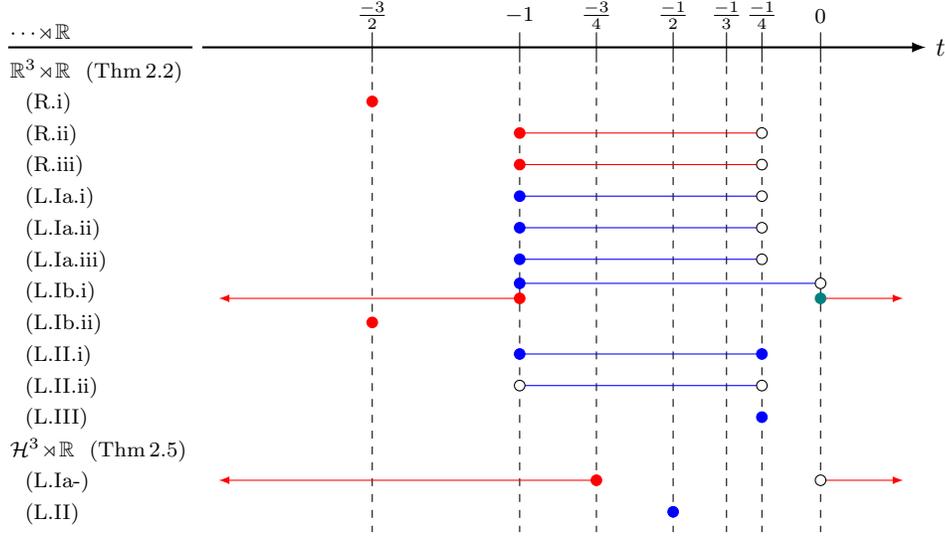
\captionof{figure}{Range of the parameter $t$ for homothetic classes of four-dimensional strict algebraic Lorentzian Ricci solitons with $\tau\neq 0$.}
	\label{fi: figura main}	
\end{center}

Note that left-invariant metrics corresponding to case~(L.II.iii) in Theorem~\ref{th:main R3} do not appear in Figure~\ref{fi: figura main} since they are homothetic (although not isomorphically homothetic) to those in case~(L.II.i), as shown in Remark~\ref{re:II}.
Moreover, it follows after a detailed analysis of the spectral structure of the Ricci operator and  of the curvature operator $R:\Lambda^2\rightarrow\Lambda^2$ that all classes in Figure~\ref{fi: figura main} are homothetically inequivalent, except possibly those corresponding to (L.II) in Theorem~\ref{th:main H3} and (L.II.ii) in Theorem~\ref{th:main R3}.

\section{Three-dimensional Lorentzian algebraic Ricci solitons}\label{se:3D} 
 Algebraic Ricci solitons on three-dimensional Lorentzian Lie groups were considered by Batat and Onda in \cite{Batat-Onda} (see also \cite{Wears}).  Following \cite{Sandro} we review  their classification, due to some inaccuracies in the description  of left-invariant Lorentz metrics in non-unimodular Lie groups in \cite{CP}.
 
 \subsection{The unimodular case}
 In the unimodular situation, the Heisenberg group $\mathcal{H}^3$, the Euclidean group $\widetilde{E}(2)$ and the Poincaré group $E(1,1)$ admit non-Einstein Lorentzian algebraic Ricci solitons, which are determined as follows. 
 Let $L$ be the structure operator of the three-dimensional Lorentzian Lie algebra $(\mathfrak{g},\ip)$, determined by $L(e_i\times e_j)=[e_i,e_j]$, where `$\times$' denotes the Lorentzian vector-cross product.
 It was shown by Milnor \cite{Milnor} and Rahmani \cite{Rahmani} that a Riemannian or Lorentzian three-dimensional Lie group $(G,\ip)$ is unimodular if and only if the corresponding structure operator $L$ is self-adjoint. In the Lorentzian setting there are four-distinct possibilities for the Jordan normal form  and thus we have:
 
 \medskip
 \noindent\underline{Type Ia}. Diagonalizable structure operator with real eigenvalues.\\
 Let $\{ e_1,e_2,e_3\}$ 
 be a basis of $\mathfrak{g}$ so that 
 \begin{equation}\label{eq:3D-unimodularIa}
 [e_1,e_2]=-\lambda_3 e_3,\quad [e_1,e_3]=-\lambda_2 e_2,\quad [e_2,e_3]=\lambda_1 e_1,
\end{equation}
   and the Lorentzian inner product is  determined by $\langle e_1,e_1\rangle=\langle e_2,e_2\rangle=-\langle e_3,e_3\rangle=~1$ \cite{Milnor, Rahmani}. Then it follows from the work in \cite{Batat-Onda} that $(G,\ip)$ is a non-Einstein algebraic Ricci soliton if and only if
 \begin{itemize}
 	\item $G$ is the Heisenberg group with a metric homothetic to the metrics determined by  \eqref{eq:3D-unimodularIa} with $(\lambda_1,\lambda_2,\lambda_3)=(1,0,0)$ or $(\lambda_1,\lambda_2,\lambda_3)=(0,0,1)$, or
 	\item $G$ is the Euclidean group with a metric homothetic to the metric determined by  \eqref{eq:3D-unimodularIa} with $(\lambda_1,\lambda_2,\lambda_3)=(1,0,-1)$, or
 	\item $G$ is the Poincaré group with a metric homothetic to the metric determined by  \eqref{eq:3D-unimodularIa} with $(\lambda_1,\lambda_2,\lambda_3)=(1,-1,0)$.
 \end{itemize}

\medskip
\noindent\underline{Type Ib}. Diagonalizable structure operator with complex eigenvalues.\\
Let $\{ e_1,e_2,e_3\}$ 
be a basis of $\mathfrak{g}$ so that 
\begin{equation}\label{eq:3D-unimodularIb}
[e_1,e_2]=-\beta e_2-\alpha e_3,\quad [e_1,e_3]=-\alpha e_2+\beta e_3,\quad [e_2,e_3]=\lambda e_1,\qquad \beta\neq 0,
\end{equation}
 and the Lorentzian inner product is  determined by $\langle e_1,e_1\rangle=\langle e_2,e_2\rangle=-\langle e_3,e_3\rangle=~1$ \cite{Rahmani}. Then it follows from the work in \cite{Batat-Onda} that $(G,\ip)$ is a non-Einstein  algebraic Ricci soliton if and only if
\begin{itemize}
	\item $G$ is the Poincaré group with a metric homothetic to the metric determined by  \eqref{eq:3D-unimodularIb} with $(\alpha,\beta,\lambda)=(0,1,0)$. 
\end{itemize} 

\medskip
\noindent\underline{Type II}. The structure operator has a double root of its minimal polynomial.\\
Let $\{ u_1,u_2,u_3\}$ 
be a basis of $\mathfrak{g}$ so that 
\begin{equation}\label{eq:3D-unimodularII}
[u_1,u_2]=\lambda_2 u_3,\quad [u_1,u_3]=-\lambda_1 u_1-\varepsilon u_2,\quad [u_2,u_3]=\lambda_1 u_2,\qquad \varepsilon=\pm 1,
\end{equation}
where the Lorentzian inner product is determined by $\langle u_1,u_2\rangle=\langle u_3,u_3\rangle=1$. Then it follows from the work in \cite{Batat-Onda} that there is no non-Einstein algebraic Ricci solitons in this setting.

\medskip
\noindent\underline{Type III}. The structure operator has a triple root of its minimal polynomial.\\
Let $\{ u_1,u_2,u_3\}$ 
be a basis of $\mathfrak{g}$ so that 
\begin{equation}\label{eq:3D-unimodularIII}
[u_1,u_2]=u_1+\lambda u_3,\quad [u_1,u_3]=-\lambda u_1,\quad [u_2,u_3]=\lambda u_2+u_3,
\end{equation}
and the Lorentzian inner product is determined by $\langle u_1,u_2\rangle=\langle u_3,u_3\rangle=1$. Then it follows from the work in \cite{Batat-Onda} that $(G,\ip)$ is a non-Einstein  algebraic Ricci soliton if and only if
\begin{itemize}
	\item $G$ is the Poincaré group with a metric homothetic to the metric determined by  \eqref{eq:3D-unimodularIII} with $\lambda=0$. 
\end{itemize}


\subsection{The non-unimodular case}
The description of the three-dimensional non-unimodular Lorentzian Lie groups is based on the consideration of the unimodular kernel $\mathfrak{u}=\{ x\in\mathfrak{g};\, \operatorname{tr}\operatorname{ad}_x=0\}$, which is a two-dimensional Abelian ideal. 
Hence, these groups are semi-direct extensions $\mathbb{R}^2\rtimes_\varphi\mathbb{R}$  and thus correspond to one of the following families determined by the Jordan normal form of the endomorphism $\varphi:\mathbb{R}^2\rightarrow\mathbb{R}^2$ (see, for example, \cite{ABDO}):
$$
\begin{array}{cccc}
\varphi=\left(\begin{array}{cc}1&0\\0&0\end{array}\right), &
\varphi=\left(\begin{array}{cc}1&1\\0&1\end{array}\right), &
\varphi=\left(\begin{array}{cc}1&0\\0&\lambda\end{array}\right), &
\varphi=\left(\begin{array}{cc}\lambda&1\\-1&\lambda\end{array}\right).
\\
\noalign{\medskip}
\mathfrak{r}_{3,0} &
\mathfrak{r}_3 &
\mathfrak{r}_{3,\lambda}\,\, \scalebox{.75}{$(\lambda\neq 0,-1)$} &
\mathfrak{r}'_{3,\lambda}\,\, \scalebox{.75}{$(\lambda\neq 0)$}
\end{array}
$$
Note that $\mathfrak{r}_{3,0}$ is the product Lie algebra $\mathfrak{r}_{3,0}=\mathfrak{aff}(\mathbb{R})\times\mathbb{R}$, while the unimodular semi-direct extensions $\mathfrak{r}_{3,-1}=\mathfrak{e}(1,1)$ and $\mathfrak{r}'_{3,0}=\mathfrak{e}(2)$ correspond to the Poincaré and the Euclidean Lie algebras, respectively.

Since the restriction of the Lorentzian inner product to the unimodular kernel may be Riemannian, Lorentzian or degenerate, one must consider the following three cases separately (see \cite{Sandro}). In all the situations one may assume $\operatorname{tr}\varphi\neq 0$.
However some further restrictions assumed in \cite{CP} are not always valid (see the discussion in \cite{Sandro}), thus leading to some omissions in \cite{Batat-Onda} that we correct below.
 
 \medskip
 \noindent\underline{Type IV.1}. The restriction of the metric to the unimodular kernel is Lorentzian.\\
 Let $\{ e_1,e_2,e_3\}$ 
 be a basis of $\mathfrak{g}$ so that 
 \begin{equation}\label{eq:3D-nunimodularIV1}
 [e_1,e_2]=0,\quad [e_1,e_3]=\alpha e_1+\beta e_2,\quad [e_2,e_3]=\gamma e_1+\delta e_2,
 \qquad \alpha+\delta\neq 0,
 \end{equation}
 and the Lorentzian inner product is determined by $1=-\langle e_1,e_1\rangle=\langle e_2,e_2\rangle=\langle e_3,e_3\rangle$. The underlying Lie group is the semi-direct extension $\mathbb{R}^2\rtimes_\varphi\mathbb{R}$ determined by an endomorphism $\varphi$ of the unimodular kernel which is self-adjoint if and only if $\beta=-\gamma$. In this situation all the possibilities $\mathfrak{r}_{3,0}$, $\mathfrak{r}_3$, $\mathfrak{r}_{3,\lambda}$ and $\mathfrak{r}'_{3,\lambda}$ are realized for different values of the parameters $\alpha$ and $\gamma$, being Einstein only those metrics which are realized on $\mathfrak{r}_{3,1}$.
 
 
 \medskip
 \noindent\underline{Type IV.2}. The restriction of the metric to the unimodular kernel is Riemannian.\\
 Let $\{ e_1,e_2,e_3\}$ 
 be a basis of $\mathfrak{g}$ so that 
 \begin{equation}\label{eq:3D-nunimodularIV2}
 [e_1,e_2]=0,\quad [e_1,e_3]=\alpha e_1+\beta e_2,\quad [e_2,e_3]=\gamma e_1+\delta e_2,
 \qquad \alpha+\delta\neq 0,
 \end{equation}
 and the Lorentzian inner product is determined by $1=\langle e_1,e_1\rangle=\langle e_2,e_2\rangle=-\langle e_3,e_3\rangle$. The underlying Lie group is the semi-direct extension $\mathbb{R}^2\rtimes_\varphi\mathbb{R}$ determined by an endomorphism $\varphi$  which is self-adjoint if and only if $\beta=\gamma$. The left-invariant metrics in the self-adjoint case are realized on $\mathfrak{r}_{3,0}$ or $\mathfrak{r}_{3,\lambda}$ and they are Einstein if and only if they are realized on $\mathfrak{r}_{3,1}$.
 
 \medskip
 \noindent\underline{Type IV.3}. The restriction of the metric to the unimodular kernel is degenerate.\\
 Let $\{ u_1,u_2,u_3\}$ 
 be a basis of $\mathfrak{g}$ so that 
 \begin{equation}\label{eq:3D-nunimodularIV3}
 [u_1,u_2]=0,\quad [u_1,u_3]=\alpha u_1+\beta u_2,\quad [u_2,u_3]=\gamma u_1+\delta u_2,
 \qquad \alpha+\delta\neq 0,
 \end{equation}
 and the Lorentzian inner product is determined by $1=\langle u_1,u_1\rangle=\langle u_2,u_3\rangle$. The underlying Lie group is the semi-direct extension $\mathbb{R}^2\rtimes_\varphi\mathbb{R}$ determined by an endomorphism $\varphi$ of the unimodular kernel $\mathfrak{u}$  which is self-adjoint if and only if $\gamma=0$. The left-invariant metrics with $\gamma=0$ are realized on $\mathfrak{r}_{3,0}$, $\mathfrak{r}_3$, or $\mathfrak{r}_{3,\lambda}$. Moreover, all metrics realized on  $\mathfrak{r}_3$ and $\mathfrak{r}_{3,1}$ are Einstein, as well as some metrics on $\mathfrak{r}_{3,0}$.

 \medskip
 Now it is easy to check that there exist Einstein metrics of types IV.1 and IV.2 with non self-adjoint derivation $\varphi$. Moreover, a straightforward calculation shows that:
 \begin{itemize}
 	\item A three-dimensional non-unimodular Lie group $\mathbb{R}^2\rtimes_\varphi\mathbb{R}$ is an algebraic Ricci soliton if and only if it is Einstein or $\varphi$ is a self-adjoint endomorphism.
 \end{itemize}
As a consequence, non-Einstein connected and simply connected three-dimensional Lorentzian algebraic Ricci solitons are realized either on the Heisenberg, the Euclidean and the Poincaré group, or on any of the non-unimodular Lie groups except those corresponding to the Lie algebra $\mathfrak{r}_{3,1}$ where all left-invariant Lorentz metrics are Einstein \cite{Nomizu}. It now follows from the above that 
\begin{quote}
\emph{Any connected and  simply connected three-dimensional solvable Lie group admits Lorentzian left-invariant Ricci soliton metrics}.
\end{quote}

In the non-solvable case, there are no non-Einstein Lorentzian algebraic Ricci solitons. Moreover, Lorentzian left-invariant Einstein metrics on non-solvable three-dimensional Lie groups may only occur in the special linear group $\widetilde{SL}(2,\mathbb{R})$.

\begin{remark}
\rm 
It was shown in \cite{Sandro} that any three-dimensional algebraic Lorentzian Ricci soliton is critical for some quadratic curvature functional with zero energy. The converse holds true for all three-dimensional Lorentzian Lie groups but the non-unimodular ones of type~IV.3 corresponding to $\operatorname{det}(\varphi)=0$ with $\gamma\neq 0$, which are $\mathcal{F}[-3]$-critical with zero energy but not algebraic Ricci solitons. On the other hand, note that there exist left-invariant Lorentzian metrics on three-dimensional Lie groups which are Ricci solitons with left-invariant soliton vector field, but not critical for any quadratic curvature functional (cf. \cite{Israel}).

The possible values of $t$ for which the three-dimensional algebraic Ricci solitons are $\mathcal{F}[t]$-critical with zero energy, up to homothety, are represented in Figure~\ref{fi: figura 2}.
We use the same legend as in Figure~\ref{fi: figura main} to represent only the irreducible non-Einstein cases, where algebraic Ricci solitons corresponding to $\mathcal{F}[t]$-critical metrics colored in red are shrinking, those in  green color are steady and those in blue are expanding. Further note that the Heisenberg group admits two non-homothetic left-invariant Lorentz metrics of type~Ia, which are shrinking algebraic Ricci solitons.

\black
\begin{center} 
	\def\ColorExp{blue}
	\def\ColorShr{red}
	\def\ColorSte{teal}
	
	\def\RTres{\mathbb{R}^3}
	\def\HTres{\mathcal{H}^3}
	
	\def\Xmenos{-5.6}
	\def\Xmenost{\Xmenos+.5+1.2+.8}
	\def\Xmenosinf{\Xmenos+.7+.8+.3}
	\def\Xmas{5.6}
	\def\Xmast{\Xmas-1.2}
	\def\Xmasinf{\Xmas-.3}
	\def\Y{0}
	\def\YLineaPunteada{3.09}

	\def\XCorreccion {.08}
	\def\XCero{\Xmast-.2} 
	\def\XUnCuarto{\XCero - .94 + 2*\XCorreccion}
	\def\XUnTercio{\XUnCuarto - 0.31 - 2*\XCorreccion}
	\def\XUnMedio{\XUnTercio - 0.63 - \XCorreccion}
	\def\XTresCuartos{\XUnMedio - .94 - \XCorreccion}
	\def\XUno{\XTresCuartos - .94 - \XCorreccion}
	\def\XTresMedios{\XUno - 1.88 - \XCorreccion}
	\def\XTres{\Xmenost}	
	
	\def\YSepDosMetricas{0.1}

	
	\medskip
	
	\begin{tikzpicture}
	\draw[thick]   (\Xmenos-1,\Y) -- (\Xmenos+.35+.8+.3,\Y); 
	\draw (\Xmenos-1.1,0.2) node[anchor=west] {\footnotesize  $Case$ };

	\draw[thick,-latex]   (\Xmenos+.48+.8+.3,\Y) -- (\Xmas,\Y) node[right] {$t$}; 
	
	\foreach \i/\n in 
	{
		\XCero/$0$,
		\XUnTercio/$\frac{-1}{3}$, 
		\XUnMedio/$\frac{-1}{2}$, 
		\XUno/$-1$,
		\XTres/$-3$ 
	}{\draw (\i,\Y-.2)--(\i,\Y+.2);}

	\foreach \i/\n in 
	{
		\XCero/{\footnotesize $0$},
		\XUno/{\footnotesize $-1$}, 
		\XTres/{\footnotesize $-3$}
	}{\draw  (\i,\Y+.2) node[above] {\footnotesize \n};}

	\foreach \i/\n in 
	{   
		\XUnMedio/{\footnotesize$\frac{-1}{2}$},
		\XUnTercio/{\footnotesize$\frac{-1}{3}$} 
	}{\draw  (\i,\Y+.12) node[above] {\footnotesize \n};}

	\foreach \i/\n in 
	{ 
		\XCero/$0$,
		\XUnTercio/$\frac{-1}{3}$, 
		\XUnMedio/$\frac{-1}{2}$, 
		\XUno/$-1$,
		\XTres/$-3$ 
	}{
		\draw[dashed] (\i,\Y-.25)--(\i,\Y-\YLineaPunteada);
	}
	
	\def\YPrimeraFamilia{0.3}
	\def\YSepEntreFamilias{0.42}
	\def\CorreccionYTexto{-.02}
	\def\XGrupo{\Xmenos-1.1}
	
	\def\YGrupo{\Y-\YPrimeraFamilia}
	\draw (\XGrupo,\YGrupo+\CorreccionYTexto) node[anchor=west] {\footnotesize 
		$\color{black} \HTres \hspace*{.8cm}(Ia)$};
	\filldraw [color=\ColorShr,fill=\ColorShr] (\XTres,\YGrupo+\YSepDosMetricas) circle (2pt);
	\filldraw [color=\ColorShr,fill=\ColorShr] (\XTres,\YGrupo-\YSepDosMetricas) circle (2pt);

	\def\YGrupo{\Y-\YPrimeraFamilia-\YSepEntreFamilias}
	\draw (\XGrupo,\YGrupo+\CorreccionYTexto) node[anchor=west] {\footnotesize 
		$\color{black} \widetilde{E}(2) \hspace*{.585cm}(Ia)$};
	\filldraw [color=\ColorShr,fill=\ColorShr] (\XUno,\YGrupo) circle (2pt); 
	
	\def\YGrupo{\Y-\YPrimeraFamilia-2*\YSepEntreFamilias}
	\draw (\XGrupo,\YGrupo+\CorreccionYTexto) node[anchor=west] {\footnotesize 
		$\color{black} E(1,1) \hspace*{.303cm}(Ia)$};
	\filldraw [color=\ColorShr,fill=\ColorShr] (\XUno,\YGrupo) circle (2pt); 
	
	\def\YGrupo{\Y-\YPrimeraFamilia-3*\YSepEntreFamilias}
	\draw (\XGrupo,\YGrupo+\CorreccionYTexto) node[anchor=west] {\footnotesize 
		$\color{black} E(1,1) \hspace*{.303cm}(Ib)$};
	\filldraw [color=\ColorExp,fill=\ColorExp] (\XUno,\YGrupo) circle (2pt);

	\def\YGrupo{\Y-\YPrimeraFamilia-4*\YSepEntreFamilias}
	\draw (\XGrupo,\YGrupo+\CorreccionYTexto) node[anchor=west] {\footnotesize 
		$\color{black} IV.1$};
	\draw[color=\ColorShr,latex-](\Xmenosinf,\YGrupo)--(\XUno,\YGrupo); 
	\draw[color=\ColorExp,-](\XUno,\YGrupo)--(\XCero,\YGrupo); 
	\draw[color=\ColorShr,-latex](\XCero,\YGrupo)--(\Xmasinf,\YGrupo); 
	\filldraw [color=black,fill=white] (\XUno,\YGrupo) circle (2pt);
	\filldraw [color=black,fill=white] (\XUnMedio,\YGrupo) circle (2pt);
	\filldraw [color=\ColorSte,fill=\ColorSte] (\XCero,\YGrupo) circle (2pt);

	\def\YGrupo{\Y-\YPrimeraFamilia-5*\YSepEntreFamilias}
	\draw (\XGrupo,\YGrupo+\CorreccionYTexto) node[anchor=west] {\footnotesize 
		$\color{black} IV.2$};
	\draw[color=\ColorShr](\XUno,\YGrupo)--(\XUnTercio,\YGrupo);
	\filldraw [color=black,fill=white] (\XUno,\YGrupo) circle (2pt);
	\filldraw [color=black,fill=white] (\XUnMedio,\YGrupo) circle (2pt);
	\filldraw [color=black,fill=white] (\XUnTercio,\YGrupo) circle (2pt);

	\def\YGrupo{\Y-\YPrimeraFamilia-6*\YSepEntreFamilias}
	\draw (\XGrupo,\YGrupo+\CorreccionYTexto) node[anchor=west] {\footnotesize 
		$\color{black} IV.3$};
	\draw[color=\ColorSte,latex-latex](\Xmenosinf,\YGrupo)--(\Xmasinf,\YGrupo); 
	
	\end{tikzpicture}
	
	
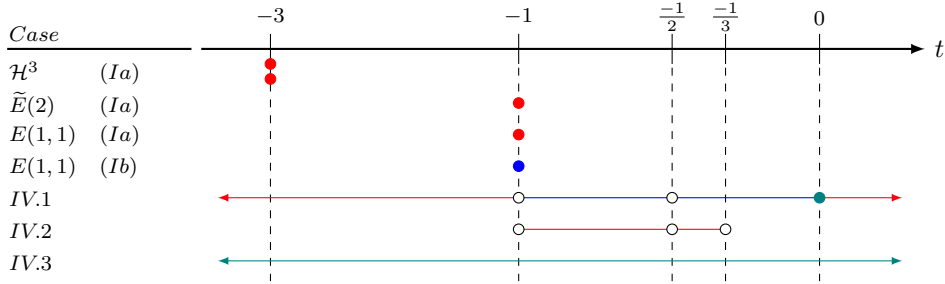
\captionof{figure}{Range of the parameter $t$ for homothetic classes of three-dimensional irreducible non-Einstein algebraic Lorentzian Ricci solitons.}
	\label{fi: figura 2}	
\end{center}

	The left-invariant metric on $E(1,1)$ with structure operator of type~III and $\lambda=0$ is a plane wave, 
	as well as left-invariant metrics of type~IV.3 with $\gamma=0$. Hence they are critical for all quadratic curvature functionals with zero energy \cite{Sandro}. These metrics are either symmetric or homothetic to a corresponding plane wave model  $\mathcal{P}_c$ described in \cite{GGN}. In particular, the plane wave metric on $E(1,1)$ is homothetic (although not isomorphically homothetic) to a left-invariant metric of type~IV.3 with $\gamma=0$ for a suitable value of the parameters, and thus omitted in Figure~\ref{fi: figura 2}. 
Moreover, it follows after a detailed analysis of the spectral structure of the Ricci operator that all classes in Figure~\ref{fi: figura 2} are homothetically inequivalent.	
\end{remark}

\section{Four-dimensional Lorentzian Lie groups}\label{se:Lorentz Lie groups}
 
Let $\mathfrak{g}$ be a four-dimensional Lie algebra. Then it is a product Lie algebra $\mathfrak{g}=\alg\times\mathbb{R}$, where the three-dimensional subalgebra $\alg=\mathfrak{sl}(2,\mathbb{R})$ or $\alg=\mathfrak{su}(2)$, or otherwise it is a solvable Lie algebra which can be obtained as a semi-direct extension of a three-dimensional unimodular Lie algebra,  $\mathfrak{g}=\alg\rtimes\mathbb{R}$, where $\alg$ is one of the Poincaré Lie algebra $\mathfrak{e}(1,1)$, the Euclidean Lie algebra $\mathfrak{e}(2)$, the Heisenberg Lie algebra $\mathfrak{h}_3$, or  the Abelian Lie algebra $\mathbb{R}^3$ (see, for example, \cite{ABDO}). Next, we briefly schedulle the description of left-invariant Lorentz metrics on four-dimensional Lie groups, thus complementing previous work in \cite{CC}.

Let $\ip$ be a Lorentzian inner product on $\mathfrak{g}$. Then the restriction of $\ip$ to the three-dimensional unimodular ideal $\alg$ may be positive definite, of Lorentzian signature or degenerate. These three possibilities give rise to the following cases.

\subsection{Positive definite metrics on $\bm{\alg}$}\label{se:Lorentz-Riemannian}
If the restriction $(\alg,\ip)$ is positive definite, then the description of such inner products follows from the work of Milnor \cite{Milnor}, based on the fact that the structure operator $L$ given by  $L(X \times Y)= [X,Y ]$ is self-adjoint in the unimodular case, where the vector-cross product $\langle X \times Y, Z \rangle = \det (X,Y,Z)$. Hence there exist an orthonormal basis $\{ e_1,e_2,e_3\}$ of $\alg$ so that 
\[
[e_1,e_2]=\lambda_3 e_3,  \quad
[e_2,e_3]=\lambda_1 e_1,  \quad
[e_3,e_1]=\lambda_2 e_2,
\]
and a complementary timelike vector $e_4$ so that $\mathfrak{g}=\alg\rtimes\operatorname{span}\{ e_4\}$ is to be determined by using the Jacobi identity. Moreover, if $L$ is non-singular then the Lie algebra $\alg$ is $\mathfrak{su}(2)$  if all the eigenvalues have the same sign, and it is  $\mathfrak{sl}(2,\mathbb{R})$ otherwise. If $L$ is of rank two, then the Lie algebra is $\mathfrak{e}(2)$ if the non-zero eigenvalues have the same sign, and it is $\mathfrak{e}(1,1)$ otherwise. The Lie algebra is $\mathfrak{h}_3$ if the structure operator is of rank one, and it is the Abelian Lie algebra $\mathbb{R}^3$ if $L$ vanishes.

\subsection{Lorentzian metrics on $\bm{\alg}$}\label{se:Lorentz-Lorentz}
If the restriction $(\alg,\ip)$ is of Lorentzian signature, then the description of such inner products follows from the work of Rahmani \cite{Rahmani}, based on the  fact that although the structure operator  $L(X \times Y)= [X,Y ]$ is self-adjoint in the unimodular case it is not necessarily diagonalizable. Considering the possible Jordan normal forms of the structure operator one has the following (see \cite{Maria}).

\medskip
\noindent(Ia) \underline{Diagonalizable structure operator}.
	 There exist an orthonormal basis $\{ e_1,e_2,e_3\}$ of $\alg$ so that 
	\[
	[e_1,e_2]=\lambda_3\varepsilon_3 e_3,  \quad
	[e_2,e_3]=\lambda_1\varepsilon_1 e_1,  \quad
	[e_3,e_1]=\lambda_2\varepsilon_2 e_2,
	\]
	where $\varepsilon_i=\langle e_i,e_i\rangle=\pm 1$,
	and a complementary spacelike vector $e_4$ so that $\mathfrak{g}=\alg\rtimes\operatorname{span}\{ e_4\}$ is to be determined by using the Jacobi identity.

Moreover, if $L$ is non-singular then the Lie algebra $\alg$ is $\mathfrak{su}(2)$  if $\varepsilon_i\lambda_i$ have the same sign, and it is  $\mathfrak{sl}(2,\mathbb{R})$ otherwise. If $L$ is of rank two, then the Lie algebra is $\mathfrak{e}(2)$ if $\varepsilon_i\lambda_i$ have the same sign, and it is $\mathfrak{e}(1,1)$ otherwise. 
The analysis splits into two non-equivalent cases depending on the causality of $\operatorname{ker}L$, which are considered in Section~\ref{E-Ia-spacelike} and Section~\ref{E-Ia-timelike}. The Lie algebra is $\mathfrak{h}_3$ if the structure operator is of rank one, and we consider separately the cases when the restriction of the metric to $\operatorname{ker}L$ is positive definite (Section~\ref{H3-se:Lorentz-1}) or Lorentzian (Section~\ref{H3-se:Lorentz-2}). Finally, the Lie algebra is $\mathbb{R}^3$ if $L$ vanishes. The different left-invariant metrics on $\mathbb{R}^3\rtimes\mathbb{R}$ are considered in Section~\ref{R3-se:Lorentz}.
	
\medskip
\noindent (Ib) \underline{Structure operator with complex eigenvalues}.
	There exist an orthonormal basis $\{ e_1,e_2,e_3\}$ of $\alg$ with $e_3$ timelike so that 
	\[
	[e_1,e_2]=-\beta e_2 -\alpha e_3,  \quad
	[e_2,e_3]=\lambda e_1,  \quad
	[e_1,e_3]=-\alpha e_2 +\beta e_3,\quad \beta\neq 0,
	\]
	where $L(e_1)=\lambda e_1$,
	and a complementary spacelike vector $e_4$ so that $\mathfrak{g}=\alg\rtimes\operatorname{span}\{ e_4\}$ is to be determined by using the Jacobi identity. 
	
	Moreover, the Lie algebra $\alg$ is $\mathfrak{sl}(2,\mathbb{R})$  if $L$ is non-singular (see Section~\ref{sl-complex}),  and it is $\mathfrak{e}(1,1)$ if the real eigenvalue $\lambda=0$ (cf. Section~\ref{E-complex}). 
	
\medskip
\noindent (II) \underline{The structure operator has a double root of its minimal polynomial}.
	There exist a pseudo-orthonormal basis $\{ u_1,u_2,u_3\}$ of $\alg$ with  
	$\langle u_1, u_2 \rangle = \langle u_3, u_3 \rangle =1$ 
	so that 
	\[
	[u_1,u_2]=\lambda_2 u_3,\quad 
	[u_1,u_3]=-\lambda_1 u_1 -\varepsilon u_2,\quad 
	[u_2,u_3]=\lambda_1 u_2,\quad \varepsilon=\pm 1,
	\]
where the structure operator has eigenvalues $\lambda_1,\lambda_2$ ($\lambda_1$ being a double root of the minimal polynomial), and a complementary spacelike vector $u_4$ so that $\mathfrak{g}=\alg\rtimes\operatorname{span}\{ u_4\}$ is to be determined by using the Jacobi identity. 

Moreover the Lie algebra is $\mathfrak{h}_3$ if $\lambda_1=\lambda_2=0$, i.e., the structure operator has rank one (see Section~\ref{H3-se:Lorentz-3}). If $\lambda_1=0$ and $\lambda_2\neq 0$, then the Lie algebra $\alg$ is $\mathfrak{e}(1,1)$ or $\mathfrak{e}(2)$ depending on whether the sign of $\varepsilon\lambda_2$ is negative or positive, respectively (cf. Section~\ref{E-II-degenerate}). If $\lambda_1\neq 0$ and $\lambda_2=0$ then the underlying Lie algebra is $\alg=\mathfrak{e}(1,1)$ (cf. Section~\ref{E-II-spacelike}), while it is $\alg=\mathfrak{sl}(2,\mathbb{R})$ if the structure operator is non-singular (see Section~\ref{sl-II}).

\medskip
\noindent (III) \underline{The structure operator has a triple root of its minimal polynomial}.
There exist a pseudo-orthonormal basis $\{ u_1,u_2,u_3\}$ of $\alg$ with 
$\langle u_1, u_2 \rangle = \langle u_3, u_3 \rangle =1$ 
so that 
\[
[u_1,u_2]=u_1 +\lambda u_3,\quad 
[u_1,u_3]=-\lambda u_1,\quad 
[u_2,u_3]=\lambda u_2 +u_3,
\]
where the structure operator has a single eigenvalue $\lambda$ (which is a triple root of the minimal polynomial), and a complementary spacelike vector $u_4$ so that $\mathfrak{g}=\alg\rtimes\operatorname{span}\{ u_4\}$ is to be determined by using the Jacobi identity. 

Moreover the Lie algebra is $\alg=\mathfrak{e}(1,1)$ if $\lambda=0$ and $\mathfrak{sl}(2,\mathbb{R})$ otherwise. These cases are considered in Section~\ref{E-III} and Section~\ref{sl-III}, respectively.

\subsection{Degenerate metrics on $\bm{\alg}$}\label{se:Lorentz-degenerate}
Assume that the restriction of the metric $\ip$ of $\mathfrak{g}=\alg\rtimes\mathbb{R}$ to the unimodular subalgebra $\alg$ is degenerate of signature $(++0)$. Then one of the following situations occurs, depending on the dimension of the derived subalgebra $\alg'=[\alg,\alg]$.

\medskip\noindent
(i) If \underline{$\operatorname{dim}\alg'=0$}, then $\alg=\mathbb{R}^3$ and there exists a pseudo-orthonormal basis $\{ u_i\}$ with $\alg=\operatorname{span}\{ u_1,u_2,u_3\}$ and 
	$\langle u_1,u_1\rangle=\langle u_2,u_2\rangle=\langle u_3,u_4\rangle=1$ where $\operatorname{ad
}_{u_4}$ is determined by any endomorphism of $\mathbb{R}^3$. These left-invariant metrics 
are considered in Section~\ref{R3-se:degenerate}.
	
\medskip\noindent
(ii)
If \underline{$\operatorname{dim}\alg'=1$}, then $\alg=\mathfrak{h}_3$ and there are two-distinct situations corresponding to $\alg'$ to be spacelike or null since the restriction of the metric to $\alg$ has signature $(++0)$. Metrics corresponding to a null $\mathfrak{h}_3'$ are considered in Section~\ref{H3-se:deg-null subspace}, while those corresponding to spacelike $\mathfrak{h}_3'$
are discussed in Section~\ref{H3-se:deg-spacelike subspace}.

\medskip\noindent (iii) 
If \underline{$\operatorname{dim}\alg'=2$}, then $\alg=\mathfrak{e}(1,1)$ or $\alg=\mathfrak{e}(2)$, and two-distinct situations may occur depending on whether the restriction of the metric to $\alg'$ is positive definite (Section~\ref{EE-se:riemannian}) or degenerate (Section~\ref{EE-se:degenerate}).

\medskip\noindent (iv)
If \underline{$\operatorname{dim}\alg'=3$}, then $\alg=\mathfrak{sl}(2,\mathbb{R})$ or $\alg=\mathfrak{su}(2)$. 
For any vector $u$ in the radical, one has that $\operatorname{ad}_{u}:\alg\rightarrow\alg$ is of rank two. Hence it has two purely imaginary complex eigenvalues,  two non-zero real opposite eigenvalues, or it is three-step nilpotent. These three distinct situations are analyzed in Sections~\ref{se:complex}, \ref{se:real} and \ref{se:nilpotent},  respectively.

\section{Semi-direct extensions of the Abelian Lie group}\label{se:R3}

We consider separately the cases when the restriction of the metric to the three-dimensional Abelian ideal $\mathbb{R}^3$ is Riemannian (Section~\ref{R3-se: R3 Riemann}), Lorentzian (Section~\ref{R3-se:Lorentz}) or degenerate (Section~\ref{R3-se:degenerate}). The proof of Theorem~\ref{th:main R3} now follows from the analysis below.

\subsection{Semi-direct extensions with Riemannian Lie group $\pmb{\mathbb{R}}^{\bm{3}}$} \label{R3-se: R3 Riemann}

Let $\mathfrak{g}=\algsdR$  be a semi-direct extension of the Abelian Lie algebra $\mathbb{R}^3$ determined by a derivation $D\in\operatorname{End}(\mathbb{R}^3)$. If  $\ip$ is a Lorentzian inner product on $\mathfrak{g}$ whose restriction to $\mathbb{R}^3$ is of Riemannian signature then  the   self-adjoint part of $D$ is diagonalizable. As a consequence,   there exists  an orthonormal basis $\{e_1,e_2,e_3,e_4\}$ of $\mathfrak{g}$, with $e_4$ timelike,  where  $\mathbb{R}^3=\span\{e_1,e_2,e_3\}$ and $\mathbb{R}=\span\{e_4\}$, so that the structure of the metric Lie algebra is given by
\begin{equation}\label{R3-eq: g_R ip}
\mathfrak{g}_R
\left\{
\begin{array}{l}
[e_1,e_4]=\eta_1 e_1-\gamma_1 e_2 - \gamma_2 e_3, \qquad
[e_2,e_4]=\gamma_1 e_1+\eta_2 e_2 - \gamma_3 e_3,
\\
\noalign{\medskip}
[e_3,e_4]=\gamma_2 e_1+\gamma_3 e_2 + \eta_3 e_3, 
\end{array}
\right.
\end{equation}
for certain $\eta_i,\gamma_i\in\mathbb{R}$.

\begin{remark}\rm\label{R3-re: R iso}
	Left-invariant metrics given by Equation~\eqref{R3-eq: g_R ip} are determined by a vector $(\eta_1,\eta_2,\eta_3,\gamma_1,\gamma_2,\gamma_3)\in\mathbb{R}^6$.
	The isometry  
	$( e_1, e_2, e_3, e_4)\mapsto(e_2,e_1,e_3,e_4)$
	shows that $(\eta_1,\eta_2,\eta_3,\gamma_1,\gamma_2,\gamma_3)\sim (\eta_2,\eta_1,\eta_3,-\gamma_1,\gamma_3,\gamma_2)$. 
	Analogously,  the isometry 
	$( e_1, e_2, e_3, e_4)\mapsto(e_3,e_2,e_1,e_4)$ gives the correspondence  $(\eta_1,\eta_2,\eta_3,\gamma_1,\gamma_2,\gamma_3)\sim (\eta_3,\eta_2,\eta_1,-\gamma_3,-\gamma_2,-\gamma_1)$, while  the isometry 
	$(e_1, e_2, e_3, e_4)\mapsto(e_1,e_3,e_2,e_4)$ shows that $(\eta_1,\eta_2,\eta_3,\gamma_1,\gamma_2,\gamma_3)\sim (\eta_1,\eta_3,\eta_2,\gamma_2,\gamma_1,-\gamma_3)$.
\end{remark}

\begin{remark}\label{R3-re: R - loc symm}\rm 	 
	A metric~\eqref{R3-eq: g_R ip}  is Einstein if and only if $\eta_1=\eta_2=\eta_3$ (in which case it is a space of constant sectional curvature). 
	Moreover, a metric~\eqref{R3-eq: g_R ip}  is locally symmetric if and only if it is  Einstein, isomorphically isometric to a metric $\mathfrak{g}_R$ with $\eta_1=\eta_2\neq 0$ and $\eta_3=\gamma_2=\gamma_3=0$ 
	(in which case it is locally conformally flat and locally isometric to a product $ N_{-1}^3(\kappa)\times\mathbb{R}$, where $N_{-1}^3(\kappa)$ is a three-dimensional Lorentzian manifold of constant sectional curvature), 
	or isomorphically isometric to a metric $\mathfrak{g}_R$ with    $\eta_1=\eta_2=\gamma_2=\gamma_3=0$ and $\eta_3\neq 0$  (in which case it is locally isometric to $ N_{-1}^2(\kappa)\times\mathbb{R}^2$).
\end{remark}

\begin{theorem}\label{R3-th: g_R}
	A  left-invariant metric $\mathfrak{g}_R$ on $\sdR$  given by Equation~\eqref{R3-eq: g_R ip} 
	is a strict algebraic Ricci soliton
	if and only if it is isomorphically homothetic to one of the following:
	\begin{itemize}
		\item[(i)]  
		$[ e_1, e_4] =  e_2$,
		$[ e_2, e_4] =  e_3$.
		
		\smallskip
		
		\item[(ii)]
		$[e_1,e_4] = e_1 -\gamma_1 e_2$,
		$[e_2,e_4] = \gamma_1 e_1 + e_2$,
		$[e_3,e_4] = \eta_3 e_3$,
		$\eta_3\notin\{0,1\}$, $\gamma_1\geq0$.
		
		\smallskip
		
		\item[(iii)]
		$[e_1,e_4] = e_1$,
		$[e_2,e_4] = \eta_2 e_2$,
		$[e_3,e_4] = \eta_3 e_3$,
		$\eta_2,\eta_3\notin\{0,1\}$, $\eta_2\neq\eta_3$.
	\end{itemize}
	
	\smallskip
	
	\noindent
	Here $\{e_i\}$ is an orthonormal basis of the Lie algebra with $e_4$ timelike.
\end{theorem}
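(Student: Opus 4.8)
The plan is to exploit that $\mathfrak{g}_R$ is almost Abelian: $\mathbb{R}^3=\span\{e_1,e_2,e_3\}$ is an Abelian ideal and the whole bracket is encoded by the single operator $D\in\operatorname{End}(\mathbb{R}^3)$ defined by $De_i=[e_i,e_4]$, whose matrix in the orthonormal basis $\{e_1,e_2,e_3\}$ has symmetric part $S=\diag(\eta_1,\eta_2,\eta_3)$ and skew part $A$ carrying the entries $\gamma_1,\gamma_2,\gamma_3$. First I would compute the Levi-Civita connection from the Koszul formula (using left-invariance, so that all directional derivatives of the inner products vanish) and then the curvature, in order to obtain the Ricci operator. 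A direct calculation gives the block form
\begin{equation*}
\operatorname{Ric}|_{\mathbb{R}^3}=(\operatorname{tr}S)\,S+[S,A],\qquad \operatorname{Ric}(e_4)=\operatorname{tr}(S^2)\,e_4,
\end{equation*}
so $\operatorname{Ric}$ preserves the splitting $\mathbb{R}^3\oplus\span\{e_4\}$ and acts on $e_4$ by the scalar $\operatorname{tr}(S^2)$.

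With this in hand the algebraic Ricci soliton condition becomes purely algebraic. Writing $\mathfrak{D}=\operatorname{Ric}-\cARS\Id$, the operator $\mathfrak{D}$ is again block-diagonal, with $\mathfrak{D}(e_i)\in\mathbb{R}^3$ and $\mathfrak{D}(e_4)=\kappa\,e_4$ for $\kappa=\operatorname{tr}(S^2)-\cARS$. Since $\mathbb{R}^3$ is Abelian the derivation identity is automatic on $[\mathbb{R}^3,\mathbb{R}^3]$, and testing it on the brackets $[e_i,e_4]=De_i$ collapses the entire soliton condition to the single operator equation $[(\operatorname{tr}S)S+[S,A],\,D]=\kappa D$ with $D=S+A$. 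Setting $N:=[S,A]$ (which is symmetric, while $[N,S]$ is skew and $[N,A]$ is symmetric) and splitting into symmetric and skew parts with respect to the Euclidean inner product on $\mathbb{R}^3$ yields the system
\begin{equation*}
[N,S]=\kappa A,\qquad (\operatorname{tr}S)\,N+[N,A]=\kappa S .
\end{equation*}
Taking the trace of the second equation gives $\kappa\,\operatorname{tr}S=0$, which is the organising principle of the case analysis.

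I would then split along this relation. If $\kappa=0$ (equivalently $\cARS=\operatorname{tr}(S^2)$), then $[N,S]=0$; since $S$ is diagonal one has $[N,S]_{ij}=-(\eta_i-\eta_j)^2A_{ij}$, which forces $A_{ij}=0$ whenever $\eta_i\neq\eta_j$ and hence $N=0$, so $D$ is \emph{normal} (commuting symmetric and skew parts) and the second equation is automatically satisfied. Classifying normal $D$ up to the isometries of Remark~\ref{R3-re: R iso} and homothety, and removing the Einstein case (all $\eta_i$ equal, which by Remark~\ref{R3-re: R - loc symm} is of constant curvature) together with the locally symmetric degenerations, produces exactly the family with a complex-conjugate pair of eigenvalues (case (ii)) and the family with three distinct real eigenvalues (case (iii)), the normalisation $\eta_1=1$, $\gamma_1\ge 0$ and the exclusions $\eta_i\notin\{0,1\}$ being forced precisely by strictness. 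If instead $\kappa\neq0$, the trace relation gives $\operatorname{tr}S=0$ and the system reduces to the symmetric pair $[N,S]=\kappa A$, $[N,A]=\kappa S$; solving this (a Gröbner basis computation is convenient here, as announced in the introduction) I expect to recover the single nilpotent operator underlying case (i).

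The connection-and-curvature computation leading to the displayed form of $\operatorname{Ric}$ is routine. The main obstacle is the non-normal branch $\kappa\neq0$: one must show that the only solutions of $[N,S]=\kappa A$, $[N,A]=\kappa S$ with $\operatorname{tr}S=0$ and $[S,A]\neq0$ are, up to isomorphic homothety and the normalising isometries, the nilpotent family (i), and in particular that no non-normal, non-nilpotent soliton survives. A secondary difficulty is the bookkeeping: verifying that the reduction via Remark~\ref{R3-re: R iso} is exhaustive and that (i), (ii), (iii) are pairwise non-isomorphically-homothetic, so that the resulting list is both complete and irredundant.
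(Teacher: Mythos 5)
Your route is genuinely different from the paper's and, in outline, sound. The paper writes out the nine polynomials $\mfP_{i4k}$ explicitly and solves them by a case split on $\diag(\eta_1,\eta_2,\eta_3)$ (one parameter zero; two equal; all distinct), invoking a Gröbner basis in the last case. You instead package the whole system as the single operator identity $[\operatorname{Ric}|_{\mathbb{R}^3},D]=\kappa D$ with $\kappa=\operatorname{tr}(S^2)-\cARS$, split it into symmetric and skew parts, and extract the trace identity $\kappa\operatorname{tr}S=0$ as the organising dichotomy. This is more conceptual, and it is consistent with the paper's data: your block formula for $\operatorname{Ric}$ reproduces the scalar curvatures quoted in Remark~\ref{re:riem}, and your diagonal equations $-2\sum_k(\eta_i-\eta_k)A_{ik}^2=\kappa\eta_i$ are exactly the paper's $\mfP_{i4i}=0$. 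The $\kappa=0$ branch is handled correctly: $[N,S]_{ij}=-(\eta_i-\eta_j)^2A_{ij}$ forces $N=0$, i.e.\ $D$ normal, and classifying normal $D$ up to the isometries of Remark~\ref{R3-re: R iso} and discarding the Einstein, reducible and locally symmetric degenerations yields precisely (ii) and (iii).

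Two points must be settled before this is a proof. First, the $\kappa\neq0$ branch is only announced, and your expectation that its only solutions with $[S,A]\neq0$ are the family (i) is not quite accurate. With $\operatorname{tr}S=0$ the skew equation forces $(\eta_i-\eta_j)^2=-\kappa$ for every pair with $A_{ij}\neq0$; besides the solution $S=\diag(0,a,-a)$ with the two skew entries adjacent to the kernel direction having squares $a^2/2$ (the paper's Case~1.3, which after a change of basis is (i)), there is also the solution $S=\diag(a,-a,0)$ with a single skew entry $\pm a$ mixing $e_1,e_2$. The latter has $N\neq0$, makes $D$ a rank-one two-step nilpotent operator, realises $\mathfrak{h}_3\times\mathbb{R}$ with $e_3$ parallel, and must be found and then discarded as reducible rather than being absent; the case of all three skew entries nonzero collapses to $S=0$. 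Second, the Ricci formula is asserted rather than derived; it is correct, but since every subsequent equation hangs on it you should record the Koszul and curvature computation, in particular the vanishing of $\rho(e_i,e_4)$ for $i\le 3$, which is what makes $\mathfrak{D}$ block-diagonal and reduces the derivation condition to the nine components the paper lists.
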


\begin{remark}\rm\label{re:riem}
(i)  The underlying Lie algebra in Theorem~\ref{R3-th: g_R}--(i) is $\mathfrak{n}_4$. The scalar curvature $\tau=1$ and the algebraic Ricci soliton is shrinking with  $\cARS =\|\rho\|^2\tau^{-1}= \frac{3}{2}$. Moreover this metric is $\mathcal{F}[-3/2]$-critical with zero energy.
		
		\medskip
		
	(ii) The underlying Lie algebra in case~(ii) above is $\mathfrak{r}_{4,1,\eta_3}$ if $\gamma_1=0$, while it corresponds to $\mathfrak{r}'_{4,\mu,\lambda}$ with $\mu=\frac{\eta_3}{\gamma_1}$ and $\lambda=\frac{1}{\gamma_1}$ if $\gamma_1> 0$. The scalar curvature $\tau=2 \left(\eta_3^2+2 \eta_3+3\right)>0$ and the algebraic Ricci solitons are shrinking with $\cARS = \eta_3^2+2$. Moreover, these metrics are $\mathcal{F}[t]$-critical with zero energy for 
		$t=-\cARS\tau^{-1} \in  [-1,-\frac{1}{4})$. 
	

		\medskip
		
(iii) The underlying Lie algebra corresponding to case~(iii) in Theorem~\ref{R3-th: g_R} is $\mathfrak{r}_{4,\eta_2,\eta_3}$. The scalar curvature  $\tau=2 \left(\eta_2^2+\eta_3^2+\eta_2 \eta_3+\eta_2+\eta_3+1\right)>0$ and the algebraic Ricci solitons are shrinking with $\cARS =\eta_2^2+\eta_3^2+1$. These metrics are $\mathcal{F}[t]$-critical for 
		$t=-\cARS\tau^{-1}
		\in  [-1,-\frac{1}{4})$ with zero energy.

%
	\end{remark}

\begin{proof}	
		The endomorphism $\mathfrak{D}= \operatorname{Ric} -\cARS \Id$ is a derivation of the Lie algebra if it satisfies the condition
		$
		\mathfrak{D}[e_i,e_j]-[\mathfrak{D} e_i,e_j]-[e_i,\mathfrak{D} e_j] = 0$, for
		$i,j=1,\dots,4$, 
		which, when expressed with respect to the basis $\{ e_1, e_2, e_3, e_4\}$, is equivalent to
		\[
		\mfP_{ijk} = \mathfrak{D}_\ell{}^k c_{ij}{}^\ell - \mathfrak{D}_i{}^\ell c_{\ell j}{}^k - \mathfrak{D}_j{}^\ell c_{i\ell}{}^k = 0, 
		\]
		where $\mathfrak{D}_s{}^r = \operatorname{Ric}_s{}^r - \cARS \delta_s{}^r$, and the structure constants $c_{ij}{}^\ell$ are determined by the Lie brackets as $[e_i,e_j]=c_{ij}{}^\ell e_\ell$.
	A straightforward calculation  shows that the conditions for $\mathfrak{D}=\Ricci-\cARS\Id$ to be a derivation are determined by a system of polynomial equations on the soliton constant $\cARS$ and the structure constants in~\eqref{R3-eq: g_R ip}, given by $\{\mfP_{ijk}=0\}$, where

	\medskip

	$
	\begin{array}{l}
	\mfP_{141} =     
	-2 (\eta_1 - \eta_2) \gamma_1^2 - 
	2 (\eta_1 - \eta_3) \gamma_2^2 - \eta_1 (\eta_1^2 + \eta_2^2 + 
	\eta_3^2 - \cARS),
	\end{array}
	$

	\vspace{\vsep}

	$
	\begin{array}{l}
	\mfP_{142} =  
	- (\eta_1 + \eta_2 - 
	2 \eta_3) \gamma_2 \gamma_3 + (3 \eta_1^2 + \eta_2^2 + 
	\eta_3^2 - 
	2 \eta_1 \eta_2 + \eta_1 \eta_3 - \eta_2 \eta_3 - 
	\cARS) \gamma_1
	,
	\end{array}
	$

	\vspace{\vsep}

	$
	\begin{array}{l}
	\mfP_{143} =   
	(\eta_1 - 
	2 \eta_2 + \eta_3) \gamma_1 \gamma_3 + (3 \eta_1^2 + 
	\eta_2^2 + \eta_3^2 + \eta_1 \eta_2 - 
	2 \eta_1 \eta_3 - \eta_2 \eta_3 - \cARS) \gamma_2,
	\end{array}
	$

	\vspace{\vsep}

	$
	\begin{array}{l}
	\mfP_{241} =     
	-(\eta_1 + \eta_2 - 2 \eta_3) \gamma_2 \gamma_3 - (\eta_1^2 + 
	3 \eta_2^2 + \eta_3^2 - 
	2 \eta_1 \eta_2 - \eta_1 \eta_3 + \eta_2 \eta_3 - 
	\cARS) \gamma_1 ,
	\end{array}
	$

	\vspace{\vsep}

	$
	\begin{array}{l}
	\mfP_{242} =  
	2  (\eta_1 - \eta_2) \gamma_1^2 + 
	2 \gamma_3^2 (-\eta_2 + \eta_3) - \eta_2 (\eta_1^2 + \eta_2^2 
	+ \eta_3^2 - \cARS) ,
	\end{array}
	$

	\vspace{\vsep}

	$
	\begin{array}{l}
	\mfP_{243} =  
	(2 \eta_1 - \eta_2 - \eta_3) \gamma_1 \gamma_2 + (\eta_1^2 + 
	3 \eta_2^2 + \eta_3^2 + \eta_1 \eta_2 - \eta_1 \eta_3 - 
	2 \eta_2 \eta_3 - \cARS) \gamma_3 ,
	\end{array}
	$

	\vspace{\vsep}

	$
	\begin{array}{l}
	\mfP_{341} =   
	(\eta_1 - 
	2 \eta_2 + \eta_3) \gamma_1 \gamma_3 - (\eta_1^2 + \eta_2^2 
	+ 3 \eta_3^2 - \eta_1 \eta_2 - 
	2  \eta_1 \eta_3 + \eta_2 \eta_3 - \cARS) \gamma_2,
	\end{array}
	$

	\vspace{\vsep}

	$
	\begin{array}{l}
	\mfP_{342} =  
	(2 \eta_1 - \eta_2 - \eta_3) \gamma_1 \gamma_2 - (\eta_1^2 + 
	\eta_2^2 + 3 \eta_3^2 - \eta_1 \eta_2 + \eta_1 \eta_3 - 
	2 \eta_2 \eta_3 - \cARS) \gamma_3,
	\end{array}
	$

	\vspace{\vsep}
	
	$
	\begin{array}{l}
	\mfP_{343} =   
	2  (\eta_1 - \eta_3) \gamma_2^2 + 
	2  (\eta_2 - \eta_3) \gamma_3^2 - \eta_3 (\eta_1^2 + \eta_2^2 
	+ \eta_3^2 - \cARS) .
	\end{array}
	$


	\bigskip
	
	In order to solve the system $\{\mfP_{ijk}=0\}$ we consider the self-adjoint part of the derivation given by $\operatorname{diag}[\eta_1,\eta_2,\eta_3]$ and split the analysis into three cases: some of the parameters is zero or, otherwise,  
	two of the parameters are equal or the three of them are different (note that the metric is Einstein if the three parameters coincide). Moreover, in the former case we may assume
	$\eta_1=0$, in the second case we may take  $\eta_1=\eta_2=1 \neq \eta_3$ and in the latter case we may fix $\eta_1=1$ (see Remark~\ref{R3-re: R iso}).

	\smallskip
	
	\subsection*{Case 1: $\bm{\eta_1=0}$}\label{R3-th g_R Case eta_1=0}
	In this case on easily checks that 
	\[
	\mfP_{142}-\mfP_{241} = 2\gamma_1 (2\eta_2^2+\eta_3^2-\cARS).
	\] 
	Moreover, if $\gamma_1=0$, then
	\[
	\mfP_{141}=2\eta_3\gamma_2^2,
	\quad
	\mfP_{143}=(\eta_2^2+\eta_3^2-\eta_2 \eta_3-\cARS) \gamma_2.
	\]
	Next we analyze the case $\gamma_1=\gamma_2=0$, the case $\gamma_1=0$, $\gamma_2\neq 0$, $\eta_3=0$, $\cARS=\eta_2^2$, and the case $\gamma_1\neq 0$, $\cARS= 2\eta_2^2+\eta_3^2$ separately.

	\subsubsection*{\underline{Case 1.1: $\eta_1=0$, $\gamma_1=\gamma_2=0$}} 
	In this case \eqref{R3-eq: g_R ip} reduces to
	\[
	[e_2,e_4]=\eta_2 e_2-\gamma_3 e_3,
	\quad
	[e_3,e_4]=\gamma_3 e_2 + \eta_3 e_3,
	\] 
	which corresponds to a product Lie algebra $\alg\times\mathbb{R}$, with $\alg=\operatorname{span}\{ e_2,e_3,e_4\}$,
	and
  a direct calculation shows that   $\nabla_{e_i} e_1 = 0$, $i=1,\dots,4$. Hence, $e_1$ generates a left-invariant spacelike parallel vector field and  the Lorentzian Lie group splits as a Lorentzian product Lie group. Therefore this case follows from the discussion in Section~\ref{se:3D}.

	\subsubsection*{\underline{Case 1.2: $\eta_1=0$, $\gamma_1=0$, $\gamma_2\neq 0$, $\eta_3=0$, $\cARS=\eta_2^2$}}
	
	In this case  it is easy to check that  the conditions for $\mathfrak{D}=\Ricci-\cARS\Id$ 
	to be a derivation reduce to $\gamma_3\eta_2=0$. Hence, we have $\eta_2\neq 0$,  $\gamma_3=0$  (since otherwise the space would be Einstein) and we get a   locally
		symmetric space, locally isometric to  $ N_{-1}^2(\kappa)\times\mathbb{R}^2$
		(see Remarks~\ref{R3-re: R - loc symm}~and~\ref{R3-re: R iso}).

	\subsubsection*{\underline{Case 1.3: $\eta_1=0$, $\gamma_1\neq 0$, $\cARS= 2\eta_2^2+\eta_3^2$}}
	The vanishing of $\mfP_{141}=2(\eta_2 \gamma_1^2+\eta_3 \gamma_2^2)$ leads to
	$\eta_2=-\frac{\eta_3\gamma_2^2}{\gamma_1^2}$ and, as a consequence,
	\[
	\mfP_{242} + \mfP_{343} = \tfrac{\eta_3^3 \gamma_2^4(\gamma_1^2-\gamma_2^2)}{\gamma_1^6}.
	\]
	Note that $\eta_3$ must be non-zero to avoid the Einstein case.
	Moreover, if  $\gamma_2=0$ 	then the system $\{\mfP_{ijk}=0\}$ reduces to
	$\gamma_3\eta_3=0$, so that $\gamma_3=0$ and the space is locally symmetric, isometric to $ N_{-1}^2(\kappa)\times\mathbb{R}^2$ (see Remark~\ref{R3-re: R - loc symm}).
	
	\smallskip
	
	Next we analyze the case   $\gamma_2=\ve_1 \gamma_1$, with $\ve_1^2=1$. In this case,  $\mfP_{342}= -3\eta_3^2 \gamma_3$, which implies $\gamma_3=0$ and the system  $\{\mfP_{ijk}=0\}$  reduces to
	\[
	\mfP_{242} = - \mfP_{343}= \eta_3(2\gamma_1^2-\eta_3^2) =0.
	\]
	Thus, $\gamma_1=\frac{\ve_2\eta_3}{\sqrt{2}}$, with $\ve_2^2=1$, and the associated left-invariant metric is given by
	\[
	[e_1,e_4] = -\tfrac{\ve_2\eta_3}{\sqrt{2}}e_2 - \tfrac{\ve_1 \ve_2\eta_3}{\sqrt{2}}e_3
	,\,
	[e_2,e_4] = \tfrac{\ve_2\eta_3}{\sqrt{2}}e_1 -\eta_3e_2,\,
	[e_3,e_4] = \tfrac{\ve_1\ve_2\eta_3}{\sqrt{2}}e_1 + \eta_3e_3 .
	\]
	Now, making the change of basis
	\[
	\begin{array}{ll}
	\bar e_1 = \tfrac{-\ve_1}{2\sqrt{2}\,\eta_3}(\sqrt{2}\,\ve_2 e_1+e_2-\ve_1 e_3),
	&
	\bar e_2 = \tfrac{1}{2\eta_3}(\ve_1 e_2+e_3),
	
	\\[0.1in]
	
	\bar e_3 = \tfrac{\ve_1}{2\sqrt{2}\,\eta_3}(\sqrt{2}\,\ve_2 e_1-e_2+\ve_1 e_3),
	&
	\bar e_4 = \tfrac{1}{\sqrt{2}\,\eta_3}e_4,
	\end{array}
	\]
	the Lie bracket transforms into
	\[
	[\bar e_1,\bar e_4] = \bar e_2,\quad
	[\bar e_2,\bar e_4] = \bar e_3,
	\]
	while the inner product is rescaled by $\tfrac{1}{2\eta_3^2}\ip$. 
	Since we are working at the homothetic level we can maintain the initial inner product
	remaining in the same homothetic class. Thus, we get  an algebraic Ricci soliton with soliton constant $\cARS=\frac{3}{2}$  whose   underlying Lie algebra is $\mathfrak{n}_4$  and case~(i) is obtained.

	\smallskip
	
	\subsection*{Case 2: $\bm{\eta_1=\eta_2=1\neq \eta_3}$,  $\bm{\eta_3\neq 0}$}
	In this case, a direct calculation shows that $\mfP_{141}-\mfP_{242} = 2 (\eta_3-1)(\gamma_2^2-\gamma_3^2)$, which implies  $\gamma_3=\ve \gamma_2$, with $\ve^2=1$. Now, $\mfP_{141}=2(\eta_3-1)\gamma_2^2-\eta_3^2-2+\cARS$, which leads to 
	$\cARS = -2(\eta_3-1)\gamma_2^2+\eta_3^2+2$. At this point, we calculate
	\[
	\mfP_{142} = 2(\eta_3-1) (\gamma_1+\ve)\gamma_2^2,
	\quad 
	\mfP_{241} = -2(\eta_3-1) (\gamma_1-\ve)\gamma_2^2,
	\]
	from where $\gamma_2=0$, and a direct checking shows that the associated left-invariant metric, given by
	\[
	[e_1,e_4] = e_1 -\gamma_1 e_2,\quad
	[e_2,e_4] = \gamma_1 e_1 + e_2,\quad
	[e_3,e_4] = \eta_3 e_3,
	\] 
	determines and algebraic Ricci soliton with soliton constant $\cARS=\eta_3^2+2$.  The isomorphic isometry $(e_1,e_2,e_3,e_4)\mapsto (e_2,e_1,e_3,e_4)$ interchanges the sign of $\gamma_1$, so we may assume $\gamma_1\geq 0$. 
	Moreover, note that if $\gamma_1=0$, then the underlying Lie algebra is $\mathfrak{r}_{4,1,\eta_3}$, while it corresponds to $\mathfrak{r}'_{4,\mu,\lambda}$ with $\mu=\frac{\eta_3}{\gamma_1}$ and $\lambda=\frac{1}{\gamma_1}$ if $\gamma_1> 0$, thus obtaining case~(ii).
	
%

	\smallskip
	
	\subsection*{Case 3: $\bm{\eta_1=1\neq\eta_2\neq \eta_3}$, $\bm{\eta_3\neq 1}$, $\bm{\eta_2\eta_3\neq 0}$}\label{R3-th g_R Case eta_i distintos}
	In this last case we introduce auxiliary variables $\eta_2'$ and $\eta_3'$ to indicate that $\eta_2\eta_3\neq 0$ by means of the polynomials $\eta_2 \eta_2'-1$ and $\eta_3 \eta_3'-1$, and   we consider the ideal $\langle \mfP_{ijk} \cup 
	\{\eta_1-1, \eta_2 \eta_2'-1,\eta_3 \eta_3'-1\}\rangle$ in the polynomial ring
	$\mathbb{R}[\cARS,\eta_1,\gamma_1,\gamma_2,\eta_2',\eta_2,\gamma_3,\eta_3',\eta_3]$. Computing a Gröbner basis of this ideal with respect to the lexicographical order we get 
	$27$ polynomials among which we find
	\[
	\mathbf{g}_1 = (\eta_2-1)^2\gamma_1,\,\,\,\,
	\mathbf{g}_2 = (\eta_3-1)^2\gamma_2,\,\,\,\,
	\mathbf{g}_3 = (\eta_2-\eta_3)^2 \gamma_3,\,\,\,\,
	\mathbf{g}_4 =-\eta_2^2-\eta_3^2-1+\cARS.
	\]
	Hence, it follows that $\gamma_1=\gamma_2=\gamma_3=0$ and $\cARS =\eta_2^2+\eta_3^2+1$. Moreover, the associated left-invariant metric, described by 
	\[
	[e_1,e_4] = e_1,\quad
	[e_2,e_4] = \eta_2 e_2,\quad
	[e_3,e_4] = \eta_3 e_3,
	\] 	  
	determines an algebraic Ricci soliton whose underlying Lie algebra is $\mathfrak{r}_{4,\eta_2,\eta_3}$, and case~(iii) is obtained. 
\end{proof}

\subsection{Semi-direct extensions with Lorentzian Lie group  $\pmb{\mathbb{R}}^{\bm{3}}$} \label{R3-se:Lorentz}
Let $\mathfrak{g}=\algsdR$ be a semi-direct extension of the Abelian Lie algebra $\mathbb{R}^3$ determined by an endomorphism $D\in\operatorname{End}({\mathbb{R}^3})$. 
As in the previous section we consider the self-adjoint part of the derivation, $D_{sad}$, but since the induced inner product on $\mathbb{R}^3$ is Lorentzian, one must consider the possible Jordan normal forms of $D_{sad}$.
We proceed as in \cite{Maria} in order to simplify the structure constants.
Let $\Phi(x,y)=\langle Dx,y \rangle$ be the associated bilinear form, and let $\Phi_{s}=\frac{1}{2}(\Phi+{}^t\Phi)$ and $\Phi_{a}=\frac{1}{2}(\Phi-{}^t\Phi)$ be the symmetric and skew-symmetric parts of $\Phi$, respectively. Moreover, let $D_{sad}$ and $D_{asad}$ defined by $\Phi_{s}(x,y)=\langle D_{sad} x,y\rangle$ and $\Phi_{a}(x,y)=\langle D_{asad} x,y\rangle$ be the corresponding self-adjoint and anti-self-adjoint endomorphisms. We analyze separately the different Jordan normal forms of $D_{sad}$.

\subsubsection{\bf The self-adjoint part of the derivation $\bm{D_{sad}}$ is diagonalizable}
In this case, there exists an orthonormal basis $\{e_1,e_2,e_3\}$ of $\mathbb{R}^3$, with $e_3$ timelike, so that
\[
D_{sad}=\left(
\begin{array}{ccc}
\eta_1 & 0 & 0
\\
0 & \eta_2 & 0
\\
0 & 0 & \eta_3
\end{array}
\right),
\quad
D_{asad}=\left(
\begin{array}{ccc}
0 & \gamma_1  & \gamma_2
\\
-\gamma_1 & 0 & \gamma_3
\\
\gamma_2 & \gamma_3 & 0
\end{array}
\right)
\]
and therefore left-invariant metrics are described by
\begin{equation}\label{R3-eq: g_L.Ia ip}
\mathfrak{g}_{L.Ia}
\left\{
\begin{array}{l}
[e_1,e_4]=\eta_1 e_1-\gamma_1 e_2 + \gamma_2 e_3, \qquad
[e_2,e_4]=\gamma_1 e_1+\eta_2 e_2 + \gamma_3 e_3,
\\
\noalign{\medskip}
[e_3,e_4]=\gamma_2 e_1+\gamma_3 e_2 + \eta_3 e_3, 
\end{array}
\right.
\end{equation}
where $\{ e_1,e_2,e_3,e_4\}$ is an orthonormal basis of $\algsdR$ with $e_3$ timelike.

\begin{remark}\rm\label{R3-re: L.Ia iso}
	Left-invariant metrics given by Equation~\eqref{R3-eq: g_L.Ia ip} are determined by a vector $(\eta_1,\eta_2,\eta_3,\gamma_1,\gamma_2,\gamma_3)\in\mathbb{R}^6$.
	The isometry  
	$( e_1, e_2, e_3, e_4)\mapsto(e_2,e_1,e_3,e_4)$
	shows that $(\eta_1,\eta_2,\eta_3,\gamma_1,\gamma_2,\gamma_3)\sim (\eta_2,\eta_1,\eta_3,-\gamma_1,\gamma_3,\gamma_2)$. 
\end{remark}

\begin{remark}\label{R3-re: L.Ia - loc symm}\rm
	A metric~\eqref{R3-eq: g_L.Ia ip}  is Einstein if and only if $\eta_1=\eta_2=\eta_3$ (in which case it is of constant sectional curvature). 	 
	Moreover, a metric~\eqref{R3-eq: g_L.Ia ip}  is locally symmetric if and only if it is  Einstein or  isometric to a metric $\mathfrak{g}_{L.Ia}$ with:
	\begin{itemize}
		\item[(a)]  $\eta_1=\eta_2\neq 0$ and  $\eta_3=\gamma_2=\gamma_3=0$ (locally isometric to $ N^3(\kappa)\times\mathbb{R}_{-1}$).
		
		\item[(b)]  $\eta_1=\eta_3\neq 0$ and  $\eta_2=\gamma_1=\gamma_3=0$ (locally isometric to $ N_{-1}^3(\kappa)\times\mathbb{R}$).
		
		\item[(c)] $\eta_1=\eta_2=\gamma_2=\gamma_3=0$ and  $\eta_3\neq 0$ (locally isometric to $ N_{-1}^2(\kappa)\times\mathbb{R}^2$).
		
		\item[(d)] $\eta_1=\eta_3=\gamma_1=\gamma_3=0$ and  $\eta_2\neq 0$ (locally isometric to $ N^2(\kappa)\times\mathbb{R}_{-1}^2$). 
	\end{itemize}
	In cases (a) and (b) above the metric is  also locally conformally flat.
\end{remark}

To determine the cases in which $\mathfrak{D}=\Ricci-\cARS\Id$ is a derivation of the Lie algebra    we will follow the same structure as in the proof of Theorem~\ref{R3-th: g_R}. However, the fact that there exists  only a basic isometry instead of three (compare Remarks~\ref{R3-re: L.Ia iso}~and~\ref{R3-re: R iso}) will imply slight but interesting  differences between \emph{diagonalizable} Lorentzian extensions and Riemannian extensions.

\begin{theorem}\label{R3-th: g_L.Ia}
	A left-invariant metric $\mathfrak{g}_{L.Ia}$ on $\sdR$  given by Equation~\eqref{R3-eq: g_L.Ia ip} 
	is a strict algebraic Ricci soliton
	if and only if it is isomorphically homothetic to one of the following:
	\begin{itemize}
		
		\item[(i)]  
		$[e_1,e_4] = e_1 -\gamma_1 e_2$,
		$[e_2,e_4] = \gamma_1 e_1 + e_2$,
		$[e_3,e_4] = \eta_3 e_3$,
		$\eta_3\notin\{0,1\}$, $\gamma_1\geq 0$.
		
		\smallskip
		
		\item[(ii)]  
		$[e_1,e_4] = e_1 +\gamma_2 e_3$,
		$[e_2,e_4] = \eta_2 e_2$,
		$[e_3,e_4] = \gamma_2 e_1+e_3$,
		$\eta_2\notin\{0,1\}$, $\gamma_2\geq 0$.
		
		\smallskip
		
		\item[(iii)]
		$[e_1,e_4] = e_1$,
		$[e_2,e_4] = \eta_2 e_2$,
		$[e_3,e_4] = \eta_3 e_3$,
		$\eta_2,\eta_3\notin\{0,1\}$, $\eta_2\neq \eta_3$.

	\end{itemize}
	
	\smallskip
	
	\noindent
	Here $\{e_i\}$ is an orthonormal basis of the Lie algebra with $e_3$ timelike.
\end{theorem}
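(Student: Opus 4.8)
The plan is to follow the strategy of the proof of Theorem~\ref{R3-th: g_R} verbatim at the structural level, exploiting the fact that the polynomial system encoding the derivation condition has essentially the same shape, the only changes being the signs introduced by the timelike character of $e_3$. First I would compute the Ricci operator of the metric Lie algebra $\mathfrak{g}_{L.Ia}$ in \eqref{R3-eq: g_L.Ia ip} and impose that $\mathfrak{D}=\Ricci-\cARS\Id$ be a derivation, i.e. that $\mathfrak{D}[e_i,e_j]-[\mathfrak{D}e_i,e_j]-[e_i,\mathfrak{D}e_j]=0$ for all $i,j$. This produces a system $\{\mfP_{ijk}=0\}$ of polynomial equations in $\eta_1,\eta_2,\eta_3,\gamma_1,\gamma_2,\gamma_3$ and the soliton constant $\cARS$, whose leading terms agree with those in Theorem~\ref{R3-th: g_R} up to the sign changes forced by $\langle e_3,e_3\rangle=-1$. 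As there, I would then split the analysis according to the self-adjoint part $\operatorname{diag}[\eta_1,\eta_2,\eta_3]$: either some $\eta_i$ vanishes, or exactly two coincide, or all three are distinct (the case $\eta_1=\eta_2=\eta_3$ being Einstein and excluded).

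The decisive structural difference from the Riemannian situation is that Remark~\ref{R3-re: L.Ia iso} supplies only the single basic isometry $(e_1,e_2,e_3,e_4)\mapsto(e_2,e_1,e_3,e_4)$, swapping the two spacelike directions, in place of the three coordinate permutations available in Remark~\ref{R3-re: R iso}. The timelike slot $\eta_3$ is therefore distinguished and cannot be interchanged with $\eta_1$ or $\eta_2$. Consequently, within each of the three cases above I must further branch according to whether the special eigenvalue (the vanishing one, or the repeated one) occupies a spacelike slot or the timelike slot. Homothetic rescaling (replacing $e_4$ by a multiple and rescaling $\ip$) remains available to normalize a nonzero eigenvalue to $1$, and the swap isometry allows me to normalize between $e_1$ and $e_2$ and to assume any boost-type off-diagonal term is $\gamma_2$ rather than $\gamma_3$.

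The crucial point, which explains why this theorem has three families rather than mirroring Theorem~\ref{R3-th: g_R} slot-by-slot, is the behaviour of the anti-self-adjoint part $D_{asad}$: with respect to the Lorentzian metric, $\gamma_1$ couples the two spacelike directions $e_1,e_2$ and acts as an infinitesimal rotation, whereas $\gamma_2,\gamma_3$ couple a spacelike direction to the timelike $e_3$ and act as boosts. Carrying the subcase analysis through, the rotation parameter $\gamma_1$ yields family~(i), the boost parameter $\gamma_2$ yields the genuinely new family~(ii), and the vanishing of all off-diagonal terms yields the diagonal family~(iii). That $\gamma_1$ and $\gamma_2$ produce inequivalent families is exactly the imprint of the Lorentzian signature: in Theorem~\ref{R3-th: g_R} the full $S_3$ symmetry identifies all such couplings into a single ``complex eigenvalue'' family, while here a rotation and a boost are not related by any isometry. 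For the three-distinct-eigenvalue case I would proceed as in Theorem~\ref{R3-th: g_R}, adjoining auxiliary variables to encode the nonvanishing constraints and computing a Gröbner basis whose generators again force $\gamma_1=\gamma_2=\gamma_3=0$ and pin down $\cARS$, delivering~(iii).

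The main obstacle I expect is the bookkeeping imposed by the reduced symmetry. Each shape of $\operatorname{diag}[\eta_1,\eta_2,\eta_3]$ now branches into several subcases according to the position of the distinguished eigenvalue relative to $e_3$, and in each branch I must decide whether the resulting metric is a \emph{strict} algebraic Ricci soliton or instead collapses to an excluded geometry. In particular, the branches with a vanishing spacelike eigenvalue must be checked against the reducible products and locally symmetric models of Remark~\ref{R3-re: L.Ia - loc symm}, and against the plane waves of Section~\ref{sse:plane-wave-homog}; I expect the Lorentzian analogue of the nilpotent family obtained in the Riemannian Case~1.3 either fails to close up into a strict soliton or reduces to one of these excluded cases, which is precisely why no $\mathfrak{n}_4$-type family appears in $\mathfrak{g}_{L.Ia}$. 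Confirming that only (i), (ii) and (iii) survive this sorting, while every degenerate branch is non-strict, is the delicate part; once the correct subcase is isolated, the remaining polynomial manipulations are routine.
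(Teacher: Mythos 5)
Your proposal follows essentially the same route as the paper's proof: the same polynomial system $\{\mfP_{ijk}=0\}$, the same trichotomy on $\operatorname{diag}[\eta_1,\eta_2,\eta_3]$, the correct observation that the loss of the $S_3$ permutation symmetry (only the $e_1\leftrightarrow e_2$ swap of Remark~\ref{R3-re: L.Ia iso} survives) forces separate subcases according to whether the vanishing or repeated eigenvalue sits in a spacelike or the timelike slot, the correct attribution of families (i) and (ii) to the surviving rotation parameter $\gamma_1$ and boost parameter $\gamma_2$ respectively, the Gröbner-basis treatment of the all-distinct case, and even the correct prediction that the Lorentzian analogue of the Riemannian Case~1.3 nilpotent family does not survive (in the paper it collapses because $\mfP_{242}+\mfP_{343}$ acquires a factor $\gamma_1^2+\gamma_2^2$ instead of $\gamma_1^2-\gamma_2^2$, forcing $\gamma_2=0$ and a locally symmetric space). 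The remaining work is the routine verification of these computations, which your plan correctly identifies and which the paper carries out case by case.
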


	\begin{remark}\rm\label{re:lor-Ia}
		\,
		(i) If $\gamma_1=0$, then the underlying Lie algebra in Theorem~\ref{R3-th: g_L.Ia}--(i) is $\mathfrak{r}_{4,1,\eta_3}$, while it corresponds to $\mathfrak{r}'_{4,\mu,\lambda}$ with $\mu=\frac{\eta_3}{\gamma_1}$ and $\lambda=\frac{1}{\gamma_1}$ if $\gamma_1> 0$. They are expanding algebraic Ricci solitons with $\tau=-2 \left(\eta_3^2+2 \eta_3+3\right)<0$ and soliton constant $\cARS = -(\eta_3^2+2)$. Moreover they are $\mathcal{F}[t]$-critical with zero energy for 
		$t=-\cARS\tau^{-1} \in [-1,-\frac{1}{4})$. 
		
		\medskip
		
		(ii) 
			 For case~(ii) above a straightforward calculation shows that $\operatorname{ad}_{e_4}$ is diagonalizable with eigenvalues
		$\{ 0,-(\gamma_2+1),\gamma_2-1,-\eta_2\}$. Hence if $\gamma_2\neq 1$, then the underlying Lie algebra is $\mathfrak{r}_{4,\mu,\lambda}$ with $\mu=\frac{1}{\eta_2}(\gamma_2+1)$ and $\lambda=\frac{1}{\eta_2}(1-\gamma_2)$. On the other hand, if $\gamma_2=1$,  the Lie algebra corresponds to the product $\mathfrak{r}_{3,\lambda}\times\mathbb{R}$ with $\lambda=\frac{\eta_2}{2}$, although the left-invariant metric is not the product one.
		These metrics are expanding algebraic Ricci solitons with  $\cARS = -(\eta_2^2+2)$ and scalar curvature $\tau=-2 \left(\eta_2^2+2 \eta_2+3\right)<0$. Moreover they are $\mathcal{F}[t]$-critical with zero energy for 
		$t=-\cARS\tau^{-1}\in [-1,-\frac{1}{4})$. 
		
		\medskip
		
		(iii) The underlying Lie algebra corresponding to Theorem~\ref{R3-th: g_L.Ia}--(iii) is $\mathfrak{r}_{4,\eta_2,\eta_3}$. The scalar curvature is $\tau=-2 \left(\eta_2^2+\eta_3^2+\eta_2 \eta_3+\eta_2+\eta_3+1\right)<0$ and the algebraic Ricci solitons are expanding with $\cARS =-(\eta_2^2+\eta_3^2+1)$. Moreover, these metrics are $\mathcal{F}[t]$-critical with zero energy for 
		$t=-\cARS\tau^{-1}\in [-1,-\frac{1}{4})$.  
		
		\medskip

	\end{remark}

\begin{proof} 
	
	We start determining  the conditions for $\mathfrak{D}=\Ricci-\cARS\Id$ to be a derivation, which are given by   a system of polynomial equations $\{\mfP_{ijk}=0\}$ on the soliton constant $\cARS$ and  the structure constants in~\eqref{R3-eq: g_L.Ia ip}, where
	
	\medskip

	$
	\begin{array}{l}
	\mfP_{141} =     
	2 (\eta_1 - \eta_2) \gamma_1^2 - 
	2 (\eta_1 - \eta_3) \gamma_2^2 
	+ \eta_1 (\eta_1^2 + \eta_2^2 + 
	\eta_3^2 +\cARS),
	\end{array}
	$

	\vspace{\vsep}

	$
	\begin{array}{l}
	\mfP_{142} =  
	- (\eta_1 + \eta_2 - 
	2 \eta_3) \gamma_2 \gamma_3 
	- (3 \eta_1^2 + \eta_2^2 + 
	\eta_3^2 - 
	2 \eta_1 \eta_2 + \eta_1 \eta_3 - \eta_2 \eta_3 
	+ \cARS) \gamma_1
	,
	\end{array}
	$

	\vspace{\vsep}

	$
	\begin{array}{l}
	\mfP_{143} =   
	(\eta_1 - 
	2 \eta_2 + \eta_3) \gamma_1 \gamma_3 + (3 \eta_1^2 + 
	\eta_2^2 + \eta_3^2 + \eta_1 \eta_2 - 
	2 \eta_1 \eta_3 - \eta_2 \eta_3 + \cARS) \gamma_2,
	\end{array}
	$

	\vspace{\vsep}

	$
	\begin{array}{l}
	\mfP_{241} =     
	-(\eta_1 + \eta_2 - 2 \eta_3) \gamma_2 \gamma_3 
	+ (\eta_1^2 + 
	3 \eta_2^2 + \eta_3^2 - 
	2 \eta_1 \eta_2 - \eta_1 \eta_3 + \eta_2 \eta_3 
	+ \cARS) \gamma_1 ,
	\end{array}
	$

	\vspace{\vsep}

	$
	\begin{array}{l}
	\mfP_{242} =  
	- 2  (\eta_1 - \eta_2) \gamma_1^2 + 
	2 \gamma_3^2 (-\eta_2 + \eta_3)
	+ \eta_2 (\eta_1^2 + \eta_2^2 
	+ \eta_3^2 + \cARS) ,
	\end{array}
	$

	\vspace{\vsep}

	$
	\begin{array}{l}
	\mfP_{243} =  
	(2 \eta_1 - \eta_2 - \eta_3) \gamma_1 \gamma_2 + (\eta_1^2 + 
	3 \eta_2^2 + \eta_3^2 + \eta_1 \eta_2 - \eta_1 \eta_3 - 
	2 \eta_2 \eta_3 + \cARS) \gamma_3 ,
	\end{array}
	$

	\vspace{\vsep}

	$
	\begin{array}{l}
	\mfP_{341} =   
	-(\eta_1 - 
	2 \eta_2 + \eta_3) \gamma_1 \gamma_3 
	+ (\eta_1^2 + \eta_2^2 
	+ 3 \eta_3^2 - \eta_1 \eta_2 - 
	2  \eta_1 \eta_3 + \eta_2 \eta_3 + \cARS) \gamma_2,
	\end{array}
	$

	\vspace{\vsep}

	$
	\begin{array}{l}
	\mfP_{342} =  
	- (2 \eta_1 - \eta_2 - \eta_3) \gamma_1 \gamma_2 
	+ (\eta_1^2 + 
	\eta_2^2 + 3 \eta_3^2 - \eta_1 \eta_2 + \eta_1 \eta_3 - 
	2 \eta_2 \eta_3 + \cARS) \gamma_3,
	\end{array}
	$

	\vspace{\vsep}
	
	$
	\begin{array}{l}
	\mfP_{343} =   
	2  (\eta_1 - \eta_3) \gamma_2^2 + 
	2  (\eta_2 - \eta_3) \gamma_3^2 
	+ \eta_3 (\eta_1^2 + \eta_2^2 
	+ \eta_3^2 + \cARS) .
	\end{array}
	$


	\bigskip

	As in Theorem~\ref{R3-th: g_R} we consider the self-adjoint part of the derivation given by $\operatorname{diag}[\eta_1,\eta_2,\eta_3]$ and split the analysis into three cases: some of the parameters is zero or, otherwise,
	two of the parameters are equal or the three of them are different (note that the metric is Einstein if the three parameters coincide). Contrary to Riemannian extensions, in the former case one has to analyze the cases
	$\eta_1=0$ and $\eta_3=0$, while if two of the parameters are equal we have to consider the cases  $\eta_1=\eta_2=1 \neq \eta_3$ and  $\eta_1=\eta_3=1 \neq \eta_2$ (see Remark~\ref{R3-re: L.Ia iso}). Although some of the cases below are obtained in a similar way to the cases in Theorem~\ref{R3-th: g_R}, there are some interesting differences, so  we include them for sake of completeness.

	\smallskip
	
	\subsection*{Case 1a: $\bm{\eta_1=0}$}\label{R3-th g_L.Ia Case eta_1=0}
	We have
	\[
	\mfP_{142}-\mfP_{241} = -2\gamma_1 (2\eta_2^2+\eta_3^2+\cARS),
	\] 
	and,  if $\gamma_1=0$, 
	\[
	\mfP_{141}=2\eta_3\gamma_2^2,
	\quad
	\mfP_{143}=(\eta_2^2+\eta_3^2-\eta_2 \eta_3+\cARS) \gamma_2.
	\]
	Next we analyze the case $\gamma_1=\gamma_2=0$, the case $\gamma_1=0$, $\gamma_2\neq 0$, $\eta_3=0$, $\cARS=-\eta_2^2$, and the case $\gamma_1\neq 0$, $\cARS= -2\eta_2^2-\eta_3^2$ separately.

	\subsubsection*{\underline{Case 1a.1: $\eta_1=0$, $\gamma_1=\gamma_2=0$}} In this case \eqref{R3-eq: g_L.Ia ip} reduces to
	\[
	[e_2,e_4]=\eta_2 e_2+\gamma_3 e_3,
	\quad
	[e_3,e_4]=\gamma_3 e_2 + \eta_3 e_3,
	\]
	which corresponds to a product Lie algebra $\alg\times\mathbb{R}$, with $\alg=\operatorname{span}\{ e_2,e_3,e_4\}$,
	and
	a direct calculation shows that   $\nabla_{e_i} e_1 = 0$, $i=1,\dots,4$. Thus, $e_1$ generates a left-invariant spacelike parallel vector field and  the Lorentzian Lie group splits as a Lorentzian product Lie group. Hence this case follows from the discussion in Section~\ref{se:3D}.

	\subsubsection*{\underline{Case 1a.2: $\eta_1=0$, $\gamma_1=0$, $\gamma_2\neq 0$, $\eta_3=0$, $\cARS=-\eta_2^2$}}
	
	In this case  it is easy to check that the system $\{\mfP_{ijk}=0\}$ reduces to $\gamma_3\eta_2=0$. Since the space is Einstein for $\eta_2=0$, it follows that
	$\eta_2\neq 0$,  $\gamma_3=0$  and we get a   locally
		symmetric space, which is also locally conformally flat and locally isometric to  $ N^2(\kappa)\times\mathbb{R}_{-1}^2$
		(see Remark~\ref{R3-re: L.Ia - loc symm}).

	\subsubsection*{\underline{Case 1a.3: $\eta_1=0$, $\gamma_1\neq 0$, $\cARS= -2\eta_2^2-\eta_3^2$}}
	Since $\mfP_{141}=-2(\eta_2 \gamma_1^2-\eta_3 \gamma_2^2)$, we get
	$\eta_2=\frac{\eta_3\gamma_2^2}{\gamma_1^2}$ and, as a consequence,
	\[
	\mfP_{242} + \mfP_{343} = - \tfrac{\eta_3^3 \gamma_2^4  (\gamma_1^2+\gamma_2^2)}{\gamma_1^6}.
	\]
	Once again $\eta_3$ must be non-zero to avoid the Einstein case. 
	Hence, necessarily    $\gamma_2=0$
	and the system $\{\mfP_{ijk}=0\}$ reduces to
	$\gamma_3\eta_3=0$, so that $\gamma_3=0$ and the space is  locally symmetric, and locally isometric to $ N_{-1}^2(\kappa)\times\mathbb{R}^2$ (see Remark~\ref{R3-re: L.Ia - loc symm}).

	\smallskip
	
	\subsection*{Case 1b:  $\bm{\eta_1\neq 0}$, $\bm{\eta_3=0}$}\label{R3-th g_L.Ia Case eta_3=0}
	In this case on easily checks that 
	\[
	\mfP_{143}+\mfP_{341} = 2\gamma_2 (2\eta_1^2+\eta_2^2+\cARS).
	\] 
	Moreover, if $\gamma_2=0$, we have
	\[
	\mfP_{343}=2\eta_2\gamma_3^2,
	\quad
	\mfP_{342}=(\eta_1^2+\eta_2^2-\eta_1 \eta_2+\cARS) \gamma_3.
	\]
	Next we analyze the case $\gamma_2=\gamma_3=0$, the case $\gamma_2=0$, $\gamma_3\neq 0$, $\eta_2=0$, $\cARS=-\eta_1^2$, and the case $\gamma_2\neq 0$, $\cARS= -2\eta_1^2-\eta_2^2$ separately.

	\subsubsection*{\underline{Case 1b.1: $\eta_1\neq 0$, $\eta_3=0$, $\gamma_2=\gamma_3=0$}} In this setting \eqref{R3-eq: g_L.Ia ip} becomes
	\[
	[e_1,e_4]=\eta_1 e_1-\gamma_1 e_2,
	\quad
	[e_2,e_4]=\gamma_1 e_1 + \eta_2 e_2,
	\]
	which corresponds to a product Lie algebra $\alg\times\mathbb{R}$, with $\alg=\operatorname{span}\{ e_1,e_2,e_4\}$,
	and
	a direct calculation shows that   $\nabla_{e_i} e_3 = 0$, $i=1,\dots,4$. Therefore, $e_3$ generates a left-invariant timelike parallel vector field and  the Lorentzian Lie group splits as a Lorentzian product Lie group. Hence this case follows from the discussion in Section~\ref{se:3D}.

	\subsubsection*{\underline{Case 1b.2: $\eta_1\neq 0$, $\eta_3=0$, $\gamma_2=0$, $\gamma_3\neq 0$, $\eta_2=0$, $\cARS=-\eta_1^2$}}
	
	In this case  it is easy to check that  the conditions for $\mathfrak{D}=\Ricci-\cARS\Id$  
	to be a derivation reduce to $\gamma_1\eta_1=0$. Hence, necessarily  $\gamma_1=0$   and we obtain a locally symmetric space which is locally isometric to  $ N^2(\kappa)\times\mathbb{R}^2_{-1}$ (see Remarks~\ref{R3-re: L.Ia - loc symm}~and~\ref{R3-re: L.Ia iso}).

	\subsubsection*{\underline{Case 1b.3: $\eta_1\neq 0$, $\eta_3=0$, $\gamma_2\neq 0$, $\cARS= -2\eta_1^2-\eta_2^2$}}
	Since $\mfP_{343}=2(\eta_1 \gamma_2^2+\eta_2 \gamma_3^2)$ must vanish it follows that 
	$\eta_1=-\frac{\eta_2\gamma_3^2}{\gamma_2^2}$ and now we get
	\[
	\mfP_{141} + \mfP_{242} = - \tfrac{\eta_2^3 \gamma_3^4  (\gamma_2^2-\gamma_3^2)}{\gamma_2^6}.
	\]
	Note that  $\eta_2$ and $\gamma_3$ must be non-zero since $\eta_1\neq 0$.
	Hence, $\gamma_3=\ve\gamma_2$, with $\ve^2=1$, and a direct calculation shows that 
	\[
	\mfP_{142}=-3\eta_2^2\gamma_1,
	\quad
	\mfP_{242}= \eta_2(4\gamma_1^2-2\gamma_2^2-\eta_2^2).
	\]
	Since $\eta_2\neq 0$, we get $\gamma_1=0$ and $2\gamma_2^2+\eta_2^2=0$, which is not possible. Therefore, there is no algebraic Ricci soliton in this case.

	\smallskip

	\subsection*{Case 2a: $\bm{\eta_1=\eta_2=1\neq \eta_3}$,  $\bm{\eta_3\neq 0}$}
	In this case a direct calculation shows that $\mfP_{141}-\mfP_{242} = 2 (\eta_3-1)(\gamma_2^2-\gamma_3^2)$ and therefore  $\gamma_3=\ve \gamma_2$, with $\ve^2=1$. Now, $\mfP_{141}=2(\eta_3-1)\gamma_2^2+\eta_3^2+2+\cARS$, which leads to 
	$\cARS = -2(\eta_3-1)\gamma_2^2-\eta_3^2-2$ and, as a consequence, 
	\[
	\mfP_{142} = 2(\eta_3-1) (\gamma_1+\ve)\gamma_2^2,
	\quad 
	\mfP_{241} = -2(\eta_3-1) (\gamma_1-\ve)\gamma_2^2.
	\]
	Hence,  $\gamma_2=0$ and a direct checking shows that the associated left-invariant metric, 
	\[
	[e_1,e_4] = e_1 -\gamma_1 e_2,\quad
	[e_2,e_4] = \gamma_1 e_1 + e_2,\quad
	[e_3,e_4] = \eta_3 e_3,
	\] 
	determines and algebraic Ricci soliton with soliton constant $\cARS=-\eta_3^2-2$.   Since the isomorphic isometry $(e_1,e_2,e_3,e_4)\mapsto (e_2,e_1,e_3,e_4)$ interchanges the sign of $\gamma_1$, we may assume $\gamma_1\geq 0$. 
		Moreover, note that if $\gamma_1=0$, then the underlying Lie algebra is $\mathfrak{r}_{4,1,\eta_3}$, while it corresponds to $\mathfrak{r}'_{4,\mu,\lambda}$ with $\mu=\frac{\eta_3}{\gamma_1}$ and $\lambda=\frac{1}{\gamma_1}$ if $\gamma_1> 0$. Thus,  case~(i) is obtained.

	\smallskip
	
	\subsection*{Case 2b: $\bm{\eta_1=\eta_3=1\neq \eta_2}$,  $\bm{\eta_2\neq 0}$}
	In this case, a direct calculation shows that $\mfP_{141}-\mfP_{343} = -2 (\eta_2-1)(\gamma_1^2+\gamma_3^2)$, which implies  $\gamma_1=\gamma_3=0$. Now, a direct checking shows that the system $\{\mfP_{ijk}=0\}$ reduces to 
	$\cARS=-\eta_2^2-2$, and we get  an algebraic Ricci soliton with associated left-invariant metric given by
	\[
	[e_1,e_4] = e_1 +\gamma_2 e_3,\quad
	[e_2,e_4] = \eta_2 e_2,\quad
	[e_3,e_4] = \gamma_2 e_1+e_3.
	\] 
	The isomorphic isometry $e_3\mapsto -e_3$ interchanges the sign of $\gamma_2$, so we may assume $\gamma_2\geq 0$.	
	In this case $\Ricci=-\diag\left[ \eta_2+2, (\eta_2+2)\eta_2, \eta_2+2, \eta_2^2+2 \right]$, which implies that   $\ker\Ricci=\{0\}$ unless  $\eta_2=-2$, in which case  $\ker\Ricci=\span\{ e_1,e_2,e_3\}$ and the covariant derivatives of the left-invariant vector fields are given by
		$$
		\begin{array}{ll}
		\nabla e_1=-e^1\otimes e_4-\gamma_2e^4\otimes e_3,&
		\nabla e_2=2e^2\otimes e_4,
		\\
		\noalign{\medskip}
		\nabla e_3=-e^3\otimes e_4-\gamma_2e^4\otimes e_1,&
		\nabla e_4=e^1\otimes e_1-2e^2\otimes e_2-e^3\otimes e_3,
		\end{array}
		$$
		where $e^\ell=\langle e_\ell,\cdot\rangle$,
		from where it follows that $\ker\Ricci=\span\{ e_1,e_2,e_3\}$ does not contain any non-zero parallel vector field and hence the manifold is not a product Lorentzian Lie group. 	This corresponds to case~(ii).
	
	

	\smallskip 
	
	\subsection*{Case 3: $\bm{\eta_1=1\neq\eta_2\neq \eta_3}$, $\bm{\eta_3\neq 1}$, $\bm{\eta_2\eta_3\neq 0}$}\label{R3-th g_L.Ia Case eta_i distintos}
	In this   case we use auxiliary variables $\eta_2'$ and $\eta_3'$ to indicate that $\eta_2\eta_3\neq 0$ by means of the polynomials $\eta_2 \eta_2'-1$ and $\eta_3 \eta_3'-1$ and,  in the polynomial ring
	$\mathbb{R}[\cARS,\eta_1,\gamma_1,\gamma_2,\eta_2',\eta_2,\gamma_3,\eta_3',\eta_3]$,
	we  consider the ideal generated by $ \mfP_{ijk} \cup 
	\{\eta_1-1, \eta_2 \eta_2'-1,\eta_3 \eta_3'-1\}$. Computing a Gröbner basis of this ideal with respect to the lexicographical order we get 
	$27$ polynomials among which we find
	\[
	\mathbf{g}_1 = (\eta_2-1)^2\gamma_1,\,\,\,\,
	\mathbf{g}_2 = (\eta_3-1)^2\gamma_2,\,\,\,\,
	\mathbf{g}_3 = (\eta_2-\eta_3)^2 \gamma_3,\,\,\,\,
	\mathbf{g}_4 =\eta_2^2+\eta_3^2+1+\cARS.
	\]
	Hence,   $\gamma_1=\gamma_2=\gamma_3=0$ and $\cARS =-\eta_2^2-\eta_3^2-1$, and  we obtain an algebraic Ricci soliton with    associated left-invariant metric 
	\[
	[e_1,e_4] = e_1,\quad
	[e_2,e_4] = \eta_2 e_2,\quad
	[e_3,e_4] = \eta_3 e_3.
	\] 	   
	Since $\eta_2\eta_3\neq 0$, the underlying Lie algebra is $\mathfrak{r}_{4,\eta_2,\eta_3}$, which corresponds to case~(iii), thus finishing the proof.
\end{proof}

\subsubsection{\bf The self-adjoint part of the derivation $\bm{D_{sad}}$ has complex eigenvalues}
If the self-adjoint part of the derivation, $D_{sad}$, has complex eigenvalues then  there exists an orthonormal basis $\{e_1,e_2,e_3\}$ of $\mathbb{R}^3$, with $e_3$ timelike, so that
\[
D_{sad}=\left(
\begin{array}{ccc}
\eta & 0 & 0
\\
0 & \delta & \nu
\\
0 & -\nu & \delta
\end{array}
\right),
\quad
D_{asad}=\left(
\begin{array}{ccc}
0 & \gamma_1  & \gamma_2
\\
-\gamma_1 & 0 & \gamma_3
\\
\gamma_2 & \gamma_3 & 0
\end{array}
\right),
\]
where $\nu\neq 0$. The corresponding left-invariant metrics are described by
\begin{equation}\label{R3-eq: g_L.Ib ip}
\mathfrak{g}_{L.Ib}
\left\{\!\!\!
\begin{array}{l}
[e_1,e_4]=\eta e_1-\gamma_1 e_2 + \gamma_2 e_3,  \qquad\,\,\,\,
[e_2,e_4]=\gamma_1 e_1+\delta e_2 + (\gamma_3-\nu) e_3,
\\
\noalign{\medskip}
[e_3,e_4]=\gamma_2 e_1+(\gamma_3+\nu) e_2 + \delta e_3, 
\end{array}
\right.
\end{equation}
where $\{ e_1,e_2,e_3,e_4\}$ is an orthonormal basis of $\algsdR$ with $e_3$ timelike.

\begin{remark}\label{R3-re: L.Ib - loc symm}\rm
	A metric~\eqref{R3-eq: g_L.Ib ip} is Einstein (indeed Ricci-flat) if and only if 
	$\gamma_1=\gamma_2=\gamma_3=0$ and $\eta=-2\delta=\pm \frac{2}{\sqrt{3}}\nu$.
	Moreover, it is never   locally symmetric.
\end{remark}

\begin{theorem}\label{R3-th: g_L.Ib}
	A  left-invariant metric $\mathfrak{g}_{L.Ib}$ on $\sdR$  given by Equation~\eqref{R3-eq: g_L.Ib ip} 
	is a strict algebraic Ricci soliton
	if and only if it is isomorphically homothetic to one of the following:
	\begin{itemize}
		
		\item[(i)]  
		$[e_1,e_4] = \eta e_1$,
		$[e_2,e_4] = \delta e_2-e_3$,
		$[e_3,e_4] = e_2+\delta e_3$,
		with $\eta\neq 0$ and  $(\eta,\delta)\notin\{
		(\frac{2}{\sqrt{3}},-\frac{1}{\sqrt{3}}),
		(-\frac{2}{\sqrt{3}},\frac{1}{\sqrt{3}})\}$.

		\smallskip
		
		\item[(ii)]  
		$[e_1,e_4] =  e_3$,
		$[e_2,e_4] = -e_3$,
		$[e_3,e_4] =  e_1+e_2$.

	\end{itemize}
	
	\smallskip
	
	\noindent
	Here $\{e_i\}$ is an orthonormal basis of the Lie algebra with $e_3$ timelike.
\end{theorem}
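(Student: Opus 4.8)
The plan is to follow the template established in the proof of Theorem~\ref{R3-th: g_R}. First I would compute the Ricci operator of the metric~\eqref{R3-eq: g_L.Ib ip} in the orthonormal basis $\{e_1,e_2,e_3,e_4\}$ (with $e_3$ timelike) and form $\mathfrak{D}=\Ricci-\cARS\Id$. The requirement that $\mathfrak{D}$ be a derivation again reduces to a polynomial system $\{\mfP_{ijk}=0\}$, now in the unknowns $\cARS,\eta,\delta,\nu,\gamma_1,\gamma_2,\gamma_3$, subject to $\nu\neq0$. Since a homothetic rescaling multiplies every structure constant by a common factor, I would use homotheties to normalize $\nu=1$ from the outset, leaving only $\eta,\delta,\gamma_1,\gamma_2,\gamma_3$ and $\cARS$ to be determined.

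In contrast with the diagonalizable situation of Theorems~\ref{R3-th: g_R} and~\ref{R3-th: g_L.Ia}, the block $\begin{pmatrix}\delta&\nu\\-\nu&\delta\end{pmatrix}$ is fixed by the complex spectrum, so there is no \emph{two-equal-versus-all-different} split; instead I would split on the real eigenvalue, treating $\eta\neq0$ and $\eta=0$ separately. In the generic branch $\eta\neq0$, I expect suitable combinations of the $\mfP_{ijk}$ (of the type $\mfP_{142}-\mfP_{241}$ exploited before) to factor so as to force the off-diagonal parameters $\gamma_1,\gamma_2,\gamma_3$ to vanish and to pin down $\cARS$; a Gröbner basis computation of the ideal $\langle\mfP_{ijk}\rangle$ augmented with the auxiliary relation $\eta\,\eta'-1$ would make this rigorous and should leave exactly the family $[e_1,e_4]=\eta e_1$, $[e_2,e_4]=\delta e_2-e_3$, $[e_3,e_4]=e_2+\delta e_3$. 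Since here $\operatorname{ad}_{e_4}$ is invertible (its eigenvalues are $\eta$ and $\delta\pm i$, with $\nu=1\neq0$) the metric is irreducible, and excluding the Ricci-flat locus $\eta=-2\delta=\pm\tfrac{2}{\sqrt3}\nu$ from Remark~\ref{R3-re: L.Ib - loc symm} yields exactly case~(i).

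In the branch $\eta=0$ the analysis is more delicate. If all $\gamma_i$ vanish then $[e_1,e_4]=0$, so $e_1$ is a parallel spacelike vector field and the group splits as a Lorentzian product, a situation already covered in Section~\ref{se:3D} and excluded by strictness. The remaining possibility is that some $\gamma_i\neq0$, where I expect the equations to force $\delta=0$ together with $\gamma_1=\gamma_3=0$ and a relation tying $\gamma_2$ to $\nu$ which homothety then normalizes, producing $[e_1,e_4]=e_3$, $[e_2,e_4]=-e_3$, $[e_3,e_4]=e_1+e_2$. The corresponding $\operatorname{ad}_{e_4}$ is three-step nilpotent; as in Case~2b of Theorem~\ref{R3-th: g_L.Ia} I would compute $\nabla$ and verify that $\ker\Ricci$ contains no nonzero parallel vector, so the metric is not a product. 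This gives case~(ii).

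The main obstacle is the coupling between the rotation $\nu$ and the anti-self-adjoint parameters in the curvature: whereas in the diagonalizable case the off-diagonal entries decoupled cleanly, here $\nu\neq0$ enters the $\mfP_{ijk}$ together with $\gamma_1$ and $\gamma_3$, so establishing that $\gamma_1=\gamma_3=0$ in the generic branch, and isolating the rigid nilpotent configuration when $\eta=\delta=0$, is the step where the polynomial algebra is least transparent and where the Gröbner basis computation is most useful.
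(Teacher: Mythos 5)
Your overall strategy is the paper's: normalize $\nu=1$ by a homothety, reduce the derivation condition to a polynomial system $\{\mfP_{ijk}=0\}$, and use a Gr\"obner basis to organize the case analysis. The paper's Gr\"obner basis yields the dichotomy $\gamma_1=\gamma_2=0$ versus $\gamma_1^2+\gamma_2^2\neq 0$ (the latter forcing $\eta=\delta=\gamma_1=\gamma_3=0$, $\cARS=3$, $\gamma_2=\pm1$, i.e.\ case~(ii)), whereas you split on $\eta\neq0$ versus $\eta=0$. Your $\eta\neq0$ branch is fine: there the equations do force $\gamma_1=\gamma_2=\gamma_3=0$ and $\cARS=-\eta^2-2\delta^2+2$, and excluding the Ricci-flat locus gives case~(i).

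The gap is in your $\eta=0$ branch. You claim that the product situation occurs only when \emph{all} $\gamma_i$ vanish, and that otherwise the equations force $\gamma_1=\gamma_3=0$ and $\delta=0$. Both claims fail. The bracket $[e_1,e_4]=\eta e_1-\gamma_1e_2+\gamma_2e_3$ vanishes as soon as $\eta=\gamma_1=\gamma_2=0$, with $\gamma_3$ and $\delta$ arbitrary; then $e_1$ is a parallel spacelike vector field and the metric splits, regardless of $\gamma_3$. Moreover, the derivation equations admit genuine solutions in this regime with $\gamma_3\neq 0$: for instance $\eta=\delta=\gamma_1=\gamma_2=0$, $\gamma_3=1$, $\cARS=6$ satisfies all the $\mfP_{ijk}=0$ (with $\gamma_1=\gamma_2=0$ and $\eta=0$ the system collapses to $(2\delta^2-2+\cARS)\delta=0$, $\delta\gamma_3=0$, $(2\delta^2-6+\cARS)\gamma_3=0$ and $4\gamma_3^2=2\delta^2+2+\cARS$, which has such solutions). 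These extra solutions are \emph{not} eliminated by the polynomial algebra; they must be discarded because they are reducible (hence not strict), exactly as the paper does in its Case~1 with $\eta=0$. As written, your argument would either miss these solutions or wrongly conclude they contradict the equations. The fix is to replace your splitting criterion by the paper's: whenever $\gamma_1=\gamma_2=0$ and $\eta=0$ the metric is a product and is excluded by strictness, while the rigid configuration of case~(ii) arises only in the complementary regime $\gamma_1^2+\gamma_2^2\neq0$. A smaller omission: for case~(i) strictness also requires checking the metric is not a plane wave; the paper does this by noting the Ricci operator has non-zero eigenvalues $\{-(\eta+2\delta)\eta,\,2-(\eta^2+2\delta^2),\,-(\eta+2\delta)(\delta\pm\sqrt{-1})\}$ off the Einstein locus, a point your proposal does not address.
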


	\begin{remark}\rm\label{re:Ib}
		\,
		(i) The underlying Lie algebra in case~(i) is  $\mathfrak{r}'_{4,\eta,\delta}$. The soliton constant is $\cARS = -\eta^2-2\delta^2+2$ so that they can be expanding, steady or shrinking depending on the values of $(\eta,\delta)$.
		They are steady   if  $\eta^2+2\delta^2=2$, shrinking solitons if $\eta^2+2\delta^2<2$, and expanding algebraic Ricci solitons otherwise.
		 Moreover metrics in this family are $\mathcal{F}[t]$-critical with zero energy for 
		$t=-\frac{\eta^2+2\delta^2-2}{2(\eta^2+3\delta^2+2\eta\delta-1)}\in\mathbb{R}$ if the scalar curvature $\tau=-2((\eta+\delta)^2+2\delta^2-1)$ does not vanish, and   $\mathcal{S}$-critical if and only if the scalar curvature vanishes, in which case $\|\rho\|=0$. 
		This family provides $\mathcal{F}[t]$-critical metrics with zero energy for all values of $t\in\mathbb{R}$ (as it occurs with plane waves) without having vanishing scalar curvature. 
	 Algebraic Ricci solitons which are $\mathcal{F}[t]$-critical are steady if $t=0$, expanding if $-1\leq t<0$, and shrinking if either $t\leq -1$ or $t>0$.

		\medskip
		
		(ii) The underlying Lie algebra is $\mathfrak{n}_4$ in  case~(ii) above. The algebraic Ricci soliton is shrinking with soliton constant $\cARS = 3$ and the scalar curvature is $\tau=2$. Moreover, this metric is $\mathcal{F}[t]$-critical with zero energy for 
		$t=-\frac{3}{2}$.  			
	\end{remark}

\begin{proof} 
	A straightforward calculation  shows that the conditions for $\mathfrak{D}=\Ricci-\cARS\Id$ to be a derivation are determined by a system of polynomial equations on the soliton constant $\cARS$ and the structure constants in~\eqref{R3-eq: g_L.Ib ip}, given by $\{\mfP_{ijk}=0\}$, where

	\medskip

	$
	\!\!\!\!\!\!\!\begin{array}{l}
	\mfP_{141} =     
	(\eta - \delta) (2 \gamma_1^2 - 2 \gamma_2^2)  + 
	4 \nu  \gamma_1 \gamma_2 + (\eta^2 + 
	2 \delta^2 - 2 \nu^2 + \cARS) \eta ,
	\end{array}
	$

	\vspace{\vsep}

	$
	\!\!\!\!\!\!\!\begin{array}{l}
	\mfP_{142} =  
	3 \nu \gamma_1 \gamma_3 - (\eta - \delta) 
	\gamma_2 \gamma_3 - (3 \eta^2 + \delta^2 - \eta \delta - 3 \nu^2 + \cARS) \gamma_1 - 
	3  \eta \nu \gamma_2 ,
	\end{array}
	$

	\vspace{\vsep}

	$
	\!\!\!\!\!\!\!\begin{array}{l}
	\mfP_{143} =   
	(\eta - \delta) \gamma_1 \gamma_3 + 
	3 \nu \gamma_2 \gamma_3 - 
	3  \eta \nu  \gamma_1 + (3 \eta^2 + \delta^2 - \eta \delta - 
	3 \nu^2 + \cARS) \gamma_2 ,
	\end{array}
	$

	\vspace{\vsep}

	$
	\!\!\!\!\!\!\!\begin{array}{l}
	\mfP_{241} =     
	3 \nu \gamma_1 \gamma_3 -  (\eta - \delta) 
	\gamma_2 \gamma_3 + (\eta^2 + 5 \delta^2 - 
	3  \eta \delta - 
	3 \nu^2 + \cARS) \gamma_1 + (\eta - 
	4 \delta) \nu  \gamma_2 ,
	\end{array}
	$

	\vspace{\vsep}

	$
	\!\!\!\!\!\!\!\begin{array}{l}
	\mfP_{242} =  
	- 2 \gamma_1^2 (\eta - \delta) - 
	2 \nu  \gamma_1 \gamma_2 - 
	2  (\eta + 
	2 \delta) \nu \gamma_3 + (\eta^2 + 
	2 \delta^2 - 
	2 \nu^2 + \cARS) \delta ,
	\end{array}
	$

	\vspace{\vsep}

	$
	\!\!\!\!\!\!\!\begin{array}{l}
	\mfP_{243} =  
	-\nu (\gamma_1^2  - \gamma_2^2 - 4 \gamma_3^2) + 
	2  (\eta - \delta) \gamma_1 \gamma_2 
	+ (\eta^2 + 2 \delta^2 - 6 \nu^2 + \cARS) (\gamma_3 -\nu) 
	-4\nu^3 ,  
	\end{array}
	$

	\vspace{\vsep}

	$
	\!\!\!\!\!\!\!\begin{array}{l}
	\mfP_{341} =   
	-(\eta - \delta) \gamma_1 \gamma_3 - 
	3 \nu \gamma_2 \gamma_3 - (\eta - 
	4 \delta) \nu  \gamma_1 + (\eta^2 + 
	5 \delta^2 - 3  \eta \delta - 
	3 \nu^2 + \cARS) \gamma_2 ,
	\end{array}
	$

	\vspace{\vsep}

	$
	\!\!\!\!\!\!\!\begin{array}{l}
	\mfP_{342} =  
	\nu (\gamma_1^2 - \gamma_2^2 - 4 \gamma_3^2 ) - 
	2  (\eta - \delta) \gamma_1 \gamma_2 
	+ (\eta^2 + 2 \delta^2 - 6 \nu^2 + \cARS) (\gamma_3+\nu)
	+4\nu^3 ,  
	\end{array}
	$
	
	\vspace{\vsep}
	
	$
	\!\!\!\!\!\!\!\begin{array}{l}
	\mfP_{343} =   
	2  (\eta - \delta) \gamma_2^2 - 
	2 \nu \gamma_1 \gamma_2 + 
	2  (\eta + 
	2 \delta) \nu \gamma_3 + (\eta^2 + 
	2 \delta^2 - 
	2 \nu^2 + \cARS) \delta .
	\end{array}
	$


	\bigskip

	Since $\nu\neq0$ we may assume $\nu=1$ in the rest of the proof working in the homothetic class of the initial metric, just taking the orthogonal basis $\hat e_i=\frac{1}{\nu}e_i$.
	We start considering the ideal $\langle \mfP_{ijk}\cup \{\nu-1\}\rangle$ in the polynomial ring $\mathbb{R}[\cARS,\eta,\delta,\gamma_1,\gamma_2,\gamma_3,\nu]$. Computing a Gröbner basis  of this ideal with respect to the graded reverse lexicographical order we obtain a set of $35$ polynomials containing
	\[
	\begin{array}{lll}
	\mathbf{g}_1 = (\gamma_1^2+\gamma_2^2)\eta,
	&
	\mathbf{g}_2 = (\gamma_1^2+\gamma_2^2)\delta,
	&
	\mathbf{g}_3 = (\gamma_1^2+\gamma_2^2)(\cARS-3),
	\\[\vsep]
	\mathbf{g}_4 = (\gamma_1^2+\gamma_2^2)\gamma_1\gamma_2,
	&
	\mathbf{g}_5 = (\gamma_1^2+\gamma_2^2)(\gamma_1^2-\gamma_2^2+1),
	&
	\mathbf{g}_6 = \gamma_2\gamma_3(\cARS-12).
	\end{array}
	\]
	Hence, either $\gamma_1=\gamma_2=0$ or, otherwise, $\eta=\delta=\gamma_1=\gamma_3=0$,   $\cARS=3$ and $\gamma_2=\ve$, with $\ve^2=1$. Next we analyze these two cases separately.

	\medskip
	
	\subsection*{Case 1: $\bm{\gamma_1=\gamma_2=0}$}
	In this case, we have 
	\[
	\mfP_{141} = \eta(\eta^2+2\delta^2-2+\cARS).
	\]
	
	If $\eta=0$ then the left-invariant metric is given by 
	\[
	[e_2,e_4] = \delta e_2 +(\gamma_3-1)e_3,\quad
	[e_3,e_4] = (\gamma_3+1)e_2+\delta e_3,
	\]
which corresponds to a product Lie algebra $\alg\times\mathbb{R}$, with $\alg=\operatorname{span}\{ e_2,e_3,e_4\}$,
and
a direct calculation shows that   $\nabla_{e_i} e_1 = 0$, $i=1,\dots,4$. Hence, $e_1$ generates a left-invariant timelike parallel vector field and  the Lorentzian Lie group splits as a Lorentzian product Lie group. Hence this case follows from the discussion in Section~\ref{se:3D}.	


	\medskip
	
	Now, if $\eta\neq0$ then $\cARS=-\eta^2-2\delta^2+2$, and the system $\{\mfP_{ijk}=0\}$ reduces to
	\[
	\mfP_{243} = 4(\gamma_3-1)\gamma_3=0,\,
	\mfP_{342} = -4(\gamma_3+1)\gamma_3=0,\,
	\mfP_{343} = - \mfP_{242} =  2(\eta+2\delta)\gamma_3=0 .
	\]
	Thus, $\gamma_3=0$ and we get  an algebraic Ricci soliton with associated left-invariant metric given by
	\[
	[e_1,e_4] = \eta e_1,\quad
	[e_2,e_4] = \delta e_2-e_3,\quad
	[e_3,e_4] = e_2+\delta e_3.
	\] 
This space is Einstein if and only if $\eta=-2\delta=\pm\frac{2}{\sqrt{3}}$ (see Remark~\ref{R3-re: L.Ib - loc symm}).   
The Ricci operator has eigenvalues $\{ -(\eta+2\delta)\eta, 2-(\eta^2+2\delta^2),-(\eta+2\delta)(\delta\pm\sqrt{-1})\}$,
which implies that the eigenvalues vanish if and only if the space is Einstein. Therefore, in the non-Einstein case, although the scalar curvature $\tau=-2((\eta+\delta)^2+2\delta^2-1)$ may vanish, the space is not a pp-wave. Moreover,
the underlying Lie algebra is  $\mathfrak{r}'_{4,\eta,\delta}$, and  case~(i) is obtained.

%
%

	\medskip
	
	\subsection*{Case 2: $\bm{\eta=\delta=\gamma_1=\gamma_3=0}$,   $\bm{\cARS=3}$ and $\bm{\gamma_2=\ve}$, with $\bm{\ve^2=1}$}
	With these assumptions the system $\{\mfP_{ijk}=0\}$ is satisfied, so that we obtain an algebraic Ricci soliton with    associated left-invariant metric given by
	\[
	[e_1,e_4] = \ve e_3,\quad
	[e_2,e_4] = -e_3,\quad
	[e_3,e_4] = \ve e_1+e_2.
	\] 
	Note that the isomorphic isometry $e_1\mapsto -e_1$ interchanges the sign of $\ve$, so we may assume $\ve=1$. Moreover, since $\operatorname{ad}_{e_4}$ is three-step nilpotent, the underlying Lie algebra is $\mathfrak{n}_4$ and case~(ii) is obtained, thus finishing the proof.
\end{proof}

\subsubsection{\bf The self-adjoint part of the derivation $\bm{D_{sad}}$ has a double root of the minimal polynomial}
In this case, there exists a pseudo-orthonormal basis $\{u_1,u_2,u_3\}$  of $\mathbb{R}^3$, with $\langle u_1,u_2\rangle=\langle u_3,u_3\rangle=1$, so that
\[
D_{sad}=\left(
\begin{array}{ccc}
\eta_1 & 0 & 0
\\
\varepsilon & \eta_1 & 0
\\
0 & 0 & \eta_2
\end{array}
\right),
\quad
D_{asad}=\left(
\begin{array}{ccc}
\gamma_1 & 0  & \gamma_2
\\
0  & -\gamma_1 & \gamma_3
\\
-\gamma_3 & -\gamma_2 & 0
\end{array}
\right),
\]
where $\varepsilon^2=1$. Thus, the corresponding left-invariant metrics are described by 
\begin{equation}\label{R3-eq: g_L.II ip}
\mathfrak{g}_{L.II}
\left\{\!\!\!
\begin{array}{l}
[u_1,u_4]=(\eta_1+\gamma_1) u_1 + \ve  u_2 - \gamma_3 u_3, \,\,\,\,\,
[u_2,u_4]= (\eta_1-\gamma_1) u_2 -  \gamma_2 u_3,
\\
\noalign{\medskip}
[u_3,u_4]=\gamma_2 u_1+\gamma_3 u_2 + \eta_2 u_3 , 
\end{array}
\right.
\end{equation}
where $\{u_1,u_2,u_3,u_4\}$ is a pseudo-orthonormal basis with $\langle u_1,u_2\rangle=\langle u_3,u_3\rangle=\langle u_4,u_4\rangle=1$.

\begin{remark}\label{R3-re: L.II - loc symm}\rm
	A metric~\eqref{R3-eq: g_L.II ip} is Einstein if and only if $\eta_2=\eta_1$, $\gamma_1=-\frac{3}{2}\eta_1$ and $\gamma_2=0$.
	Moreover, 
	a metric~\eqref{R3-eq: g_L.II ip} is locally symmetric if and only if either
	$\eta_1=\eta_2=\gamma_1=\gamma_2=0$ (in which case it is flat), or 
	$\eta_2=\gamma_2=\gamma_3=0$ and $\gamma_1=-\eta_1\neq 0$, in which case is locally conformally flat and locally isometric to $ N_{-1}^3(\kappa)\times\mathbb{R}$.
\end{remark}

\begin{theorem}\label{R3-th: g_L.II}
	A  left-invariant metric $\mathfrak{g}_{L.II}$ on $\sdR$  given by Equation~\eqref{R3-eq: g_L.II ip} 
	is a strict algebraic Ricci soliton
	if and only if it is isomorphically homothetic to one of the following:
	\begin{itemize}
		
		\item[(i)]  
		$[u_1,u_4] = \eta_1 u_1+ u_2$,
		$[u_2,u_4] = \eta_1 u_2$,
		$[u_3,u_4] = \eta_2 u_3$, $\eta_2\neq 0$.

		\smallskip
		
		\item[(ii)]  
		$[u_1,u_4] = -\tfrac{\eta_2}{2}u_1+  u_2$,
		$[u_2,u_4] = \tfrac{4\eta_1+\eta_2}{2} u_2$,
		$[u_3,u_4] = \eta_2 u_3$,
		
		\noindent
		with 
		 $\eta_2(\eta_2-\eta_1)(\eta_2+2\eta_1)\neq 0$.
		
		\smallskip
		
		\item[(iii)]
		$[u_1,u_4] = \eta_1 u_1+  u_2-\gamma_3 u_3$,
		$[u_2,u_4] = \eta_1 u_2$,
		$[u_3,u_4] = \gamma_3 u_2 +\eta_1 u_3$, 
		$\eta_1\gamma_3\neq 0$.

	\end{itemize}
	
	\smallskip
	
	\noindent
	Here $\{u_i\}$ is a pseudo-orthonormal basis with $\langle u_1,u_2\rangle=\langle u_3,u_3\rangle=\langle u_4,u_4\rangle=1$. 
\end{theorem}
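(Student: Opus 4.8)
The plan is to follow the scheme already used in the proofs of Theorem~\ref{R3-th: g_R} and Theorem~\ref{R3-th: g_L.Ia}. First I would compute the Levi-Civita connection of the metric~\eqref{R3-eq: g_L.II ip} in the pseudo-orthonormal basis $\{u_1,u_2,u_3,u_4\}$ and, from it, the Ricci operator $\operatorname{Ric}$. Imposing that $\mathfrak{D}=\operatorname{Ric}-\cARS\Id$ be a derivation of the Lie algebra, i.e.\ $\mathfrak{D}[u_i,u_j]=[\mathfrak{D}u_i,u_j]+[u_i,\mathfrak{D}u_j]$, yields a polynomial system $\{\mfP_{ijk}=0\}$ in the structure constants $\eta_1,\eta_2,\gamma_1,\gamma_2,\gamma_3$ and the soliton constant $\cARS$, together with the discrete parameter $\ve=\pm1$ coming from the nilpotent entry of $D_{sad}$, which I would carry throughout and check for both signs (the two being expected to produce isomorphically homothetic families).

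The heart of the argument is to show that, away from the non-strict metrics, the parameter $\gamma_2$—the only one coupling the Jordan block $\span\{u_1,u_2\}$ with the kernel direction $u_3$—must vanish. As in Case~3 of the previous theorems, I would obtain this by computing a Gröbner basis of the ideal $\langle\mfP_{ijk}\cup\{\ve^2-1\}\rangle$ in the appropriate polynomial ring, expecting a low-degree generator that factors as $\gamma_2$ times a polynomial which is non-zero precisely outside the Einstein and locally symmetric loci of Remark~\ref{R3-re: L.II - loc symm}. Once $\gamma_2=0$ the system decouples, and I would split according to $\gamma_3$ and then $\gamma_1$: if $\gamma_3\neq0$ the remaining equations force $\eta_2=\eta_1$ and $\gamma_1=0$, giving case~(iii) under the constraint $\eta_1\gamma_3\neq0$; if $\gamma_3=0$ and $\gamma_1=0$ one lands in case~(i), where $\eta_2\neq0$ is needed since $\eta_2=0$ makes $u_3$ parallel and splits off a Lorentzian factor handled in Section~\ref{se:3D}; and if $\gamma_3=0$ with $\gamma_1\neq0$ the soliton equation fixes the relation $\eta_1+\gamma_1=-\tfrac{\eta_2}{2}$, producing case~(ii) together with the non-degeneracy condition $\eta_2(\eta_2-\eta_1)(\eta_2+2\eta_1)\neq0$ that separates it from the Einstein metric ($\gamma_1=-\tfrac{3}{2}\eta_1$, $\eta_2=\eta_1$) and from the symmetric ones.

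The main obstacle I anticipate is twofold. First, the branch $\gamma_2\neq0$: the nilpotent entry $\ve$ destroys the symmetry exploited in the diagonalizable case $\mathfrak{g}_{L.Ia}$, so the polynomial system is considerably less transparent, and I expect to rely on the Gröbner basis computation to certify that every solution with $\gamma_2\neq0$ is Einstein or locally symmetric and hence excluded as non-strict. Second, once the three candidate families are isolated I must confirm that they are genuinely strict—irreducible and not products; as in Case~2b of Theorem~\ref{R3-th: g_L.Ia}, this requires computing the covariant derivatives $\nabla u_i$ for the listed brackets and verifying that $\ker\operatorname{Ric}$, when nontrivial, contains no non-zero parallel vector field, so that no Lorentzian factor splits off. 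The Einstein and locally symmetric metrics of Remark~\ref{R3-re: L.II - loc symm} are then removed explicitly through the stated parameter restrictions, completing the classification into cases~(i)--(iii).
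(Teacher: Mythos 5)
Your proposal is correct and follows the same overall scheme as the paper: derive the polynomial system $\{\mfP_{ijk}=0\}$, show that the coupling parameter $\gamma_2$ must vanish, then split on $\gamma_3$ and $\gamma_1$ to land in cases (i)--(iii), with the Einstein locus of Remark~\ref{R3-re: L.II - loc symm} and the product splittings (in particular $\eta_2=0$ in case (i)) excluded exactly as you describe. The one place where you overestimate the difficulty is the branch $\gamma_2\neq 0$: no Gr\"obner basis is needed, because $\mfP_{241}=2(\eta_1-\eta_2)\gamma_2^2$ immediately forces $\eta_2=\eta_1$, and then the combination $\mfP_{242}+\tfrac{\gamma_1-\eta_1}{\gamma_2}\mfP_{341}=\ve\gamma_2^2$ is nonzero, so that branch is empty outright --- not merely confined to the Einstein or locally symmetric loci as you anticipated. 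The paper also inserts one organizing step you omit: with $\gamma_2=0$ the diagonal equations factor as $(\eta_1\pm\gamma_1)(2\eta_1^2+\eta_2^2+\cARS)$ and $\eta_2(2\eta_1^2+\eta_2^2+\cARS)$, which pins down $\cARS=-2\eta_1^2-\eta_2^2$ (the alternative being Einstein) before the $\gamma_3$/$\gamma_1$ case analysis; after that your three branches reproduce the paper's exactly, including the relation $\gamma_1=-\tfrac{2\eta_1+\eta_2}{2}$ for case (ii) and the reduction of $\ve$ to $1$ via $u_4\mapsto -u_4$.
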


	\begin{remark}\rm\label{re:II}
		(i) The Lie algebra is $\mathfrak{r}_{4,\lambda}$ with $\lambda=\frac{\eta_1}{\eta_2}$. The algebraic Ricci solitons are expanding with  $\cARS = -(2\eta_1^2+\eta_2^2)$ and  $\tau=-2(3\eta_1^2+\eta_2^2+2\eta_1\eta_2)<0$. Moreover, these metrics are $\mathcal{F}[t]$-critical with zero energy  for 
		$t=-\frac{2\eta_1^2+\eta_2^2}{2(3\eta_1^2+\eta_2^2+2\eta_1\eta_2)}
		\in [-1,-\frac{1}{4}]$. 

		\medskip
				
		(ii) In this case the underlying Lie algebra  is $\mathfrak{r}_{4,-2,\lambda}$ with $\lambda=	-\tfrac{4\eta_1+\eta_2}{\eta_2}$, which reduces to the product $\mathfrak{r}_{3,-2}\times\mathbb{R}$ if $4\eta_1+\eta_2=0$. However the left-invariant metric is not the product one in the latter case. The algebraic Ricci solitons are expanding with soliton constant $\cARS = -(2\eta_1^2+\eta_2^2)$ and $\tau=-2(3\eta_1^2+\eta_2^2+2\eta_1\eta_2)<0$. Moreover, these metrics are $\mathcal{F}[t]$-critical with zero energy for 
		$t=-\frac{2\eta_1^2+\eta_2^2}{2(3\eta_1^2+\eta_2^2+2\eta_1\eta_2)}
		\in(-1,-\frac{1}{4})$.

		\medskip

		(iii) The underlying Lie algebra is $\mathfrak{r}_4$ and the algebraic Ricci solitons are expanding with  $\cARS = -3\eta_1^2$ and $\tau=-12\eta_1^2$. Moreover they are $\mathcal{F}[-1/4]$-critical with zero energy. 
		Furthermore, it follows from \cite{Kulkarni} that left-invariant metrics in (iii) are homothetic (although not isomorphically homothetic) to the corresponding left-invariant metrics with $\gamma_3=0$, and hence left-invariant metrics in case~(iii) are homothetic to those in case~(i) with $\eta_2=\eta_1$.
	\end{remark}

\begin{proof} 
	The conditions for $\mathfrak{D}=\Ricci-\cARS\Id$ to be a derivation are determined by a system of polynomial equations, $\{\mfP_{ijk}=0\}$, on the soliton constant $\cARS$ and  the structure constants in~\eqref{R3-eq: g_L.II ip}, where

	\medskip

	$
	\begin{array}{l}
	\mfP_{141} =     
	\ve \gamma_2^2 + 
	2  (\eta_1 - \eta_2) \gamma_2 \gamma_3 + (2 \eta_1^2 + 
	\eta_2^2 + \cARS) ( \gamma_1+\eta_1) ,
	\end{array}
	$

	\vspace{\vsep}

	$
	\begin{array}{l}
	\mfP_{142} =  
	-4  \ve \gamma_1^2 + 
	2  (\eta_1 - \eta_2) \gamma_3^2 + 
	2 \ve \gamma_2 \gamma_3 - 
	2  \ve (2 \eta_1 + \eta_2) \gamma_1 + \ve (2 \eta_1^2 + \eta_2^2 + \cARS)  ,
	\end{array}
	$

	\vspace{\vsep}

	$
	\begin{array}{l}
	\mfP_{143} =   
	-3 \ve \gamma_1 \gamma_2 - (\eta_1 - \eta_2) 
	\gamma_1 \gamma_3 - \ve (4 \eta_1 - \eta_2) 
	\gamma_2 - (5 \eta_1^2 + \eta_2^2 - 
	3 \eta_1 \eta_2 + \cARS) \gamma_3 ,
	\end{array}
	$

	\vspace{\vsep}

	$
	\begin{array}{l}
	\mfP_{241} =     
	2  (\eta_1 - \eta_2) \gamma_2^2 ,
	\end{array}
	$

	\vspace{\vsep}

	$
	\begin{array}{l}
	\mfP_{242} =  
	\ve  \gamma_2^2 + 
	2 (\eta_1 - \eta_2) \gamma_2 \gamma_3 - (\gamma_1 - \eta_1) (2 	 \eta_1^2 + \eta_2^2 + \cARS) ,
	\end{array}
	$

	\vspace{\vsep}

	$
	\begin{array}{l}
	\mfP_{243} =  
	\left( (\eta_1 - \eta_2) \gamma_1 - 5 \eta_1^2 - \eta_2^2 + 
	3 \eta_1 \eta_2 - \cARS \right)  \gamma_2 ,
	\end{array}
	$

	\vspace{\vsep}

	$
	\begin{array}{l}
	\mfP_{341} =   
	\left( (\eta_1 - \eta_2) (\gamma_1 + \eta_1) + 
	3 \eta_2^2 + \cARS \right) \gamma_2 ,
	\end{array}
	$

	\vspace{\vsep}

	$
	\begin{array}{l}
	\mfP_{342} =  
	-3 \ve \gamma_1 \gamma_2 - (\eta_1 - \eta_2) 
	\gamma_1 \gamma_3   - 
	3 \ve \eta_2 \gamma_2 + (\eta_1^2 + 
	3 \eta_2^2 - \eta_1 \eta_2 + \cARS) \gamma_3 ,
	\end{array}
	$
	
	\vspace{\vsep}
	
	$
	\begin{array}{l}
	\mfP_{343} =   
	-2 \ve  \gamma_2^2 - 
	4  (\eta_1 - \eta_2) \gamma_2 \gamma_3 + (2 \eta_1^2 + 
	\eta_2^2 + \cARS) \eta_2 .
	\end{array}
	$


	\medskip
	
	First of all note that if $\gamma_2\neq 0$ then $\mfP_{241}=0$ implies $\eta_2=\eta_1$. Now, a direct calculation shows that $\mfP_{242}+\frac{\gamma_1-\eta_1}{\gamma_2}\mfP_{341}=\ve \gamma_2^2$, which does not vanish. Hence, necessarily $\gamma_2=0$.
	
	Assuming $\gamma_2=0$, we compute
	\[
	\begin{array}{l}
	\mfP_{141} = (\eta_1+\gamma_1)(2\eta_1^2+\eta_2^2+\cARS),
	\\[\vsep]
	\mfP_{242} = (\eta_1-\gamma_1)(2\eta_1^2+\eta_2^2+\cARS),
	\\[\vsep]
	\mfP_{343} = \eta_2(2\eta_1^2+\eta_2^2+\cARS).
	\end{array}
	\]
	If $2\eta_1^2+\eta_2^2+\cARS\neq 0$ then $\eta_1=\eta_2=\gamma_1=0$ and the space is Einstein (see Remark~\ref{R3-re: L.II - loc symm}). Hence, $\cARS = -2\eta_1^2-\eta_2^2$ and
	the system $\{\mfP_{ijk}=0\}$ reduces to
	\begin{equation}\label{R3-II eq 1}
	\begin{array}{l}
	\mfP_{142} = 2(\eta_1-\eta_2)\gamma_3^2 - 2\ve (2\eta_1+\eta_2+2\gamma_1) \gamma_1=0,
	\\[\vsep]
	\mfP_{143} = - (\eta_1-\eta_2)(3\eta_1+\gamma_1)\gamma_3 =0,
	\\[\vsep]
	\mfP_{342} =  -(\eta_1-\eta_2)(\eta_1+2\eta_2+\gamma_1)\gamma_3 =0.
	\end{array}
	\end{equation}
	Next we consider the cases $\gamma_3=0$ and $\eta_2=\eta_1$, which we analyze separately. 
	
	\medskip
	
	\subsection*{Case 1: $\bm{\gamma_2=0}$,  $\bm{\cARS = -2\eta_1^2-\eta_2^2$}, $\bm{\gamma_3=0}$} In this case Equation~\eqref{R3-II eq 1} reduces to
	\[
	(2\eta_1+\eta_2+2\gamma_1) \gamma_1=0,
	\]
	so that we analyze the cases $\gamma_1=0$ and $\gamma_1= -\frac{2\eta_1+\eta_2}{2}\neq 0$.
	
	\medskip
	
	If $\gamma_1=0$, we get an algebraic Ricci soliton with soliton constant $\cARS = -2\eta_1^2-\eta_2^2$ and associated left-invariant metric given by
	\[
	[u_1,u_4] = \eta_1 u_1+\ve u_2,\quad
	[u_2,u_4] = \eta_1 u_2,\quad
	[u_3,u_4] = \eta_2 u_3.
	\] 
The isomorphic isometry	$u_4\mapsto -u_4$ interchanges $(\ve,\eta_1,\eta_2)$ and $(-\ve,-\eta_1,-\eta_2)$, so we may assume $\ve=1$. This space  is  Einstein if and only if $\eta_1=\eta_2=0$, and the same holds to   be locally symmetric (see Remark~\ref{R3-re: L.II - loc symm}). 

If $\eta_2=0$ and $\eta_1\neq 0$, then the underlying Lie algebra splits as a product 
$\mathfrak{k}\times\mathbb{R}$, where the subalgebra $\mathfrak{k}$ is spanned by $\{ u_1,u_2,u_4\}$ and $u_3$ determines a spacelike parallel left-invariant vector field. Hence the Lorentzian Lie group splits as a Lorentzian product and this situation is covered by the analysis in Section~\ref{se:3D}.

Finally, if $\eta_2\neq 0$, then the underlying Lie algebra is $\mathfrak{r}_{4,\lambda}$ with $\lambda=\frac{\eta_1}{\eta_2}$, and case~(i) is obtained.

	\bigskip

	If $\gamma_1= -\frac{2\eta_1+\eta_2}{2}\neq 0$  then the left-invariant metric 
	\[
	[u_1,u_4] = -\tfrac{\eta_2}{2}u_1+\ve u_2,\quad
	[u_2,u_4] = \tfrac{4\eta_1+\eta_2}{2} u_2,\quad
	[u_3,u_4] = \eta_2 u_3,
	\] 
	determines an algebraic Ricci soliton with soliton constant $\cARS = -2\eta_1^2-\eta_2^2$.  Exactly  as in the previous case, the isomorphic isometry $u_4\mapsto -u_4$ let us assume $\ve=1$.
The space is Einstein if and only if $\eta_1=\eta_2$, and it is locally symmetric whenever $\eta_2=0$ (see Remark~\ref{R3-re: L.II - loc symm}). 
		Next, we assume  $\eta_2(\eta_2-\eta_1)\neq 0$.
		Since $2\eta_1+\eta_2(=-2\gamma_1)\neq 0$
		considering the new basis
		$$
		\bar u_1=(2\eta_1+\eta_2)u_1-u_2,\quad \bar u_2=u_3,\quad \bar u_3=u_2,\quad \bar u_4=-\tfrac{2}{\eta_2}u_4,
		$$
		one has the brackets
		$$
		[\bar u_1,\bar u_4]=\bar u_1,\quad [\bar u_2,\bar u_4]=-2\bar u_2,\quad [\bar u_3,\bar u_4]=-\tfrac{4\eta_1+\eta_2}{\eta_2}\bar u_3,
		$$
		with $\langle\bar u_3,\bar u_3\rangle=0$.
	This shows that the underlying Lie algebra is $\mathfrak{r}_{4,-2,\lambda}$ with $\lambda=	-\tfrac{4\eta_1+\eta_2}{\eta_2}$. In the special case when $4\eta_1+\eta_2=0$, the Lie algebra splits as a product $\mathfrak{r}_{3,-2}\times\mathbb{R}$ but since $\bar u_3$ is a null vector, the left-invariant metric does not split as a product. This corresponds to case~(ii).

	\medskip
	
	\subsection*{Case 2: $\bm{\gamma_2=0}$,  $\bm{\cARS = -3\eta_1^2$}, $\bm{\gamma_3\neq 0}$, $\bm{\eta_2=\eta_1}$} Equation~\eqref{R3-II eq 1}   reduces to
	\[
	(3\eta_1+2\gamma_1)\gamma_1=0.
	\] 
	Note that if $3\eta_1+2\gamma_1=0$ then the space is Einstein (see Remark~\ref{R3-re: L.II - loc symm}). Now, if $\gamma_1=0$ and $\eta_1\neq 0$, then we obtain an algebraic Ricci soliton with soliton constant $\cARS = -3\eta_1^2$ and associated left-invariant metric given by 
	\[
	[u_1,u_4] = \eta_1 u_1+\ve u_2-\gamma_3 u_3,\quad
	[u_2,u_4] = \eta_1 u_2,\quad
	[u_3,u_4] = \gamma_3 u_2 +\eta_1 u_3.
	\]
As in the previous cases we can take $\ve=1$ using the isomorphic isometry 
		$u_4\mapsto -u_4$. Moreover $\operatorname{ad}_{u_4}$ has eigenvalues $0$ and $-\eta_1$, the latter with multiplicity three. Since $\eta_1\gamma_3\neq 0$, one has that $-\eta_1$ is a triple root of the minimal polynomial, and hence the underlying Lie algebra is $\mathfrak{r}_4$, corresponding  to  case~(iii). This  finishes the proof.
\end{proof}

\subsubsection{\bf The self-adjoint part of the derivation $\bm{D_{sad}}$ has a triple root}
There exists a  pseudo-orthonormal basis   $\{u_1$, $u_2$, $u_3\}$  of $\mathbb{R}^3$, with $\langle u_1,u_2\rangle=\langle u_3,u_3\rangle=1$, so that
\[
D_{sad}=\left(
\begin{array}{ccc}
\eta & 0 & 1
\\
0 & \eta & 0
\\
0 & 1 & \eta
\end{array}
\right),
\quad
D_{asad}=\left(
\begin{array}{ccc}
\gamma_1 & 0  & \gamma_2
\\
0  & -\gamma_1 & \gamma_3
\\
-\gamma_3 & -\gamma_2 & 0
\end{array}
\right).
\]
Therefore the corresponding left-invariant metrics are given by
\begin{equation}\label{R3-eq: g_L.III ip}
\mathfrak{g}_{L.III}
\left\{
\begin{array}{l}
[u_1,u_4]=(\eta+\gamma_1) u_1  - \gamma_3 u_3,  \quad
[u_2,u_4]= (\eta-\gamma_1) u_2 -  (\gamma_2-1) u_3,
\\
\noalign{\medskip}
[u_3,u_4]=(\gamma_2+1) u_1+\gamma_3 u_2 + \eta u_3, 
\end{array}
\right.
\end{equation}
where  $\{u_1,u_2,u_3, u_4\}$  is  a pseudo-orthonormal basis of    $\algsdR$, with $\langle u_1,u_2\rangle=\langle u_3,u_3\rangle=\langle u_4,u_4\rangle=1$.

\begin{remark}\label{R3-re: L.III symm}\rm
	A metric~\eqref{R3-eq: g_L.III ip} is Einstein if and only if  $\gamma_1=3\eta$ and  $\gamma_2=\gamma_3=0$.
	Moreover, it  is locally symmetric if and only if 
	$\eta=\gamma_1=\gamma_3=0$ and $\gamma_2=1$ 
	(in which case is locally isometric to a Cahen-Wallach symmetric space).
\end{remark}

\begin{theorem}\label{R3-th: g_L.III}
	A  left-invariant metric $\mathfrak{g}_{L.III}$ on $\sdR$  given by Equation~\eqref{R3-eq: g_L.III ip} 
	is a strict algebraic Ricci soliton
	if and only if it is isomorphically homothetic to 
	\begin{itemize}
		\item[]
		$[u_1,u_4]=\eta u_1$,
		$[u_2,u_4]= \eta u_2+u_3$,
		$[u_3,u_4]=u_1+\eta u_3$,\quad
		$\eta\neq 0$.

%
%
		
	\end{itemize}
	
	\smallskip
	
	\noindent 
	Here $\{u_i\}$ is a pseudo-orthonormal basis with $\langle u_1,u_2\rangle=\langle u_3,u_3\rangle=\langle u_4,u_4\rangle=1$. 
\end{theorem}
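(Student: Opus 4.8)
The plan is to follow the same template as Theorems~\ref{R3-th: g_R}--\ref{R3-th: g_L.II}: impose that $\mathfrak{D}=\Ricci-\cARS\Id$ be a derivation of the metric Lie algebra~\eqref{R3-eq: g_L.III ip}, encode this requirement as the polynomial system $\{\mfP_{ijk}=0\}$ in the unknowns $\cARS,\eta,\gamma_1,\gamma_2,\gamma_3$, and solve it. First I would compute the Ricci operator of~\eqref{R3-eq: g_L.III ip}. Since $\{u_i\}$ is a pseudo-orthonormal basis with the null pairing $\langle u_1,u_2\rangle=1$ and the self-adjoint part $D_{sad}$ carries a nontrivial triple-root Jordan block, the Ricci operator will not be diagonal, and indices must be raised and lowered with the off-diagonal metric. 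Assembling the entries $\mathfrak{D}_s{}^r=\Ricci_s{}^r-\cARS\delta_s{}^r$ then produces the explicit expressions for $\mfP_{ijk}$.

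Next I would solve $\{\mfP_{ijk}=0\}$. In contrast to the diagonalizable Type~Ia case, here there is essentially no residual isometry freedom left to normalize parameters (the triple-root normal form is already rigid), so I expect a Gröbner basis computation, as employed in Theorems~\ref{R3-th: g_R} and~\ref{R3-th: g_L.Ib}, to be the cleanest route. I anticipate that the basis contains generators forcing $\gamma_1=\gamma_2=\gamma_3=0$ away from exceptional loci, together with a relation fixing $\cARS$ in terms of $\eta$ (an expanding constant in the strict case). Substituting back collapses~\eqref{R3-eq: g_L.III ip} to $[u_1,u_4]=\eta u_1$, $[u_2,u_4]=\eta u_2+u_3$, $[u_3,u_4]=u_1+\eta u_3$, which is precisely the asserted normal form; the converse (that this family is indeed an algebraic Ricci soliton) follows by direct substitution.

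Finally I would remove the non-strict subcases. By Remark~\ref{R3-re: L.III symm} the locus $\gamma_1=3\eta$, $\gamma_2=\gamma_3=0$ is Einstein and the locus $\eta=\gamma_1=\gamma_3=0$, $\gamma_2=1$ is a Cahen--Wallach (hence plane wave) symmetric space; both are excluded by strictness. In particular, once $\gamma_1=\gamma_2=\gamma_3=0$ is established, the value $\eta=0$ produces a three-step nilpotent structure whose metric is a steady plane-wave soliton, so strictness forces $\eta\neq 0$; a direct check then confirms that the surviving family is non-Einstein, irreducible, and does not split off a parallel direction reducing it to the three-dimensional list of Section~\ref{se:3D}. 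The main obstacle I foresee is the Ricci computation itself: with a non-semisimple $D_{sad}$ in a null frame, tracking the Levi--Civita connection and contracting correctly is markedly more error-prone than in the orthonormal Types~Ia and~Ib, and any slip there contaminates every $\mfP_{ijk}$. Once the correct system is recorded, the elimination and the exclusion of the Einstein and plane-wave loci are routine.
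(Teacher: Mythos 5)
Your overall strategy is the same as the paper's: write down the system $\{\mfP_{ijk}=0\}$ expressing that $\mathfrak{D}=\Ricci-\cARS\Id$ is a derivation, solve it, and discard the non-strict loci. In the actual proof no Gr\"obner basis is needed: the combinations $\mfP_{143}+\mfP_{342}=6\gamma_3^2$, $\mfP_{141}-\mfP_{242}=2(3\eta^2+\cARS)\gamma_1$ and $\mfP_{243}+\mfP_{341}=2(3\eta^2+\cARS)-2(\gamma_1-3\eta)\gamma_1$ immediately give $\gamma_3=0$ and $\cARS=-3\eta^2$, after which the system collapses to $(\gamma_1-\eta)\gamma_2=0$ and $(\gamma_1-3\eta)\gamma_1=0$. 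That is a cosmetic difference only.

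There is, however, one concrete omission in your plan. The solution variety is not ``$\gamma_1=\gamma_2=\gamma_3=0$ away from exceptional loci'': it contains the full one-parameter branch $\gamma_3=\gamma_1=0$, $\eta=0$, $\cARS=0$ with $\gamma_2\neq0$ \emph{arbitrary}, on which every metric is a genuine (steady) algebraic Ricci soliton. You only flag the single point $\gamma_2=1$ of this branch, which is the Cahen--Wallach locally symmetric case from Remark~\ref{R3-re: L.III symm}; for $\gamma_2\neq0,1$ the metrics are neither Einstein nor locally symmetric, so strictness does not dispose of them via those two exclusions. The paper eliminates the whole branch by verifying directly that $u_1$ is a parallel null vector field with $R(x,y)=0$ and $\nabla_xR=0$ for $x,y\in u_1^\perp$, so that Leistner's criterion identifies these metrics as the plane waves of case~(b) in Section~\ref{sse:plane-wave-homog}; only then does the ``not a plane wave'' clause of strictness apply. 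Your plan needs this extra verification, and without it the classification would wrongly retain (or silently drop) the $\gamma_2\neq0,1$ metrics. A second, minor point: with $\gamma_1=\gamma_2=\gamma_3=0$ and $\eta=0$ the metric is Einstein by Remark~\ref{R3-re: L.III symm} (it is the point $\gamma_1=3\eta=0$ of the Einstein locus), which is the cleaner reason to require $\eta\neq0$ than the plane-wave description you give. Otherwise the setup, the identification of the normal form, and the exclusion of the Einstein locus $\gamma_1=3\eta\neq0$, $\gamma_2=\gamma_3=0$ match the paper.
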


	\begin{remark}\rm\label{re:III}
The underlying Lie algebra in this case  is $\mathfrak{r}_4$ and the corresponding algebraic Ricci solitons are expanding with  $\cARS = -3\eta^2$ and $\tau=-12\eta^2$. Moreover these metrics are  $\mathcal{F}[-1/4]$-critical with zero energy.
	\end{remark}

\begin{proof} 
	A straightforward calculation  shows that the conditions for $\mathfrak{D}=\Ricci-\cARS\Id$ to be a derivation are determined by a system of polynomial equations on the soliton constant $\cARS$ and  the structure constants in~\eqref{R3-eq: g_L.III ip}, given by $\{\mfP_{ijk}=0\}$, where

	\medskip

	$
	\begin{array}{l}
	\mfP_{141} =  
	-\gamma_1 \gamma_3 + (3 \eta^2 + \cARS) (\gamma_1 + 
	\eta) + 3 \eta \gamma_3 ,
	
\end{array}
$

\vspace{\vsep}

$
\begin{array}{l}
\mfP_{143} =   
(3 \gamma_3 - 3 \eta^2 - \cARS) \gamma_3 ,
\end{array}
$

\vspace{\vsep}

$
\begin{array}{l}
\mfP_{241} =     
-6  (\gamma_1 - \eta) \gamma_2 ,
\end{array}
$

\vspace{\vsep}

$
\begin{array}{l}
\mfP_{242} =  
-\gamma_1 \gamma_3 - (3 \eta^2 + \cARS) (\gamma_1 - 
\eta ) + 3 \eta \gamma_3  ,
\end{array}
$

\vspace{\vsep}

$
\begin{array}{l}
\mfP_{243} =  
-\gamma_1^2 + 5 \gamma_2 \gamma_3 + 
3 \eta  \gamma_1 - (3 \eta^2 + \cARS) 
\gamma_2 - 3 \gamma_3 + 3 \eta^2 + \cARS ,
\end{array}
$

\vspace{\vsep}

$
\begin{array}{l}
\mfP_{341} =   
-\gamma_1^2 + 5 \gamma_2 \gamma_3 + 
3  \eta \gamma_1 + (3 \eta^2 + \cARS) 
\gamma_2 + 3 \gamma_3 + 3 \eta^2 + \cARS ,
\end{array}
$

\vspace{\vsep}

$
\begin{array}{l}
\mfP_{342} =  
(3 \gamma_3 + 3 \eta^2 + \cARS) \gamma_3 ,
\end{array}
$

\vspace{\vsep}

$
\begin{array}{l}
\mfP_{343} =   
2 \gamma_1 \gamma_3 - 
6  \eta \gamma_3 + (3 \eta^2 + \cARS) \eta.
\end{array}
$


\bigskip

Note that $\mfP_{143} + \mfP_{342} = 6\gamma_3^2$ and therefore necessarily $\gamma_3=0$. Now, using this condition, we have
\[
\mfP_{141}-\mfP_{242} = 2(3\eta^2+\cARS)\gamma_1,\quad
\mfP_{243}+\mfP_{341} = 2(3\eta^2+\cARS)-2(\gamma_1-3\eta)\gamma_1,
\]
which imply that $\cARS=-3\eta^2$, and the system $\{\mfP_{ijk}=0\}$ reduces to
\[
\mfP_{241}= -6(\gamma_1-\eta)\gamma_2=0,\quad
\mfP_{243}=\mfP_{341}=-(\gamma_1-3\eta)\gamma_1=0 .
\]

As a consequence,  $\gamma_1\gamma_2=0$, so that we must consider the case $\gamma_1=\gamma_2=0$, the case $\gamma_1=0$, $\gamma_2\neq 0$, $\eta=0$ and the case $\gamma_1=3\eta\neq 0$, $\gamma_2=0$. Since the latter case gives an Einstein space (see Remark~\ref{R3-re: L.III symm}), next we analyze the other two cases separately.

\medskip

\subsection*{Case 1: $\bm{\gamma_3=0}$, $\bm{\cARS=-3\eta^2}$, $\bm{\gamma_1=\gamma_2=0}$} In this case, the left-invariant metric, given by
\[
[u_1,u_4]=\eta u_1,\quad
[u_2,u_4]= \eta u_2+u_3,\quad
[u_3,u_4]=u_1+\eta u_3,
\] 
determines an algebraic Ricci soliton with soliton constant $\cARS=-3\eta^2$, 
being Einstein if and only if $\eta=0$ and which is never locally symmetric (see Remark~\ref{R3-re: L.III symm}). The underlying Lie algebra is $\mathfrak{r}_4$ in the non-Einsteinian case, thus   corresponding  to the case in the theorem.

\medskip

\subsection*{Case 2: $\bm{\gamma_3=0}$, $\bm{\cARS=0}$, $\bm{\gamma_1=0}$, $\bm{\gamma_2\neq 0}$, $\bm{\eta=0}$} 
The algebraic Ricci soliton, with soliton constant $\cARS=0$, has  associated left-invariant metric 
\[ 
[u_2,u_4]= -(\gamma_2-1)u_3,\quad
[u_3,u_4]=(\gamma_2+1)u_1 .
\] 
A straightforward calculation shows that $u_1$ determines a left-invariant parallel null vector field. Moreover the curvature tensor satisfies $R(x,y)=0$ for all $x,y\in u_1^\perp$, and the covariant derivatives $\nabla_xR=0$ for all $x\in u_1^\perp$. Hence it follows from \cite{Leistner} that the underlying structure is a plane wave corresponding to case~(b) in Section~\ref{sse:plane-wave-homog}, which finishes the proof.
\end{proof} 

\begin{remark}\rm\label{grupos-ext-R1}
The non-Einstein left-invariant plane wave metrics in Case 2 above are realized on the four-dimensional nilpotent Lie groups. The underlying Lie algebra is $\mathfrak{h}_3\times\mathbb{R}$ if $\gamma_2=\pm1$, and  $\mathfrak{n}_4$ otherwise.
\end{remark}

\subsection{Semi-direct extensions with degenerate Lie group $\pmb{\mathbb{R}}^{\bm{3}}$} \label{R3-se:degenerate}

Let  $\mathfrak{g}=\algsdR$ be a four-dimensional Lie algebra with a Lorentzian inner product $\ip$ which restricts to a degenerate inner product on the subalgebra $\mathbb{R}^3$.    
In this case there exists a pseudo-orthonormal basis $\{u_1,u_2,u_3,u_4\}$  of $\mathfrak{g}=\algsdR$, with $\langle u_1,u_1\rangle=\langle u_2,u_2\rangle=\langle u_3,u_4\rangle=1$, possibly after rotating the vectors $u_1$ and $u_2$, so that 
\begin{equation}\label{R3-eq: g_D ip}
\mathfrak{g}_D
\left\{
\begin{array}{l}
[u_1,u_4]=\gamma_1 u_1-\gamma_2 u_2 + \gamma_3 u_3, \qquad 
[u_2,u_4]=\gamma_2 u_1+\gamma_4 u_2 + \gamma_5 u_3,
\\
\noalign{\medskip}
[u_3,u_4]=\gamma_6 u_1+\gamma_7 u_2 + \gamma_8 u_3, 
\end{array}
\right.
\end{equation}
for certain  $\gamma_i\in\mathbb{R}$.

\begin{remark}\label{R3-re: D symm}\rm
	A metric~\eqref{R3-eq: g_D ip} is Einstein if and only if  $\gamma_6=\gamma_7 =0$ and, moreover,
	$
	\gamma_1^2  +  \gamma_4^2 - 
	(\gamma_1 + \gamma_4) \gamma_8 = 0
	$.
	Besides, it  is locally symmetric if and only if  $\gamma_6=\gamma_7 =0$ and  one of the following holds:
	\begin{itemize}
		\item[(a)] $
		\gamma_2 = \gamma_8 = 0$. Moreover it is Einstein if and only if, in addition,  $\gamma_1=\gamma_4=0$, in which case it is flat. Otherwise it is locally a Cahen-Wallach symmetric space.
		
		\item[(b)] $
		\gamma_2=\gamma_1=(\gamma_4-\gamma_8)\gamma_4=0$, $\gamma_8\neq 0$, in which case it is flat.
		
		\item[(c)] $
		\gamma_2= (\gamma_1-\gamma_4)\gamma_4=0$, $\gamma_8=\gamma_1\neq 0
		$, in which case it is flat.
		
		\item[(d)] $ \gamma_2\neq 0$, $\gamma_1-\gamma_4=(\gamma_1-\gamma_8)\gamma_1\gamma_8=0$. Moreover, it is Einstein if, in addition $(\gamma_1-\gamma_8)\gamma_1=0$, in which case it is flat. Otherwise it is locally a Cahen-Wallach symmetric space.
	\end{itemize}
\end{remark}

\begin{theorem}\label{R3-th: g_D}
No left-invariant metric $\mathfrak{g}_{D}$ on $\sdR$ given by Equation~\eqref{R3-eq: g_D ip} is a strict algebraic Ricci soliton.
\end{theorem}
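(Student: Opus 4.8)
The plan is to impose the condition that $\mathfrak{D}=\operatorname{Ric}-\cARS\Id$ be a derivation, extract the resulting polynomial system on the structure constants $\gamma_1,\dots,\gamma_8$ and the soliton constant $\cARS$, and then show that every solution is forced into one of the non-strict classes: Einstein, locally symmetric, reducible, or a plane wave. Since strict means non-symmetric, irreducible, neither Einstein nor a plane wave, establishing this dichotomy proves that no strict algebraic Ricci soliton occurs.

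First I would compute the Ricci operator of the metric~\eqref{R3-eq: g_D ip} in the pseudo-orthonormal basis $\{u_i\}$, where $u_3,u_4$ span a null plane with $\langle u_3,u_4\rangle=1$ and the radical of the restricted metric on $\mathbb{R}^3$ is $\operatorname{span}\{u_3\}$. All the structure is encoded in the endomorphism $D$ with $D(u_i)=[u_i,u_4]$, whose matrix in this basis is $\bigl(\begin{smallmatrix}\gamma_1&\gamma_2&\gamma_6\\-\gamma_2&\gamma_4&\gamma_7\\\gamma_3&\gamma_5&\gamma_8\end{smallmatrix}\bigr)$, so the connection and curvature are determined by $D$ together with its metric adjoint. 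I would then write out the independent components $\mfP_{ijk}=\mathfrak{D}[u_i,u_j]-[\mathfrak{D}u_i,u_j]-[u_i,\mathfrak{D}u_j]$ as polynomials in the $\gamma_i$ and $\cARS$, exactly as in the proofs of Theorem~\ref{R3-th: g_R} through Theorem~\ref{R3-th: g_L.III}.

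Second, I would analyze this system. I expect the equations to force the lower off-diagonal entries $\gamma_6,\gamma_7$ of $D$ to vanish, since these are precisely the terms whose vanishing makes $D$ preserve the radical $\operatorname{span}\{u_3\}$ and whose vanishing characterizes both the Einstein locus in Remark~\ref{R3-re: D symm} and the plane-wave family~(c) of Section~\ref{sse:plane-wave-homog}. The cleanest route is to compute a Gröbner basis of the ideal generated by $\{\mfP_{ijk}\}$ (in a graded reverse lexicographic order) and read off the low-degree generators, as was done in Case~3 of Theorem~\ref{R3-th: g_R} and in Theorem~\ref{R3-th: g_L.Ib}; branching on $\gamma_6,\gamma_7$ and on the diagonal parameters then reduces the problem to a short list of normal forms. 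When $\gamma_6=\gamma_7=0$ and $\gamma_1^2+\gamma_4^2-(\gamma_1+\gamma_4)\gamma_8=0$, Remark~\ref{R3-re: D symm} already yields the Einstein case.

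Third, for each surviving non-Einstein branch I would verify it is a plane wave by the criterion of Leistner~\cite{Leistner}: I would exhibit a left-invariant parallel null vector field $U$ (a suitable combination of $u_3$ with the diagonal eigendirections of $D$), and then check transversal flatness $R(X,Y)=0$ together with $\nabla_X R=0$ for all $X,Y\in U^\perp$, placing the metric in family~(c) of Section~\ref{sse:plane-wave-homog}. Branches that instead split off a parallel spacelike or timelike direction are reducible and are absorbed by Section~\ref{se:extensions}. The main obstacle will be the size and degeneracy of the system: with eight structure constants the derivation equations are genuinely bulky, and because the induced metric on $\mathbb{R}^3$ is degenerate the Ricci operator need not be diagonalizable, so the spectral normalizations available in the Riemannian and nondegenerate Lorentzian cases cannot be used here. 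The delicate point is therefore to show that no soliton branch escapes into a genuinely new, non-plane-wave and non-symmetric geometry—that is, that the Gröbner analysis really does pin every solution down to $\gamma_6=\gamma_7=0$ together with the plane-wave or symmetric loci—rather than to carry out any single curvature computation.
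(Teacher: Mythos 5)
Your overall strategy (set up the system $\{\mfP_{ijk}=0\}$, then show every solution lands in the Einstein, symmetric, reducible or plane-wave locus) is the right one, and your treatment of the branch $\gamma_6=\gamma_7=0$ matches the paper: there $\cARS=0$ is forced, every such metric is a steady algebraic Ricci soliton, $u_3$ generates a parallel null line field, and transversal flatness plus $\nabla_xR=0$ on $u_3^\perp$ places it in family~(c) of Section~\ref{sse:plane-wave-homog}. (Minor point: $u_3$ is only recurrent, $\nabla u_3=\omega\otimes u_3$ with $\omega=-\gamma_8\langle u_3,\cdot\rangle$, not parallel; it is the line field that is parallel, which is what Leistner's criterion needs.)

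The genuine gap is your central expectation that the derivation equations force $\gamma_6=\gamma_7=0$. They do not. The paper's proof finds bona fide non-Einstein algebraic Ricci solitons with $\gamma_7\neq 0$ (with $\cARS=\tfrac32\gamma_7^2$, $\gamma_1=\gamma_2=\gamma_3=0$, $\gamma_8=-\gamma_4$, $\gamma_5=-\gamma_4^2/\gamma_7$) and with $\gamma_6\neq 0$ (a full branch with $\cARS=\tfrac32(\gamma_6^2+\gamma_7^2)$ and all remaining $\gamma_i$ expressed in terms of $\gamma_1,\gamma_6,\gamma_7$). These solutions are excluded from being \emph{strict} not because they are Einstein or plane waves but because they are reducible: in each case one must exhibit an explicit left-invariant parallel spacelike vector field and a change of basis splitting the Lie algebra as $\alg\times\mathbb{R}$ (in the $\gamma_6\neq0$ branch the parallel direction is the nonobvious combination $\bar u_3=\gamma_6\gamma_7 u_1-\gamma_6^2 u_2-2\gamma_1\gamma_7 u_3$). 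Your one-sentence fallback about reducible branches does anticipate the correct resolution, but it is precisely where the substantive work lies — the ``delicate point'' you flag (that the Gr\"obner analysis pins everything to $\gamma_6=\gamma_7=0$) is exactly the step that fails, so as written the proposal would leave two of the three cases unresolved. To repair it you need to solve the system on the branches $\gamma_6\neq0$ and $\gamma_6=0,\gamma_7\neq0$ explicitly and then produce the parallel spacelike direction in each, rather than expecting those branches to be empty.
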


\begin{proof} 
	The conditions for $\mathfrak{D}=\Ricci-\cARS\Id$ to be a derivation are determined by a system of polynomial equations, $\{\mfP_{ijk}=0\}$, on the soliton constant $\cARS$ and  the structure constants in~\eqref{R3-eq: g_D ip}, where 
	
	\medskip

	$
	\begin{array}{l}
	2\mfP_{141} =     
	-(3 \gamma_1 + \gamma_4) \gamma_6^2
	- \gamma_1 \gamma_7^2
	+3 \gamma_2   \gamma_6 \gamma_7 + 
	2 \gamma_1 \cARS ,
	\end{array}
	$

	\vspace{\vsep}

	$
	\begin{array}{l}
	2\mfP_{142} =  
	3 \gamma_2 \gamma_7^2 - 3 \gamma_1 \gamma_6 \gamma_7 -
	2 \gamma_2 \cARS ,
	\end{array}
	$

	\vspace{\vsep}

	$
	\begin{array}{l}
	2\mfP_{143} =   
	\gamma_3 \gamma_6^2
	+ \gamma_5 \gamma_6 \gamma_7
	-  \gamma_2 (2 \gamma_1 + 
	2 \gamma_4 - \gamma_8)   \gamma_7
	
	\\[\vsep]
	
	\phantom{2\mfP_{143}=}
	+ \left(2 \gamma_1^2 - \gamma_2^2 + \gamma_1 
	(\gamma_4 - 
	2 \gamma_8)  - \gamma_4 \gamma_8\right) 
	\gamma_6 + 2 \gamma_3 \cARS ,
\end{array}
$

\vspace{\vsep}

$
\begin{array}{l}
2\mfP_{241} =     
-3 \gamma_2 \gamma_6^2 - 3 \gamma_4 \gamma_6 \gamma_7 + 
2 \gamma_2 \cARS ,
\end{array}
$

\vspace{\vsep}

$
\begin{array}{l}
2\mfP_{242} =  
-\gamma_4 \gamma_6^2
- (\gamma_1 + 3 \gamma_4) \gamma_7^2
-3\gamma_2 \gamma_6 \gamma_7 + 
2 \gamma_4 \cARS ,
\end{array}
$

\vspace{\vsep}

$
\begin{array}{l}
2\mfP_{243} =  
\gamma_5 \gamma_7^2
+ \gamma_3 \gamma_6 \gamma_7
+ \gamma_2  (2 \gamma_1 + 2 \gamma_4 - \gamma_8) \gamma_6

\\[\vsep]
\phantom{2\mfP_{243} =}

- \left(
\gamma_2^2-(\gamma_1 + 
2 \gamma_4) (\gamma_4 - \gamma_8) \right) 
\gamma_7 + 2 \gamma_5 \cARS ,
\end{array}
$

\vspace{\vsep}

$
\begin{array}{l}
2\mfP_{341} =   
- (3 \gamma_6^2 + 3  \gamma_7^2 - 2  \cARS) 
\gamma_6,
\end{array}
$

\vspace{\vsep}

$
\begin{array}{l}
2\mfP_{342} =  
- (3 \gamma_6^2 + 3 \gamma_7^2 - 2 \cARS) \gamma_7 ,
\end{array}
$

\vspace{\vsep}

$
\begin{array}{l}
2\mfP_{343} =   
(2 \gamma_1 + \gamma_4 - \gamma_8) \gamma_6^2 + (\gamma_1 + 
2 \gamma_4 - \gamma_8) \gamma_7^2
+ 
2 \gamma_8 \cARS .
\end{array}
$


\bigskip

In view of the components above we study the case $\gamma_6=\gamma_7=0$, the case $\gamma_6=0$, $\gamma_7\neq 0$, and the case $\gamma_6\neq 0$ separately.

\medskip

\subsection*{Case 1: $\bm{\gamma_6=\gamma_7=0}$} 
A direct calculation shows that necessarily $\cARS=0$, since otherwise the metric would be flat, and the corresponding algebraic Ricci soliton is given by the left-invariant metric determined by
\[
[u_1,u_4] = \gamma_1 u_1 - \gamma_2 u_2 + \gamma_3 u_3,\quad
[u_2,u_4] = \gamma_2 u_1 + \gamma_4 u_2 + \gamma_5 u_3,\quad
[u_3,u_4] =  \gamma_8 u_3.
\]
The scalar curvature of any metric~\eqref{R3-eq: g_D ip} is $\tau=\frac{1}{2}(\gamma_6^2+\gamma_7^2)$. Hence the scalar curvature vanishes in the setting $\gamma_6=\gamma_7=0$. In the non-Einstein case the Ricci operator is two-step nilpotent and isotropic, since the only non-zero component is given by $\rho(u_4,u_4)=\gamma_1(\gamma_8- \gamma_1)+\gamma_4 (\gamma_8-\gamma_4)$. Moreover $u_3$ determines a null left-invariant vector field which is recurrent since $\nabla u_3=\omega\otimes u_3$, where $\omega(\cdot)=-\gamma_8\langle u_3,\,\cdot\,\rangle$. A straightforward calculation shows that $R(x,y)=0$ and $\nabla_xR=0$ for all $x,y\in u_3^\perp$, from where it follows that the Lorentzian Lie group determined by \eqref{R3-eq: g_D ip} with $\gamma_6=\gamma_7=0$ is a plane wave (cf. \cite{Leistner}), corresponding to case~(c) in Section~\ref{sse:plane-wave-homog} .
The only non-zero component of $\nabla\rho$ is given by
$(\nabla_{u_4}\rho)(u_4,u_4)=-2\gamma_8\rho(u_4,u_4)$. This shows that the Ricci tensor is parallel if and only if $\gamma_8=0$.

\medskip

\subsection*{Case 2: $\bm{\gamma_6=0}$, $\bm{\gamma_7\neq0}$} 
If $\gamma_6=0$, we have
\[
\mfP_{342} = -\tfrac{1}{2}(3\gamma_7^2-2\cARS)\gamma_7,\quad
\mfP_{241} = \gamma_2\cARS,\quad
\mfP_{141} = -\tfrac{1}{2}(\gamma_7^2-2\cARS)\gamma_1.
\]
Since $\gamma_7\neq 0$, it follows that 
$
\cARS = \tfrac{3}{2}\gamma_7^2
$ and $\gamma_1=\gamma_2=0$.
Now, 
\[
\mfP_{343} = (\gamma_4+\gamma_8)\gamma_7^2
\]
leads to $\gamma_8=-\gamma_4$, and the system $\{\mfP_{ijk}=0\}$ reduces to
\[
\mfP_{143} = \tfrac{3}{2}  \gamma_3\gamma_7^2,\quad
\mfP_{243} = 2(\gamma_4^2+\gamma_5\gamma_7)\gamma_7.
\]
Hence, $\gamma_3=0$ and $\gamma_5=-\frac{\gamma_4^2}{\gamma_7}$, and \eqref{R3-eq: g_D ip}  becomes
\[ 
[u_2,u_4]= \gamma_4 u_2 -\tfrac{\gamma_4^2}{\gamma_7} u_3,\quad
[u_3,u_4] = \gamma_7 u_2-\gamma_4 u_3,
\]
so that the Lie algebra splits as a product $\mathfrak{g}=\alg\times\mathbb{R}$ with $\alg=\operatorname{span}\{ u_2,u_3,u_4\}$.
A direct calculation now shows that $\nabla_{u_i} u_1 = 0$, $i=1,\dots,4$. Thus, $u_1$ is a spacelike parallel vector field and the Lorentzian Lie group reduces to a product covered by the results in Section~\ref{se:3D}.

\medskip

\subsection*{Case 3: $\bm{\gamma_6\neq 0}$} 
Since $\mfP_{341}=-\frac{1}{2}(3\gamma_6^2+3\gamma_7^2-2\cARS)\gamma_6$, the soliton constant is given by $\cARS=\frac{3}{2}(\gamma_6^2+\gamma_7^2)$ and, as a consequence, $\mfP_{142}=-\frac{3}{2}(\gamma_2\gamma_6+\gamma_1\gamma_7)\gamma_6$, so that $\gamma_2=-\frac{\gamma_1\gamma_7}{\gamma_6}$.
Now $\mfP_{141} =- \frac{1}{2}(\gamma_4\gamma_6^2+\gamma_1\gamma_7^2)$, 
which implies $\gamma_4=-\frac{\gamma_1\gamma_7^2}{\gamma_6^2}$.
Next, we compute $\mfP_{343}=\frac{\left( \gamma_1(\gamma_6^2-\gamma_7^2)+\gamma_6^2\gamma_8\right)(\gamma_6^2+\gamma_7^2)}{\gamma_6^2}$, 
which leads to
$\gamma_8=-\frac{(\gamma_6^2-\gamma_7^2)\gamma_1}{\gamma_6^2}$.
At this point, we have
\[
2\gamma_6^3 \mfP_{143} = 
(4 \gamma_6^2 + 3  \gamma_7^2) \gamma_6^3 \gamma_3 
+ 2  (\gamma_6^2 - \gamma_7^2) (2 \gamma_6^2 + 
\gamma_7^2)\gamma_1^2
+ \gamma_5 \gamma_6^4 \gamma_7 .
\]
Now using that the coefficient of $\gamma_3$ does not vanish, we can clear up this unknown, and using its expression the system $\{\mfP_{ijk}=0\}$ finally reduces to
\[
\mfP_{243} = 
\frac{6\left(\gamma_5\gamma_6^4-(\gamma_6^2-\gamma_7^2)\gamma_1^2\gamma_7\right)
(\gamma_6^2+\gamma_7^2)^2}
{(4 \gamma_6^2 + 3  \gamma_7^2) \gamma_6^4},
\]
from where $\gamma_5=\frac{(\gamma_6^2-\gamma_7^2)\gamma_1^2\gamma_7}{\gamma_6^4}$. Therefore, \eqref{R3-eq: g_D ip} reduces to
\[
\begin{array}{l}
[u_1,u_4]= \gamma_1 u_1 +\tfrac{\gamma_1\gamma_7}{\gamma_6}  u_2
- \tfrac{(\gamma_6^2-\gamma_7^2)\gamma_1^2}{\gamma_6^3} u_3,
\\[0.2cm]
[u_2,u_4] =  -\tfrac{\gamma_1\gamma_7}{\gamma_6} u_1 
- \tfrac{\gamma_1\gamma_7^2}{\gamma_6^2} u_2
+ \tfrac{(\gamma_6^2-\gamma_7^2)\gamma_1^2\gamma_7}{\gamma_6^4}u_3 ,

\\[0.2cm]
[u_3,u_4] = \gamma_6 u_1+\gamma_7 u_2
- \tfrac{(\gamma_6^2-\gamma_7^2)\gamma_1}{\gamma_6^2} u_3,
\end{array}
\]
which is a shrinking algebraic Ricci soliton with soliton constant $\cARS=\frac{3}{2}(\gamma_6^2+\gamma_7^2)$.
Since $\gamma_6(\gamma_6^2+\gamma_7^2)\neq 0$, considering the basis
$$
\bar u_1=u_3,\quad \bar u_2=\gamma_6 u_1+\gamma_7 u_2,\quad \bar u_3=\gamma_6\gamma_7 u_1-\gamma_6^2 u_2-2\gamma_1\gamma_7 u_3,\quad \bar u_4=2\gamma_1\gamma_7 u_2-\gamma_6^2 u_4,
$$
one has the brackets
$$
[\bar u_1,\bar u_4]=\gamma_1(\gamma_6^2-\gamma_7^2)\bar u_1 -\gamma_6^2\bar u_2,\quad
[\bar u_2,\bar u_4]=\tfrac{\gamma_1^2(\gamma_6^2-\gamma_7^2)^2}{\gamma_6^2}\bar u_1-\gamma_1(\gamma_6^2-\gamma_7^2)\bar u_2,
$$
which shows that the Lie algebra splits as a product $\mathfrak{g}=\alg\times\mathbb{R}$, where $\alg=\operatorname{span}\{ \bar u_1,\bar u_2,\bar u_4\}$. Moreover a straightforward calculation shows that  $\langle\bar u_3,\bar u_3\rangle=\gamma_6^2(\gamma_6^2+\gamma_7^2)$ and $\nabla_{\bar u_i} \bar u_3 = 0$, $i=1,\dots,4$. This shows that the Lorentzian Lie group is a Lorentzian product and thus covered by the analysis in Section~\ref{se:3D}, finishing the proof.
\end{proof}

\begin{remark}\rm\label{grupos-ext-R}
	Any non-Abelian semi-direct extension $\mathbb{R}^3\rtimes\mathbb{R}$ of the Abelian Lie group but  the $\mathfrak{S}$-type Lie group with Lie algebra $\mathfrak{r}_{4,1,1}$ admits non-Einstein left-invariant Lorentz metrics which are plane waves. 
	
	The case of nilpotent extensions was already considered in Remark~\ref{grupos-ext-R1}.
	
	We take $\gamma_6=\gamma_7=0$, and set $\xi=(\gamma_1,\gamma_2,\gamma_3,\gamma_4,\gamma_5,\gamma_8)$. 	
	The product Lie groups corresponding to $\mathfrak{r}_{3}\times\mathbb{R}$, $\mathfrak{r}_{3,\lambda}\times\mathbb{R}$ and $\mathfrak{r}'_{3,\lambda}\times\mathbb{R}$ admit non-Einstein left-invariant plane wave metrics given by $\mathfrak{g}_{D}$ for the special choice of $\xi=(-2,-1,1,0,0,0)$,
	$\xi=(-1,0,0,-\lambda,0,0)$, and 
	$\xi=(-\lambda+1,\frac{2}{\sqrt{2}},0,-\lambda-1,0,0)$, respectively.
	
	The Lie algebra $\mathfrak{r}_4$ admits a non-Einstein plane-wave left-invariant metric $\mathfrak{g}_{D}$ determined by $\xi=(-2,-1,1,0,0,-1)$. The left-invariant metrics corresponding to $\xi=(-\lambda,0,-1,-1,0,-\lambda)$ are realized on $\mathfrak{r}_{4,\lambda}$ for $\lambda\neq1$, and $\xi=(-2,1,0,0,0,-1)$ realizes on $\mathfrak{r}_{4,1}$.

	The Lie algebra $\mathfrak{r}_{4,\mu,\lambda}$ admits left-invariant non-Einstein plane wave metrics $\mathfrak{g}_{D}$ determined by
	$\xi=(-1,0,0,-\mu,0,-\lambda)$ if $\mu^2-\lambda\mu-\lambda+1\neq0$, and 
	$\xi=(-\mu,0,0,-\lambda,0,-1)$ if $\mu^2-\lambda\mu-\lambda+1=0$. This covers all the cases but $\mathfrak{r}_{4,1,1}$.
	The remaining Lie algebra $\mathfrak{r}'_{4,\mu,\lambda}$ admits left-invariant non-Einstein plane wave metrics $\mathfrak{g}_{D}$ determined by
	$\xi=(-\mu-2\lambda,\sqrt{(\lambda+\mu)^2+1},0,\mu,0,-\mu)$ for any value of $\lambda$ and $\mu\neq 0$.
\end{remark}

\section{Semi-direct extensions of the Heisenberg group $\mathcal{H}^3$}

We proceed as in the case of the Abelian Lie group, considering separately the cases when the induced metric on $\mathcal{H}^3$ is positive definite, Lorentzian or degenerate. Hence Theorem~\ref{th:main H3} follows at once from the analysis below.

\subsection{Semi-direct extensions with Riemannian normal subgroup $\bm{\mathcal{H}^3$}} \label{H3-se: H3 Riemann}

In this section we consider   left-invariant  Lorentzian metrics which are obtained as extensions 
of the three-dimensional Riemannian Heisenberg Lie group $\mathcal{H}^3$. 
Following Section~\ref{se:Lorentz-Riemannian} one may describe all such Lorentzian extensions considering an orthonormal basis $\{ e_i\}$ with $e_4$ timelike where the Lie brackets become
\begin{equation}\label{H3-eq: g_R ip}
\mathfrak{g}_R
\left\{
\begin{array}{ll}
[e_1,e_2]=\lambda_3 e_3,  &
[e_1,e_4]=\gamma_1 e_1-\gamma_2 e_2 + \gamma_3 e_3,  
\\
\noalign{\medskip}
[e_2,e_4]=\gamma_2 e_1 + \gamma_4  e_2 + \gamma_5 e_3,  &
[e_3,e_4]=(\gamma_1 +\gamma_4)e_3,
\end{array}
\right.
\end{equation}
where $\lambda_3\neq 0$ and $\gamma_1$, $\dots$, $\gamma_5\in\mathbb{R}$. 

\begin{remark}\label{H3-re: R - loc symm}\rm 	 
	A metric~\eqref{H3-eq: g_R ip}  is never either Einstein or locally symmetric.
\end{remark}

\begin{remark}\label{H3-re: R - paso a R3}\rm 	
	We introduce  a    parameter $\HsignRL$ which will be  used in what follows with  the purpose of facilitating the solution of the case in \S\ref{H3-se:Lorentz-1}
	proceeding exactly as in the present section. In the case at hand $\HsignRL = 1$, while in Section~\ref{H3-se:Lorentz-1} it takes the value $\HsignRL = -1$.
	
Considering the new basis
		$\bar e_1 = e_1$, $\bar e_2 = e_2$, $\bar e_3 = e_3$,
		$\bar e_4 =  \HsignRL\tfrac{ \gamma_5}{\lambda_3} e_1 - \HsignRL\tfrac{ \gamma_3}{\lambda_3} e_2  +e_4$,
		the Lie bracket transforms into
		\[
		[\bar e_1,\bar e_2] =   \HsignRL \lambda_3 \bar   e_3,
		\,\,
		[\bar e_1,\bar e_4] = \gamma_1 \bar e_1 -\gamma_2 \bar e_2,
		\,\,
		[\bar e_2, \bar e_4] =   \gamma_2 \bar e_1 + \gamma_4 \bar e_2 ,
		\,\,
		[\bar e_3, \bar e_4] = (\gamma_1+\gamma_4) \bar e_3,
		\]
		and a   direct  calculation shows that, when evaluating on the basis $\{\bar e_i\}$,
		\[ 
		\ad_{\bar e_4}=\left(
		\begin{array}{cc}
		A& 0 
		\\
		0 & \operatorname{tr}A
		\end{array}
		\right), 
		\quad \text{where}\,\, 
		A=\left(
		\begin{array}{cc}
		-\gamma_1 & -\gamma_2  
		\\
	\gamma_2 & -\gamma_4  
		\end{array}
		\right).
\qquad\qquad
		\]
		Hence,  the Lie algebra corresponds to $ \mathfrak{h}_3\times\mathbb{R} $ or $\mathfrak{n}_4$ if and only if $\gamma_4=-\gamma_1$ and 
		$\gamma_1^2=\gamma_2^2$ (cf. \cite{ABDO}). 
Indeed, setting $\gamma_2=\ve \gamma_1$, with $\ve^2=1$,   and considering the basis 
		\[
		\widetilde e_1 = -\HsignRL\tfrac{\gamma_5}{\lambda_3}   e_1 
		+ \HsignRL\tfrac{\gamma_3}{\lambda_3}   e_2  -  e_4,\quad
		\widetilde e_2 =   \HsignRL \lambda_3 e_3,\quad
		\widetilde e_3 = \ve e_1 - e_2,\quad
		\widetilde e_4 =  e_1,
		\]
		the Lie brackets transform into
	$[\widetilde e_1,\widetilde e_4] = \ve \gamma_1 \widetilde e_3$ and 
	$[\widetilde e_3, \widetilde e_4] =  \widetilde e_2$.
		Therefore,   we conclude that if $\gamma_4=-\gamma_1$ and $\gamma_1^2 = \gamma_2^2$ then the semi-direct extensions are almost Abelian and covered by Theorem~\ref{th:main R3}.

\end{remark}

\begin{theorem}\label{H3-th: g_R}
	Any  left-invariant metric $\mathfrak{g}_R$ on $\sdH$  given by Equation~\eqref{H3-eq: g_R ip} which is an algebraic Ricci soliton is also   realized on $\sdR$.  
\end{theorem}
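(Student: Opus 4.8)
The goal is to prove that the algebraic Ricci soliton condition forces $\operatorname{ad}_{e_4}$ to restrict to a nilpotent endomorphism of $\mathfrak{h}_3=\span\{e_1,e_2,e_3\}$; in the notation of \eqref{H3-eq: g_R ip} this is exactly the pair of scalar conditions $\gamma_1+\gamma_4=0$ and $\gamma_1^2=\gamma_2^2$, i.e.\ the vanishing of the trace and of the determinant of the block $A$ appearing in Remark~\ref{H3-re: R - paso a R3}. Once these two conditions are established, that remark provides an explicit change of basis exhibiting the underlying Lie algebra as $\mathfrak{h}_3\times\mathbb{R}$ or $\mathfrak{n}_4$, both of which are almost Abelian; hence the metric is realized on $\sdR$ and falls under Theorem~\ref{th:main R3}. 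The whole statement therefore reduces to extracting $\gamma_1+\gamma_4=0$ and $\gamma_1^2=\gamma_2^2$ from the soliton equations.

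First I would compute the Levi-Civita connection of the metric \eqref{H3-eq: g_R ip} in the orthonormal basis $\{e_i\}$ (with $e_4$ timelike) by Koszul's formula, and from it the Ricci operator $\Ricci$, keeping all six parameters $\lambda_3,\gamma_1,\dots,\gamma_5$. Proceeding exactly as in the proof of Theorem~\ref{R3-th: g_R}, I would then impose that $\mathfrak{D}=\Ricci-\cARS\Id$ be a derivation, that is $\mfP_{ijk}=\mathfrak{D}[e_i,e_j]-[\mathfrak{D}e_i,e_j]-[e_i,\mathfrak{D}e_j]=0$ for all $i,j,k$, obtaining a polynomial system $\{\mfP_{ijk}=0\}$ in the structure constants and the soliton constant $\cARS$. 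Applying $\mathfrak{D}$ to the central bracket $[e_1,e_2]=\lambda_3 e_3$ and to $[e_3,e_4]=(\gamma_1+\gamma_4)e_3$, together with the standing hypothesis $\lambda_3\neq0$, should already relate $\cARS$ and the off-diagonal entries of $\mathfrak{D}$ to the $\gamma_i$.

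The main obstacle is solving this system. Where a direct manipulation is not transparent I would compute a Gröbner basis of the ideal generated by $\{\mfP_{ijk}\}$ (adjoining a relation $\lambda_3\lambda_3'-1$ to record $\lambda_3\neq0$) with respect to a suitable lexicographic order, and read off generators forcing $\gamma_1+\gamma_4=0$ and $\gamma_1^2=\gamma_2^2$, much as in Case~3 of the proof of Theorem~\ref{R3-th: g_R}. Remark~\ref{H3-re: R - loc symm} guarantees that no Einstein branch can appear, so discarding trivial solitons loses no solution, while the degenerate subcases in which some of the $\gamma_i$ vanish (where the metric may split off a parallel vector field and reduce to a Lorentzian product) are treated exactly as the corresponding degenerate cases in the proof of Theorem~\ref{R3-th: g_R}. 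With $\gamma_1+\gamma_4=0$ and $\gamma_1^2=\gamma_2^2$ in hand, invoking Remark~\ref{H3-re: R - paso a R3} identifies the metric as an almost Abelian one and hence realized on $\sdR$, completing the proof.
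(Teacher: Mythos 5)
Your proposal is correct and follows essentially the same route as the paper: reduce the statement to extracting $\gamma_1+\gamma_4=0$ and $\gamma_1^2=\gamma_2^2$ from the system $\{\mfP_{ijk}=0\}$ and then invoke Remark~\ref{H3-re: R - paso a R3} to identify the Lie algebra as almost Abelian. The paper obtains these two conditions by short explicit linear combinations of the $\mfP_{ijk}$ (after normalizing $\lambda_3=1$ up to homothety), so no Gr\"obner basis computation and no case split over degenerate subcases is actually needed, but that is only a computational shortcut relative to what you describe.
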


\begin{proof} 
 The conditions for $\mathfrak{D}=\Ricci-\cARS\Id$ to be a derivation are determined by a system of polynomial equations  $\{\mfP_{ijk}=0\}$ on the soliton constant $\cARS$ and the structure constants in~\eqref{H3-eq: g_R ip}.
		We will make use of the following polynomials:

	\medskip

	\small 
	
	\noindent
	$
	\begin{array}{l}
	2 \mfP_{121} =
	\HsignRL (\gamma_1 \gamma_3 - 2 \gamma_2 \gamma_5 + 
	3 \gamma_3 \gamma_4) \lambda_3 ,
	\end{array}
	$

	\vspace{\vsep}
	
	\noindent
	$
	\begin{array}{l}
	2 \mfP_{122} =  
	\HsignRL (3 \gamma_1 \gamma_5 + 
	2 \gamma_2 \gamma_3 + \gamma_4 \gamma_5 ) \lambda_3,
	\end{array}
	$

	\vspace{\vsep}
	
	\noindent
	$
	\begin{array}{l}
	2 \mfP_{123} =    
	- \HsignRL  (3 \gamma_3^2 + 3 \gamma_5^2 - 3 \HsignRL \lambda_3^2 - 
	2 \cARS) \lambda_3 ,
	\end{array}
	$

	\vspace{\vsep}

	%
	%
	%
	%
	%
	%
	%
	
	\noindent
	$
	\begin{array}{l}
	2 \mfP_{142} =   
	2 \HsignRL  (5 \gamma_1^2 + \gamma_4^2) \gamma_2 + 
	3 \gamma_2 \gamma_3^2 + (4 \gamma_1 + \gamma_4) \gamma_3
	\gamma_5 - 2 \gamma_2 \cARS  ,
	\end{array}
	$

	%

	\vspace{\vsep}

	\noindent
	$
	\begin{array}{l}
	2 \mfP_{144} =
	- \HsignRL (\gamma_1 \gamma_5 + \gamma_2 \gamma_3) \lambda_3 ,
	\end{array}
	$

	%

	\vspace{\vsep}
	
	\noindent
	$
	\begin{array}{l}
	2 	\mfP_{241} =    
	-2 \HsignRL  (\gamma_1^2 + 5 \gamma_4^2) \gamma_2
	- 3 \gamma_2 \gamma_5^2 + (\gamma_1 + 
	4   \gamma_4  ) \gamma_3 \gamma_5 + 2 \gamma_2 \cARS ,
	\end{array}
	$

	\vspace{\vsep}
	
	\noindent
	$
	\begin{array}{l}
	2 \mfP_{242} =    
	-4 \HsignRL \left(\gamma_4^3 + \gamma_1^2 \gamma_4 - (\gamma_1  - 
	\gamma_4) \gamma_2^2  + \gamma_1 \gamma_4^2 \right)
	- \gamma_3^2 \gamma_4 + (3 \gamma_1  + \gamma_4) \gamma_5^2 + 
	3 \gamma_2 \gamma_3 \gamma_5 + 2 \gamma_4 \cARS,
	\end{array}
	$
	
	%
	
	\vspace{\vsep}
	
	\noindent
	$
	\begin{array}{l}
	2	\mfP_{244} =  
	- \HsignRL (\gamma_2 \gamma_5 - \gamma_3 \gamma_4) \lambda_3,
	\end{array}
	$

	\vspace{\vsep}
	
	\noindent
	$
	\begin{array}{l}
	2 \mfP_{341} =    
	-\gamma_2^2 \gamma_3 - 
	3  (\gamma_1 + \gamma_4) \gamma_2 \gamma_5 + (2 \gamma_1 + 
	3 \gamma_4) \gamma_3 \gamma_4 ,
	\end{array}
	$
	
	\vspace{\vsep}
	
	\noindent
	$
	\begin{array}{l}
	2 \mfP_{342} =    
	(3 \gamma_1^2 - \gamma_2^2) \gamma_5 + 
	3 ( \gamma_1 +  \gamma_4) \gamma_2 \gamma_3 + 
	2 \gamma_1 \gamma_4 \gamma_5   ,
	\end{array}
	$
	
	\vspace{\vsep}
	
	\noindent
	$
	\begin{array}{l}
	2 \mfP_{343} =   
	-4 \HsignRL (\gamma_1^2 + \gamma_4^2 + \gamma_1 \gamma_4) (\gamma_1 + 
	\gamma_4)
	- (3 \gamma_1 + 4 \gamma_4) \gamma_3^2 - (4 \gamma_1 + 
	3 \gamma_4 ) \gamma_5^2 + 2 (\gamma_1   + \gamma_4) \cARS .
	\end{array}
	$
	
	
	\normalsize
	
	\bigskip
	
	Since $\lambda_3\neq 0$, we may consider the orthogonal basis $\hat e_i=\frac{1}{\lambda_3} e_i$ and assume $\lambda_3=1$ from now on, working in the homothetic class of the initial metric. From $\mfP_{123}=0$ we have
	$\cARS =   \frac{3}{2} (  \gamma_3^2 + \gamma_5^2-\HsignRL)$, and a straightforward calculation shows that
	\[
	\mfP_{121}-2 \mfP_{244} = \tfrac{\HsignRL}{2}  (\gamma_1 + \gamma_4) \gamma_3,
	\quad
	\mfP_{122}+2 \mfP_{144} = \tfrac{\HsignRL}{2} (\gamma_1 + \gamma_4) \gamma_5  .
	\]
	Note that, if $\gamma_3=\gamma_5=0$, then 
	\[
	\mfP_{343} =  -\tfrac{\HsignRL}{2} \left( 4 (\gamma_1^2 +  \gamma_4^2 
	+    \gamma_1 \gamma_4) + 3 \right) (\gamma_1 + \gamma_4)   ,
	\]
	which vanishes if and only if $\gamma_1+\gamma_4=0$. Hence, in any case,  $\gamma_4=-\gamma_1$, which leads to
	\[
	\mfP_{341} =   \tfrac{1}{2}  (\gamma_1^2 - \gamma_2^2) \gamma_3 ,
	\quad
	\mfP_{342} = \tfrac{1}{2} (\gamma_1^2  -\gamma_2^2) \gamma_5 .
	\]
	Suppose      $\gamma_1^2 -\gamma_2^2\neq 0$, so that  $\gamma_3=\gamma_5=0$. Then, we have 
	\[
	\mfP_{242} =  \tfrac{\HsignRL}{2}  \left(4 \gamma_1^2 +8 \gamma_2^2  
	+ 3\right) \gamma_1 ,
	\]
	which vanishes if and only if $\gamma_1=0$, and this  implies
	\[
	\mfP_{142}-\mfP_{241} =  3 \HsignRL    \gamma_2 ,
	\]
	which leads to a contradiction since we are assuming 
	$\gamma_1^2 - \gamma_2^2 = -\gamma_2^2\neq 0$. Hence, necessarily $\gamma_1^2 -\gamma_2^2=0$.
	
	We have shown that 
	$
	\gamma_4=-\gamma_1$ and 
	$\gamma_1^2  - \gamma_2^2 = 0$,
	so    we conclude that the possible algebraic Ricci solitons can be also realized on   $\sdR$ (see Remark~\ref{H3-re: R - paso a R3}), finishing the proof.
\end{proof}

\subsection{Semi-direct extensions with Lorentzian normal subgroup $\bm{\mathcal{H}^3$}}\label{H3-se:EGR-Lorentzian}
We proceed as indicated in Section~\ref{se:Lorentz-Lorentz}. It was shown by Rahmani \cite{Rahmani} that there are three non-homothetic classes of left-invariant Lorentz metrics on $\mathcal{H}^3$ corresponding to structure operator of types Ia and II. If the structure operator is diagonalizable, one distinguishes the two cases corresponding to $\operatorname{ker}L$ being positive definite or of Lorentzian signature. If the structure operator is of type~II, then it is necessarily nilpotent.
We shall now analyze   these three cases separately.

\subsubsection{\bf The structure operator is diagonalizable of rank one with positive definite kernel}\label{H3-se:Lorentz-1}

In this case there exists an orthonormal basis $\{e_1,e_2,e_3,e_4\}$ of $\mathfrak{g}=\algsdH$, with $e_3$ timelike,  where  $\mathfrak{h}_3=\span\{e_1,e_2,e_3\}$ and $\mathbb{R}=\span\{e_4\}$, so that 
\begin{equation}\label{H3-eq: g_L.Ia+ ip}
\mathfrak{g}_{L.Ia+}
\left\{
\begin{array}{ll}
[e_1,e_2]=-\lambda_3 e_3,  &
[e_1,e_4]=\gamma_1 e_1 - \gamma_2 e_2 + \gamma_3 e_3,  
\\
\noalign{\medskip}
[e_2,e_4]=\gamma_2 e_1 + \gamma_4  e_2 + \gamma_5 e_3,  &
[e_3,e_4]=(\gamma_1 +\gamma_4)e_3,
\end{array}
\right.
\end{equation}
where $\lambda_3\neq 0$ and $\gamma_1$, $\dots$, $\gamma_5\in\mathbb{R}$. 

As in Section~\ref{H3-se: H3 Riemann}, metrics \eqref{H3-eq: g_L.Ia+ ip} are never Einstein nor locally symmetric. Moreover, the following result is obtained proceeding exactly as      in  the proof of Theorem~\ref{H3-th: g_R} with $\HsignRL=-1$, using Remark~\ref{H3-re: R - paso a R3}.

\begin{theorem}\label{H3-th: g_L.Ia+}
Any  left-invariant metric $\mathfrak{g}_{L.Ia+}$ on $\sdH$  given by Equation~\eqref{H3-eq: g_L.Ia+ ip} which is an algebraic Ricci soliton is also   realized on $\sdR$.  
\end{theorem}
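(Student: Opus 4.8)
The plan is to run the argument of Theorem~\ref{H3-th: g_R} essentially verbatim with the structural sign parameter set to $\HsignRL=-1$, exactly as anticipated by the sentence preceding the statement and by Remark~\ref{H3-re: R - paso a R3}. First I would verify the one genuinely new point, namely the bookkeeping claim that the system $\{\mfP_{ijk}=0\}$ expressing that $\mathfrak{D}=\Ricci-\cARS\Id$ is a derivation of \eqref{H3-eq: g_L.Ia+ ip} coincides with the polynomials displayed in the proof of Theorem~\ref{H3-th: g_R} after the substitution $\HsignRL=-1$. Concretely, \eqref{H3-eq: g_L.Ia+ ip} differs from \eqref{H3-eq: g_R ip} only in the sign of $[e_1,e_2]$ (now $-\lambda_3 e_3$) and in the causal character of the frame ($e_3$ timelike instead of $e_4$). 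One must check that these two modifications enter the Ricci operator precisely through the factor $\HsignRL$ recorded in the relation $[\bar e_1,\bar e_2]=\HsignRL\lambda_3\bar e_3$ of Remark~\ref{H3-re: R - paso a R3}, so that every occurrence of $\HsignRL$ in $\mfP_{121},\mfP_{122},\mfP_{123},\mfP_{142},\mfP_{144},\mfP_{241},\mfP_{242},\mfP_{244},\mfP_{341},\mfP_{342},\mfP_{343}$ remains correct on setting $\HsignRL=-1$. This \emph{sign audit}, rather than any new geometric idea, is where the real work lies.

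Granting this, I would rescale via $\hat e_i=\tfrac{1}{\lambda_3}e_i$ to assume $\lambda_3=1$, and read off $\cARS=\tfrac{3}{2}(\gamma_3^2+\gamma_5^2-\HsignRL)$ from $\mfP_{123}=0$. The combinations $\mfP_{121}-2\mfP_{244}=\tfrac{\HsignRL}{2}(\gamma_1+\gamma_4)\gamma_3$ and $\mfP_{122}+2\mfP_{144}=\tfrac{\HsignRL}{2}(\gamma_1+\gamma_4)\gamma_5$ force either $\gamma_1+\gamma_4=0$ or $\gamma_3=\gamma_5=0$; and in the latter subcase $\mfP_{343}=-\tfrac{\HsignRL}{2}(4(\gamma_1^2+\gamma_4^2+\gamma_1\gamma_4)+3)(\gamma_1+\gamma_4)$ again yields $\gamma_1+\gamma_4=0$, because the quadratic cofactor $4(\gamma_1^2+\gamma_4^2+\gamma_1\gamma_4)+3\geq 3$ is strictly positive irrespective of $\HsignRL$. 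Thus $\gamma_4=-\gamma_1$ in all cases.

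With $\gamma_4=-\gamma_1$ one computes $\mfP_{341}=\tfrac{1}{2}(\gamma_1^2-\gamma_2^2)\gamma_3$ and $\mfP_{342}=\tfrac{1}{2}(\gamma_1^2-\gamma_2^2)\gamma_5$. Assuming $\gamma_1^2-\gamma_2^2\neq 0$ forces $\gamma_3=\gamma_5=0$, whence $\mfP_{242}=\tfrac{\HsignRL}{2}(4\gamma_1^2+8\gamma_2^2+3)\gamma_1$ gives $\gamma_1=0$ (the cofactor being positive and $\HsignRL\neq 0$), and then $\mfP_{142}-\mfP_{241}=3\HsignRL\gamma_2$ gives $\gamma_2=0$, contradicting $\gamma_1^2-\gamma_2^2=-\gamma_2^2\neq 0$. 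Hence $\gamma_1^2=\gamma_2^2$. Having established $\gamma_4=-\gamma_1$ and $\gamma_1^2=\gamma_2^2$, Remark~\ref{H3-re: R - paso a R3} (with $\HsignRL=-1$) exhibits the extension as almost Abelian, i.e.\ realized on $\sdR$, which completes the proof.

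I expect the only real obstacle to be the sign audit of the first paragraph: once one confirms that both the bracket-sign flip and the signature change are absorbed into the single replacement $\HsignRL=-1$, the remaining deductions are identical to the Riemannian case. It is worth emphasizing that these deductions never use the actual value of $\HsignRL$ beyond $\HsignRL\neq 0$ together with the positivity of the quadratic cofactors $4(\gamma_1^2+\gamma_4^2+\gamma_1\gamma_4)+3$ and $4\gamma_1^2+8\gamma_2^2+3$; this is exactly why the Riemannian and Lorentzian computations can be unified, and why the conclusion is insensitive to the change of signature.
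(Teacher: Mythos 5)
Your proposal is correct and coincides with the paper's own proof: the paper deliberately carries the sign parameter $\HsignRL$ through the polynomials $\mfP_{ijk}$ in the proof of Theorem~\ref{H3-th: g_R} precisely so that the Lorentzian case follows by setting $\HsignRL=-1$ and invoking Remark~\ref{H3-re: R - paso a R3}, which is exactly your argument. Your explicit observation that the deductions only use $\HsignRL\neq 0$ and the positivity of the cofactors $4(\gamma_1^2+\gamma_4^2+\gamma_1\gamma_4)+3$ and $4\gamma_1^2+8\gamma_2^2+3$ is a correct and slightly more transparent justification of why the unification works.
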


\subsubsection{\bf The structure operator is diagonalizable of rank one and with Lorentzian kernel}\label{H3-se:Lorentz-2}

In this setting, it is possible to choose    an orthonormal basis $\{e_1,e_2,e_3,e_4\}$ of $\mathfrak{g}=\algsdH$, with $e_3$ timelike,  where  $\mathfrak{h}_3=\span\{e_1,e_2,e_3\}$ and $\mathbb{R}=\span\{e_4\}$, so that 
the left-invariant metrics are described by
\begin{equation}\label{H3-eq: g_L.Ia- ip}
\mathfrak{g}_{L.Ia-}
\left\{
\begin{array}{ll}
[e_1,e_3]=-\lambda_2 e_2,  &
[e_1,e_4]=\gamma_1 e_1+\gamma_2 e_2 + \gamma_3 e_3,  
\\
\noalign{\medskip}
[e_2,e_4]=\gamma_4 e_2,  &
[e_3,e_4]=\gamma_5 e_1+\gamma_6 e_2-(\gamma_1 -\gamma_4)e_3,
\end{array}
\right.
\end{equation}
where $\lambda_2\neq 0$ and $\gamma_1$, $\dots$, $\gamma_6\in\mathbb{R}$.

\begin{remark}\label{H3-re: L.Ia- - loc symm}\rm 	 
	A metric~\eqref{H3-eq: g_L.Ia- ip}  is never either Einstein or locally symmetric.
\end{remark}

\begin{remark}\label{H3-re: L.Ia- - paso a R3}\rm 
Considering the new basis given by
	$\bar e_1= e_1$,
	$\bar e_2 = e_3$,
	$\bar e_3 = e_2$ and
	$\bar e_4 = -\tfrac{\gamma_6}{\lambda_2} e_1+\tfrac{\gamma_2}{\lambda_2} e_3+e_4$,
	the Lie brackets in Equation~\eqref{H3-eq: g_L.Ia- ip} transform into
	\[
	\begin{array}{ll}
	[\bar e_1,\bar e_2] = -\lambda_2 \bar e_3,  
	&
	[\bar e_1,\bar e_4] = \gamma_1 \bar e_1 +\gamma_3 \bar e_2,
	\\
	\noalign{\medskip}
	[\bar e_2,\bar e_4] = \gamma_5 \bar e_1-(\gamma_1-\gamma_4)\bar e_2,
	&
	[\bar e_3,\bar e_4] = \gamma_4 \bar e_3.
	\end{array}
	\]
	Hence, a   direct calculation shows that, when evaluating on the basis $\{\bar e_i\}$,
		\[ 
	\ad_{\bar e_4}=\left(
	\begin{array}{cc}
	A& 0 
	\\
	0 & \operatorname{tr}A
	\end{array}
	\right), 
	\quad \text{where}\,\, 
	A=\left(
	\begin{array}{cc}
	-\gamma_1 & -\gamma_5  
	\\
	-\gamma_3 & \gamma_1-\gamma_4  
	\end{array}
	\right),
	\]
	which implies that the Lie algebra corresponds to $ \mathfrak{h}_3\times\mathbb{R} $ or $\mathfrak{n}_4$ if and only if $\gamma_4=0$ and $\gamma_1^2+\gamma_3 \gamma_5=0$
	(cf.~\cite{ABDO}).
  In this case,  if  $\gamma_3\neq 0$, the change of basis 
		\[
		\widetilde e_1 = -\tfrac{\gamma_6}{\lambda_2} e_1+\tfrac{\gamma_2}{\lambda_2} e_3+e_4 ,\quad
		\widetilde e_2 = -  \lambda_2  e_2,\quad
		\widetilde e_3 = \gamma_1  e_1 + \gamma_3 e_3,\quad
		\widetilde e_4 = -   e_1,
		\]
		transforms the Lie bracket into
	$[\widetilde e_1,\widetilde e_4] =   \widetilde e_3$ and $[\widetilde e_3, \widetilde e_4] =    \gamma_3 \widetilde e_2$,
		while if $\gamma_3=0$, which implies $\gamma_1=0$,  we use the new basis
		\[
		\widetilde e_1 = -\tfrac{\gamma_6}{\lambda_2} e_1+\tfrac{\gamma_2}{\lambda_2} e_3+e_4,\quad
		\widetilde e_2 =\lambda_2   e_2,\quad
		\widetilde e_3 =   e_1,\quad
		\widetilde e_4 = -   e_3,
		\]
		to get
	$[\widetilde e_1,\widetilde e_4] = \gamma_5 \widetilde e_3$ and 
	$[\widetilde e_3, \widetilde e_4] =    \widetilde e_2$.
		Therefore, we conclude that if $\gamma_4=0$ and $\gamma_1^2+\gamma_3 \gamma_5=0$ the corresponding left-invariant metrics are also realized on $\sdR$.
\end{remark}

\begin{theorem}\label{H3-th: g_L.Ia-}
	A  left-invariant metric $\mathfrak{g}_{L.Ia-}$ on $\sdH$  given by Equation~\eqref{H3-eq: g_L.Ia- ip}
 which is not realized on $\sdR$
	is a strict algebraic Ricci soliton
	if and only if it is isomorphically homothetic to  
	\[
	[e_1,e_3] =-e_2,\,\,
	[e_1,e_4] = \gamma_1 e_1 + \gamma_3 e_3,\,\,
	[e_2,e_4] = \gamma_4 e_2,\,\,
	[e_3,e_4] = -\gamma_3 e_1-(\gamma_1-\gamma_4)e_3,
	\]
	where $\gamma_3$ is the only positive solution of 
	$4\gamma_3^2 = 4 (\gamma_1^2  + \gamma_4^2 - \gamma_1 \gamma_4) + 3$ and $\{e_i\}$ is an orthonormal basis of the Lie algebra with $e_3$ timelike.
\end{theorem}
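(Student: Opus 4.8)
The plan is to mimic the strategy already used in the proof of Theorem~\ref{H3-th: g_R}. First I would compute the Ricci operator of the metric~\eqref{H3-eq: g_L.Ia- ip} and encode the requirement that $\mathfrak{D}=\Ricci-\cARS\Id$ be a derivation as a polynomial system $\{\mfP_{ijk}=0\}$ in the unknowns $\cARS,\lambda_2,\gamma_1,\dots,\gamma_6$. Since $\lambda_2\neq 0$, I would immediately rescale the basis by $\hat e_i=\tfrac{1}{\lambda_2}e_i$ and work in the homothetic class, so that $\lambda_2=1$ throughout; this is harmless because the classification is only up to homothety. The component of the system coming from the nilpotent bracket $[e_1,e_3]$ should be linear in $\cARS$ and thus determine the soliton constant as $\cARS=\tfrac{3}{2}(\cdots)$ in terms of the $\gamma_i$, which I would substitute back into the remaining equations.

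Next I would look for linear combinations of the surviving $\mfP_{ijk}$ that decouple the off-diagonal parameters. I expect that suitable combinations force $\gamma_2=0$ and $\gamma_6=0$ and impose the relation $\gamma_5=-\gamma_3$, thereby collapsing the bracket onto the two-parameter family (in $\gamma_1,\gamma_4$) of the statement. Where the elimination does not proceed cleanly by hand, I would instead compute a Gröbner basis of the ideal $\langle\{\mfP_{ijk}\}\cup\{\lambda_2-1\}\rangle$, exactly as elsewhere in the paper, and read off the vanishing of $\gamma_2,\gamma_6$ and the relation $\gamma_3+\gamma_5$ from the low-degree generators. Along the way several degenerate branches will appear (e.g.\ $\gamma_3=0$, or configurations making the metric Einstein or locally symmetric); these are discarded using Remark~\ref{H3-re: L.Ia- - loc symm}, which asserts that metrics~\eqref{H3-eq: g_L.Ia- ip} are never Einstein or locally symmetric.

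Once the bracket is reduced to the normal form, the last surviving diagonal equation should yield the single scalar constraint $4\gamma_3^2=4(\gamma_1^2+\gamma_4^2-\gamma_1\gamma_4)+3$. Since its right-hand side equals $4\bigl((\gamma_1-\tfrac12\gamma_4)^2+\tfrac34\gamma_4^2\bigr)+3\geq 3>0$, this has exactly two real roots $\pm\gamma_3$, and I would select the positive one by applying an appropriate sign-reversing isomorphic isometry (for instance $(e_1,e_2,e_3,e_4)\mapsto(e_1,-e_2,-e_3,e_4)$, which flips $\gamma_3$ while fixing $\gamma_1$ and $\gamma_4$). This produces precisely the stated family. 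At this point I must invoke Remark~\ref{H3-re: L.Ia- - paso a R3}: the metrics realizable on $\sdR$ are exactly those with $\gamma_4=0$ and $\gamma_1^2+\gamma_3\gamma_5=0$. With $\gamma_5=-\gamma_3$ and, when $\gamma_4=0$, $\gamma_3^2=\gamma_1^2+\tfrac34$, one computes $\gamma_1^2+\gamma_3\gamma_5=\gamma_1^2-\gamma_3^2=-\tfrac34\neq 0$; hence the surviving solutions are automatically \emph{not} realized on $\sdR$, consistent with the hypothesis, while any branch satisfying the excluded relations is removed here.

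The main obstacle is this solving step: the system in seven unknowns is large, and the delicate point is to separate the genuine $\sdH$ solitons from the many degenerate branches that either yield Einstein or symmetric metrics, or collapse onto $\sdR$. Organizing the case analysis (or the Gröbner computation) so that \emph{exactly} the claimed family survives is where the real work lies. To finish and justify the word \emph{strict}, I would note that non-Einstein and non-symmetry are already given by Remark~\ref{H3-re: L.Ia- - loc symm}, and then read off irreducibility and the failure to be a plane wave from the spectral structure of the Ricci operator of the normal form: its eigenvalues should exhibit no non-degenerate parallel splitting and fail Ricci-isotropy, so Leistner's criterion (cf.\ \cite{Leistner}) excludes the pp-wave, and hence plane wave, case.
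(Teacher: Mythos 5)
Your overall strategy coincides with the paper's: set up the derivation system $\{\mfP_{ijk}=0\}$, normalize $\lambda_2=1$ in the homothetic class, eliminate via a Gr\"obner basis, reduce to the stated normal form with the quadratic constraint on $\gamma_3$, fix the sign of $\gamma_3$ by the isometry $(e_1,e_2,e_3,e_4)\mapsto(e_1,-e_2,-e_3,e_4)$, and verify non-realizability on $\sdR$ by computing $\gamma_1^2+\gamma_3\gamma_5=-\tfrac34$. Your final check and your treatment of the pp-wave exclusion via the spectrum of the Ricci operator also match what the paper does.

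There is, however, a gap in how you deploy the hypothesis ``not realized on $\sdR$.'' You claim that suitable combinations of the $\mfP_{ijk}$ (or a Gr\"obner basis of $\langle\{\mfP_{ijk}\}\cup\{\lambda_2-1\}\rangle$) force $\gamma_2=\gamma_6=0$ and $\gamma_5=-\gamma_3$ outright, and that the spurious branches are discarded by Remark~\ref{H3-re: L.Ia- - loc symm}. Neither is right. That remark says metrics~\eqref{H3-eq: g_L.Ia- ip} are \emph{never} Einstein or locally symmetric, so it discards nothing; and the Gr\"obner basis of the raw ideal only yields these relations multiplied by $\gamma_4$ (the paper obtains $(2\cARS-3)\gamma_4$, $\gamma_2\gamma_4$, $\gamma_6\gamma_4$, $(\gamma_3+\gamma_5)(\gamma_4^2+1)\gamma_4$), because there genuinely are solitons in the branch $\gamma_4=0$, $\gamma_1^2+\gamma_3\gamma_5=0$ --- precisely the ones realized on $\sdR$. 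The non-realizability hypothesis is therefore not a final consistency check but an essential input to the elimination itself: the paper saturates the $\gamma_4=0$ branch by the polynomial $(\gamma_1^2+\gamma_3\gamma_5)\varphi-1$ before recomputing the Gr\"obner basis, and invokes the same remark a second time to discard the branch $\zeta\neq0$ (which forces $\gamma_1=\gamma_3=\gamma_4=0$ and hence lands back on $\sdR$). Your write-up would stall at the first elimination unless you reorganize it along these lines.
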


	\begin{remark}\rm\label{re:Heisenberg-(i)}
The Lie algebra underlying metrics in  Theorem~\ref{H3-th: g_L.Ia-} is $\mathfrak{d}'_{4,\lambda}$ with $\lambda=-\frac{\gamma_4}{\sqrt{3(\gamma_4^2+1)}}\in(-\frac{1}{\sqrt{3}},\frac{1}{\sqrt{3}})$. They are shrinking solitons with $\cARS=\frac{3}{2}$ and the scalar curvature is given by $\tau=-2(2\gamma_4^2-1)$, while $\|\rho\|^2=-3(2\gamma_4^2-1)$. Hence the scalar curvature vanishes for $\gamma_4=\pm\frac{1}{\sqrt{2}}$, in which case the corresponding metrics are $\mathcal{S}$-critical, but not $\mathcal{F}[t]$-critical for any $t\in\mathbb{R}$. 
In any other case the metrics in Theorem~\ref{H3-th: g_L.Ia-} are $\mathcal{F}[t]$-critical with zero energy for $ t=-\|\rho\|^2\tau^{-2}\in (-\infty,-\frac{3}{4}]\cup (0,+\infty)$.
\end{remark}

\begin{proof}
	The conditions for $\mathfrak{D}=\Ricci-\cARS\Id$ to be a derivation are determined by a system of polynomial equations, $\{\mfP_{ijk}=0\}$, on the soliton constant $\cARS$ and  the structure constants in~\eqref{H3-eq: g_L.Ia- ip}, where 
	\medskip

	$
	\begin{array}{l}
	2 \mfP_{122} = 
	(\gamma_1 \gamma_6 + \gamma_2 \gamma_3 + 
	3 \gamma_4 \gamma_6 ) \lambda_2 ,
	\end{array}
	$

	\vspace{\vsep}

	$
	\begin{array}{l}
	2 \mfP_{131} =   
	-(2 \gamma_1 \gamma_2 - 3 \gamma_2 \gamma_4 - 
	2 \gamma_5 \gamma_6) \lambda_2 ,
	\end{array}
	$

	\vspace{\vsep}

	$
	\begin{array}{l}
	2 \mfP_{132} = 
	- \left(3 ( \gamma_2^2 - \gamma_6^2 - \lambda_2^2) + 
	2 \cARS\right) \lambda_2 ,
	\end{array}
	$

	\vspace{\vsep}

	$
	\begin{array}{l}
	2 \mfP_{133} =  
	-(2 \gamma_1 \gamma_6 + 
	2 \gamma_2 \gamma_3 + \gamma_4 \gamma_6 ) \lambda_2 ,
	\end{array}
	$

	\vspace{\vsep}

	$
	\begin{array}{l}
	2 \mfP_{141} =    
	( 4 \gamma_1^2 + 2 \gamma_2^2 - 3 \gamma_3^2 + 4 \gamma_4^2 - 
	3 \gamma_5^2 - \gamma_6^2 - 4 \gamma_1 \gamma_4 - 
	2 \gamma_3 \gamma_5 + 2 \cARS  ) \gamma_1
	
	\\[\vsep]
	\phantom{2 \mfP_{141} =}
	
	- 3 \gamma_2^2 \gamma_4 + 
	3 \gamma_3^2 \gamma_4 - \gamma_4 \gamma_5^2 - \gamma_2 
	(\gamma_3 + 2  \gamma_5) \gamma_6 + 2 \gamma_3 \gamma_4 \gamma_5 ,
\end{array}
$

\vspace{\vsep}

$
\begin{array}{l}
2 \mfP_{142} =  
( 5 \gamma_1^2 + 3 \gamma_2^2 - 3 \gamma_3^2 + 3 \gamma_4^2 - 
3 \gamma_6^2 - 4 \gamma_1   \gamma_4 + 2 \gamma_3 \gamma_5 - 
3 \lambda_2^2 + 2 \cARS  ) \gamma_2

\\[\vsep]
\phantom{2 \mfP_{142} =}

- 3  \gamma_1 (\gamma_3 + \gamma_5) \gamma_6 + \gamma_3 
\gamma_4 \gamma_6 ,
\end{array}
$

\vspace{\vsep}

$
\begin{array}{l}
2  \mfP_{143} =    
( 8 \gamma_1^2 + 3 \gamma_2^2 - 3 \gamma_3^2 + 
3 \gamma_4^2 + \gamma_5^2 - 4 \gamma_1 \gamma_4 + 
2 \gamma_3 \gamma_5 + 2 \cARS  ) \gamma_3

\\[\vsep]
\phantom{2 \mfP_{143} =}

+ (4 \gamma_1^2 - \gamma_4^2) \gamma_5 + (3 \gamma_1  + 
\gamma_4) \gamma_2 \gamma_6 ,
\end{array}
$

\vspace{\vsep}

$
\begin{array}{l}
2 \mfP_{144} = 
(\gamma_1 \gamma_6 + \gamma_2 \gamma_3) \lambda_2,
\end{array}
$

\vspace{\vsep}

$
\begin{array}{l}
2 \mfP_{232} =   
(\gamma_1 \gamma_2 - 
4 \gamma_2 \gamma_4 - \gamma_5 \gamma_6) \lambda_2 ,
\end{array}
$

\vspace{\vsep}

$
\begin{array}{l}
2 	\mfP_{241} =   
-(\gamma_1^2  + 3  \gamma_4^2) \gamma_2 + 
4 \gamma_1 \gamma_2 \gamma_4 - \gamma_2 \gamma_3 \gamma_5 - 
3 \gamma_4 \gamma_5 \gamma_6 ,
\end{array}
$

\vspace{\vsep}

$
\begin{array}{l}
2 \mfP_{242} =   
( 4 \gamma_1^2 + 4 \gamma_2^2 - \gamma_3^2 + 
4 \gamma_4^2 - \gamma_5^2 - 3 \gamma_6^2 - 
4 \gamma_1 \gamma_4 + 2 \gamma_3 \gamma_5 + 
2 \cARS  ) \gamma_4

\\[\vsep]
\phantom{2 \mfP_{242} =}

- \gamma_1 (\gamma_2^2 + \gamma_6^2) - \gamma_2 (\gamma_3 - 
\gamma_5) \gamma_6 ,
\end{array}
$

\vspace{\vsep}

$
\begin{array}{l}
2 \mfP_{243} =    
\gamma_1^2 \gamma_6 + 2 \gamma_1 \gamma_4 \gamma_6 + 
3 \gamma_2 \gamma_3 \gamma_4 + \gamma_3 \gamma_5 \gamma_6 ,
\end{array}
$

\vspace{\vsep}

$
\begin{array}{l}
2	\mfP_{341} =   
( 8 \gamma_1^2 + \gamma_3^2 + 7 \gamma_4^2 - 3 \gamma_5^2 - 
3 \gamma_6^2 - 12 \gamma_1 \gamma_4 + 2 \gamma_3 \gamma_5 + 
2 \cARS ) \gamma_5

\\[\vsep]
\phantom{2 \mfP_{341} =}

+ (4 \gamma_1^2 + 3   \gamma_4^2) \gamma_3 + 
3 \gamma_1 \gamma_2 \gamma_6 - 8 \gamma_1 \gamma_3 \gamma_4 - 
4 \gamma_2 \gamma_4 \gamma_6 ,
\end{array}
$

\vspace{\vsep}

$
\begin{array}{l}
2 \mfP_{342} =     
( 5 \gamma_1^2 + 3 \gamma_2^2 + 4 \gamma_4^2 - 3 \gamma_5^2 - 
3 \gamma_6^2 - 6 \gamma_1 \gamma_4 + 2 \gamma_3 \gamma_5 - 
3 \lambda_2^2 + 2 \cARS  ) \gamma_6

\\[\vsep]
\phantom{2 \mfP_{342} =}

+ 3 \gamma_1 \gamma_2 (\gamma_3 + \gamma_5) - \gamma_2  (3 
\gamma_3 + 2 \gamma_5) \gamma_4 ,
\end{array}
$

\vspace{\vsep}

$
\begin{array}{l}
2 \mfP_{343} =  
- ( 4 \gamma_1^2 + \gamma_2^2 - 3 \gamma_3^2 + 8 \gamma_4^2 - 
3 \gamma_5^2 - 2 \gamma_6^2 - 8 \gamma_1 \gamma_4 - 
2 \gamma_3 \gamma_5 + 2 \cARS  ) \gamma_1

\\[\vsep]
\phantom{2 \mfP_{343} =}

+ (\gamma_2^2  - 4 \gamma_3^2 + 4 \gamma_4^2 + \gamma_6^2 + 
2 \cARS) \gamma_4
+ \gamma_2 (2 \gamma_3 + \gamma_5) \gamma_6 ,
\end{array}
$

\vspace{\vsep}

$
\begin{array}{l}
2 \mfP_{344} =    	
-(\gamma_1 \gamma_2 - \gamma_2 \gamma_4 - \gamma_5 \gamma_6) 
\lambda_2 .
\end{array}
$


\normalsize

\bigskip

Since $\lambda_2\neq 0$, we may assume $\lambda_2=1$ from now on working in the homothetic class of the initial metric, just considering  the orthogonal basis $\hat e_i=\frac{1}{\lambda_2} e_i$.  We start taking the ideal $\mathcal{I}_1= \langle 
\mfP_{ijk} \cup \{ \lambda_2-1   \}\rangle$ in the polynomial ring 
$\mathbb{R}[ \gamma_1,\dots,\gamma_6,\lambda_2,\cARS]$ and computing a Gröbner basis for this ideal with respect to the lexicographical order. As a consequence, we obtain a set of $26$ polynomials, among which we find
\[
\mathbf{g}_{11} = (2\cARS-3)\gamma_4,
\quad
\mathbf{g}_{12} = \gamma_2\gamma_4,
\quad
\mathbf{g}_{13} = \gamma_6 \gamma_4,
\quad
\mathbf{g}_{14} = (\gamma_3+\gamma_5)(\gamma_4^2+1)\gamma_4.
\]
Now, if $\gamma_4=0$, note that the metric is also realized on $\sdR$ whenever $\gamma_1^2+\gamma_3\gamma_5=0$ (see Remark~\ref{H3-re: L.Ia- - paso a R3}). Hence,  
we introduce and auxiliary variable $\varphi$ to indicate that $\gamma_1^2+\gamma_3\gamma_5\neq0$ by means of the polynomial $(\gamma_1^2+\gamma_3\gamma_5)\varphi-1$. Let $\mathcal{I}_2$ be the ideal generated by 
$\mathcal{I}_1\cup \{ \gamma_4,(\gamma_1^2+\gamma_3\gamma_5)\varphi-1 \}$ in the polynomial ring $\mathbb{R}[ \varphi, \gamma_1,\dots,\gamma_6,\lambda_2,\cARS]$, where we consider again the lexicographic order. Computing a Gröbner basis for $\mathcal{I}_2$ we get $8$ polynomials, including
\[
\mathbf{g}_{21} = 2\cARS-3,
\quad
\mathbf{g}_{22} = \gamma_2,
\quad
\mathbf{g}_{23} = \gamma_6 ,
\quad
\mathbf{g}_{24} = \gamma_3+\gamma_5 .
\]
Hence, it follows that if a left-invariant metric~\eqref{H3-eq: g_L.Ia- ip} is an algebraic Ricci soliton which is not realized in $\sdR$ then necessarily 
\[
\cARS=\tfrac{3}{2},\quad
\gamma_2=\gamma_6=0,\quad
\gamma_5=-\gamma_3,
\]
and a straightforward calculation shows that, in that case, the system of polynomial equations $\{\mfP_{ijk}=0\}$ is determined by
\[
\mfP_{141} = \tfrac{\gamma_1}{2} \zeta=0,\quad
\mfP_{143} = -\mfP_{341} = \tfrac{\gamma_3}{2} \zeta=0,\quad
\mfP_{242} = \tfrac{\gamma_4}{2} \zeta=0 ,
\]
where $\zeta=-4 \gamma_3^2 +  4 (\gamma_1^2  + \gamma_4^2 - \gamma_1 \gamma_4) + 3$.
Using again Remark~\ref{H3-re: L.Ia- - paso a R3}, if $\zeta\neq 0$ then the left-invariant metric is also realized on $\sdR$. Hence, we take $\zeta=0$ to obtain an algebraic Ricci soliton with soliton constant $\cARS=\frac{3}{2}$ and associated left-invariant metric given by
\[
[e_1,e_3] =-e_2,\,\,\,
[e_1,e_4] = \gamma_1 e_1 + \gamma_3 e_3,\,\,\,
[e_2,e_4] = \gamma_4 e_2,\,\,\,
[e_3,e_4] = -\gamma_3 e_1-(\gamma_1-\gamma_4)e_3,
\]
where  $\gamma_3=\frac{\ve}{2}\sqrt{4 (\gamma_1^2  + \gamma_4^2 - \gamma_1 \gamma_4) + 3}$. The isomorphic isometry $(e_1,e_2,e_3,e_4)\mapsto (e_1,-e_2,-e_3,e_4)$  interchanges the sign of $\ve$, so that we may take $\ve=1$. Note that the space   is not realized on $\sdR$, since if $\gamma_4=0$ then   
\[
\gamma_1^2+\gamma_3 \gamma_5 = \gamma_1^2 -\gamma_3^2=\tfrac{\zeta-3}{4}=-\tfrac{3}{4}\neq 0
\]
(see Remark~\ref{H3-re: L.Ia- - paso a R3}). Finally, a straightforward calculation shows that
the Ricci operator is diagonalizable with non-zero eigenvalues
\[
\tfrac{1}{2}
\Big\{
-4\gamma_4^2-1,\,
-2\gamma_4^2+1\pm  2 \gamma_4 \sqrt{3(\gamma_4^2+1)}\, \sqrt{-1}   ,\, 
3
\Big\} .
\]
Hence, although the scalar curvature $\tau=-2(2\gamma_4^2-1)$ may vanish,  the space is not a pp-wave.
Moreover, the  underlying Lie algebra is $\mathfrak{d}'_{4,\lambda}$ with $\lambda=-\frac{\gamma_4}{\sqrt{3(\gamma_4^2+1)}}$, 
  corresponding  to  the case given in the theorem. This  finishes the proof.
\end{proof}

\subsubsection{\bf  The structure operator is 2-step nilpotent}\label{H3-se:Lorentz-3} 

In this   case  there exists a pseudo-orthonormal basis $\{u_1,u_2,u_3,u_4\}$ of $\mathfrak{g}=\algsdH$, with $\langle u_1,u_2\rangle=\langle u_3,u_3\rangle=\langle u_4,u_4\rangle=1$,  where  $\mathfrak{h}_3=\span\{u_1,u_2,u_3\}$ and $\mathbb{R}=\span\{u_4\}$, so that 
\begin{equation}\label{H3-eq: g_L.II ip}
\mathfrak{g}_{L.II}
\left\{
\begin{array}{ll}
[u_1,u_3]=-\varepsilon u_2,&
[u_1,u_4]=\gamma_1 u_1+\gamma_2 u_2+\gamma_3 u_3,   

\\
\noalign{\medskip}

[u_2,u_4]=\gamma_4 u_2, &
[u_3,u_4]=\gamma_5 u_1+\gamma_6 u_2 - (\gamma_1-\gamma_4)u_3 ,
\end{array}
\right.
\end{equation}
with $\varepsilon^2=1$ and $\gamma_1$, $\dots$, $\gamma_6\in\mathbb{R}$.

\begin{remark}\label{H3-re: L.II - loc symm}\rm 	 
A metric~\eqref{H3-eq: g_L.II ip}  is Einstein if and only if
either $\gamma_1=\gamma_4=\gamma_5=0$, $\gamma_6=\pm\gamma_3$ (in which case it is Ricci-flat), or $\gamma_4=3\gamma_1\neq 0$, $\gamma_2=\gamma_5=0$, $\gamma_6=-\gamma_3$ (in which case  it is of constant sectional curvature).
Moreover, a metric~\eqref{H3-eq: g_L.II ip} is locally symmetric if and only if either
$\gamma_1=\gamma_4=\gamma_5=\gamma_3(\gamma_3+\gamma_6)=0$ (in which case it is locally conformally flat), or
$\gamma_4=3\gamma_1\neq 0$, $\gamma_2=\gamma_5=0$, $\gamma_6=-\gamma_3$  (in which case it is Einstein).
\end{remark}

\begin{remark}\label{H3-re: L.II - paso a R3}\rm 
The brackets in \eqref{H3-eq: g_L.II ip} are the same as those in \eqref{H3-eq: g_L.Ia- ip} just interchanging the basis $\{ e_i\}$ by $\{ u_i\}$ and replacing the parameter $\lambda_2$ by $\varepsilon$.
Hence, proceeding exactly as in Remark~\ref{H3-re: L.Ia- - paso a R3} (setting $\lambda_2=\ve)$, we get that 	  the underlying Lie algebra is almost Abelian, thus corresponding to $ \mathfrak{h}_3\times\mathbb{R} $ or $\mathfrak{n}_4$,  if and only if $\gamma_4=0$ and $\gamma_1^2+\gamma_3 \gamma_5=0$.
\end{remark}

\begin{theorem}\label{H3-th: g_L.II}
A left-invariant metric $\mathfrak{g}_{L.II}$ on $\sdH$  given by Equation~\eqref{H3-eq: g_L.II ip}
 which is not realized on $\sdR$
is a strict algebraic Ricci soliton
if and only if it is isomorphically homothetic to  
\[
\begin{array}{ll}
[u_1,u_3] = - u_2,
&
[u_1,u_4] = \gamma_1 u_1 + \gamma_2 u_2 + 3 \gamma_6 u_3,

\\
\noalign{\medskip}

[u_2,u_4] = -\gamma_1 u_2,
&
[u_3,u_4] = \gamma_6 u_2 - 2\gamma_1 u_3 ,
\end{array}
\]  
where $\gamma_1\neq 0$ and $\{u_i\}$ is a pseudo-orthonormal basis with $\langle u_1,u_2\rangle=\langle u_3,u_3\rangle=\langle u_4,u_4\rangle=1$.
\end{theorem}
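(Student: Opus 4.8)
The plan is to mirror the proof of Theorem~\ref{H3-th: g_L.Ia-}, using Remark~\ref{H3-re: L.II - paso a R3} as the criterion that separates genuinely new solitons from those already accounted for on $\sdR$. Concretely, I would first compute the Ricci operator of the metric~\eqref{H3-eq: g_L.II ip} in the pseudo-orthonormal basis $\{u_i\}$ and encode the condition that $\mathfrak{D}=\operatorname{Ric}-\cARS\Id$ be a derivation as a polynomial system $\{\mfP_{ijk}=0\}$ in $\cARS$, $\varepsilon$ and $\gamma_1,\dots,\gamma_6$. Although the brackets in~\eqref{H3-eq: g_L.II ip} coincide formally with those in~\eqref{H3-eq: g_L.Ia- ip} after the replacement $\lambda_2\leftrightarrow\varepsilon$, the degenerate-signature inner product $\langle u_1,u_2\rangle=\langle u_3,u_3\rangle=\langle u_4,u_4\rangle=1$ yields a genuinely different Ricci operator; in particular the null pairing of $u_1$ and $u_2$ should allow the parameter $\gamma_2$ to survive in the final family, in contrast with the situation of Theorem~\ref{H3-th: g_L.Ia-} where it is forced to vanish. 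The polynomials $\mfP_{ijk}$ therefore have to be recomputed from scratch rather than transported.

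Since $\varepsilon^2=1$, I would normalize $\varepsilon=1$ by the isomorphic isometry $u_3\mapsto -u_3$, which flips the sign of $\varepsilon$ while preserving both the inner product and the form of the brackets. Then, as in the proof of Theorem~\ref{H3-th: g_L.Ia-}, I would form the ideal generated by $\{\mfP_{ijk}\}$ and compute a Gröbner basis with respect to a lexicographic order, splitting the analysis according to whether $\gamma_4$ vanishes. In the branch $\gamma_4=0$ I would adjoin an auxiliary variable $\varphi$ together with the relation $(\gamma_1^2+\gamma_3\gamma_5)\varphi-1=0$, so that Remark~\ref{H3-re: L.II - paso a R3} excludes the metrics already realized on $\sdR$; I expect this branch to produce no new soliton, either because the saturated ideal becomes the whole ring or because every surviving solution falls back to the almost Abelian case.

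In the complementary branch $\gamma_4\neq 0$ I expect the Gröbner basis to force $\gamma_4=-\gamma_1$, $\gamma_5=0$ and $\gamma_3=3\gamma_6$, together with the value of $\cARS$, leaving $\gamma_1\,(\neq 0)$, $\gamma_2$ and $\gamma_6$ as free parameters and producing exactly the stated family. I would then confirm strictness: Remark~\ref{H3-re: L.II - loc symm} rules out the Einstein and locally symmetric cases, since both require $\gamma_1=0$ once $\gamma_4=-\gamma_1$; computing the spectrum of the Ricci operator (as in the proof of Theorem~\ref{H3-th: g_L.Ia-}) shows it is not Ricci isotropic, hence not a plane wave; Remark~\ref{H3-re: L.II - paso a R3} applies once more, with $\gamma_4=-\gamma_1\neq 0$, to guarantee that the soliton is not realized on $\sdR$; and the eigenvalues $\{\gamma_1,-\gamma_1,-2\gamma_1\}$ of the restriction of $\operatorname{ad}_{u_4}$ to $\mathfrak{h}_3$ identify the underlying Lie algebra as $\mathfrak{d}_{4,2}$.

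The main obstacle I anticipate is the Gröbner basis computation together with the bookkeeping that cleanly separates the new solitons from those that collapse onto $\sdR$; the repeated use of saturation via the auxiliary variable $\varphi$, tracked in both the $\gamma_4=0$ and $\gamma_4\neq 0$ branches, is the delicate combinatorial part. A secondary but real difficulty is computing the Ricci operator correctly in the degenerate pseudo-orthonormal frame, where index raising couples $u_1$ and $u_2$, so that sign and pairing errors are easy to commit.
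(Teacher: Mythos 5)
Your proposal is correct in substance and reaches the theorem by the same criteria the paper uses (solve the derivation system $\{\mfP_{ijk}=0\}$, normalize $\varepsilon=1$ via $u_3\mapsto-u_3$, discard metrics realized on $\sdR$ via Remark~\ref{H3-re: L.II - paso a R3}, discard Einstein metrics via Remark~\ref{H3-re: L.II - loc symm}, identify the Lie algebra as $\mathfrak{d}_{4,2}$), but your computational organization is heavier than what the paper actually needs. The paper does not use Gr\"obner bases here: the component $\mfP_{131}=-2\varepsilon\gamma_5^2$ forces $\gamma_5=0$ outright, $\mfP_{132}$ then determines $\cARS=2(\gamma_1-\gamma_4)\gamma_4$, and the pair $\mfP_{141},\mfP_{242}$ factors as $\tfrac12(\gamma_1+\gamma_4)(3\gamma_1-\gamma_4)\gamma_1$ and $\tfrac12(\gamma_1+\gamma_4)(3\gamma_1-\gamma_4)\gamma_4$, so the case split is $\gamma_4=-\gamma_1$ versus $\gamma_4=3\gamma_1\neq0$ rather than your $\gamma_4=0$ versus $\gamma_4\neq0$. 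Your split still works ($\gamma_4=0$ forces $\gamma_1=0$ and hence realization on $\sdR$, exactly as you predict), but one claim in your $\gamma_4\neq0$ branch is imprecise: the Gr\"obner basis of the (saturated) ideal will \emph{not} force $\gamma_4=-\gamma_1$, because the variety there has a second component $\gamma_4=3\gamma_1$, $\gamma_2=\gamma_5=0$, $\gamma_6=-\gamma_3$ consisting of Einstein metrics; it is only after discarding that component via Remark~\ref{H3-re: L.II - loc symm} that you land on $\gamma_4=-\gamma_1$, $\gamma_3=3\gamma_6$, $\cARS=-4\gamma_1^2$. You do invoke that remark later, so this is an ordering issue, not a gap. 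Your remaining observations are accurate and match the paper: the $\mfP_{ijk}$ must be recomputed for the degenerate pairing (the paper's lists for \eqref{H3-eq: g_L.Ia- ip} and \eqref{H3-eq: g_L.II ip} are indeed different despite identical brackets), $\gamma_2$ survives as a free parameter, $\tau=-8\gamma_1^2\neq0$ already rules out the plane wave case, and the eigenvalue ratios $1:-1:-2$ of $\operatorname{ad}_{u_4}|_{\mathfrak{h}_3}$ identify $\mathfrak{d}_{4,2}$ (the paper instead exhibits an explicit change of basis in Remark~\ref{re:Heisenberg-(ii)}).
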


\begin{remark}\rm\label{re:Heisenberg-(ii)}
Since $\gamma_1\neq 0$, considering the basis
$$
\bar u_1=u_1+\tfrac{\gamma_6}{\gamma_1}u_3,\quad
\bar u_2=-u_3,\quad
\bar u_3=u_2,\quad
\bar u_4=-\gamma_6 u_1+\gamma_2 u_3+u_4 ,
$$
one has
$[\bar u_1,\bar u_2]=\bar u_3$, 
$[\bar u_1,\bar u_4]=\gamma_1 \bar u_1$,
$[\bar u_2,\bar u_4]=-2\gamma_1\bar u_2$ and
$[\bar u_3,\bar u_4]=-\gamma_1\bar u_3$,
which shows that the underlying Lie algebra is $\mathfrak{d}_{4,2}$.

The scalar curvature of metrics in Theorem~\ref{H3-th: g_L.II} is $\tau=-8\gamma_1^2$ and they are expanding algebraic Ricci solitons with
$\cARS = -4\gamma_1^2$. Moreover, these metrics are $\mathcal{F}[t]$-critical with zero energy for 
$t=-\frac{1}{2}$.
\end{remark}

\begin{proof}
A straightforward calculation  shows that the conditions for $\mathfrak{D}=\Ricci-\cARS\Id$ to be a derivation are determined by a system of polynomial equations on the soliton constant $\cARS$ and the structure constants in~\eqref{H3-eq: g_L.II ip}, given by $\{\mfP_{ijk}=0\}$, where

\medskip

\small

\noindent
$
\begin{array}{l}
2 \mfP_{122} = 
-\ve (\gamma_1 + 3 \gamma_4) \gamma_5 ,
\end{array}
$

\vspace{\vsep}

\noindent
$
\begin{array}{l}
2 \mfP_{131} =   
-2  \ve \gamma_5^2  ,
\end{array}
$

\vspace{\vsep}

\noindent
$
\begin{array}{l}
2 \mfP_{132} =  
-\ve (4 \gamma_4^2 - 4 \gamma_1 \gamma_4 + 
3 \gamma_5 \gamma_6 + 2 \cARS) ,
\end{array}
$

\vspace{\vsep}

\noindent
$
\begin{array}{l}
2 \mfP_{133} =   
\ve  (2 \gamma_1 + \gamma_4) \gamma_5  ,
\end{array}
$

\vspace{\vsep}

\noindent
$
\begin{array}{l}
2 \mfP_{141} =    
( 3 \gamma_1^2 + 3 \gamma_4^2 - 2 \gamma_1 \gamma_4 + 
4 \gamma_5 \gamma_6 + 2 \cARS  ) \gamma_1
+ (2 \gamma_2 \gamma_5 + \gamma_4 \gamma_6) \gamma_5 ,
\end{array}
$

\vspace{\vsep}

\noindent
$
\begin{array}{l}
2 \mfP_{142} =   
(\gamma_1^2 + 5 \gamma_4^2 - 
2 \gamma_1 \gamma_4 + \gamma_3 \gamma_5 + 
3 \gamma_5 \gamma_6 + 2 \cARS ) \gamma_2 
- \gamma_3^2 \gamma_4 - 
3 \gamma_1  (\gamma_3 - \gamma_6) \gamma_6 + \gamma_3 \gamma_4 
\gamma_6 ,
\end{array}
$

\vspace{\vsep}

\noindent
$
\begin{array}{l}
2  \mfP_{143} =    
( 5 \gamma_1^2 + 3 \gamma_4^2 - \gamma_1  \gamma_4 +  
2 \gamma_3 \gamma_5 - \gamma_5 \gamma_6 + 
2 \cARS) \gamma_3 
- (4 \gamma_1^2  - \gamma_4^2) \gamma_6 - (3 \gamma_1    + 
\gamma_4) \gamma_2  \gamma_5 ,
\end{array}
$

\vspace{\vsep}

\noindent
$
\begin{array}{l}
2 \mfP_{144} =  
- \ve \gamma_1 \gamma_5  ,
\end{array}
$

\vspace{\vsep}

\noindent
$
\begin{array}{l}
2 \mfP_{232} =    
\ve \gamma_5^2  ,
\end{array}
$

\vspace{\vsep}

\noindent
$
\begin{array}{l}
2 	\mfP_{241} =    	
3 \gamma_4 \gamma_5^2 ,
\end{array}
$

\vspace{\vsep}

\noindent
$
\begin{array}{l}
2 \mfP_{242} =    
(3 \gamma_1^2  + 3 \gamma_4^2 - 2 \gamma_1  \gamma_4 + 
2 \gamma_3 \gamma_5 + 4 \gamma_5 \gamma_6 + 
2 \cARS) \gamma_4
+ (\gamma_1  \gamma_6 - \gamma_2 \gamma_5 ) \gamma_5 ,
\end{array}
$

\vspace{\vsep}

\noindent
$
\begin{array}{l}
2 \mfP_{243} =   
- (\gamma_1^2 + 
2 \gamma_1 \gamma_4 + \gamma_3 \gamma_5) \gamma_5 ,
\end{array}
$

\vspace{\vsep}

\noindent
$
\begin{array}{l}
2	\mfP_{341} =    
(5 \gamma_1^2 + 3 \gamma_4^2 - 5 \gamma_1 \gamma_4 + 
2 \gamma_3 \gamma_5 + 6 \gamma_5 \gamma_6 + 
2 \cARS) \gamma_5 ,
\end{array}
$

\vspace{\vsep}

\noindent
$
\begin{array}{l}
2 \mfP_{342} =    
(5 \gamma_1^2  + 5 \gamma_4^2 - 7 \gamma_1 \gamma_4  + 
2 \gamma_3 \gamma_5  + 6 \gamma_5 \gamma_6 + 
2 \cARS) \gamma_6 
- (\gamma_1^2  - 
2 \gamma_1 \gamma_4 + \gamma_3 \gamma_5)  \gamma_3 ,
\end{array}
$

\vspace{\vsep}

\noindent
$
\begin{array}{l}
2 \mfP_{343} = 
- ( 3 \gamma_1^2 + 5 \gamma_4^2 - 5 \gamma_1 \gamma_4 + 
5 \gamma_5 \gamma_6 + 2 \cARS) \gamma_1 
+ (3 \gamma_4^2 + 2 \gamma_3 \gamma_5 - \gamma_5 \gamma_6 + 
2 \cARS) \gamma_4
- \gamma_2 \gamma_5^2 ,
\end{array}
$

\vspace{\vsep}

\noindent
$
\begin{array}{l}
2 \mfP_{344} =
- \ve \gamma_5^2 .
\end{array}
$


\normalsize

\bigskip

First of all note that the isomorphic isometry $u_3\mapsto -u_3$ interchanges the sign of $\ve$, $\gamma_3$, $\gamma_5$ and $\gamma_6$. Hence, without loss of generality, we may assume $\ve=1$ from now on. Moreover, from $\mfP_{131}=0$ and $\mfP_{132}=0$ we get
\[
\gamma_5=0, \quad
\cARS = 2(\gamma_1-\gamma_4)\gamma_4 ,
\]
which lead to
\[
\mfP_{141} = \tfrac{1}{2} (\gamma_1+\gamma_4)(3\gamma_1-\gamma_4)\gamma_1,
\quad
\mfP_{242} = \tfrac{1}{2} (\gamma_1+\gamma_4)(3\gamma_1-\gamma_4)\gamma_4 .
\] 
Therefore, it is enough to consider the cases $\gamma_4=-\gamma_1$ and $\gamma_4=3\gamma_1\neq 0$. Next we analyze them separately.

\medskip

\subsection*{Case 1: $\bm{\gamma_4=-\gamma_1}$} 
In this case,  a direct calculation shows that the system $\{ \mfP_{ijk}=0\}$ reduces to
\[
\mfP_{142} = (\gamma_3-\gamma_6)(\gamma_3-3\gamma_6)\gamma_1 = 0,
\quad
\mfP_{143} =-\tfrac{1}{3} \mfP_{342} = (\gamma_3-3\gamma_6)\gamma_1^2 = 0 .
\]

\smallskip

If $\gamma_1=0$ then the left-invariant metric is also realized on $\sdR$ (see Remark~\ref{H3-re: L.II - paso a R3}). Otherwise, if $\gamma_1\neq 0$, we take $\gamma_3=3\gamma_6$ to obtain an algebraic Ricci soliton with soliton constant $\cARS=-4\gamma_1^2$ and associated left-invariant metric given by
\[
\begin{array}{ll}
[u_1,u_3] = - u_2,
&
[u_1,u_4] = \gamma_1 u_1 + \gamma_2 u_2 + 3 \gamma_6 u_3,

\\
\noalign{\medskip}

[u_2,u_4] = -\gamma_1 u_2,
&
[u_3,u_4] = \gamma_6 u_2 - 2\gamma_1 u_3 .
\end{array}
\]
Note that the space is not realized on $\sdR$, since $\gamma_4=-\gamma_1\neq 0$ (see Remark~\ref{H3-re: L.II - paso a R3}). This  corresponds   to  the case given in the theorem.

\medskip

\subsection*{Case 2: $\bm{\gamma_4=3\gamma_1\neq 0}$} 
The system $\{\mfP_{ijk}=0\}$ reduces to
\[
\mfP_{142} = -(3\gamma_3^2-3\gamma_6^2-16\gamma_1\gamma_2)\gamma_1 = 0,
\quad
\mfP_{143} =  \mfP_{342} = 5(\gamma_3+\gamma_6)\gamma_1^2 = 0 ,
\]
from where we obtain $\gamma_6=-\gamma_3$ and $\gamma_2=0$.
Hence, the space is Einstein (see Remark~\ref{H3-re: L.II - loc symm}), which finishes the proof.
\end{proof}

\subsection{Semi-direct extensions with degenerate normal subgroup $\bm{\mathcal{H}^3}$}\label{H3-se:2}

In this section we analyze left-invariant Lorentzian metrics which are extensions of the three-dimensional unimodular Lie group $\mathcal{H}^3$ equipped with a degenerate metric. 
Hence, let $\mathfrak{g}=\algsdH$ be a four-dimensional Lie algebra with a Lorentzian inner product $\ip$  which restricts to a degenerate inner product on   $\mathfrak{h}_3$. We proceed as in case~(ii) of Section~\ref{se:Lorentz-degenerate} and consider the two-distinct situations when the restriction of the metric to the derived subalgebra $\mathfrak{h}'_3$ is degenerate or spacelike.

\subsubsection{\bf $\bm{{\mathfrak{h}_3'=\operatorname{\bf span}\{v\}}}$ is a null subspace}\label{H3-se:deg-null subspace}

In this case, setting $u_3=v$ we can take a pseudo-orthonormal basis $\{u_1,u_2,u_3,u_4\}$ of $\mathfrak{g}=\algsdH$, with $\langle u_1,u_1\rangle=\langle u_2,u_2\rangle=\langle u_3,u_4\rangle=1$,  where  $\mathfrak{h}_3=\span\{u_1,u_2,u_3\}$ and $\mathbb{R}=\span\{u_4\}$, so that  
\begin{equation}\label{H3-eq: g_D0 ip}
\mathfrak{g}_{D0}
\left\{
\begin{array}{ll}
[u_1,u_2]= \lambda_1 u_3 , &
[u_1,u_4]= \gamma_1 u_1 -  \gamma_2 u_2 + \gamma_3 u_3 ,
\\
\noalign{\medskip}
[u_2,u_4]=  \gamma_2 u_1 + \gamma_4 u_2 + \gamma_5 u_3 , &
[u_3,u_4]=(\gamma_1+\gamma_4) u_3 ,
\end{array}
\right.
\end{equation}
where  $\lambda_1\neq 0$ and    $\gamma_1$, $\dots$, $\gamma_5\in\mathbb{R}$.


\begin{remark}\label{H3-re: D0 - loc symm}\rm 
A metric~\eqref{H3-eq: g_D0 ip}  is Einstein if and only if $4\gamma_1\gamma_4+\lambda_1^2=0$, and the only non-zero component of the covariant derivative of the Ricci tensor is given by $(\nabla_{u_4}\rho)(u_4,u_4)=-(\gamma_1+\gamma_4)(4\gamma_1\gamma_4+\lambda_1^2)$, which shows that the  Ricci tensor is parallel if and only if $\gamma_1=-\gamma_4$ or the metric is Einstein.
Furthermore a metric $\mathfrak{g}_{D0}$ is locally symmetric if and only if either
$\gamma_2=\frac{\lambda_1}{2}$ and
the Ricci tensor is parallel
or, otherwise, 
$\gamma_2\neq \frac{\lambda_1}{2}$ and $\gamma_1=\gamma_4=0$.
In both cases the left-invariant metric  is also  locally conformally flat. 

Moreover, any metric $\mathfrak{g}_{D0}$ is homothetic (although not necessarily isomorphically homothetic) to a metric given by \eqref{H3-eq: g_D0 ip} with $\gamma_3=\gamma_5=0$ (cf. \cite{Kulkarni}).
\end{remark}

\begin{remark}\label{H3-re: D0 - paso a R3}\rm 	 
We can proceed exactly as in Remark~\ref{H3-re: R - paso a R3} (after interchanging $\lambda_3$ by $\lambda_1$)  to get that  the underlying Lie algebra of any metric in \eqref{H3-eq: g_D0 ip} is either $ \mathfrak{h}_3\times\mathbb{R} $ or $\mathfrak{n}_4$ if and only if $\gamma_4=-\gamma_1$ and  $\gamma_1^2 = \gamma_2^2$, in which case the analysis is covered by that of Section~\ref{se:R3}.
\end{remark}

\begin{theorem}\label{H3-th: g_D0}
Left-invariant metrics $\mathfrak{g}_{D0}$ on $\sdH$  given by Equation~\eqref{H3-eq: g_D0 ip}
		are never strict algebraic Ricci solitons.
\end{theorem}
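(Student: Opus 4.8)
The plan is to mirror the degenerate case of Theorem~\ref{R3-th: g_D} (its Case~1, where $\gamma_6=\gamma_7=0$), now in the presence of the extra bracket $[u_1,u_2]=\lambda_1 u_3$. First I would write out the derivation conditions $\{\mfP_{ijk}=0\}$ for $\mathfrak{D}=\Ricci-\cARS\Id$ coming from \eqref{H3-eq: g_D0 ip}, and normalize $\lambda_1=1$ by rescaling the inner product (passing to $\hat u_i=\tfrac{1}{\lambda_1}u_i$), so as to work within the homothetic class of the initial metric.

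The engine of the whole argument is the observation that the centre $u_3$ of $\mathfrak{h}_3$ is \emph{null} and spans a parallel null line. A direct Koszul computation gives $\nabla_{u_i}u_3=0$ for $i=1,2,3$ and $\nabla_{u_4}u_3=-(\gamma_1+\gamma_4)u_3$, so $u_3$ is recurrent with $\nabla u_3=\omega\otimes u_3$, $\omega=-(\gamma_1+\gamma_4)\langle u_3,\cdot\rangle$; in particular $\operatorname{span}\{u_3\}$ is a parallel degenerate line field and $(\sdH,\ip)$ is a Brinkmann wave. I would then compute the Ricci operator and check that its image lies in $\operatorname{span}\{u_3\}$ while $u_3^\perp=\operatorname{span}\{u_1,u_2,u_3\}$ lies in its kernel, i.e. $\Ricci X=r\langle X,u_3\rangle u_3$ for a scalar $r$ that vanishes precisely on the Einstein locus $4\gamma_1\gamma_4+\lambda_1^2=0$ of Remark~\ref{H3-re: D0 - loc symm}. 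Thus $\Ricci$ is two-step nilpotent and isotropic.

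From here the soliton analysis is immediate. Since every bracket in \eqref{H3-eq: g_D0 ip} lies in $u_3^\perp$, and $u_3^\perp$ is orthogonal to $u_3$, one has $\langle [X,Y],u_3\rangle\equiv 0$; combined with $[u_3,Y]=(\gamma_1+\gamma_4)\langle Y,u_3\rangle u_3$ this gives $\Ricci[X,Y]=0=[\Ricci X,Y]+[X,\Ricci Y]$, so $\Ricci$ is a derivation. Consequently $\mathfrak{D}=\Ricci-\cARS\Id$ is a derivation if and only if $\cARS\,[X,Y]=0$ for all $X,Y$; as $[u_1,u_2]=\lambda_1 u_3\neq 0$ this forces $\cARS=0$, and every $\mathfrak{g}_{D0}$ is therefore a \emph{steady} algebraic Ricci soliton with $\mathfrak{D}=\Ricci$. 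It then remains to identify the geometry: I would verify $R(x,y)=0$ and $\nabla_x R=0$ for all $x,y\in u_3^\perp$, so that by Leistner's criterion \cite{Leistner} the metric is a plane wave, namely case~(a) of Section~\ref{sse:plane-wave-homog}, with parallel Ricci tensor exactly when $\gamma_1+\gamma_4=0$. Since these metrics are plane waves (or Einstein when $4\gamma_1\gamma_4+\lambda_1^2=0$, or realized on $\sdR$ in the almost Abelian locus $\gamma_4=-\gamma_1$, $\gamma_1^2=\gamma_2^2$ of Remark~\ref{H3-re: D0 - paso a R3}), none of them is strict, which is the assertion.

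The main obstacle I anticipate is purely computational: confirming that the Ricci operator has the pure null form $\Ricci X=r\langle X,u_3\rangle u_3$ for \emph{all} values of the parameters (equivalently, transversal flatness $R(x,y)=0$ on $u_3^\perp$), rather than merely being nilpotent, and then checking $\nabla_x R=0$ on $u_3^\perp$ in order to upgrade \emph{pp-wave} to \emph{plane wave} through \cite{Leistner}. Once the null form of $\Ricci$ is in hand, the derivation property and the value $\cARS=0$ are forced with no further case distinctions, so no Gröbner-basis reduction of $\{\mfP_{ijk}=0\}$ should be needed in this case.
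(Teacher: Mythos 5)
Your proposal is correct and follows essentially the same route as the paper: every metric $\mathfrak{g}_{D0}$ is a steady algebraic Ricci soliton (the paper reads $\cARS=0$ off the polynomials $\mfP_{ijk}$, which are all of the form $\cARS$ times a structure constant — exactly the content of your observation that $\operatorname{Ric}$ is itself a derivation because its image is the recurrent null direction $u_3$ and it annihilates $[\mathfrak{g},\mathfrak{g}]\subset u_3^\perp$), and the metrics are then identified as plane waves of class (a) via Leistner's criterion, hence never strict. The one computation you flag as the anticipated obstacle — that the only non-zero Ricci component is $\rho(u_4,u_4)=\tfrac{1}{2}(4\gamma_1\gamma_4+\lambda_1^2)$, together with transversal flatness and $\nabla_xR=0$ on $u_3^\perp$ — does check out, so the argument closes as you describe.
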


\begin{proof}
A straightforward calculation  shows that the conditions for $\mathfrak{D}=\Ricci-\cARS\Id$ to be a derivation are determined by a system of polynomial equations on the soliton constant $\cARS$ and the structure constants in~\eqref{H3-eq: g_D0 ip}, given by $\{\mfP_{ijk}=0\}$, where

\medskip

\[
\begin{array}{llll}
\mfP_{123} = \lambda_1 \cARS ,  
&
\mfP_{141} =     \gamma_1 \cARS ,  
&
\mfP_{142} = -\mfP_{241} = -\gamma_2 \cARS ,     
&
\mfP_{143} = \gamma_3 \cARS ,    

\\[\vsep]

\mfP_{242} =  \gamma_4 \cARS ,      
&
\mfP_{243} =  \gamma_5 \cARS , 
&
\mfP_{343} =\mfP_{141} + \mfP_{242} .     
\end{array}
\]


\normalsize

\bigskip

Hence, any left-invariant metric $\mathfrak{g}_{D0}$  is an steady algebraic Ricci soliton.
A straightforward calculation shows that $u_3$ determines a null recurrent left-invariant vector field, since $\nabla u_3=\omega\otimes u_3$	where $\omega(\cdot)=-(\gamma_1+\gamma_4)\langle u_3,\cdot\rangle$. It follows by a direct calculation that the only non-zero component of the Ricci tensor is given by $\rho(u_4,u_4)=\frac{1}{2}(4\gamma_1\gamma_4+\lambda_1^2)$, which proves that the Ricci operator is isotropic. Furthermore one has that $R(x,y)=0$  and $\nabla_xR=0$ for all $x,y\in u_3^\perp$. Hence left-invariant metrics determined by  \eqref{H3-eq: g_D0 ip} are plane waves (see \cite{Leistner}).

Moreover, since $\lambda_1\neq 0$, we may take $\lambda_1=1$ working in the homothetic class of the initial metric, which corresponds to plane waves in class (a) in Section~\ref{sse:plane-wave-homog}. 
This finishes the proof.  
\end{proof}

\begin{remark}\rm\label{grupos-ext-H}
Any semi-direct extension $\mathcal{H}^3\rtimes\mathbb{R}$ of the Heisenberg group admits non-Einstein left-invariant Lorentz metrics which are plane waves.

Let $\xi=(\gamma_1,\gamma_2,\gamma_3,\gamma_4,\gamma_5)$.
Four-dimensional nilpotent Lie groups admit non-Einstein plane wave metrics which are special cases of the metrics $\mathfrak{g}_{D0}$ with $\lambda_1=1$, just choosing the parameters in \eqref{H3-eq: g_D0 ip} as $\xi=(1,-1,0,-1,0)$ (resp., $\xi=(0,0,0,0,0)$), so that they are realized on the Lie group associated to $\mathfrak{n}_4$ (resp., $\mathfrak{h}_3\times\mathbb{R}$).

Non-Einstein left-invariant plane waves on solvable extensions of the Heisenberg group are obtained also from metrics $\mathfrak{g}_{D0}$.
Setting $\lambda_1=1$ and varying the parameters   in \eqref{H3-eq: g_D0 ip}, one has that the underlying Lie algebras are as follows. For $\xi=(-1,0,0,1,0)$ the corresponding non-Einstein plane wave realizes on $\mathfrak{d}_4$, while metrics corresponding to $\xi=(-2,1,0,0,0)$ are non-Einstein plane waves on $\mathfrak{h}_4$. 
The one-parameter family $\mathfrak{d}_{4,\lambda}$ is obtained considering non-Einstein left-invariant plane waves determined by the parameters
$\xi=(-\lambda,0,0,\lambda-1,0)$ with $\lambda\neq \frac{1}{2}(1\pm\sqrt{2})$, and 
$\xi=(-\frac{1}{2}(1+\sqrt{6}),1,0,-\frac{1}{2}(1-\sqrt{6}),0)$ otherwise,
Finally, the one-parameter family $\mathfrak{d}'_{4,\lambda}$ is obtained considering the parameters
$\xi=(-\lambda,-1,0,-\lambda,0)$.
\end{remark}

\subsubsection{\bf $\bm{{\mathfrak{h}_3'=\span\{v\}}}$ is a spacelike subspace}\label{H3-se:deg-spacelike subspace}

We set  $u_1=\frac{v}{\|v\|}$ and consider a pseudo-orthonormal basis $\{u_1,u_2,u_3,u_4\}$ of $\mathfrak{g}=\algsdH$, with $\langle u_1,u_1\rangle=\langle u_2,u_2\rangle=\langle u_3,u_4\rangle=1$,  where  $\mathfrak{h}_3=\span\{u_1,u_2,u_3\}$ and $\mathbb{R}=\span\{u_4\}$, so that  
\begin{equation}\label{H3-eq: g_D+ ip}
\mathfrak{g}_{D+}
\left\{
\begin{array}{l}
[u_1,u_4]= \gamma_1 u_1, \quad
[u_2,u_3]= \lambda_3 u_1, \quad
[u_2,u_4]= \gamma_2 u_1 +\gamma_3 u_2 + \gamma_4 u_3,
\\
\noalign{\medskip}
[u_3,u_4]=  \gamma_5 u_1+\gamma_6 u_2 + (\gamma_1-\gamma_3) u_3 ,
\end{array}
\right.
\end{equation}  
where  $\lambda_3\neq 0$ and $\gamma_1$, $\dots$, $\gamma_6\in\mathbb{R}$.

\begin{remark}\label{H3-re: D+ - loc symm}\rm 	 
A metric~\eqref{H3-eq: g_D+ ip}  is never either Einstein or locally symmetric.
\end{remark}

\begin{remark}\label{H3-re: D+ - paso a R3}\rm 
Considering the change of basis 
\[
\bar u_1= u_3,\quad
\bar u_2 = u_2,\quad
\bar u_3 = u_1,\quad
\bar u_4 = \tfrac{\gamma_5}{\lambda_3} 	
u_2-\tfrac{\gamma_2}{\lambda_3} u_3+u_4,
\]
the Lie bracket in Equation~\eqref{H3-eq: g_D+ ip} transforms into
\[
\begin{array}{ll}
[\bar u_1,\bar u_2] = -\lambda_3 \bar u_3,  
&
[\bar u_1,\bar u_4] = (\gamma_1-\gamma_3) \bar u_1
+\gamma_6 \bar u_2,
\\
\noalign{\medskip}
[\bar u_2,\bar u_4] = \gamma_4 \bar u_1
+ \gamma_3\bar u_2,
&
[\bar u_3,\bar u_4] = \gamma_1 \bar u_3,
\end{array}
\]
so that, when evaluating on the basis $\{\bar u_i\}$,
\[ 
\ad_{\bar u_4}=\left(
\begin{array}{cc}
A& 0 
\\
0 & \operatorname{tr}A
\end{array}
\right), 
\quad \text{where}\,\, 
A=\left(
\begin{array}{cc}
-\gamma_1+\gamma_3 & -\gamma_4  
\\
-\gamma_6 & -\gamma_3  
\end{array}
\right).
\]
Hence the Lie algebra corresponds to $ \mathfrak{h}_3\times\mathbb{R} $ or $\mathfrak{n}_4$ if and only if $\gamma_1=0$ and $\gamma_3^2+\gamma_4 \gamma_6=0$ (cf.~\cite{ABDO}).
 Indeed,  if  $\gamma_6\neq 0$, the change of basis 
$\widetilde u_1 = -\tfrac{\gamma_5}{\lambda_3} 	
u_2 + \tfrac{\gamma_2}{\lambda_3} u_3 - u_4$, 
$\widetilde u_2 =   \lambda_3  u_1$, 
$\widetilde u_3 = \gamma_6  u_2 - \gamma_3 u_3$, 
$\widetilde u_4 =    u_3$,
transforms the Lie bracket into
$[\widetilde u_1,\widetilde u_4] =   \widetilde u_3$ and 
$[\widetilde u_3, \widetilde u_4] =    \gamma_6 \widetilde u_2$,
while if $\gamma_6=0$ (which implies $\gamma_3=0$),  we use the basis
$\widetilde u_1 =   \tfrac{\gamma_5}{\lambda_3} 	
u_2 - \tfrac{\gamma_2}{\lambda_3} u_3 + u_4$, 
$\widetilde u_2 =\lambda_3   u_1$, 
$\widetilde u_3 =   u_3$, 
$\widetilde u_4 = -   u_2$,
to get
$[\widetilde u_1,\widetilde u_4] = \gamma_4 \widetilde u_3$,
and
$[\widetilde u_3, \widetilde u_4] =    \widetilde u_2$.
Thus, we conclude that if $\gamma_1=0$ and $\gamma_3^2+\gamma_4 \gamma_6=0$ the corresponding left-invariant metrics are also realized on $\sdR$.

\end{remark}

\begin{theorem}\label{H3-th: g_D+}
Any  left-invariant metric $\mathfrak{g}_{D+}$ on $\sdH$  given by Equation~\eqref{H3-eq: g_D+ ip} which is an algebraic Ricci soliton is also   realized on $\sdR$.  
\end{theorem}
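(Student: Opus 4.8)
The plan is to follow the strategy of Theorem~\ref{H3-th: g_R}: encode the algebraic Ricci soliton condition as the polynomial system $\{\mfP_{ijk}=0\}$ expressing that $\mathfrak{D}=\Ricci-\cARS\Id$ is a derivation, and then show that every solution forces $\gamma_1=0$ together with $\gamma_3^2+\gamma_4\gamma_6=0$. By Remark~\ref{H3-re: D+ - paso a R3} these two conditions are precisely what is needed to realize the metric on $\sdR$, so establishing them suffices. First I would compute the Ricci operator of the metric~\eqref{H3-eq: g_D+ ip}, form $\mathfrak{D}=\Ricci-\cARS\Id$, and write the derivation equations $\mfP_{ijk}=\mathfrak{D}_\ell{}^k c_{ij}{}^\ell-\mathfrak{D}_i{}^\ell c_{\ell j}{}^k-\mathfrak{D}_j{}^\ell c_{i\ell}{}^k=0$ with respect to $\{u_1,u_2,u_3,u_4\}$. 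Since $\lambda_3\neq 0$, I would rescale by $\hat u_i=\tfrac{1}{\lambda_3}u_i$ and assume $\lambda_3=1$, working in the homothetic class of the initial metric throughout.

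With the system set up, I would solve it by isolating clean combinations of the $\mfP_{ijk}$, just as in Theorem~\ref{H3-th: g_R}. I expect the component arising from the index pattern of the Heisenberg bracket $[u_2,u_3]=\lambda_3 u_1$ to determine the soliton constant $\cARS$ in terms of the remaining $\gamma_i$, after which suitable linear combinations of the other components should factor through $\gamma_1$ and through the quantity $\gamma_3^2+\gamma_4\gamma_6$, forcing both to vanish. Should direct manipulation prove unwieldy, I would instead compute a Gröbner basis of the ideal $\langle\mfP_{ijk}\cup\{\lambda_3-1\}\rangle$ in $\mathbb{R}[\gamma_1,\dots,\gamma_6,\lambda_3,\cARS]$, as done for Theorem~\ref{H3-th: g_L.Ia-}, and read off basis elements that exhibit $\gamma_1$ and $\gamma_3^2+\gamma_4\gamma_6$ as forced-to-vanish factors. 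Since by Remark~\ref{H3-re: D+ - loc symm} no metric~\eqref{H3-eq: g_D+ ip} is Einstein, there is no Einstein branch to set aside, which keeps the case analysis clean.

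The main obstacle is the size and opacity of the polynomial system: there are six structure parameters $\gamma_1,\dots,\gamma_6$ together with $\cARS$, and the Ricci operator of a Lorentzian metric with the two null directions $u_3,u_4$ is less transparent than in the Riemannian-restriction setting, so the precise combinations that collapsed the proof of Theorem~\ref{H3-th: g_R} are not immediately available and will have to be uncovered (or replaced by the Gröbner computation). I would also expect a few degenerate branches — for instance where $\cARS=0$ or where some $\gamma_i$ vanishes — to require separate treatment, and in each I would verify that $\gamma_1=0$ and $\gamma_3^2+\gamma_4\gamma_6=0$ still hold. Once these are established, the explicit changes of basis recorded in Remark~\ref{H3-re: D+ - paso a R3} (according to whether $\gamma_6\neq 0$ or $\gamma_6=0$) realize the metric on $\sdR$, completing the proof.
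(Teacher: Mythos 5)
Your proposal matches the paper's proof: the paper writes out the derivation system $\{\mfP_{ijk}=0\}$, observes that $2\mfP_{131}+\mfP_{232}=\tfrac{5}{2}\gamma_1\lambda_3^2$ forces $\gamma_1=0$, whereupon $\mfP_{142}=-\tfrac{1}{2}(\gamma_3^2+\gamma_4\gamma_6)\lambda_3$ forces $\gamma_3^2+\gamma_4\gamma_6=0$, and then invokes Remark~\ref{H3-re: D+ - paso a R3} exactly as you intend. The only thing left implicit in your plan is the specific linear combination that eliminates $\cARS$ (no determination of $\cARS$ or separate degenerate branches turns out to be needed), but the strategy is the same.
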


\begin{proof}
The conditions for $\mathfrak{D}=\Ricci-\cARS\Id$ to be a derivation are determined by a system of polynomial equations on the soliton constant $\cARS$ and the structure constants in~\eqref{H3-eq: g_D+ ip},   $\{\mfP_{ijk}=0\}$.
In particular, we will make use of the following polynomials: 
\medskip


%
%
%
%

$
\begin{array}{l}
2 \mfP_{131} = 
\left(\gamma_5 \gamma_6 + (4 \gamma_1 - \gamma_3) \lambda_3 \right) 
\lambda_3 ,  

\end{array}
$

%
%
%
%
%

\vspace{\vsep}

$
\begin{array}{l}
2 \mfP_{142} =  
-(3 \gamma_1^2 + \gamma_3^2 - 
4 \gamma_1 \gamma_3 + \gamma_4 \gamma_6) \lambda_3 - 
3 \gamma_1 \gamma_5 \gamma_6 ,
\end{array}
$

%
%
%
%
%
%
%

\vspace{\vsep}

$
\begin{array}{l}
2  \mfP_{232} =    
- \left(2 \gamma_5 \gamma_6 + (3 \gamma_1  - 
2 \gamma_3) \lambda_3\right) \lambda_3 .
\end{array}
$

\normalsize

\bigskip

Since $2\mfP_{131}+\mfP_{232}=\frac{5}{2}\gamma_1\lambda_3^2$, we get $\gamma_1=0$. Now, using this condition, we have
$\mfP_{142} = -\frac{1}{2}(\gamma_3^2+\gamma_4 \gamma_6)\lambda_3$, which implies $\gamma_3^2+\gamma_4 \gamma_6=0$. Hence, by Remark~\ref{H3-re: D+ - paso a R3}, any algebraic Ricci soliton is also realized on $\sdR$, finishing the proof. 
\end{proof}

\section{Semi-direct extensions of the Euclidean and Poincaré Lie groups}\label{se:EE}

We show that a semi-direct extension of the Euclidean or Poincaré Lie groups admitting a non-Einstein algebraic Ricci soliton is necessarily unimodular. Hence it follows from the results in  \cite{ABDO} that they reduce to a direct product $E(1,1)\times \mathbb{R}$ or $\widetilde{E}(2)\times\mathbb{R}$, and thus they are isomorphic to a semi-direct extension $\mathbb{R}^3\rtimes\mathbb{R}$ of the Abelian Lie group covered by the analysis in Section~\ref{se:R3}.

\begin{theorem}\label{E-ATS-thm}
	Let $G=G_3\rtimes\mathbb{R}$ with $G_3=E(1,1)$ or $G_3=\widetilde{E}(2)$. If $(G,\ip)$ is a  non-Einstein algebraic Ricci soliton, then the Lie group is unimodular and isomorphic to a semi-direct extension $\mathbb{R}^3\rtimes\mathbb{R}$.
\end{theorem}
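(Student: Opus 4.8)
The plan is to reduce the whole statement to the single assertion that such an extension must be unimodular; granted that, the classification in \cite{ABDO} identifies the unimodular extensions of $\mathfrak{e}(1,1)$ and $\mathfrak{e}(2)$ with the direct products $E(1,1)\times\mathbb{R}$ and $\widetilde{E}(2)\times\mathbb{R}$, and regrouping the Abelian ideal exhibits these as almost Abelian groups $\sdR$ (the two-dimensional derived algebra together with the central $\mathbb{R}$ form an Abelian ideal $\mathbb{R}^3$ on which the remaining generator acts, recovering the families of Theorem~\ref{th:main R3}). I write $\mathfrak{g}=\mathfrak{g}_3\rtimes\mathbb{R}e_4$ with $\ad_{e_4}|_{\mathfrak{g}_3}=D\in\operatorname{Der}(\mathfrak{g}_3)$, and record that, $\mathfrak{g}_3$ being unimodular, the mean curvature vector $H$ defined by $\langle H,X\rangle=\operatorname{tr}\ad_X$ satisfies $\langle H,\mathfrak{g}_3\rangle=0$ and $\langle H,e_4\rangle=\operatorname{tr}D$; hence $\mathfrak{g}$ is unimodular if and only if $H=0$, i.e. if and only if $\operatorname{tr}D=0$.

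The backbone of the argument is a signature-free identity valid for any algebraic soliton. For a derivation $\mathfrak{D}$ one has $\ad_{\mathfrak{D}X}=[\mathfrak{D},\ad_X]$, whose trace vanishes, so $\langle H,\mathfrak{D}X\rangle=\operatorname{tr}\ad_{\mathfrak{D}X}=0$ for every $X$. Applying this to $\mathfrak{D}=\Ricci-\cARS\Id$ and using that $\Ricci$, and therefore $\mathfrak{D}$, is self-adjoint yields $\mathfrak{D}H=0$, that is $\Ricci(H)=\cARS H$. Moreover, since $\mathfrak{g}$ is solvable, every derivation maps $\mathfrak{g}$ into the nilradical $\mathfrak{n}$; a direct inspection of the derivations of $\mathfrak{e}(1,1)$ and $\mathfrak{e}(2)$ shows that $D(\mathfrak{g}_3)$ lies in the two-dimensional Abelian ideal $\mathfrak{g}_3'$, and in the non-unimodular branch ($\operatorname{tr}D\neq0$) one checks that no nonzero combination of $z$ and $e_4$ acts nilpotently on $\mathfrak{g}_3'$, so $\mathfrak{n}=\mathfrak{g}_3'$. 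Consequently $\Ricci-\cARS\Id$ maps all of $\mathfrak{g}$ into the $2$-plane $\mathfrak{g}_3'$, whence $\Ricci$ induces $\cARS\Id$ on the quotient $\mathfrak{g}/\mathfrak{g}_3'$, and self-adjointness gives $(\mathfrak{g}_3')^{\perp}\subseteq\ker\mathfrak{D}$.

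With these reductions I would then run the explicit computation used throughout the paper. Taking the descriptions of left-invariant Lorentz metrics on $\mathfrak{e}(1,1)\rtimes\mathbb{R}$ and $\mathfrak{e}(2)\rtimes\mathbb{R}$ from Section~\ref{se:Lorentz Lie groups} in each causal type (restriction to $\mathfrak{g}_3$ Riemannian, Lorentzian, or degenerate), I would compute the Ricci operator and impose the two structural constraints above together with $\Ricci(H)=\cARS H$. Since $H$ lies in the directions transverse to $\mathfrak{g}_3'$, these conditions pin down the transverse block of $\Ricci$, and under the standing assumption $\operatorname{tr}D\neq0$ the reduced polynomial system $\{\mfP_{ijk}=0\}$ is satisfied only on the Einstein locus. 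This yields the contrapositive of the claim: a \emph{non-Einstein} algebraic Ricci soliton must have $\operatorname{tr}D=0$, hence $H=0$ and $\mathfrak{g}$ unimodular.

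The main obstacle is exactly this last step. The structural relations $\Ricci(H)=\cARS H$ and $\operatorname{Im}(\Ricci-\cARS\Id)\subseteq\mathfrak{g}_3'$ shrink the problem considerably, but one still must carry out, algebra by algebra and causal type by causal type, the verification that the non-unimodular branch collapses onto the Einstein metrics; the degenerate case, where $\mathfrak{g}_3'$ may fail to be nondegenerate and the orthogonal splitting must be replaced by the filtration $\mathfrak{g}_3'\subseteq\mathfrak{g}$, requires the most care. Once unimodularity is secured, the passage to $\sdR$ is the routine regrouping described in the first paragraph, which completes the proof.
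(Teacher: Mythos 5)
Your structural reductions are correct and genuinely different in spirit from the paper's argument: the identity $\operatorname{tr}\ad_{\mathfrak{D}X}=\operatorname{tr}[\mathfrak{D},\ad_X]=0$ together with self-adjointness does give $\mathfrak{D}H=0$, and since $\mathfrak{g}$ is solvable the derivation $\mathfrak{D}=\Ricci-\cARS\Id$ maps into the nilradical, which (after checking that no combination of $z$ and $e_4$ acts nilpotently when $\operatorname{tr}D\neq0$) is $\mathfrak{g}_3'$; hence $\operatorname{Im}\mathfrak{D}\subseteq\mathfrak{g}_3'$ and $(\mathfrak{g}_3')^{\perp}\subseteq\ker\mathfrak{D}$. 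The final regrouping is also fine: a trace-free derivation of $\mathfrak{e}(1,1)$ or $\mathfrak{e}(2)$ is inner, so the unimodular extensions are the direct products, which are almost Abelian. None of the paper's case computations use these general facts, so if the plan were completed it would be a leaner route.

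The gap is that the decisive step is only announced, not carried out. The sentence ``under the standing assumption $\operatorname{tr}D\neq 0$ the reduced polynomial system $\{\mfP_{ijk}=0\}$ is satisfied only on the Einstein locus'' \emph{is} the theorem, and nothing in your two structural constraints forces it: an operator with $\mathfrak{D}H=0$ and $\operatorname{Im}\mathfrak{D}\subseteq\mathfrak{g}_3'$ can still be a nonzero self-adjoint map of rank one or two, so these conditions alone do not exclude non-unimodular, non-Einstein solitons. The content of the paper's proof is precisely the case-by-case verification you defer --- roughly ten families indexed by the causal type of $\ip|_{\mathfrak{g}_3}$ and the Jordan form of the structure operator, in each of which an explicit linear combination of the $\mfP_{ijk}$ (e.g.\ $\gamma_1\lambda_2\mfP_{231}-\gamma_1\lambda_1\mfP_{132}-\lambda_1\lambda_2(\mfP_{141}+\mfP_{242})=\gamma_1(4\gamma_1^2+(\lambda_1-\lambda_2)^2)\lambda_1\lambda_2$ in the Riemannian case) kills the unimodularity parameter or lands on the Einstein locus; several branches (types II and III, the degenerate cases) require separate ad hoc combinations, and in the degenerate setting $(\mathfrak{g}_3')^{\perp}$ meets $\mathfrak{g}_3'$, so even your orthogonality constraint degenerates there, as you note. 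Until those computations are exhibited, the proposal is a (sound) strategy rather than a proof.
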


The proof of Theorem~\ref{E-ATS-thm} follows directly from the discussion below, where we consider all left-invariant Lorentz metrics on $G=G_3\rtimes\mathbb{R}$ with $G_3=E(1,1)$ or $G_3=\widetilde{E}(2)$ following Section~\ref{se:Lorentz Lie groups}. 

\subsection{Extensions of the Riemannian Euclidean and Poincaré Lie groups}
Let $G=G_3\rtimes\mathbb{R}$ with $G_3=E(1,1)$ or $G_3=\widetilde{E}(2)$ so that the restriction of the metric to $G_3$ is positive definite. Proceeding as in Section~\ref{se:Lorentz-Riemannian},  there exists an orthonormal basis $\{ e_i\}$ of the Lie algebra $\mathfrak{g}=\mathfrak{g}_3\rtimes\mathbb{R}$ with $e_4$ timelike so that 
$$
\begin{array}{lll}
[e_1,e_3]=-\lambda_2 e_2, & [e_2,e_3]=\lambda_1 e_1,& 
\\ \noalign{\medskip}
{[e_1,e_4]}=\gamma_1 e_1+\gamma_2\lambda_2 e_2, &{[e_2,e_4]}=-\gamma_2\lambda_1 e_1+\gamma_1 e_2,&[e_3,e_4]=\gamma_3 e_1+\gamma_4 e_2,
\end{array}
$$
where $\lambda_1 \lambda_2\neq 0$ and $\gamma_1, \gamma_2, \gamma_3, \gamma_4\in\mathbb{R}$.
The Lie algebra is unimodular if and only if $\gamma_1=0$.
Moreover, a left-invariant metric as above is Einstein if and only if $\gamma_1=\gamma_3=\gamma_4=0$ and $\lambda_1=\lambda_2$.

We analyze the polynomial equations $\{\mfP_{ijk}=0\}$ equivalent to $\mathfrak{D}= \operatorname{Ric} -\cARS\Id$ being a derivation. Considering the equations
$\mfP_{134}=\frac{1}{2}\gamma_3\lambda_1\lambda_2$ and $\mfP_{234}=\frac{1}{2}\gamma_4\lambda_1\lambda_2$, one has $\gamma_3=\gamma_4=0$ and hence a straightforward calculation shows that 
\[
\begin{array}{l}
-2 \mfP_{231} =   \left( (\gamma_2^2 - 
1)   (\lambda_1 - \lambda_2) (3 \lambda_1 + \lambda_2) - 
2 \cARS \right) \lambda_1 ,

\\ \noalign{\medskip}

-2 \mfP_{132} =   \left( (\gamma_2^2 - 1) (\lambda_1 - \lambda_2)  (\lambda_1 + 
3 \lambda_2) + 2  \cARS \right) \lambda_2 ,

\\ \noalign{\medskip}

-2 \mfP_{141} =  \left(  \gamma_2^2 (\lambda_1 - \lambda_2) (3 \lambda_1 + 
\lambda_2) + 4 \gamma_1^2 - 2 \cARS \right) \gamma_1 ,

\\ \noalign{\medskip}

\phantom{-} 2 \mfP_{242} =  \left(   \gamma_2^2 (\lambda_1 - \lambda_2) (\lambda_1 + 
3 \lambda_2) - 4 \gamma_1^2 + 2 \cARS \right) \gamma_1.
\end{array}
\]
Now  a direct calculation leads to
\[
\gamma_1 \lambda_2 \mfP_{231}-\gamma_1 \lambda_1 \mfP_{132}
-  \lambda_1\lambda_2 
( \mfP_{141}+\mfP_{242} ) 
=\gamma_1 (4\gamma_1^2+(\lambda_1-\lambda_2)^2)\lambda_1\lambda_2,
\]
which implies    $\gamma_1=0$,  and therefore the underlying Lie algebra is unimodular.

\subsection{Extensions of the Lorentzian Euclidean and Poincaré Lie groups}

We consider the different possibilities for the Jordan normal form of the structure operator $L$.

\subsubsection{\bf Diagonalizable structure operator with spacelike $\bm{\ker L}$}\label{E-Ia-spacelike}

In this case we proceed as in Section~\ref{se:Lorentz-Lorentz} and assume without loss of generality that $\lambda_1=0$, so that the metric is described in an orthonormal basis $\{ e_i\}$ with $e_3$ timelike by
$$
\begin{array}{lll}
[e_1,e_2]=-\lambda_3 e_3, & [e_1,e_3]=-\lambda_2 e_2, &
\\ \noalign{\medskip}
{[e_1,e_4]}=\gamma_1 e_2+\gamma_2 e_3, & [e_2,e_4]=\gamma_3 e_2+\gamma_4\lambda_3 e_3,&[e_3,e_4]=\gamma_4 \lambda_2 e_2+\gamma_3 e_3,
\end{array}
$$
where $\lambda_2\lambda_3\neq 0$ and $\gamma_1,\gamma_2,\gamma_3,\gamma_4\in\mathbb{R}$.
The Lie algebra is unimodular if and only if $\gamma_3=0$. Moreover, a left-invariant metric above is Einstein if and only if $\gamma_1=\gamma_2=\gamma_3=0$ and $\lambda_2=\lambda_3$.

A straightforward calculation shows that the algebraic Ricci soliton equations $\{\mfP_{ijk}=0\}$ are such that $\mfP_{124}=-\frac{1}{2}\gamma_1\lambda_2\lambda_3$ and $\mfP_{134}=\frac{1}{2}\gamma_2\lambda_2\lambda_3$. Therefore,  $\gamma_1=\gamma_2=0$ and
we calculate  
\[
\begin{array}{l}
\phantom{-} 
2 \mfP_{132} =   \left( (\gamma_4^2 + 
1) (\lambda_2 - \lambda_3) (3 \lambda_2 + \lambda_3) - 
2 \cARS \right) \lambda_2 ,

\\ \noalign{\medskip}

-2 \mfP_{123} =   \left((\gamma_4^2 + 1) (\lambda_2 - \lambda_3)   (\lambda_2 + 
3 \lambda_3) + 2  \cARS \right) \lambda_3 ,

\\ \noalign{\medskip}

-2 \mfP_{242} =   \left(  \gamma_4^2 (\lambda_2 - \lambda_3) (3 \lambda_2 + \lambda_3) 
- 4 \gamma_3^2 - 2   \cARS \right) \gamma_3 ,

\\ \noalign{\medskip}

\phantom{-} 2 \mfP_{343} =  \left(    \gamma_4^2 (\lambda_2 - \lambda_3) (\lambda_2 + 
3 \lambda_3) + 4 \gamma_3^2 + 2  \cARS \right) \gamma_3 .
\end{array}
\]
By a direct calculation  we obtain
\[
\gamma_3 \lambda_2 \mfP_{123} + \gamma_3 \lambda_3 \mfP_{132}
+  \lambda_2\lambda_3 
( \mfP_{242}+\mfP_{343} ) 
=\gamma_3 (4\gamma_3^2+(\lambda_2-\lambda_3)^2)\lambda_2\lambda_3,
\]
which shows that $\gamma_3=0$. Hence,  the underlying Lie algebra is unimodular.

\subsubsection{\bf Diagonalizable structure operator with timelike $\bm{\ker L}$}\label{E-Ia-timelike}

We proceed as in Section~\ref{se:Lorentz-Lorentz} assuming that $\lambda_3=0$, so that the metric is described in an orthonormal basis $\{ e_i\}$ with $e_3$ timelike by
$$
\begin{array}{lll}
[e_1,e_3]=-\lambda_2 e_2, & [e_2,e_3]=\lambda_1 e_1, &
\\ \noalign{\medskip}
{[e_1,e_4]}=\gamma_1 e_1+\gamma_2\lambda_2 e_2, & [e_2,e_4]=-\gamma_2\lambda_1 e_1+\gamma_1 e_2, &[e_3,e_4]=\gamma_3 e_1+\gamma_4 e_2,
\end{array}
$$
where $\lambda_1\lambda_2\neq 0$ and   $\gamma_1, \gamma_2, \gamma_3, \gamma_4\in\mathbb{R}$.
The Lie algebra is unimodular if and only if $\gamma_1=0$.
Moreover, a left-invariant metric as above is Einstein if and only if $\gamma_1=\gamma_3=\gamma_4=0$ and $\lambda_1=\lambda_2$.

We analyze the polynomial equations $\{\mfP_{ijk}=0\}$ equivalent to $\mathfrak{D}= \operatorname{Ric} -\cARS \Id$ being a derivation as follows. Considering the polynomials
$\mfP_{134}=\frac{1}{2}\gamma_3\lambda_1\lambda_2$ and $\mfP_{234}=\frac{1}{2}\gamma_4\lambda_1\lambda_2$, one has that $\gamma_3=\gamma_4=0$ and hence
$$
\begin{array}{l}
\phantom{-}
2\mfP_{231} =
\left(( \gamma_2^2 - 
1) ( \lambda_1 -  \lambda_2) (3 \lambda_1 +  \lambda_2) + 
2 \cARS  \right) \lambda_1 ,

\\ \noalign{\medskip}

\phantom{-}
2\mfP_{132} = 
\left(( \gamma_2^2 - 1) ( \lambda_1 -  \lambda_2) ( \lambda_1 + 
3 \lambda_2) - 2 \cARS  \right) \lambda_2 ,

\\ \noalign{\medskip}

\phantom{-}
2\mfP_{141} = 
\left(   \gamma_2^2 ( \lambda_1 -  \lambda_2) (3 \lambda_1 + 
\lambda_2) + 4 \gamma_1^2 + 2 \cARS  \right) \gamma_1 ,

\\ \noalign{\medskip}

- 2\mfP_{242} = 
\left(    \gamma_2^2 ( \lambda_1 -  \lambda_2) ( \lambda_1 + 
3 \lambda_2) - 4 \gamma_1^2 - 2 \cARS  \right) \gamma_1 .
\end{array}
$$
Now one easily checks that
\[
\gamma_1 \lambda_1 \mfP_{132} - \gamma_1 \lambda_2 \mfP_{231}+
\lambda_1\lambda_2 (\mfP_{141}+\mfP_{242}) =
\gamma_1  \left(4 \gamma_1^2 + ( \lambda_1 - \lambda_2)^2 \right) \lambda_1 
\lambda_2 ,
\]
which leads to $\gamma_1=0$. Hence, we conclude that the underlying Lie algebra is unimodular.

\subsubsection{\bf Structure operator $\bm{L}$ with a complex eigenvalue}\label{E-complex}

We proceed as in the previous cases to show that the existence of non-Einstein algebraic Ricci solitons leads to unimodularity of the underlying Lie group. In this case, the metric is determined by 
$$
\begin{array}{l}
[e_1,e_2]=-\beta e_2-\alpha e_3, 
\qquad 
[e_1,e_3]=-\alpha e_2+\beta e_3,\qquad [e_1,e_4]=\gamma_1 e_2+\gamma_2 e_3,
\\ \noalign{\medskip}
[e_2,e_4]=2\gamma_3\beta e_2+(\gamma_3-\gamma_4)\alpha e_3,
\quad
[e_3,e_4]=(\gamma_3-\gamma_4)\alpha e_2+2\gamma_4\beta e_3 ,
\end{array}
$$
where $\beta\neq 0$, $\alpha,\gamma_1, \gamma_2, \gamma_3, \gamma_4\in\mathbb{R}$ and
$\{ e_i\}$ is an orthonormal basis with $e_3$ timelike. The Lie algebra is unimodular if and only if $\gamma_3+\gamma_4=0$, and the left-invariant metric is Einstein if and only if $\alpha=\gamma_1=\gamma_2=0$ and $\gamma_3=\gamma_4=\pm\frac{1}{2}$.

Considering the polynomial equations $\{\mfP_{ijk}=0\}$ equivalent to $\mathfrak{D}= \operatorname{Ric} -\cARS\Id$ being a derivation, one has that $\mfP_{144}=\frac{1}{2}(\gamma_1^2+\gamma_2^2)\beta$ so that $\gamma_1=\gamma_2=0$. In this situation one further has
\small
$$
\begin{array}{l}
\phantom{-}
\mfP_{122} = 	
\left\{ 4  \left((\gamma_3 - \gamma_4)^2 + 1\right) \alpha^2 - 
2 \left(2  (\gamma_3 - \gamma_4) \gamma_3 + 
1 \right) \beta^2 - \cARS \right\} \beta ,

\\ \noalign{\medskip}

-\mfP_{133} =
\left\{ 4  \left((\gamma_3 - \gamma_4)^2 + 1\right) \alpha^2 + 
2  \left(2  (\gamma_3 - \gamma_4) \gamma_4 - 
1 \right) \beta^2 - \cARS \right\} \beta , 

\\ \noalign{\medskip}

-\mfP_{123} =
\left\{ 2    \left(  (5 \gamma_3 - \gamma_4) (\gamma_3 - \gamma_4)  + 
3 \right) \beta^2 + \cARS \right\} \alpha ,

\\ \noalign{\medskip}

-\mfP_{242} =
2 \left\{
2  \left((\gamma_3 - \gamma_4)^2 + 1\right) (\gamma_3 - \gamma_4) \alpha^2 
-  \left(4  (\gamma_3^2 + \gamma_4^2) \gamma_3 + \gamma_3 - \gamma_4 \right) 
\beta^2 -  \gamma_3 \cARS
\right\} \beta .
\end{array}
$$
\normalsize
Note that $\mfP_{122}+\mfP_{133} = -4(\gamma_3^2-\gamma_4^2)\beta^3$, which implies $\gamma_4=\pm\gamma_3$. If $\gamma_4=-\gamma_3$   the underlying Lie algebra is unimodular. Now, if $\gamma_4=\gamma_3\neq 0$,  a direct calculation shows that 
\[
\alpha \mfP_{122} - \beta \mfP_{123} = 4(\alpha^2+\beta^2)\alpha \beta,
\quad
2\gamma_3\mfP_{122} + \mfP_{242} = 
4 \left( 2\alpha^2 +(4\gamma_3^2-1)\beta^2 \right) \gamma_3 \beta .
\]
Hence,   $\alpha=0$, $4\gamma_3^2=1$ and the corresponding left-invariant metric is Einstein.

\subsubsection{\bf Structure operator of type II with degenerate kernel}\label{E-II-degenerate}
In this case the metric is described, in terms of a pseudo-orthonormal basis $\{ u_i\}$ of the Lie algebra with $\langle u_1,u_2\rangle=\langle u_3,u_3\rangle=\langle u_4,u_4\rangle=1$, by the Lie brackets
$$
\begin{array}{lll}
[u_1,u_2]=\lambda_2 u_3,& [u_1,u_3]=-\varepsilon u_2, &
\\ \noalign{\medskip}
[u_1,u_4]=\gamma_1 u_2+\gamma_2 u_3, & 
[u_2,u_4]=\gamma_3 u_2+\gamma_4\lambda_2  u_3, & 
[u_3,u_4]=-\varepsilon\gamma_4 u_2+\gamma_3 u_3,
\end{array}
$$
where $\lambda_2\neq 0$, $\ve=\pm 1$ and  $\gamma_1, \gamma_2, \gamma_3, \gamma_4\in\mathbb{R}$. 
The Lie algebra is unimodular if and only if $\gamma_3=0$. Moreover, these metrics are never Einstein.

Considering the component $\mfP_{131}=\frac{\ve}{2}\gamma_4^2\lambda_2^2$ one has $\gamma_4=0$, and then we obtain
\[
\mfP_{123} =-\tfrac{1}{2} (3\lambda_2^2-2\cARS)\lambda_2,
\quad
\mfP_{132} = -\tfrac{\ve}{2} (\lambda_2^2+4\gamma_3^2+2\cARS). 
\]
Now, one easily checks that
\[
\ve \mfP_{123} +\lambda_2 \mfP_{132} = -2\ve (\lambda_2^2+\gamma_3^2)\lambda_2,	
\]
which is non-zero since $\lambda_2\neq 0$,  thus showing the non-existence of algebraic Ricci solitons in this setting.

\subsubsection{\bf Structure operator of type II with spacelike kernel}\label{E-II-spacelike}
The metric is described by
$$
\begin{array}{lll}
[u_1,u_3]= -\lambda_1 u_1 -\varepsilon u_2 , &
[u_2,u_3]=\lambda_1 u_2 ,
\\ \noalign{\medskip}
[u_1,u_4]=\gamma_1 u_1+\gamma_2 u_2, &
[u_2,u_4]=(\gamma_1-2\varepsilon\gamma_2\lambda_1)u_2, & 
[u_3,u_4]=\gamma_3 u_1+\gamma_4 u_2,
\end{array}
$$
where $\lambda_1\neq 0$, $\ve=\pm 1$, $\gamma_1, \gamma_2, \gamma_3, \gamma_4\in\mathbb{R}$ and 
$\{ u_i\}$ is a pseudo-orthonormal basis of the Lie algebra with $\langle u_1,u_2\rangle=\langle u_3,u_3\rangle=\langle u_4,u_4\rangle=1$. The Lie algebra is unimodular if and only if $\varepsilon\gamma_2\lambda_1-\gamma_1=0$. Moreover, the left-invariant metrics in this family are never Einstein.

Considering the polynomials
$$
\mfP_{134}=\tfrac{1}{2}\gamma_4\lambda_1^2,\quad
\mfP_{234}=\tfrac{1}{2}\gamma_3\lambda_1^2,\quad
\mfP_{232}=\tfrac{1}{2}(3\gamma_4\lambda_1+\ve\gamma_3)\gamma_3+\cARS\lambda_1,
$$
one has that $\gamma_3=\gamma_4=\cARS=0$ and we get 
\[
\mfP_{132} = 4( \ve \lambda_1+\gamma_1\gamma_2)\lambda_1,
\quad
\mfP_{141} =2\left(   (\gamma_2 \lambda_1 -2\ve \gamma_1 )\gamma_2\lambda_1+\gamma_1^2\right) \gamma_1.
\]
Hence, one easily checks that
\[
\gamma_1 \mfP_{132} +\ve \mfP_{141} =
2\ve \left((\gamma_2^2+2)\lambda_1^2 +\gamma_1^2\right) \gamma_1 ,
\]
which leads to $\gamma_1=0$. Hence, $\mfP_{132} = 4\ve \lambda_1^2\neq 0$ and we conclude that left-invariant metrics in this case do not support any algebraic Ricci soliton.

\subsubsection{\bf Structure operator of type III}\label{E-III}
The metric is described by
$$
\begin{array}{lll}
[u_1,u_2]=u_1,  &
[u_2,u_3]=u_3,
\\ \noalign{\medskip}
[u_1,u_4]=\gamma_1 u_1, & 
[u_2,u_4]=\gamma_2 u_1+\gamma_3 u_3, & 
[u_3,u_4]=\gamma_4 u_3,
\end{array}
$$
where $\gamma_1, \gamma_2, \gamma_3, \gamma_4\in\mathbb{R}$ and  $\{ u_i\}$ is a pseudo-orthonormal basis of the Lie algebra with $\langle u_1,u_2\rangle=\langle u_3,u_3\rangle=\langle u_4,u_4\rangle=1$. The Lie algebra is unimodular if and only if $\gamma_1+\gamma_4=0$.
A left-invariant metric determined by the brackets above is Einstein if and only if $\gamma_1=2\gamma_4$, $\gamma_3=0$ and $\gamma_2\gamma_4=-2$.

We proceed as in the previous cases. Considering the polynomials
\[
\mfP_{121} = \tfrac{1}{2}  ( 2\gamma_1^2-\gamma_1\gamma_4+2\cARS  ),
\quad
\mfP_{233} = \tfrac{1}{2}  ( \gamma_1^2+2\gamma_4^2+2\cARS),
\quad
\mfP_{231} = - 2 \gamma_3\gamma_4,
\]
it follows that 
\[
\mfP_{121} - \mfP_{233} =\tfrac{1}{2} (\gamma_1-2\gamma_4)(\gamma_1+\gamma_4). 
\]
If $\gamma_1+\gamma_4=0$ the underlying Lie algebra is unimodular. Otherwise, 
if $\gamma_1=2\gamma_4\neq 0$, then $\gamma_3=0$ and we calculate
\[
\mfP_{121} = 3\gamma_4^2+\cARS,
\quad
\mfP_{241} = 5\gamma_2 \gamma_4^2+4\gamma_4+\gamma_2\cARS,
\]
which implies $\mfP_{241} - \gamma_2 \mfP_{121} = 2(\gamma_2\gamma_4+2)\gamma_4$. Hence,  $\gamma_2\gamma_4=-2$ and  the corresponding left-invariant metric is Einstein.

\red

\subsection{Extensions of degenerate Euclidean and Poincaré Lie groups}
{\black We follow the discussion in Section~\ref{se:Lorentz-degenerate} and consider the derived algebra $\mathfrak{g}_3'=[\mathfrak{g}_3,\mathfrak{g}_3]$. Since the restriction of the metric to $\mathfrak{g}_3$ is degenerate of signature $(++0)$, we consider separately the cases when the induced metric on $\mathfrak{g}'_3$ is Riemannian in Section~\ref{EE-se:riemannian}, while the case when the restriction of the metric to  $\mathfrak{g}'_3$ is degenerate in considered in Section~\ref{EE-se:degenerate}.

\subsubsection{\bf The induced metric on the derived algebra $\bm{\mathfrak{g}'_3}$ is positive definite}\label{EE-se:riemannian}

There exists a pseudo-orthonormal basis $\{u_i\}$ of the Lie algebra $\mathfrak{g}=\mathfrak{g}_3\rtimes\mathbb{R}$ with $\langle u_1,u_1\rangle=\langle u_2,u_2\rangle=\langle u_3,u_4\rangle=1$ so that $\mathfrak{g}_3=\operatorname{span}\{ u_1,u_2,u_3\}$ and the derived algebra $\mathfrak{g}'_3=\operatorname{span}\{ u_1,u_2\}$.
In this situation, using that $\mathfrak{g}_3$ is unimodular and the derived algebra $\mathfrak{g}'_3$ is Abelian, the Jacobi identity leads to two-distinct situations which we consider as follows.

\subsubsection*{\underline{Case 1}}
There exists a pseudo-orthonormal basis $\{u_i\}$ of the Lie algebra with $\langle u_1,u_1\rangle=\langle u_2,u_2\rangle=\langle u_3,u_4\rangle=1$ so that
\begin{equation}\label{deg-riem-1}
\begin{array}{lll}
[u_1,u_3]=\lambda_1 u_2,  &\!
[u_2,u_3]=-\lambda_1 u_1,
\\ \noalign{\medskip}
[u_1,u_4]=\gamma_1 u_1 + \gamma_2 u_2, & \!
[u_2,u_4]=-\gamma_2 u_1+\gamma_1 u_2, & \!
[u_3,u_4]=\gamma_3 u_1 + \gamma_4 u_2,
\end{array}
\end{equation}
where $\lambda_1\neq 0$ and $\gamma_1, \gamma_2, \gamma_3, \gamma_4\in\mathbb{R}$. The Lie algebra $\mathfrak{g}=\mathfrak{g}_3\rtimes\mathbb{R}$ is unimodular if and only if $\gamma_1 =0$. Moreover, 
a left-invariant metric determined by the brackets above is Einstein if and only if $\gamma_1=\gamma_3=\gamma_4=0$. The three-dimensional Lie algebra $\mathfrak{g}_3=\mathfrak{e}(2)$ in this case.

We analyze the polynomial equations $\{\mfP_{ijk}=0\}$ equivalent to $\mathfrak{D}=\Ricci-\cARS \Id$ being a derivation. Considering the polynomials
\small
\[
2 \mfP_{142} =        4 \gamma_1^2 { \lambda_1} - 3 \gamma_2 \gamma_4^2 - 
3 \gamma_1 \gamma_3 \gamma_4 + 2 \gamma_2 \cARS ,
\,\,\,
-2 \mfP_{131} =3 \gamma_3\gamma_4{ \lambda_1},
\,\,\,
-2 \mfP_{132} =   (3 \gamma_4^2 - 2 \cARS) { \lambda_1},
\]
\normalsize
a direct calculation shows that
\[
{ \lambda_1} \mfP_{142} - \gamma_1 \mfP_{131} - \gamma_2 \mfP_{132} 
= 2\gamma_1^2 { \lambda_1}^2.
\] 
Hence, $\gamma_1=0$ and the underlying Lie algebra is unimodular, thus covered by the analysis in Section~\ref{se:R3}.

\begin{remark}\rm\label{re:plane-wave no ARS}
Let $G$ be the affine group corresponding to $\mathfrak{aff}(\mathbb{C})$ with the left-invariant metric determined by 
$$
	[u_1,u_3]\!=\!\lambda_1 u_2,  \,\,\,\,
	[u_2,u_3]\!=\!-\lambda_1 u_1,\,\,\,\,
	[u_1,u_4]\!=\!\gamma_1 u_1 + \gamma_2 u_2, \,\,\,\,
	[u_2,u_4]\!=\!\gamma_1 u_2-\gamma_2 u_1, 
$$
corresponding to \eqref{deg-riem-1} with $\gamma_3=\gamma_4=0$, where $\gamma_1\neq 0$. A straightforward calculation shows that $u_3$ determines a parallel null vector field. Moreover one has that $R(x,y)=0$  and $\nabla_xR=0$ for all $x,y\in u_3^\perp$. Hence left-invariant metrics determined by  \eqref{deg-riem-1} with $\gamma_3=\gamma_4=0$ are plane waves (see \cite{Leistner}). Note that we may take $\lambda_1=1$ working in the homothetic class of the initial metric, corresponding to case~(d) in Section~\ref{sse:plane-wave-homog}.
Furthermore $(G,\ip)$ is a locally symmetric and locally conformally flat plane wave. Moreover, it follows from the above that $(G,\ip)$ is not an algebraic Ricci soliton.
This reflects the fact that algebraic Ricci solitons depend both on the Lorentzian and Lie group structure, since the left-invariant metric on $\mathcal{H}^3\times\mathbb{R}$ in Section~\ref{re:producto}
corresponding to $(\alpha,\beta)=(1,1)$, being locally conformally flat Cahen-Wallach symmetric spaces, are algebraic Ricci solitons.
\end{remark}

\subsubsection*{\underline{Case 2}}
There exists a pseudo-orthonormal basis $\{u_i\}$ of the Lie algebra with $\langle u_1,u_1\rangle=\langle u_2,u_2\rangle=\langle u_3,u_4\rangle=1$ so that
\begin{equation}\label{deg-riem-2}
\begin{array}{l}
[u_1,u_3]\!=\!\lambda_1 u_1+\lambda_2 u_2,  \quad
[u_2,u_3]\!=\! -\lambda_2 u_1-\lambda_1 u_2, \quad
[u_3,u_4]\!=\!\gamma_3 u_1 + \gamma_4 u_2,

\\ \noalign{\medskip}
[u_1,u_4]\!=\! \gamma_1 u_1 + \frac{(\gamma_1-\gamma_2)\lambda_2}{2\lambda_1} u_2, \quad
[u_2,u_4]\!=\! - \frac{(\gamma_1-\gamma_2)\lambda_2}{2\lambda_1} u_1+\gamma_2 u_2,  
\end{array}
\end{equation}
where $\lambda_1\neq 0$, $\lambda_1^2-\lambda_2^2\neq 0$ and  $\gamma_1, \gamma_2, \gamma_3, \gamma_4\in\mathbb{R}$. The Lie algebra $\mathfrak{g}=\mathfrak{g}_3\rtimes\mathbb{R}$ is unimodular if and only if $\gamma_1+\gamma_2=0$. Moreover, the left-invariant metrics in this family are never Einstein. The three-dimensional Lie algebra is $\mathfrak{g}_3=\mathfrak{e}(1,1)$ or $\mathfrak{g}_3=\mathfrak{e}(2)$, depending on whether $\lambda_1^2-\lambda_2^2$ is positive or negative, respectively.

Considering the polynomials
$\mfP_{134}=-\tfrac{1}{2}(\lambda_1^2-\lambda_2^2)\gamma_3$ and
$\mfP_{234}=-\tfrac{1}{2}(\lambda_1^2-\lambda_2^2)\gamma_4$, one has that
$\gamma_3=\gamma_4=0$ and hence
\[
\mfP_{131} = 
(3 \gamma_1 - \gamma_2 ) \lambda_1^2 + 
4 (\gamma_1 - \gamma_2) \lambda_2^2 + \lambda_1 \cARS ,
\,\,\,
\mfP_{232} = 
-(\gamma_1 - 3 \gamma_2) \lambda_1^2 - 
4 ( \gamma_1 - \gamma_2) \lambda_2^2 - \lambda_1 \cARS .
\] 
Now one easily checks that 
\[
\mfP_{131}+\mfP_{232} = 2(\gamma_1+\gamma_2)\lambda_1^2,
\]
which leads to $\gamma_1+\gamma_2=0$. Hence, we conclude that the underlying Lie algebra is unimodular. Therefore the analysis is covered by that in Section~\ref{se:R3}.

\subsubsection{\bf The induced metric on the derived algebra $\bm{\mathfrak{g}'_3}$ is degenerate}\label{EE-se:degenerate}

There exists a pseudo-orthonormal basis $\{u_i\}$ of the Lie algebra $\mathfrak{g}=\mathfrak{g}_3\rtimes\mathbb{R}$ with $\langle u_1,u_1\rangle=\langle u_2,u_2\rangle=\langle u_3,u_4\rangle=1$ so that $\mathfrak{g}_3=\operatorname{span}\{ u_1,u_2,u_3\}$ and the derived algebra $\mathfrak{g}'_3=\operatorname{span}\{ u_1,u_3\}$.
In this situation, using that $\mathfrak{g}_3$ is unimodular and $\mathfrak{g}'_3$ is Abelian, the Jacobi identity leads to two-different situations which we consider as follows.

\subsubsection*{\underline{Case 1}}
There exists a pseudo-orthonormal basis $\{u_i\}$ of the Lie algebra with $\langle u_1,u_1\rangle=\langle u_2,u_2\rangle=\langle u_3,u_4\rangle=1$ so that
the metric is determined by
\begin{equation}\label{deg-deg-1}
\begin{array}{lll}
[u_1,u_2]\!=\!\lambda_1 u_3, \! &\!
[u_2,u_3]\!=\!\lambda_2 u_1,
\\ \noalign{\medskip}
[u_1,u_4]\!=\!\gamma_1 u_1 + \gamma_2 u_3, \!& \!
[u_2,u_4]\!=\!\gamma_3 u_1+\gamma_4 u_3, \!& \!
[u_3,u_4]\!=\!-\frac{\gamma_2\lambda_2}{\lambda_1} u_1 + \gamma_1 u_3,
\end{array}
\end{equation}
where $\lambda_1\lambda_2\neq 0$,  $\gamma_1, \gamma_2, \gamma_3, \gamma_4\in\mathbb{R}$.
The Lie algebra $\mathfrak{g}=\mathfrak{g}_3\rtimes\mathbb{R}$ is unimodular if and only if $\gamma_1 =0$. Moreover, 
a left-invariant metric determined by the brackets above is never Einstein. The three-dimensional Lie algebra is $\mathfrak{g}_3=\mathfrak{e}(1,1)$ or $\mathfrak{g}_3=\mathfrak{e}(2)$, depending on whether $\lambda_1\lambda_2$ is negative or positive, respectively.

We only need to calculate the polynomial $\mfP_{124}=-\frac{1}{2}\lambda_1\lambda_2^2$ to  conclude that left-invariant metrics in this case do not support any algebraic Ricci soliton.

\subsubsection*{\underline{Case 2}}
There exists a pseudo-orthonormal basis $\{u_i\}$ of the Lie algebra with $\langle u_1,u_1\rangle=\langle u_2,u_2\rangle=\langle u_3,u_4\rangle=1$ so that
the metric is determined by
\begin{equation}\label{def-deg-2}
\begin{array}{l}
[u_1,u_2]\!=\! \lambda_1 u_1+\lambda_2 u_3,  \quad
[u_2,u_3]\!=\! \lambda_3 u_1+\lambda_1 u_3, \quad
[u_2,u_4]\!=\! \gamma_2 u_1 + \gamma_3 u_3,

\\ \noalign{\medskip}

[u_1,u_4]\!=\! \gamma_1 u_1 + \frac{(\gamma_1-\gamma_4)\lambda_2}{2\lambda_1} u_3, \quad
[u_3,u_4]\!=\! - \frac{(\gamma_1-\gamma_4)\lambda_3}{2\lambda_1} u_1+\gamma_4 u_3,  
\end{array}
\end{equation}
where $\lambda_1\neq 0$, $\lambda_1^2-\lambda_2\lambda_3\neq 0$ and  $\gamma_1, \gamma_2, \gamma_3, \gamma_4\in\mathbb{R}$. The Lie algebra $\mathfrak{g}=\mathfrak{g}_3\rtimes\mathbb{R}$ is unimodular if and only if $\gamma_1+\gamma_4=0$. Moreover, these left-invariant metrics  are never Einstein. The three-dimensional Lie algebra is $\mathfrak{g}_3=\mathfrak{e}(1,1)$ or $\mathfrak{g}_3=\mathfrak{e}(2)$, depending on whether $\lambda_1^2-\lambda_2\lambda_3$ is positive or negative, respectively.

We proceed as in the previous cases. Since $\mfP_{124}=\tfrac{1}{2}(\lambda_1^2-\lambda_2\lambda_3)\lambda_3$ one has $\lambda_3=0$, and then we get
\[
\mfP_{121} = \tfrac{1}{2} (3\lambda_1^2+2\cARS)\lambda_1,
\quad
\mfP_{343} = -\tfrac{1}{2}(3\gamma_1\lambda_1^2- 2 \gamma_4\cARS).
\]
A direct calculation shows that
\[
\gamma_4\mfP_{121}-\lambda_1\mfP_{343} = \tfrac{3}{2}(\gamma_1+\gamma_4)\lambda_1^3,
\]
which leads to $\gamma_1+\gamma_4=0$. Hence, the underlying Lie algebra is unimodular and this case is also covered by the analysis in Section~\ref{se:R3}.

\section{Direct extensions of the non-solvable Lie groups}\label{se:SLSU}

We proceed as in the previous sections considering all the possible left-invariant Lorentzian metrics on the non-solvable four-dimensional Lie groups $\widetilde{SL}(2,\mathbb{R})\times\mathbb{R}$ and $SU(2)\times\mathbb{R}$. Following the discussion in Section~\ref{se:Lorentz Lie groups}, the different cases corresponding to a Riemannian, Lorentzian or degenerate metrics on $\widetilde{SL}(2,\mathbb{R})$ or $SU(2)$ are considered separately in Sections~\ref{se:91}, \ref{se:92} and  \ref{se:93}, respectively. 
Algebraic Ricci solitons in this setting are necessarily rigid as follows.

\begin{theorem}\label{Nonsolvable-ARS-thm}
	Let $G=G_3\times\mathbb{R}$ with $G_3=\widetilde{SL}(2,\mathbb{R})$ or $G_3=SU(2)$. If $(G,\ip)$ is a  non-Einstein algebraic Ricci soliton, then the left-invariant metric is the product one and the algebraic Ricci soliton is locally isometric to $\mathbb{S}^3\times\mathbb{R}$ or $\mathbb{S}_1^3\times\mathbb{R}$.
\end{theorem}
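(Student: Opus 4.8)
The plan is to run the same scheme as in the solvable cases: exhaust the left-invariant Lorentzian metrics on $G_3\times\mathbb{R}$ according to whether the restriction of $\ip$ to the three-dimensional unimodular ideal $\mathfrak{g}_3=\mathfrak{sl}(2,\mathbb{R})$ or $\mathfrak{su}(2)$ is Riemannian, Lorentzian or degenerate (the three families set up in Sections~\ref{se:91}, \ref{se:92} and \ref{se:93}), and in each family impose that $\mathfrak{D}=\Ricci-\cARS\Id$ be a derivation, i.e.\ solve the polynomial system $\{\mfP_{ijk}=0\}$ in the structure constants and $\cARS$. What makes the non-solvable case collapse is a structural observation that I would isolate first and use to predict the outcome of every computation.

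The structural reduction is as follows. Since $\mathfrak{g}_3$ is simple, it is perfect and has trivial center, so in $\mathfrak{g}=\mathfrak{g}_3\times\mathbb{R}$ one has $[\mathfrak{g},\mathfrak{g}]=\mathfrak{g}_3$ and $\mathfrak{z}(\mathfrak{g})=\mathbb{R}e_4$, both metric-independent. Any derivation $\delta$ preserves $[\mathfrak{g},\mathfrak{g}]=\mathfrak{g}_3$ and restricts there to an inner derivation $\operatorname{ad}_X$ with $X\in\mathfrak{g}_3$; writing $\delta e_4=c\,e_4+V$ and applying $\delta$ to $[e_4,Z]=0$ forces $V\in\mathfrak{z}(\mathfrak{g}_3)=0$, so every derivation is $\delta=\operatorname{ad}_X\oplus c\,\Id_{\mathbb{R}e_4}$. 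Applying this to $\mathfrak{D}=\Ricci-\cARS\Id$, which is moreover self-adjoint, I get that $\operatorname{ad}_X$ must be self-adjoint for $\ip|_{\mathfrak{g}_3}$; since $\operatorname{ad}_X$ is always skew for the Killing form $B$ of $\mathfrak{g}_3$, it is simultaneously $B$-skew and $g$-symmetric. When $\ip|_{\mathfrak{g}_3}$ is positive definite this already forces $X=0$, because a $g$-symmetric operator is $\mathbb{R}$-diagonalizable whereas a nonzero $\operatorname{ad}_X$ on a compact (or elliptic) three-dimensional real form has purely imaginary nonzero eigenvalues.

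Then I would verify, family by family, that the explicit system $\{\mfP_{ijk}=0\}$ drives the $\operatorname{ad}_{e_4}$-parameters (the $\gamma_i$ coupling $e_4$ to $\mathfrak{g}_3$) to zero in the Lorentzian and degenerate restriction cases as well, discarding along the way the branches that give Einstein metrics; for $\mathfrak{sl}(2,\mathbb{R})$ the delicate branch is the split one, with real eigenvalues, where the $B$-skew versus $g$-symmetric dichotomy does not by itself exclude $X\neq0$, so there the polynomial elimination (using a Gröbner basis when convenient) does the real work. The upshot in every case is $X=0$ and $c\neq0$, the latter since $c=0$ gives an Einstein metric. Once $X=0$, the self-adjoint operator $\mathfrak{D}=0\oplus c$ has distinct eigenvalues, hence orthogonal eigenspaces, so $\mathbb{R}e_4\perp\mathfrak{g}_3$ and $e_4$ is parallel; thus $(G,\ip)$ splits isometrically as a Lorentzian product $N^3\times\mathbb{R}$ with $\Ricci|_{\mathfrak{g}_3}=\cARS\Id$. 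Being three-dimensional Einstein, $N^3$ has constant sectional curvature, realized as the round $\mathbb{S}^3$ for $G_3=SU(2)$ and the Lorentzian space form $\mathbb{S}^3_1$ for $G_3=\widetilde{SL}(2,\mathbb{R})$, placing us exactly in the symmetric products of Remark~\ref{se:51}.

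The main obstacle is obtaining $X=0$ uniformly. In the definite-restriction case it is immediate from the eigenvalue argument, but in the Lorentzian- and degenerate-restriction cases (and for split elements of $\mathfrak{sl}(2,\mathbb{R})$) one cannot avoid solving the soliton system, and the bookkeeping there---handling the non-diagonalizable structure operators of types~II and~III and separating the Einstein locus from genuine solitons---is where the computation is heaviest.
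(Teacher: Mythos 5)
Your plan is, for the bulk of the work, the same as the paper's: run through the families of left-invariant Lorentzian metrics according to the causal character of $\ip|_{\mathfrak{g}_3}$ and the Jordan type of the relevant operator, and show in each family that the system $\{\mfP_{ijk}=0\}$ forces the coupling parameters $\gamma_i$ to vanish (or that no soliton exists), after which the product metric with $\lambda_1=\lambda_2=\lambda_3$ is all that survives. What you add, and what the paper does not use, is the structural lemma that every derivation of $\mathfrak{g}_3\times\mathbb{R}$ with $\mathfrak{g}_3$ simple has the form $\operatorname{ad}_X\oplus c\,\Id$ with $X\in\mathfrak{g}_3$; this is correct as you derive it, and it buys two real things: it explains a priori why a non-Einstein algebraic Ricci soliton must be an orthogonal product of an Einstein $G_3$ with $\mathbb{R}$ (the self-adjoint $\mathfrak{D}=0\oplus c$ with $c\neq 0$ forces $\mathfrak{g}_3\perp\mathbb{R}$, which also rules out the degenerate-restriction case at once), and it disposes of the $SU(2)$ case with Riemannian restriction without any computation, since an operator that is skew for the (definite) Killing form and symmetric for a positive definite inner product must vanish. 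The paper instead verifies the product conclusion and the identification with $\mathbb{S}^3\times\mathbb{R}$ and $\mathbb{S}^3_1\times\mathbb{R}$ by direct manipulation of the polynomials in every branch, including one case (three-step nilpotent $\operatorname{ad}_u$ in the degenerate setting) omitted in the earlier literature; your ``family by family'' sweep would need to include it.

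One claim in your second paragraph is overstated: that positive definiteness of $\ip|_{\mathfrak{g}_3}$ ``already forces $X=0$.'' This is true for $\mathfrak{su}(2)$, but for $\mathfrak{sl}(2,\mathbb{R})$ a hyperbolic element $X$ has $\operatorname{ad}_X$ real-diagonalizable with $B$-null eigenvectors for the nonzero eigenvalues, so $\operatorname{ad}_X$ can perfectly well be symmetric for a positive definite inner product while being Killing-skew; your dichotomy excludes only the elliptic and nilpotent branches. You do acknowledge that the split branch needs the polynomial elimination, but you locate that difficulty in the Lorentzian and degenerate restriction cases, whereas it is equally present in the Riemannian-restriction case for $\widetilde{SL}(2,\mathbb{R})$. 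Since your plan is to carry out the elimination in all families anyway, this is a misstatement rather than a fatal gap, but as written the Riemannian $\widetilde{SL}(2,\mathbb{R})$ case is not actually closed by the argument you give for it.
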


The proof of Theorem~\ref{Nonsolvable-ARS-thm} follows directly from the discussion below.

\subsection{Direct extensions with Riemannian Lie groups $\bm{\widetilde{SL}(2,} \pmb{\mathbb{R}}\bm{)}$ or $\bm{SU(2)}$} \label{se:91}
Let $G=G_3\times \mathbb{R}$ with $G_3=\widetilde{SL}(2, \mathbb{R})$ or $G_3=SU(2)$ so that the restriction of the metric to $G_3$ is positive definite. Hence, there exists  an orthonormal basis $\{e_i\}$ of the Lie algebra, with $e_4$ timelike,  so that    
\[
\begin{array}{l}
[e_1,e_2]=\lambda_3 e_3,\qquad
[e_1,e_3]=-\lambda_2 e_2, \qquad
[e_2,e_3]=\lambda_1 e_1,

\\
\noalign{\medskip}

[e_1,e_4]=\gamma_1\lambda_2 e_2+\gamma_2\lambda_3 e_3, \qquad 
[e_2,e_4]=-\gamma_1\lambda_1 e_1+\gamma_3\lambda_3 e_3, 

\\
\noalign{\medskip}

[e_3,e_4]=-\gamma_2\lambda_1 e_1-\gamma_3\lambda_2 e_2,
\end{array}
\]
where $\lambda_1\lambda_2\lambda_3\neq 0$ and  $\gamma_1,\gamma_2,\gamma_3\in\mathbb{R}$. The Lie algebra corresponds to  $\mathfrak{su}(2)$ if $\lambda_1$, $\lambda_2$, $\lambda_3$   have the same sign  and to $\mathfrak{sl}(2,\mathbb{R})$ otherwise. Moreover, a left-invariant metric as above is never Einstein.

We analyze the derivation condition for $\mathfrak{D}=\Ricci-\cARS \Id$ by studying the polynomial equations $\{\mfP_{ijk}=0\}$. First, we consider  the polynomials 
\[
\mfP_{124}= -\tfrac{1}{2} (\lambda_1-\lambda_2)^2 \gamma_1 \lambda_3,
\,\,\,
\mfP_{134}= -\tfrac{1}{2} (\lambda_1-\lambda_3)^2 \gamma_2 \lambda_2,
\,\,\,
\mfP_{234}= -\tfrac{1}{2} (\lambda_2-\lambda_3)^2 \gamma_3 \lambda_1.
\]
If $\lambda_1$, $\lambda_2$ and $\lambda_3$ are all different then clearly $\gamma_1=\gamma_2=\gamma_3=0$.
Next we suppose that  at least two of $\lambda_1$, $\lambda_2$ and $\lambda_3$ are  equal.  
Note that the isomorphic isometry
$(e_1,e_2,e_3,e_4)\mapsto (e_3,e_2,-e_1,e_4)$ interchanges $(\lambda_1,\lambda_2,\lambda_3,\gamma_1,\gamma_2,\gamma_3)$ with 
$(\lambda_3,\lambda_2,\lambda_1,-\gamma_3,\gamma_2,\gamma_1)$, while 
$(e_1,e_2,e_3,e_4)\mapsto (e_1,e_3,-e_2,e_4)$ 
interchanges $(\lambda_1,\lambda_2,\lambda_3,\gamma_1,\gamma_2,\gamma_3)$ with 
$(\lambda_1,\lambda_3,\lambda_2,\gamma_2,-\gamma_1,\gamma_3)$. Hence,  without loss of generality,   we can assume $\lambda_1=\lambda_2$  and   we calculate
\small
\[
\begin{array}{l}
\phantom{-}
2 \mfP_{132} =  \left(
\lambda_3^2
+\gamma_3^2(3\lambda_1+\lambda_3)(\lambda_1-\lambda_3)
-2\cARS \right) \lambda_1,

\\ \noalign{\medskip}

-2\mfP_{142} =\left(
3 (\gamma_3^2 \lambda_1 -\gamma_2^2 \lambda_3) (\lambda_1-\lambda_3)-2 \cARS
\right) \gamma_1 \lambda_1,

\\ \noalign{\medskip}

-2\mfP_{34i} \,=
\left( (\gamma_2^2 + \gamma_3^2-1) (\lambda_1-\lambda_3)^2
- (\gamma_1^2+4\gamma_2^2+4\gamma_3^2-2)(\lambda_1-\lambda_3)
\lambda_1 + 2\cARS
\right) \gamma_{i+1} \lambda_1  ,
\end{array}
\] 
\normalsize
for $i=1,2$. 
Now, if $\lambda_1=\lambda_2=\lambda_3$, a direct calculation shows that
\[
(\gamma_1^2+\gamma_2^2+\gamma_3^2) \mfP_{132}
+\gamma_1\mfP_{142} -\gamma_2\mfP_{341}-\gamma_3 \mfP_{342}
= \tfrac{1}{2}(\gamma_1^2+\gamma_2^2+\gamma_3^2)\lambda_1^3  ,
\]
while    if $\lambda_1=\lambda_2\neq\lambda_3$ then  $\mfP_{134}=\mfP_{234}=0$ imply  $\gamma_2=\gamma_3=0$ and 
\[
\gamma_1 \mfP_{132}+\mfP_{142} 
= \tfrac{1}{2}\gamma_1\lambda_1\lambda_3^2  .
\] 
Hence, in any case, we conclude that   $\gamma_1=\gamma_2=\gamma_3=0$ and  the left-invariant metric is the product one.

Finally, for a product metric ($\gamma_1=\gamma_2=\gamma_3=0$) the system $\{\mfP_{ijk}=0\}$ reduces to 
$$
\begin{array}{l}
\phantom{-}
2\mfP_{123}=(3\lambda_3^2-(\lambda_1-\lambda_2)^2-2(\lambda_1+\lambda_2)\lambda_3+2\cARS)\lambda_3 =0,

\\ \noalign{\medskip}

-2\mfP_{132}=(3\lambda_2^2-(\lambda_1-\lambda_3)^2- 2(\lambda_1+\lambda_3)\lambda_2+2\cARS)\lambda_2 =0,

\\ \noalign{\medskip}

\phantom{-}
2\mfP_{231}=(3\lambda_1^2-(\lambda_2-\lambda_3)^2-2(\lambda_2+\lambda_3)\lambda_1+2\cARS)\lambda_1 =0,
\end{array}
$$ 
from where a straightforward calculation shows that $\lambda_1=\lambda_2=\lambda_3$ and hence the metric on $SU(2)$ is that of the round sphere.

\subsection{Direct extensions with Lorentzian Lie groups $\bm{\widetilde{SL}(2,} \pmb{\mathbb{R}}\bm{)}$ or $\bm{SU(2)}$} \label{se:92}
We consider the different possibilities for the Jordan normal form of the structure operator as discussed in Section~\ref{se:Lorentz-Lorentz}.

\subsubsection{\bf Diagonalizable structure operator}\label{sl-Ia}
There exists an orthonormal basis $\{e_i\}$ of the Lie algebra, with $e_3$ timelike,  so that  
\[
\begin{array}{l}
[e_1,e_2]=-\lambda_3 e_3,\qquad
[e_1,e_3]=-\lambda_2 e_2,  \qquad
[e_2,e_3]=\lambda_1 e_1,

\\
\noalign{\medskip}
[e_1,e_4]=\gamma_1\lambda_2 e_2+\gamma_2\lambda_3 e_3,\qquad
[e_2,e_4]=-\gamma_1\lambda_1 e_1+\gamma_3\lambda_3 e_3, 

\\
\noalign{\medskip}

[e_3,e_4]=\gamma_2\lambda_1 e_1+\gamma_3\lambda_2 e_2,
\end{array}
\]
where $\lambda_1\lambda_2\lambda_3\neq 0$  and  $\gamma_1,\gamma_2,\gamma_3\in\mathbb{R}$. The associated Lie algebra corresponds to  $\mathfrak{su}(2)$ if $\varepsilon_1\lambda_1$,  $\varepsilon_2\lambda_2$ and $\varepsilon_3\lambda_3$ have the same sign, or  to $\mathfrak{sl}(2,\mathbb{R})$ otherwise, where $\varepsilon_k=\langle e_k,e_k\rangle$.
Moreover, a left-invariant metric determined by the brackets above is never Einstein.

We proceed as in the previous case.
Considering the polynomials 
\[
\mfP_{124}= -\tfrac{1}{2} (\lambda_1-\lambda_2)^2 \gamma_1 \lambda_3,
\,\,\,
\mfP_{134}= \tfrac{1}{2} (\lambda_1-\lambda_3)^2 \gamma_2 \lambda_2,
\,\,\,
\mfP_{234}= \tfrac{1}{2} (\lambda_2-\lambda_3)^2 \gamma_3 \lambda_1,
\]
it follows that if $\lambda_1$, $\lambda_2$ and $\lambda_3$ are all different then  $\gamma_1=\gamma_2=\gamma_3=0$.
Next we suppose that  at least two of $\lambda_1$, $\lambda_2$ and $\lambda_3$ are  equal.    Although we do  not have the isometries we used in the previous case and we  must analyze additional possibilities, the process is identical to that carried out in the previous section so we omit the details.

First, if $\lambda_1=\lambda_2=\lambda_3$, we get
\[
(\gamma_1^2+\gamma_2^2+\gamma_3^2) \mfP_{132}
+\gamma_1\mfP_{142} +\gamma_2\mfP_{341}+\gamma_3 \mfP_{342}
= -\tfrac{1}{2}(\gamma_1^2+\gamma_2^2+\gamma_3^2)\lambda_1^3  .
\]
Now, if $\lambda_1=\lambda_2\neq \lambda_3$, then $\gamma_2=\gamma_3=0$ and
\[
\gamma_1 \mfP_{132} + \mfP_{142} = -\tfrac{1}{2} \gamma_1\lambda_1\lambda_3^2 .
\]
Next, for $\lambda_1=\lambda_3\neq \lambda_2$ one has $\gamma_1=\gamma_3=0$ and, moreover, 
\[
\gamma_2 \mfP_{123} + \mfP_{143} = -\tfrac{1}{2} \gamma_2\lambda_1\lambda_2^2 .
\]
Finally, if $\lambda_2=\lambda_3\neq\lambda_1$ then $\gamma_1=\gamma_2=0$ and
\[
\gamma_3 \mfP_{123} + \mfP_{243} = -\tfrac{1}{2} \gamma_3\lambda_1^2\lambda_2 .
\]
Therefore, in any case, $\gamma_1=\gamma_2=\gamma_3=0$.

Finally, for a product metric ($\gamma_1=\gamma_2=\gamma_3=0$) one has that $\{\mfP_{ijk}=0\}$ reduces to
	
	$$
	\begin{array}{l}
		\phantom{-}
	2\mfP_{123}=(3\lambda_3^2-(\lambda_1-\lambda_2)^2-2(\lambda_1+\lambda_2)\lambda_3-2\cARS)\lambda_3 =0,
	
	\\
	\noalign{\medskip}
		\phantom{-}2
		\mfP_{132}=(3\lambda_2^2-(\lambda_1-\lambda_3)^2- 2(\lambda_1+\lambda_3)\lambda_2-2\cARS)\lambda_2 =0,
	
	\\
	\noalign{\medskip}
	
	-2\mfP_{231}=(3\lambda_1^2-(\lambda_2-\lambda_3)^2-2(\lambda_2+\lambda_3)\lambda_1-2\cARS)\lambda_1 =0,
	\end{array}
	$$ 
	from where a straightforward calculation shows that $\lambda_1=\lambda_2=\lambda_3$. Hence the metric on $\widetilde{SL}(2,\mathbb{R})$ is that of the round pseudo-sphere $\mathbb{S}^3_1$.

\subsubsection{\bf Structure operator $\bm{L}$ with a complex eigenvalue}\label{sl-complex}
If the structure operator $L$ is of type~Ib then  there exists an orthonormal basis $\{e_i\}$ of the Lie algebra, with $e_3$ timelike,  such that the Lie algebra structure is given by
\[
\begin{array}{l}
[e_1,e_2]=-\beta e_2-\alpha e_3,\quad
[e_1,e_3]=-\alpha e_2 + \beta e_3,\quad
[e_2,e_3]=\lambda e_1, 
\\
\noalign{\medskip}
[e_1,e_4]=(\alpha^2+\beta^2)(\gamma_1 e_2+\gamma_2 e_3), 
\\
\noalign{\medskip}
[e_2,e_4]=-(\gamma_1\alpha-\gamma_2\beta)\lambda e_1 +  \gamma_3 \beta e_2 +  \gamma_3 \alpha e_3, 
\\
\noalign{\medskip}
[e_3,e_4]=(\gamma_2\alpha+\gamma_1\beta)\lambda e_1 + \gamma_3 \alpha e_2 -   \gamma_3 \beta e_3 ,
\end{array}
\]
where    $\beta\lambda\neq 0$ and $\alpha, \gamma_1,\gamma_2,\gamma_3\in\mathbb{R}$.
In this case the three-dimensional unimodular Lie algebra corresponds to $\mathfrak{sl}(2,\mathbb{R})$ and, moreover, these metrics are never Einstein.

Considering the polynomials 
\[ 
\begin{array}{l} 

-2 \mfP_{124} = 
\gamma_1 (\alpha^2 + \beta^2 - \alpha 
\lambda)^2 - (\alpha^2 + \beta^2 - 
\alpha \lambda) (\gamma_1 \lambda - 
2 \gamma_2 \beta) \lambda + \gamma_1 (
\alpha - \lambda) \alpha  \lambda^2  ,

\\ \noalign{\medskip}

\phantom{-}
2 \mfP_{134} = 
\gamma_2 (\alpha^2 + \beta^2 - \alpha 
\lambda)^2 - (\alpha^2 + \beta^2 - 
\alpha \lambda)  (\gamma_2 \lambda + 
2 \gamma_1 \beta) \lambda + \gamma_2   (
\alpha - \lambda) \alpha  \lambda^2 ,

\\ \noalign{\medskip}

\phantom{-}
2 \mfP_{144} = (\gamma_1^2+\gamma_2^2) (\alpha^2+\beta^2) (\alpha^2+\beta^2-\lambda^2)\beta, 
\end{array}
\]
a direct calculation shows that
\[
\gamma_2 (\alpha^2+\beta^2) \mfP_{124}
+\gamma_1 (\alpha^2+\beta^2) \mfP_{134}
+ 2\lambda \mfP_{144} 
=    (\gamma_1^2+\gamma_2^2)(\alpha^2+\beta^2) (\alpha-\lambda) \beta\lambda^2 .
\]
Moreover, note that if $\lambda=\alpha$ then $\mfP_{144} = \tfrac{1}{2} (\gamma_1^2+\gamma_2^2)(\alpha^2+\beta^2)\beta^3$. On the other hand, without any assumptions, one has 
$\mfP_{234}=-2 \gamma_3 \beta^2\lambda$.
Hence, we conclude that necessarily  $\gamma_1=\gamma_2=\gamma_3=0$.
Finally, for a product metric ($\gamma_1=\gamma_2=\gamma_3=0$) one has 
	$$
	\begin{array}{l}
		\phantom{-}
	2\mfP_{122}=(8\alpha^2-4\beta^2-\lambda^2-4\alpha\lambda-2\cARS)\beta,
	
	\\ \noalign{\medskip}
	
	-2\mfP_{123}=12\alpha\beta^2-4\beta^2\lambda+\alpha\lambda^2+2\alpha\cARS,
	
	\\ \noalign{\medskip}
	
	-2\mfP_{231}=(4\beta^2+3\lambda^2-4\alpha\lambda-2\cARS)\lambda,
	\end{array}
	$$ 
	and a straightforward calculation shows that there are no algebraic Ricci soliton metrics in this setting.

\subsubsection{\bf Structure operator of type II}\label{sl-II}
In this case the metric is described by
\[
\begin{array}{l}
[u_1,u_2]= \lambda_2 u_3 ,\qquad
[u_1,u_3]=-\lambda_1 u_1-\ve u_2,\qquad
[u_2,u_3]=\lambda_1 u_2,

\\
\noalign{\medskip}
[u_1,u_4]=\gamma_1\lambda_1 u_1 + \ve\gamma_1 u_2 + \gamma_2\lambda_2 u_3, 
\quad
[u_2,u_4]=-\gamma_1\lambda_1 u_2 + \gamma_3 \lambda_2 u_3, 
\\
\noalign{\medskip}
[u_3,u_4]=-\gamma_3\lambda_1 u_1 - (\gamma_2\lambda_1+\ve\gamma_3)u_2 ,
\end{array}
\]
where   $\varepsilon^2=1$, $\lambda_1\lambda_2\neq 0$,  $\gamma_1,\gamma_2,\gamma_3\in\mathbb{R}$ and
$\{u_i\}$  is a pseudo-orthonormal basis of the Lie algebra with $\langle u_1,u_2\rangle=\langle u_3,u_3\rangle=\langle u_4,u_4\rangle=1$.
The underlying unimodular Lie algebra corresponds to $\mathfrak{sl}(2,\mathbb{R})$.
Moreover, the left-invariant metrics in this family are never Einstein.

Considering the polynomial equations $\{\mfP_{ijk}=0\}$ equivalent to $\mathfrak{D}= \operatorname{Ric} -\cARS\Id$ being a derivation, one has that
\[
\begin{array}{l} 
-2\mfP_{123} =   \big(
( \lambda_1 - 3 \lambda_2 ) (\lambda_1 - \lambda_2)
+ 4  \ve \lambda_1  \gamma_3^2
\\
\noalign{\medskip}

\phantom{-2\mfP_{123} =   \big(}
+ 2  (\gamma_2 (\lambda_1 + 
3 \lambda_2) - \ve \gamma_3 ) \gamma_3 (
\lambda_1 - \lambda_2)
- \lambda_1^2 - 2 \cARS
\big) \lambda_2 ,

\\
\noalign{\medskip}

\phantom{-}
2\mfP_{131} =
(\lambda_1 + \lambda_2)   (\lambda_1 - \lambda_2)  \lambda_1
- 2 \ve   \lambda_1^2 \gamma_3^2
\\
\noalign{\medskip}

\phantom{-2\mfP_{131} =}
- (3 \lambda_1 + \lambda_2) (\gamma_2 \lambda_1 +  \ve \gamma_3) \gamma_3 (\lambda_1 - \lambda_2)
- \lambda_1^3 - 2 \lambda_1 \cARS,
\end{array}
\]
and a direct calculation shows that if $\gamma_3=0$ then
\[
\lambda_1 \mfP_{123} + \lambda_2 \mfP_{131} = 2(\lambda_1-\lambda_2)\lambda_1\lambda_2^2,
\]
while if $\lambda_1-\lambda_2=0$ we get
\[
\mfP_{123} + \mfP_{131} = -3 \ve \gamma_3^2\lambda_1^2.
\]
Moreover, since $\mfP_{234} = -\tfrac{1}{2}\gamma_3 (\lambda_1-\lambda_2)^2 \lambda_1$, it follows that necessarily $\gamma_3=0$ and $\lambda_1=\lambda_2$. Hence, $\mfP_{123} = \frac{1}{2} (\lambda_1^2+2\cARS)\lambda_1$, which implies $\cARS=-\frac{1}{2}\lambda_1^2$.
In this situation,   we calculate
\[
\mfP_{132} =  2 \ve (2\gamma_1^2+1) \lambda_1^2\neq 0,
\]
which shows the non-existence of algebraic Ricci solitons in this setting.

\subsubsection{\bf Structure operator of type III}\label{sl-III}
There exists a pseudo-orthonormal basis $\{u_i\}$ of the Lie algebra, with $\langle u_1,u_2\rangle=\langle u_3,u_3\rangle=\langle u_4,u_4\rangle=1$,  such that the corresponding Lie brackets are determined by 
\[
\begin{array}{l}
[u_1,u_2]= u_1 + \lambda u_3 ,\qquad
[u_1,u_3]=-\lambda u_1 ,\qquad
[u_2,u_3]=\lambda u_2+u_3,

\\
\noalign{\medskip}
[u_1,u_4]=\gamma_1\lambda u_1 + \gamma_2 \lambda^2 u_3,

\\
\noalign{\medskip}

[u_2,u_4]=\gamma_3 u_1 -(\gamma_1-\gamma_2)\lambda u_2 -(\gamma_1-\gamma_2-\gamma_3\lambda)u_3 ,

\\
\noalign{\medskip}

[u_3,u_4]=-\gamma_3\lambda u_1-\gamma_2\lambda^2 u_2-\gamma_2\lambda u_3 ,
\end{array}
\]
where $\lambda\neq 0$ and   $\gamma_1,\gamma_2,\gamma_3\in\mathbb{R}$.  The underlying unimodular Lie algebra corresponds to $\mathfrak{sl}(2,\mathbb{R})$ and, moreover, these metrics are never Einstein.

We proceed  as in the previous cases. Considering the polynomial 
$\mfP_{122} = 3\gamma_2^2 \lambda^4$ one has $\gamma_2=0$ and hence a straightforward calculation shows that
\[
\mfP_{121} = -\tfrac{1}{2}(2\gamma_1^2+3)\lambda^2+\cARS,
\quad
\mfP_{123} = \tfrac{1}{2} (\lambda^2+2\cARS) \lambda.
\]
Thus we have
\[
\lambda \mfP_{121} -\mfP_{123} = -(\gamma_1^2+2)\lambda^3\neq 0
\]
and we conclude that left-invariant metrics in this case do not support any algebraic Ricci soliton.

\subsection{Direct extensions with degenerate Lie groups $\bm{\widetilde{SL}(2,} \pmb{\mathbb{R}}\bm{)}$ or $\bm{SU(2)}$} \label{se:93}

Next we assume that the restriction of the left-invariant metric to $\widetilde{SL}(2,\mathbb{R})$ or $SU(2)$ is degenerate and proceed as in case~(iv) of Section~\ref{se:Lorentz-degenerate}.

Let $u$ be such that $\operatorname{span}\{ u\}$ is degenerate. We consider separately the cases of $\operatorname{ad}_{u}$ having real and complex eigenvalues. While the cases of complex eigenvalues and non-zero real eigenvalues were already considered in \cite{CC}, the possibility of zero eigenvalues (omitted in that reference) gives rise to some left-invariant metrics on $\widetilde{SL}(2,\mathbb{R})\times\mathbb{R}$.

\subsubsection{\bf  $\bm{{\ad_{u}}}$ has complex eigenvalues}\label{se:complex}
In this case there exists a basis  $\{ v_1,v_2,v_3,v_4\}$ of the Lie algebra, with $\langle v_1,v_1\rangle=\langle v_2,v_2\rangle=\langle v_3,v_4\rangle=1$, $\langle v_1,v_2\rangle=\kappa$ with $\kappa^2<1$,   such that the Lie brackets are  given by (up to an isomorphic homothety)
\begin{equation}\label{S-degenerate-complex}
\begin{array}{lll}
\![v_1,v_2]=v_3, &\
\!\![v_1,v_3]=\beta\lambda^2 v_2, &\
\!\![v_1,v_4]= \gamma_1\lambda^2 v_2+\gamma_2 v_3,	
\\
\noalign{\medskip}
\![v_2,v_3]=-\beta v_1, &
\!\![v_2,v_4]= -\gamma_1 v_1 + \gamma_3 v_3, &\
\!\![v_3,v_4]= \gamma_2\beta v_1+\gamma_3\beta\lambda^2 v_2 ,
\end{array}
\end{equation}
where $\beta\lambda\neq 0$ and  $\gamma_1,\gamma_2,\gamma_3\in\mathbb{R}$.
These metrics are realized on $SU(2)\times\mathbb{R}$ if $\beta<0$, while for values of $\beta>0$ we have  left-invariant metrics on   $\widetilde{SL}(2,\mathbb{R})\times\mathbb{R}$. Moreover, these metrics are never Einstein.

We analyze the derivation condition for $\mathfrak{D}=\Ricci-\cARS \Id$ by studying the polynomial equations $\{\mfP_{ijk}=0\}$.
Considering the component $\mfP_{124}=\tfrac{\beta^2\left( (\lambda^2-1)^2+4\kappa^2\lambda^2 \right)}{2(\kappa^2-1)}$ one has 
$\kappa=0$ and $\lambda^2=1$, and hence we obtain
\[
\begin{array}{ll}
\mfP_{121} = -\gamma_3\beta^2, \quad
&
\mfP_{123} = \phantom{-}
\tfrac{1}{2} \left(
(\gamma_2^2+\gamma_3^2)\beta^2 + 4\beta +2\cARS
\right),

\\ \noalign{\medskip}

\mfP_{122} = \phantom{-}
\gamma_2 \beta^2,
&

\mfP_{132} = -\tfrac{1}{2} ( 3\gamma_3^2 \beta^2-2\cARS)\beta .
\end{array}
\]
Now one easily checks that
\[
4\gamma_3 \beta \mfP_{121} - \gamma_2 \beta \mfP_{122} + 2\beta \mfP_{123} - 2\mfP_{132} =
4\beta^2\neq 0,
\]
which shows  that left-invariant metrics in this case do not support any algebraic Ricci soliton.

\subsubsection{\bf $\bm{{\ad_{u}}}$ has non-zero real eigenvalues}\label{se:real}

There exists a basis $\{ v_i\}$ of the Lie algebra, with $\langle v_1,v_1\rangle=\langle v_2,v_2\rangle=\langle v_3,v_4\rangle=1$, $\langle v_1,v_2\rangle=\kappa$ with $\kappa^2<1$, so that   
\[
\begin{array}{lll}

\![v_1,v_2]=v_3, &
\![v_1,v_3]=\lambda v_1, &
\![v_1,v_4]= \gamma_1 v_1+\gamma_2 v_3,	

\\
\noalign{\medskip}

\![v_2,v_3]=-\lambda v_2, &
\![v_2,v_4]= -\gamma_1 v_2 + \gamma_3 v_3, &
\![v_3,v_4]= \gamma_3\lambda v_1+\gamma_2\lambda v_2,
\end{array}
\]
where $\lambda\neq 0$ and  $\gamma_1,\gamma_2,\gamma_3\in\mathbb{R}$. The underlying unimodular Lie algebra corresponds to  
$\widetilde{SL}(2, \mathbb{R})\times\mathbb{R}$. Moreover, a left-invariant metric as above is never Einstein.

In this case, we just need to calculate the polynomial 
$\mfP_{124} = \frac{2\lambda^2}{\kappa^2-1}\neq 0$ to show the non-existence of algebraic Ricci solitons in this case.

	\subsubsection{\bf $\bm{{\ad_{u}}}$ is three-step nilpotent}\label{se:nilpotent}
	As in the previous cases we consider a Jordan basis $\{ u_1,u_2,u_3=u\}$ for $\ad_{u_3}$ so that
	$\ad_{u_3}(u_1)=u_2$, $\ad_{u_3}(u_2)=u_3$. After normalizing $\{ u_1,u_2\}$ and rescaling $u_3$, one has a basis $\{ v_1,v_2,v_3\}$ of $\alg=\mathfrak{sl}(2,\mathbb{R})$ so that
	$[v_1,v_2]=\alpha v_1+\beta v_3$, $[v_1,v_3]=-v_2$, $[v_2,v_3]=\alpha v_3$ for some parameters $\alpha,\beta$ with $\alpha\neq0$. Moreover, the inner product is determined by the non-zero components
	$\langle v_1,v_1\rangle=\langle v_2,v_2\rangle=1$, $\langle v_1,v_2\rangle=\kappa$ with $\kappa^2<1$. Hence the left-invariant metrics on $\widetilde{SL}(2,\mathbb{R})\times\mathbb{R}$ in this case are determined, with respect to a basis  $\{v_i\}$ of the Lie algebra  with $\langle v_1,v_1\rangle=\langle v_2,v_2\rangle=\langle v_3,v_4\rangle=1$ and $\langle v_1,v_2\rangle=\kappa$, by the Lie brackets
	\[
	\begin{array}{lll}
	\![v_1,v_2]=\alpha v_1+\beta v_3, &
	\!\![v_1,v_3]=-v_2, &
	\!\! [v_1,v_4]= \gamma_1 v_1+\gamma_2 v_2+\frac{\beta\gamma_1}{\alpha}v_3,	
	\\
	\noalign{\medskip}
	\![v_2,v_3]=\alpha v_3, &
	\!\! [v_3,v_4]= \gamma_3 v_2-\gamma_1 v_3 , &
	\!\! [v_2,v_4]= -\alpha\gamma_3 v_1 - (\alpha\gamma_2+\beta\gamma_3) v_3,
	\end{array}
	\]
where $\alpha\neq 0$, $\kappa^2\neq 1$ and $\gamma_1,\gamma_2,\gamma_3,\beta\in\mathbb{R}$.
	
A straightforward calculation shows that these metrics are never Einstein nor locally symmetric.
Furthermore, they are never algebraic Ricci solitons since, considering the polynomial $\mfP_{324}$ corresponding to the condition for $\mathfrak{D}=\Ricci-\cARS \Id$ being a derivation, one has that 
$\mfP_{324}=\frac{\alpha}{2(1-\kappa^2)}$, which never vanishes.

\section{Final Remarks}\label{se:final}

\subsection{Left-invariant Lorentzian Ricci solitons}\label{lirs}
A Lorentzian Lie group $(G,\ip)$ is a \emph{left-invariant Ricci soliton} if there is a left-invariant vector field $X$ satisfying the Ricci soliton equation $\mathcal{L}_X\ip +\rho=\cARS \ip$.
	Four-dimensional left-invariant Lorentzian Ricci solitons have been determined in \cite{Maria}. A direct examination  shows that they are plane waves or correspond, in the non-Einstein and not locally symmetric cases, to one of the following cases in \cite[Theorem 1.2]{Maria},	
	with the exception of case $(v)$ below that was originally omitted in \cite{Maria}, and corresponds to metrics in Case 2 of Section~\ref{EE-se:degenerate}.
\begin{enumerate}
	\item[(i)] $G=\mathbb{R}^3\rtimes\mathbb{R}$ with left-invariant metrics given by
	$$
	[e_1,e_4]=\alpha e_1,
	\,\,\,
	[e_2,e_4]= \lambda e_2-e_3,
	\,\,\,
	[e_3,e_4]=e_2 +  \lambda e_3,
	$$
	where $\{e_i\}$ is an orthonormal basis with $e_3$ timelike, $\lambda=\varepsilon(1-\frac{\alpha^2}{2})^{1/2}$ and the parameter
	$0\leq\alpha\leq\sqrt{2}$. If $\alpha=0$ then $\varepsilon=1$, while if    $0<\alpha<\sqrt{2}$   then $\varepsilon^2=1$; in this latter  case,  $\alpha\neq \frac{2}{\sqrt{3}}$ whenever $\varepsilon=-1$. 
	The underlying Lie algebra is $\mathfrak{r}'_{3,-1}\times\mathbb{R}$ if $\alpha=0$, and  $\mathfrak{r}'_{4,-\alpha,-\lambda}$ otherwise.
	Metrics above are steady Ricci solitons and also steady algebraic Ricci solitons. They are $\mathcal{F}[0]$-critical metrics with zero energy. This does not mean that the soliton vector field determined by the derivation $\mathfrak{D}=\operatorname{Ric}$ is left-invariant, but it differs from the left-invariant soliton vector field by a Killing one. 
	
	\item[(ii)]$G=E(1,1)\times\mathbb{R}$ with left-invariant metrics determined by  
	$$
	[u_1,u_4]= \alpha u_1,\quad
	[u_2,u_4]= -\alpha u_2+u_3,\quad
	[u_3,u_4]= u_1, \qquad \alpha>0,
	$$
	where $\{u_i\}$ is a pseudo-orthonormal basis with $\langle u_1,u_2\rangle = \langle u_3,u_3\rangle$ $=$ $\langle u_4,u_4\rangle$ $=$ $1$.
	Metrics in this case are left-invariant steady Ricci solitons which are not algebraic Ricci solitons. Moreover these metrics are $\mathcal{S}$-critical and have $\|\rho\|=\tau=0$. They are Brinkmann waves which are not $pp$-waves since $\operatorname{Ric}^3=0$, but $\operatorname{Ric}^2\neq 0$. 
	
	\item[(iii)] $G=\operatorname{Aff}(\mathbb{R})\times\operatorname{Aff}(\mathbb{R})$  with left-invariant metric determined by   
	$$
	[e_2,e_4]=-[e_1,e_2]=e_2,\quad
	[e_1,e_3]=[e_3,e_4]=\tfrac{1}{2}[e_1,e_4]=e_3,
	$$
	where $\{e_i\}$ is an orthonormal basis with $e_3$ timelike.
	Metrics in this family are left-invariant steady Ricci solitons which are not algebraic ones. Moreover they are not critical for any quadratic curvature functional.
	
	\item[(iv)] $G=\operatorname{Aff}(\mathbb{R})\times\operatorname{Aff}(\mathbb{R})$ 
	with left-invariant metrics given by
	$$
	\begin{array}{l}
	[u_1,u_2]= u_1, \quad
	\hspace*{.215cm}[u_1,u_4]=-2\alpha(\alpha\beta+1) u_1,\quad
	[u_2,u_3]=u_3,
	\\
	\noalign{\medskip}
	[u_2,u_4] = \beta u_1,\quad
	[u_3,u_4]= \alpha u_3,
	\quad \alpha>0,\,\, \text{and}\,\, \alpha\beta\notin\left\{-2,-1,-\tfrac{1}{2}\right\},
	\end{array}
	$$
	where $\{u_i\}$ is a pseudo-orthonormal basis with $\langle u_1,u_2\rangle = \langle u_3,u_3\rangle = \langle u_4,u_4\rangle$ $=$ $1$. 
	Metrics in this family are left-invariant expanding Ricci solitons which are not algebraic ones. Moreover they are $\mathcal{F}[t]$-critical with zero energy for $t=-\frac{2\alpha^2\beta^2+4\alpha\beta+3}{2(3\alpha^2\beta^2+4\alpha\beta+2)}\in(-1,-\frac{1}{4})$.
	
	\item[(v)]	$G=\operatorname{Aff}(\mathbb{R})\times\operatorname{Aff}(\mathbb{R})$ 
	with left-invariant metrics given by 
	$$
	\begin{array}{l}
	[u_1,u_2]= \lambda_1 u_1+\lambda_2u_3, 
	\quad 
	{[u_2,u_3]}= \lambda_1 u_3,
	\quad
	{[u_3,u_4]}=\gamma_4u_3,
	\\
	\noalign{\medskip}
	{[u_1,u_4]}=\gamma_1 u_1+\frac{(\gamma_1-\gamma_4)\lambda_2}{2\lambda_1}u_3,
	\quad
	[u_2,u_4]=-\frac{1}{2}\lambda_2u_1-\frac{3\lambda_2^2+4\gamma_1^2+8\gamma_1\gamma_4}{8\lambda_1}u_3,
	\end{array}
	$$
	where $\lambda_1\neq0$, $\gamma_1+\gamma_4\neq0$ and $\{ u_i\}$ is a pseudo-orthonormal basis with
	$\langle u_1,u_1\rangle = \langle u_2,u_2\rangle = \langle u_3,u_4\rangle$ $=$ $1$.
	They are expanding Ricci solitons but not algebraic ones, although they are $\mathcal{F}[-1]$-critical with zero energy.
\end{enumerate}

\subsubsection{Compact four-dimensional Lorentzian Ricci solitons}	

We use left-invariant Ricci solitons on four-dimensional Lorentzian Lie groups to exhibit new examples of compact Lorentzian steady Ricci solitons on some compact nilmanifolds and solvmanifolds, in addition to the examples provided in \cite{JN}.

	The four-dimensional nilpotent Lie algebras admit cocompact subgroups as well as the solvable Lie group with underlying Lie algebra  $\mathfrak{e}(1,1)\times\mathbb{R}$ (see \cite{Bock}), and so left-invariant metrics pass to the corresponding compact quotients. 
	As a consequence, the left-invariant Ricci solitons in the following remarks induce compact steady Ricci solitons in the corresponding nil and solvmanifolds.

	\begin{remark}\rm\label{re:pwlirs}
	Left-invariant plane waves which are non-Einstein left-invariant steady Ricci solitons  given by
	$[u_1,u_3]=u_2$, $[u_1,u_4]=\gamma_3 u_3$, where $\gamma_3\neq 0$ and  $\{u_i\}$ is a pseudo-orthonormal basis with $\langle u_1,u_2\rangle = \langle u_3,u_3\rangle = \langle u_4,u_4\rangle$ $=$ $1$. (cf. \cite[Theorem 5.3-(i)]{Maria}),  are realized on a nilpotent Lie group with underlying Lie algebra $\mathfrak{n}_4$. Moreover the left-invariant soliton vector field is generated by $\xi=\kappa u_2+\frac{1}{4}\gamma_3^2 u_3$. 
	
	The plane wave left-invariant metrics in \cite[Theorem 5.3-(ii)]{Maria} given by 
	$[u_1,u_2]=u_1$, $[u_2,u_3]=u_3$, 
	$[u_2,u_4]=\gamma_3 u_3$, with $\langle u_1,u_2\rangle = \langle u_3,u_3\rangle = \langle u_4,u_4\rangle$ $=$ $1$,
	are left-invariant steady Ricci solitons with soliton vector field generated by 
	$\xi=-(\frac{1}{4}\gamma_3^2+1)u_1-\gamma_3\kappa u_3+\kappa u_4$. They are realized on the product Lie group $E(1,1)\times\mathbb{R}$.
	
	Moreover observe that while plane waves in \cite[Theorem 5.3-(i)]{Maria} have parallel Ricci tensor, those in \cite[Theorem 5.3-(ii)]{Maria} have not. Therefore they correspond to the different classes (i) and (ii) of four-dimensional homogeneous plane waves discussed in Section~\ref{sse:plane-wave-homog}, respectively.
\end{remark}

\begin{remark}\label{re:pp}
	\rm
	It follows from the analysis in sections~\ref{se:R3}--\ref{se:SLSU} that any pp-wave Lie group which is an algebraic Ricci soliton is a plane wave Lie group. 
	In sharp contrast there exist pp-wave Lie groups 
	which are left-invariant steady Ricci solitons but not plane waves as in   \cite[Theorem 5.1]{Maria}. This is the case of the metric determined on the product Lie algebra $\mathfrak{e}(1,1)\times\mathbb{R}$  by 
	$[u_1,u_4]=\gamma_1 u_1+\varepsilon u_2$, $[u_2,u_4]=-\gamma_1 u_2$, where $\gamma_1\neq 0$ and  $\{ u_i\}$ is a pseudo-orthonormal basis with  $\langle u_1,u_2\rangle=1=\langle u_3,u_3\rangle=\langle u_4,u_4\rangle$. Moreover, these metrics are not $\mathcal{F}[t]$-critical for any quadratic curvature functional, and the soliton vector field, determined by $\xi=\kappa u_3+\gamma_1 u_4$, is locally a gradient.
	
	In addition to the above, there are Brinkmann waves which are left-invariant Ricci solitons but not pp-waves (cf. metrics in Family (ii) in Section~\ref{lirs}) where the underlying Lie algebra $\mathfrak{e}(1,1)\times\mathbb{R}$.
\end{remark}


\begin{remark}\rm
	Three-dimensional left-invariant Lorentzian Ricci solitons on $E(1,1)$ are determined by the Lie algebra structures (see \cite{Israel})
	\begin{enumerate}
		\item $[u_1,u_3]=-\lambda u_1-\varepsilon u_2$,\quad  $[u_2,u_3]=\lambda u_2$,\quad  $\varepsilon=\pm 1$, $\lambda\neq 0$,
		\item $[u_1,u_2]=u_1$,\quad $[u_2, u_3]=u_3$,
	\end{enumerate}
	where $\{ u_i\}$ is a pseudo-orthonormal basis with $\langle u_1,u_2\rangle=\langle u_3,u_3\rangle=1$.
	Moreover, the existence of cocompact subgroups on $E(1,1)$ (see \cite{Bock}) yields three-dimensional examples of compact Lorentzian steady Ricci solitons.
\end{remark}
	
\subsection{Four-dimensional plane wave left-invariant metrics}
Trivially any left-invariant Lorentz metric on the Abelian Lie group is flat. Moreover, left-invariant Lorentz metrics on the Lie group corresponding to the Lie algebra $\mathfrak{r}_{4,1,1}$ are of constant sectional curvature, since they are of type~$\mathfrak{S}$ (see \cite{Milnor, Nomizu}). In opposition to those cases, any other semi-direct extension of the three-dimensional Abelian Lie group admits non-Einstein plane wave metrics (cf. Remark~\ref{grupos-ext-R1} and  Remark~\ref{grupos-ext-R}). Moreover, if was shown in Remark~\ref{grupos-ext-H} that any semi-direct extension of the Heisenberg group admits left-invariant plane wave metrics which are not Einstein. Furthermore, all left-invariant plane waves are steady algebraic Ricci solitons but the plane wave metrics on $\operatorname{Aff}(\mathbb{C})$  corresponding to the family (d) in Section~\ref{sse:plane-wave-homog} (cf. Remark~\ref{re:plane-wave no ARS})). Hence one has
\begin{quote}
	\emph{Any four-dimensional solvable Lie algebra admits Lorentzian inner products so that the corresponding connected and simply connected Lie group is a non-Einstein algebraic Ricci soliton, with the exception of the Abelian Lie algebra, the $\mathfrak{S}$-type Lie algebra $\mathfrak{r}_{4,1,1}$ and the affine algebras $\mathfrak{aff}(\mathbb{R})\times\mathfrak{aff}(\mathbb{R})$ and $\mathfrak{aff}(\mathbb{C})$.}
\end{quote}

Moreover, the affine group $\operatorname{Aff}(\mathbb{R})\times\operatorname{Aff}(\mathbb{R})$ admits left-invariant Ricci soliton metrics as shown in Section~\ref{lirs} (see \cite{Maria}), and the complex affine group $\operatorname{Aff}(\mathbb{C})$ admits steady gradient Ricci soliton metrics since it is a locally conformally flat Cahen-Wallach symmetric space (cf \cite{wafa, Sandra}). On the other hand, the non-solvable products $SU(2)\times\mathbb{R}$ and $\widetilde{SL}(2,\mathbb{R})\times\mathbb{R}$ are algebraic Ricci solitons (indeed rigid Ricci solitons) as shown in Theorem~\ref{Nonsolvable-ARS-thm}. Hence one has that
\begin{quote}
	\emph{Any connected and simply connected four-dimensional Lie group admits left-invariant Lorentz metrics resulting in a Ricci soliton.}
\end{quote}
This shows that the Lorentzian situation is much richer that the Riemannian one (compare with the results in \cite{AL, Lauret2, PW, Wears}).

\end{document}